
\documentclass{amsart}
\usepackage{amssymb}
\usepackage[mathcal]{euscript}
\usepackage[cmtip,all]{xy}
\usepackage{amsmath}\usepackage{amsfonts}
\usepackage{graphicx} 
\usepackage{verbatim} 
\usepackage{tikz}

\newcommand{\bi}{\textnormal{\textbf{i}}} \newcommand{\alb}{\mathbb{A}} \newcommand{\cB}{\mathcal{B}} \newcommand{\cC}{\mathcal{C}} \newcommand{\FF}{\mathbb{F}} \newcommand{\ZZ}{\mathbb{Z}} \newcommand{\NN}{\mathbb{N}} 
\newcommand{\cT}{\mathcal{T}} \newcommand{\cO}{\mathcal{O}} \newcommand{\cV}{\mathcal{V}} \newcommand{\cW}{\mathcal{W}} \newcommand{\cH}{\mathcal{H}} \newcommand{\med}{\;|\;} \newcommand{\CC}{\mathbb{C}}
\DeclareMathOperator{\supp}{\mathrm{Supp}\,} \DeclareMathOperator{\Aut}{\mathrm{Aut}} \DeclareMathOperator{\Diag}{\mathrm{Diag}} \DeclareMathOperator{\Stab}{\mathrm{Stab}}  \DeclareMathOperator{\Inn}{\mathrm{Inn}} 
\DeclareMathOperator{\End}{\mathrm{End}} \DeclareMathOperator{\Sym}{\mathrm{Sym}} \DeclareMathOperator{\Str}{\mathrm{Str}} 
\DeclareMathOperator{\AAut}{\mathbf{Aut}} \DeclareMathOperator{\mmu}{\boldsymbol{\mu}} \DeclareMathOperator{\SStr}{\mathbf{Str}}
\DeclareMathOperator{\chr}{\mathrm{char}\,} \DeclareMathOperator{\TKK}{\mathrm{TKK}} \DeclareMathOperator{\lspan}{\mathrm{span}} 

\newcommand{\Ort}{\mathrm{O}} \newcommand{\Spin}{\mathrm{Spin}} \newcommand{\GL}{\mathrm{GL}} \newcommand{\SL}{\mathrm{SL}}
\newcommand{\RT}{\mathrm{RT}} \newcommand{\Univ}{\mathcal{U}}
\DeclareMathOperator{\Hom}{\mathrm{Hom}\,} \DeclareMathOperator{\id}{\mathrm{id}} 
\DeclareMathOperator{\Der}{\mathrm{Der}\,}  \DeclareMathOperator{\Innder}{\mathrm{Innder}\,}  \DeclareMathOperator{\Rad}{\mathrm{Rad}\,}
\DeclareMathOperator{\Grad}{\mathrm{Grad}} \DeclareMathOperator{\TKKGrad}{\mathrm{TKKGrad}}
\DeclareMathOperator{\rk}{\textnormal{rk}}
\DeclareMathOperator{\im}{\textnormal{im}} 
\newcommand{\Def}{\textnormal{Def}}
\newcommand{\SSpin}{\mathbf{Spin}}
\newcommand{\Lie}{\mathrm{Lie}}
\newcommand{\frso}{{\mathfrak{so}}}
\newcommand{\GG}{\mathbf{G}}
\newcommand{\cJ}{\mathcal{J}}
\newcommand{\subo}{_{\bar 0}}
\newcommand{\subuno}{_{\bar 1}}

\newtheorem{theorem}{Theorem}
\newtheorem{proposition}[theorem]{Proposition}
\newtheorem{lemma}[theorem]{Lemma}
\newtheorem{corollary}[theorem]{Corollary}

\theoremstyle{definition}
\newtheorem{df}[theorem]{Definition}
\newtheorem{example}[theorem]{Example}
\newtheorem{notation}[theorem]{Notation}

\theoremstyle{remark}
\newtheorem{remark}[theorem]{Remark}

\numberwithin{equation}{section} 
\numberwithin{theorem}{section} 

\begin{document}

\title[Fine gradings on simple exceptional Jordan pairs and triple systems]{Fine gradings on simple exceptional \\ Jordan pairs and triple systems}

\author[Diego Aranda-Orna]{Diego Aranda-Orna${}^\star$}
\address{Departamento de Matem\'{a}ticas
 e Instituto Universitario de Matem\'aticas y Aplicaciones,
 Universidad de Zaragoza, 50009 Zaragoza, Spain}
\email{daranda@unizar.es}

\thanks{${}^\star$Supported by the Spanish Ministerio de Econom\'{\i}a y Competitividad---Fondo Europeo de Desarrollo Regional (FEDER) MTM2013-45588-C3-2-P and by the Diputaci\'on General de Arag\'on---Fondo Social Europeo (Grupo de Investigaci\'on de \'Algebra)}

\date{}

\begin{abstract}
We give a classification up to equivalence of the fine group gradings by abelian groups on the Jordan pairs and triple systems of types bi-Cayley and Albert, under the assumption that the base field is algebraically closed of characteristic different from $2$. The associated Weyl groups are computed. We also determine, for each fine grading on the bi-Cayley and Albert pairs, the induced grading on the exceptional simple Lie algebra given by the Tits-Kantor-Koecher construction.
\end{abstract}

\maketitle


\section{Introduction and preliminaries}

In the classification of simple Jordan pairs (\cite{L75}) there are four infinite families and two exceptional cases. The importance of Jordan pairs is due to the Tits-Kantor-Koecher (TKK) construction, which allows to construct a 3-graded Lie algebra from a Jordan pair, which is simple if and only if so is the Jordan pair. It is also known that a 3-graded Lie algebra determines a Jordan pair. The exceptional simple Jordan pairs, namely, the types bi-Cayley and Albert, allow to construct Lie algebras of types $E_6$ and $E_7$ via the TKK construction.

The main goal of this paper is to study and classify fine gradings on Jordan pairs and triple systems of types bi-Cayley and Albert. By grading we usually mean grading by an abelian group, and we always assume that the base field $\FF$ is algebraically closed of characteristic different from $2$, unless otherwise stated. 

This paper is organized as follows.

In this section, we first give the basic definitions relevant to Jordan pairs and triple systems, and then recall the classification of orbits under the action of the automorphism group of finite-dimensional simple Jordan pairs. Finally, we recall the classification of fine group gradings on the Cayley algebra and on the Albert algebra, which will be used later to construct gradings on the Jordan systems under consideration. A nice reference to study the classification of fine gradings on Cayley and Albert algebras is \cite{EKmon}.

In the second section, we introduce the basic definitions concerning gradings on Jordan pairs and triple systems, and prove some general results about such gradings. In particular, we study how the gradings on a Jordan pair induce gradings on the associated Lie algebra given by the TKK construction. We prove, using Peirce decompositions, that fine gradings on finite-dimensional semisimple Jordan pairs have $1$-dimensional homogeneous components. We also prove that the trace form behaves well with respect to the gradings on Jordan pairs and triple systems.

In the third section we recall the definition of the bi-Cayley and Albert pairs and triple systems, and introduce some important automorphisms. For the bi-Cayley systems, we describe the automorphism groups and their orbits. A construction of fine gradings on the bi-Cayley and Albert pairs and triple systems is given. 

The main goal of the paper is reached in the fourth section, where we classify, up to equivalence, the fine gradings by abelian groups on the bi-Cayley and Albert pairs and triple systems (over an algebraically closed field of characteristic not $2$).

In the fifth section, we study how the fine gradings on the bi-Cayley and Albert pairs induce gradings on the Lie algebra obtained by means of the TKK construction.

Finally, in the sixth section we compute the Weyl groups of all the fine gradings on the bi-Cayley and Albert pairs and triple systems.

\subsection{Jordan pairs and triple systems}
We will now recall from \cite{L75} some basic definitions about Jordan pairs and Jordan triple systems.

As already mentioned, we will assume throughout the paper that the ground field $\FF$ is algebraically closed of characteristic different from $2$, unless indicated otherwise. The superscript $\sigma$ will always take the values $+$ and $-$, and will be omitted when there is no ambiguity.

Let $\cV^+$ and $\cV^-$ be vector spaces over $\FF$, and let $Q^\sigma \colon \cV^\sigma \rightarrow \Hom(\cV^{-\sigma},\cV^\sigma)$ be quadratic maps. Define trilinear maps, $\cV^\sigma \times \cV^{-\sigma} \times \cV^\sigma \rightarrow \cV^\sigma$, $(x,y,z)\mapsto \{x,y,z\}^\sigma$, and bilinear maps, $D^\sigma \colon \cV^\sigma \times \cV^{-\sigma} \rightarrow \End(\cV^\sigma)$, by the formulas
\begin{equation} \{x,y,z\}^\sigma = D^\sigma(x,y)z := Q^\sigma(x,z)y \end{equation}
where $Q^\sigma(x,z) = Q^\sigma(x+z) - Q^\sigma(x) - Q^\sigma(z)$. Note that $\{x,y,z\}=\{z,y,x\}$ and $\{x,y,x\}=2Q(x)y$. 

We will write $x^\sigma$ to emphasize that $x$ is an element of $\cV^\sigma$. Alternatively, we may write $(x,y)\in\cV$ to mean $x\in\cV^+$ and $y\in\cV^-$. The map $Q^\sigma(x)$ is also denoted by $Q^\sigma_x$.

\smallskip

A {\em (quadratic) Jordan pair} is a pair $\cV=(\cV^+,\cV^-)$ of vector spaces and a pair $(Q^+, Q^-)$ of quadratic maps $Q^\sigma \colon \cV^\sigma \rightarrow \Hom(\cV^{-\sigma},\cV^\sigma)$ such that the following identities hold in all scalar extensions:
\begin{itemize}
\item[(QJP1)] $D^\sigma(x,y) Q^\sigma(x) = Q^\sigma(x) D^{-\sigma}(y,x)$,
\item[(QJP2)] $D^\sigma(Q^\sigma(x)y, y) = D^\sigma(x, Q^{-\sigma}(y)x)$,
\item[(QJP3)] $Q^\sigma(Q^\sigma(x)y) = Q^\sigma(x)Q^{-\sigma}(y)Q^\sigma(x)$.
\end{itemize}

A {\em (linear) Jordan pair} is a pair $\cV=(\cV^+,\cV^-)$ of vector spaces with trilinear products $\cV^\sigma \times \cV^{-\sigma} \times \cV^\sigma \to \cV^\sigma $, $(x,y,z) \mapsto \{x,y,z\}^\sigma$, satisfying the following identities:
\begin{itemize}
\item[(LJP1)] $\{x,y,z\}^\sigma = \{z,y,x\}^\sigma$,
\item[(LJP2)] $[D^\sigma(x,y), D^\sigma(u,v)] = D^\sigma(D^\sigma(x,y)u,v) - D^\sigma(u,D^{-\sigma}(y,x)v)$,
\end{itemize}
where $D^\sigma(x,y)z=\{x,y,z\}^\sigma$.

Note that, under the assumption $\chr\FF\neq2$, the definitions of quadratic and linear Jordan pairs are equivalent.

\smallskip

A pair $\cW=(\cW^+,\cW^-)$ of subspaces of a Jordan pair $\cV$ is called a {\em subpair} (respectively an {\em ideal}) if $Q^\sigma(\cW^\sigma)\cW^{-\sigma} \subseteq \cW^\sigma$ (respectively $Q^\sigma(\cW^\sigma)\cV^{-\sigma} + Q^\sigma(\cV^\sigma)\cW^{-\sigma} + \{\cV^\sigma, \cV^{-\sigma}, \cW^\sigma\} \subseteq \cW^\sigma $). We say that $\cV$ is {\em simple} if its ideals are only the trivial ones and the maps $Q^\sigma$ are nonzero.

A {\em homomorphism} $h\colon \cV \rightarrow \cW$ of Jordan pairs is a pair $h=(h^+,h^-)$ of $\FF$-linear maps $h^\sigma \colon \cV^\sigma \rightarrow \cW^\sigma$ such that $h^\sigma(Q^\sigma(x)y) = Q^\sigma(h^\sigma(x))h^{-\sigma}(y)$ for all $x\in\cV^\sigma$, $y\in\cV^{-\sigma}$. By linearization, this implies $h^\sigma(\{x,y,z\}) = \{h^\sigma(x),h^{-\sigma}(y),h^\sigma(z)\}$ for all $x\in\cV^\sigma$, $y\in\cV^{-\sigma}$. {\em Isomorphisms} and {\em automorphisms} are defined in the obvious way. The ideals are precisely the kernels of homomorphisms.

A {\em derivation} is a pair $\Delta=(\Delta^+,\Delta^-) \in \End(\cV^+)\times\End(\cV^-)$ such that $\Delta^\sigma(Q^\sigma(x)y) = \{\Delta^\sigma(x),y,x\} + Q^\sigma(x)\Delta^{-\sigma}(y)$ for all $x\in\cV^\sigma$, $y\in\cV^{-\sigma}$. For $(x,y)\in\cV$, the pair $\nu(x,y):=(D(x,y),-D(y,x)) \in \mathfrak{gl}(\cV^+)\oplus\mathfrak{gl}(\cV^-)$ is a derivation, which is usually called the {\em inner derivation} defined by $(x,y)$. It is well-known that $\Innder(\cV) := \lspan\{\nu(x,y) \med (x,y)\in\cV \}$ is an ideal of $\Der(\cV)$.

\smallskip

A {\em (quadratic) Jordan triple system} is a vector space $\cT$ with a quadratic map $P \colon \cT \rightarrow \End(\cT)$ such that the following identities hold in all scalar extensions:
\begin{itemize}
\item[(QJT1)] $L(x,y) P(x) = P(x) L(y,x)$,
\item[(QJT2)] $L(P(x)y, y) = L(x, P(y)x)$,
\item[(QJT3)] $P(P(x)y) = P(x)P(y)P(x)$,
\end{itemize}
where $L(x,y)z = P(x,z)y$ and $P(x,z) = P(x+z)-P(x)-P(z)$.

A {\em (linear) Jordan triple system} is a vector space $\cT$ with a trilinear product $\cT\times\cT\times\cT\to\cT$, $(x,y,z) \mapsto \{x,y,z\}$, satisfying the following identities:
\begin{itemize}
\item[(LJT1)] $\{x,y,z\} = \{z,y,x\}$,
\item[(LJT2)] $[D(x,y), D(u,v)] = D(D(x,y)u,v) - D(u,D(y,x)v)$,
\end{itemize}
where $D(x,y)z=\{x,y,z\}$.

Note that, under the assumption $\chr\FF\neq2$, the definitions of quadratic and linear Jordan triple systems are equivalent. In a quadratic Jordan triple system, the triple product is given by $\{x,y,z\}=L(x,y)z$.

\smallskip

A {\em homomorphism} of Jordan triple systems is an $\FF$-linear map $f\colon \cT \rightarrow \cT'$ such that $f(P(x)y) = P(f(x))f(y)$ for all $x,y\in\cT$. The rest of basic concepts, including isomorphisms and automorphisms, are defined in the obvious way. Recall that a linear Jordan algebra $J$ has an associated Jordan triple system $\cT$ with quadratic product $P(x) = U_x := 2L^2_x - L_{x^2}$, and similarly, a Jordan triple system $\cT$ has an associated Jordan pair $\cV = (\cT, \cT)$ with quadratic products $Q^\sigma = P$.

\subsection{Peirce decompositions and orbits}

We will now recall some well-known definitions related to Jordan pairs (for more details, see \cite{L75}, \cite{L91a}, \cite{L91b}, \cite{ALM05}).
 
An element $x\in\cV^\sigma$ is called \emph{invertible} if $Q^\sigma(x)$ is invertible, and in this case, $x^{-1}:=Q^\sigma(x)^{-1}x$ is said to be the \emph{inverse} of $x$. The set of invertible elements of $\cV^\sigma$ is denoted by $(\cV^\sigma)^\times$. A Jordan pair $\cV$ is called \emph{division pair} if $\cV\neq0$ and every nonzero element is invertible. The pair $\cV$ is said to be \emph{local} if the noninvertible elements of $\cV$ form a proper ideal, say $\mathcal{N}$; in this case, $\cV/\mathcal{N}$ is a division pair.

For a fixed $y\in\cV^{-\sigma}$, the vector space $\cV^\sigma$ with the operators
\begin{equation} x^2=x^{(2,y)}:=Q^\sigma(x) y, \qquad U_x=U_x^{(y)}:=Q^\sigma(x)Q^{-\sigma}(y), \end{equation}
becomes a Jordan algebra, which is denoted by $\cV^\sigma_y$. An element $(x,y)\in\cV$ is called \emph{quasi-invertible} if $x$ is quasi-invertible in the Jordan algebra $\cV^\sigma_y$, i.e., if $1-x$ is invertible in the unital Jordan algebra $\FF1+\cV^\sigma_y$ obtained from $\cV^\sigma_y$ by adjoining a unit element. In that case, $(1-x)^{-1}=1+z$ for some $z\in\cV^\sigma$, and $x^y:=z$ is called the \emph{quasi-inverse} of $(x,y)$. An element $x\in\cV^\sigma$ is called \emph{properly quasi-invertible} if $(x,y)$ is quasi-invertible for all $y\in\cV^{-\sigma}$. The \emph{(Jacobson) radical} of $\cV$ is $\Rad \cV := (\Rad\cV^+,\Rad\cV^-)$, where $\Rad\cV^\sigma$ is the set of properly quasi-invertible elements of $\cV^\sigma$. Note that $\Rad\cV$ is an ideal of $\cV$. We say that $\cV$ is \emph{semisimple} if $\Rad \cV=0$, and $\cV$ is \emph{quasi-invertible} or \emph{radical} if $\cV=\Rad \cV$. Of course, finite-dimensional simple Jordan pairs are semisimple, and finite-dimensional semisimple Jordan pairs are a direct sum of simple Jordan pairs.

An element $x\in\cV^\sigma$ is called \emph{von Neumann regular} (or vNr, for short) if there exists $y\in\cV^{-\sigma}$ such that $Q(x) y=x$. A Jordan pair is called \emph{von Neumann regular} if $\cV^+$ and $\cV^-$ consist of vNr elements. A pair $e=(x,y)\in\cV$ is called \emph{idempotent} if $Q(x) y=x$ and $Q(y) x=y$. Recall from \cite[Lemma 5.2]{L75} that if $x\in\cV^+$ is vNr and $Q(x) y=x$, then $(x, Q(y)x)$ is an idempotent; therefore, every vNr element can be completed to an idempotent. An element $x\in\cV^\sigma$ is called \emph{trivial} if $Q(x)=0$. A Jordan pair $\cV$ is called \emph{nondegenerate} if it contains no nonzero trivial elements. 

Given a Jordan pair $\cV$, a subspace $\mathcal{I}\subseteq \cV^\sigma$ is called an {\em inner ideal} if $Q^\sigma(\mathcal{I})(\cV^{-\sigma})\subseteq \mathcal{I}$. Given an element $x\in\cV^\sigma$, the {\em principal inner ideal} generated by $x$ is defined by $[x]:=Q(x)\cV^{-\sigma}$. The {\em inner ideal} generated by $x\in\cV^\sigma$ is defined by $(x):=\FF x+[x]$.

\begin{theorem}[{\cite[Th. 10.17]{L75}}] \label{semisimple}
The following conditions on a Jordan pair $\cV$ with dcc on principal inner ideals are equivalent: 
\begin{itemize}
\item[i)] $\cV$ is von Neumann regular;
\item[ii)] $\cV$ is semisimple;
\item[iii)] $\cV$ is nondegenerate.
\end{itemize}
\end{theorem}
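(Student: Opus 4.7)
The plan is to establish the cycle (i) $\Rightarrow$ (ii) $\Rightarrow$ (iii) $\Rightarrow$ (i), in which the first two implications are essentially formal while the third is where the descending chain condition is indispensable.

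For (i) $\Rightarrow$ (ii), I would show that a von Neumann regular Jordan pair has zero radical. If $x\in\Rad\cV^\sigma$ is vNr, write $x=Q^\sigma(x)y$ for some $y\in\cV^{-\sigma}$; since $x$ is properly quasi-invertible, one plugs $(x,y)$ into the quasi-inverse identities and finds $x=0$. Hence $\Rad\cV=0$, i.e.\ $\cV$ is semisimple.

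For (ii) $\Rightarrow$ (iii), I would argue that every trivial element lies in the radical. Indeed, if $Q^\sigma(x)=0$ then a direct computation shows $(x,y)$ is quasi-invertible for every $y\in\cV^{-\sigma}$ (the quasi-inverse is given explicitly in terms of $D^\sigma(x,y)$), so $x\in\Rad\cV^\sigma$. Semisimplicity thus forces $x=0$, proving nondegeneracy without even using dcc.

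For (iii) $\Rightarrow$ (i), the strategy is substantially more delicate. Given a nonzero $x\in\cV^\sigma$, consider the descending chain of principal inner ideals obtained by iterating the operator $Q^\sigma(x)Q^{-\sigma}(-)$ inside $[x]=Q^\sigma(x)\cV^{-\sigma}$; by the dcc on principal inner ideals this chain stabilizes at some $\mathcal{I}\subseteq[x]$. Nondegeneracy rules out the possibility that the stabilized subspace produces a trivial element, and a careful application of the Jordan pair identities (QJP1)--(QJP3) to the stabilized piece then yields some $y\in\cV^{-\sigma}$ with $Q^\sigma(x)y=x$, completing von Neumann regularity.

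The main obstacle is clearly this last implication: turning a chain-stabilization statement into an explicit regularizing element $y$ requires a nontrivial interplay between the quadratic identities and nondegeneracy, and is the Jordan-pair analogue of the classical structure theory that passes from Jacobson-semisimplicity to von Neumann regularity for artinian associative rings. Without dcc the implication (iii) $\Rightarrow$ (i) fails outright, which pinpoints where the hypothesis does the decisive work.
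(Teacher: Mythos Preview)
The paper does not give its own proof of this statement: it is simply quoted from Loos's monograph as \cite[Th.~10.17]{L75} and used as a black box (for instance in Theorem~\ref{dim1} and Remark~\ref{remarkorbitidempotents}). So there is nothing in the paper to compare your argument against.

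Regarding the proposal itself, your treatment of (i)~$\Rightarrow$~(ii) and (ii)~$\Rightarrow$~(iii) is fine; those implications are indeed formal and do not require dcc. The sketch for (iii)~$\Rightarrow$~(i), however, is not yet a proof. Saying that the descending chain of principal inner ideals inside $[x]$ ``stabilizes'' and that ``nondegeneracy rules out the possibility that the stabilized subspace produces a trivial element'' does not by itself produce a regularizing $y$ with $Q^\sigma(x)y=x$; you still owe the mechanism that converts stabilization plus absence of trivial elements into regularity of the original $x$ (rather than merely regularity of some element further down the chain). In Loos's argument the bridge is the structure of \emph{minimal} inner ideals in a nondegenerate pair together with the local--global passage through $\cV_2(e)$ for an idempotent $e$; your outline alludes to something like this but does not carry it out, so as written it has a genuine gap at exactly the place you flag as ``the main obstacle''.
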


\smallskip

For any $x\in\cV^\sigma$ and $y\in\cV^{-\sigma}$, the Bergmann operator is defined by
\begin{equation*}
B(x,y)=\id_{\cV^\sigma}-D(x,y)+Q(x)Q(y).
\end{equation*} 
In case $(x,y)\in\cV$ is quasi-invertible, the map
\begin{equation*}
\beta(x,y):=(B(x,y),B(y,x)^{-1})
\end{equation*}
is an automorphism, called the \textit{inner automorphism} defined by $(x,y)$. The \textit{inner automorphism group}, $\Inn(\cV)$, is the group generated by the inner automorphisms. 

Recall (\cite[Th. 5.4]{L75}) that given an idempotent $e=(e^+,e^-)$ of $\cV$, the linear operators
\begin{equation}
E_2^\sigma=Q(e^\sigma)Q(e^{-\sigma}), \quad E_1^\sigma=D(e^\sigma,e^{-\sigma})-2E^\sigma_2 , \quad E_0^\sigma=B(e^\sigma,e^{-\sigma}),
\end{equation}
are orthogonal idempotents of $\End(\cV^\sigma)$ whose sum is the identity, and we have the so-called \emph{Peirce decomposition}: $\cV^\sigma=\cV^\sigma_2 \oplus \cV^\sigma_1 \oplus \cV^\sigma_0$, where $\cV^\sigma_i=\cV^\sigma_i(e):=E_i^\sigma(\cV^\sigma)$. Moreover, this decomposition satisfies 
\begin{equation} \label{peirceproperties}
Q(\cV^\sigma_i)\cV^{-\sigma}_j\subseteq\cV^\sigma_{2i-j}, \qquad \{\cV^\sigma_i,\cV^{-\sigma}_j,\cV^\sigma_k\}\subseteq\cV^\sigma_{i-j+k},
\end{equation}
for any $i,j,k\in\{0,1,2\}$ (note that we use the convention $\cV_i^\sigma=0$ for $i\notin\{0,1,2\}$). In particular, $\cV_i=(\cV^+_i,\cV^-_i)$ is a subpair of $\cV$ for $i=0,1,2$.

We recall a few more definitions related to idempotents. Two nonzero idempotents $e$ and $f$ are called \emph{orthogonal} if $f\in\cV_0(e)$; this is actually a symmetric relation. An \emph{orthogonal system of idempotents} is an ordered set of pairwise orthogonal idempotents; it is usually denoted by $(e_1,\dots,e_r)$ in case it is finite, and there is an associated Peirce decomposition (but we will not use this more general version). An orthogonal system of idempotents is called \emph{maximal} if it is not properly contained in a larger orthogonal system of idempotents. It is known that a finite sum of pairwise orthogonal idempotents is again an idempotent. A nonzero idempotent $e$ is called \emph{primitive} if it cannot be written as the sum of two nonzero orthogonal idempotents. We say that $e$ is a \emph{local idempotent} (respectively a \emph{division idempotent}) if $\cV_2(e)$ is a local pair (respectively a division pair). In general, division idempotents are local, and local idempotents are primitive. If $\cV$ is semisimple, then the local idempotents are exactly the division idempotents. A \emph{frame} is a maximal set among orthogonal systems of local idempotents. Two frames of a simple finite-dimensional Jordan pair have always the same number of idempotents; that number of idempotents is called the {\em rank} of $\cV$ (see \cite[Def.~15.18]{L75}), and we have:

\begin{theorem}[{\cite[Th. 17.1]{L75}}] \label{orbitframes}
Let $\cV$ be a simple finite-dimensional Jordan pair over an algebraically closed field $\FF$. Let $(c_1,\dots,c_r)$ and $(e_1,\dots,e_r)$ be frames of $\cV$. Then there exists an inner automorphism $g$ of $\cV$ such that $g(c_i)=e_i$ for $i=1,\dots,r$.
\end{theorem}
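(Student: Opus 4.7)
The plan is to proceed by induction on the rank $r$. For the base case $r = 1$, each frame consists of a single division idempotent, so the task reduces to showing that $\Inn(\cV)$ acts transitively on the set of division idempotents of $\cV$. In a simple Jordan pair of rank $1$ over an algebraically closed field of characteristic different from $2$, this can be verified either by appealing to the classification of rank-$1$ simple Jordan pairs (which are of very restricted form) or intrinsically: given two division idempotents $c$ and $e$, one produces an inner automorphism mapping $c$ to $e$ as a suitable product of Bergmann automorphisms $\beta(x,y)$, using that the Jordan division algebras $\cV^+_{c^-}$ and $\cV^+_{e^-}$ are isomorphic.

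For the inductive step, assume the result for all ranks strictly less than $r$. A variant of the base-case argument, applied to $\cV$ itself, produces $g_1 \in \Inn(\cV)$ with $g_1(c_1) = e_1$; this is the statement that $\Inn(\cV)$ acts transitively on division idempotents of a simple finite-dimensional Jordan pair (which can be derived from the induction by first completing $c_1$ and $e_1$ to frames and matching rank-$1$ components). Replacing $c_i$ by $g_1(c_i)$, we may assume $c_1 = e_1$. Orthogonality to $e_1$ then forces $c_2, \dots, c_r$ and $e_2, \dots, e_r$ to lie in $\cV_0(e_1)$, and both tuples are frames of this subpair. By the Peirce multiplication rules \eqref{peirceproperties} and Theorem~\ref{semisimple}, $\cV_0(e_1)$ is semisimple and finite-dimensional of rank $r-1$; decomposing it into its simple summands and applying the inductive hypothesis to each summand produces an inner automorphism $h$ of $\cV_0(e_1)$ carrying $(c_2, \dots, c_r)$ onto $(e_2, \dots, e_r)$.

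To conclude, lift $h$ to an element of $\Inn(\cV)$. The key compatibility is that for any quasi-invertible pair $(x, y)$ contained in $\cV_0(e_1)$, the Peirce multiplication rules \eqref{peirceproperties} ensure $(x, y)$ is quasi-invertible in $\cV$ as well, and the induced Bergmann automorphism $\beta(x, y) \in \Inn(\cV)$ fixes $e_1$ and restricts on $\cV_0(e_1)$ to $\beta(x, y)$ computed in the subpair. Writing $h$ as a product of such Bergmann generators and lifting termwise yields the required inner automorphism of $\cV$. The main obstacle, in my view, is avoiding circularity around the base case: the transitivity of $\Inn(\cV)$ on division idempotents in rank $r > 1$ must be handled carefully, either by applying rank-$1$ transitivity inside suitable Peirce subpairs or by intertwining the induction with an auxiliary transitivity statement at the level of single idempotents.
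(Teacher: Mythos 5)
The paper does not actually prove this statement: it is quoted directly from Loos's monograph (\cite[Th.~17.1]{L75}), so there is no in-paper argument to compare against, and your proposal must stand on its own. It does not, and the gap sits exactly where you locate it yourself. The step ``produce $g_1\in\Inn(\cV)$ with $g_1(c_1)=e_1$'' is the assertion that $\Inn(\cV)$ acts transitively on the division idempotents of the simple pair $\cV$ of rank $r$. You propose to derive it ``from the induction by first completing $c_1$ and $e_1$ to frames and matching rank-$1$ components,'' but matching the first components of two frames of $\cV$ by an inner automorphism is precisely (a special case of) the statement being proved at rank $r$; the inductive hypothesis only concerns pairs of rank $<r$ and gives you no way to move $c_1$ to $e_1$ inside $\cV$ itself. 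This transitivity on rank-one idempotents is where essentially all of the content of the theorem lives: in Loos's proof it comes out of the structure theory of idempotents in simple pairs (connectedness and exchange arguments carried out with explicit inner automorphisms built from elements of the Peirce spaces $\cV_1$), not from an induction on the length of the frame. Since you leave it as an acknowledged ``obstacle,'' the argument is incomplete at its core, and the base case $r=1$ (transitivity on division idempotents in a rank-one simple pair) is likewise only gestured at.

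Two secondary points. First, after arranging $c_1=e_1$ you apply the inductive hypothesis ``to each simple summand'' of $\cV_0(e_1)$, but the theorem concerns \emph{ordered} frames: inner automorphisms of a semisimple pair preserve its simple ideals, so you must first show that $c_i$ and $e_i$ lie in the same simple summand for every $i$. This is not automatic and again requires input from the simplicity of $\cV$ rather than of $\cV_0(e_1)$. Second, the lifting step is correct, but note that it relies on the orthogonality relations $D(\cV_2,\cV_0)=D(\cV_0,\cV_2)=0$ and $Q(\cV_0)\cV_2=0$ from Loos's Theorem~5.4, which do not follow from the index rule \eqref{peirceproperties} alone as quoted in the paper.
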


Let $\cV$ be a semisimple Jordan pair and $x\in\cV^\sigma$. The \emph{rank} of $x$, $\rk(x)$, is defined as the supremum of the lenghts of all finite chains $[x_0] \subseteq [x_1] \subseteq \cdots \subseteq [x_n]$ of principal inner ideals $[x_i]=Q(x_i)\cV^{-\sigma}$ where each $x_i$ belongs to the inner ideal $(x)=\FF x+[x]$ generated by $x$, and the length of the chain is the number of strict inclusions (for more details, see \cite{L91a}). Hence, given a chain of length $n=\rk(x)$, we have $x_0=0$ and $[x_n]=[x]$. Two elements $x,z\in\cV^\sigma$ are called \emph{orthogonal} ($x\perp z$) if they are part of orthogonal idempotents, i.e., $x=e^\sigma$ and $z=c^\sigma$ for some orthogonal idempotents $e$ and $c$. For any $x,z\in\cV^\sigma$, $\rk(x+z)\leq\rk(x)+\rk(z)$; and in case that $x$ and $z$ have finite rank, the equality holds if and only if $x\perp z$ (\cite[Th. 3]{L91a}). Recall from \cite{L91b} that the \emph{capacity} of a Jordan pair $\cV$, $\kappa(\cV)$, is the infimum of the cardinalities of all finite sets of orthogonal division idempotents whose Peirce-$0$-space is zero (if there are no such idempotents, the capacity is $+\infty$). 

Recall that if $e=(x,y)$ is an idempotent, then $\rk(x)=\rk(y)$ (\cite[Cor. 1 of Th. 3]{L91a}), and this common value will be called the {\em rank} of $e$. In general, if $\rk(x)<\infty$, then $\rk(x)=\kappa(\cV_2(e))$ (\cite[Proposition~3]{L91a}); hence, $x$ has rank $1$ if and only if $\cV_2(e)$ is a division pair (i.e., the division idempotents are exactly the rank $1$ idempotents), and since $\FF$ is algebraically closed, this is equivalent to the condition $\im Q_x = \FF x$ (see \cite[Lemma 15.5]{L75}).

An element $x\in\cV^\sigma$ is called \emph{diagonalizable} if there exist orthogonal division idempotents $d_1,\dots,d_t$ such that $x=d_1^\sigma+\dots+d_t^\sigma$, and it is called \emph{defective} if $Q_y x=0$ for all rank one elements $y\in\cV^{-\sigma}$. The only element which is both diagonalizable and defective is $0$. If $\cV$ is simple, every element is either diagonalizable or defective (\cite[Cor. 1]{L91a}). The \textit{defect} of $\cV$ is $\Def(\cV) := (\Def(\cV^+), \Def(\cV^-))$, where $\Def(\cV^\sigma)$ denotes the set of defective elements of $\cV^\sigma$. For the definition of the generic trace of $\cV$, which is a bilinear map $\cV^+\times\cV^- \to\FF$ usually denoted by $m_1$ or $t$, see \cite[Def.~16.2]{L75}.

\begin{lemma}[{\cite[1.2.b]{ALM05}}] \label{defect}
Let $\cV$ be a semisimple finite-dimensional Jordan pair over an algebraically closed field $\FF$.
The defect is the kernel of the generic trace $t$ in the sense that
\begin{align*}
& x\in\Def(\cV^+) \Leftrightarrow t(x,\cV^-)=0, \\
& y\in\Def(\cV^-) \Leftrightarrow t(\cV^+,y)=0.
\end{align*} 
\end{lemma}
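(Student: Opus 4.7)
The plan is to analyze both sides of the equivalence by means of the Peirce decomposition attached to a rank-one idempotent, after reducing to the simple case. Since $\cV$ is a finite-dimensional semisimple Jordan pair, it splits as a direct sum of simple ideals, and the generic trace and the defect both respect this decomposition, so I may assume that $\cV$ is simple.

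Now fix $x\in\cV^+$. For any rank-one idempotent $e=(e^+,e^-)$, the Peirce-$2$ subpair $\cV_2(e)$ is a division pair, and since $\FF$ is algebraically closed, $\cV_2^\sigma(e)=\FF e^\sigma$. Writing the Peirce decomposition $x=x_2+x_1+x_0$ with $x_i\in\cV_i^+(e)$, we obtain a scalar $\lambda(e)\in\FF$ such that $x_2=\lambda(e)\,e^+$. By the Peirce rules in \eqref{peirceproperties}, $Q(e^-)$ sends $\cV_j^+$ into $\cV_{4-j}^-$ and hence annihilates $\cV_1^+\oplus\cV_0^+$; thus
\[
Q(e^-)\,x \;=\; \lambda(e)\,Q(e^-)e^+ \;=\; \lambda(e)\,e^-.
\]
On the other hand, the generic trace pairs Peirce components only in matching degree, that is, $t(\cV_i^+,\cV_j^-)=0$ whenever $i\neq j$, and $t(e^+,e^-)\neq 0$ for every rank-one idempotent $e$. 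Consequently
\[
t(x,e^-) \;=\; t(x_2,e^-) \;=\; \lambda(e)\,t(e^+,e^-),
\]
which shows that $Q(e^-)\,x=0$ if and only if $t(x,e^-)=0$.

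To conclude, I would extend from a single rank-one idempotent to the full space $\cV^-$. Every rank-one element of $\cV^-$ is vNr (since $\cV$ is semisimple) and therefore a scalar multiple of the negative part of a rank-one idempotent, while $\cV^-$ itself is spanned by its rank-one idempotents (reduce to a frame of rank-one idempotents and use the associated Peirce decomposition of $\cV^-$). Combining these, $x$ lies in $\Def(\cV^+)$ if and only if $t(x,e^-)=0$ for every rank-one idempotent $e$, which is if and only if $t(x,\cV^-)=0$; the symmetric argument handles $y\in\Def(\cV^-)$. The main obstacle is the verification of the two trace properties used above --- Peirce orthogonality and the non-vanishing of $t(e^+,e^-)$ on rank-one idempotents --- but both follow from the standard construction of $t$ via the generic norm together with the associativity identity $t(D(a,b)c,d)=t(c,D(b,a)d)$ applied to Peirce components.
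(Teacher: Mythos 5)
The paper does not actually prove this lemma --- it is quoted verbatim from \cite[1.2.b]{ALM05} --- so there is no internal argument to compare against; your proposal has to stand on its own. Its core is correct and is the standard computation: for a rank-one (= division) idempotent $e$ over an algebraically closed field one has $\cV_2^\sigma(e)=\FF e^\sigma$, the Peirce rules kill $Q(e^-)$ on $\cV_1^+\oplus\cV_0^+$, and Peirce-orthogonality of $t$ gives $Q(e^-)x = t(x,e^-)\,t(e^+,e^-)^{-1}e^-$. Since every rank-one element of $\cV^-$ is literally the minus-part of a rank-one idempotent (not merely a scalar multiple of one), this already yields the implication $t(x,\cV^-)=0\Rightarrow x\in\Def(\cV^+)$ with no further input.

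Two of your supporting claims are under-justified, and the second is where the content of the converse implication actually lives. First, $t(e^+,e^-)\neq0$ does not follow from the associativity identity; you need either Loos's formula for the generic trace of an idempotent in terms of its rank (so $t(e^+,e^-)=1$ for a division idempotent), or the argument the paper uses later in the proof of Lemma~\ref{lemmacartan2}: $\Aut$-invariance of $t$ plus transitivity on rank-one idempotents, then an explicit check for a single one. Second, the assertion that $\cV^-$ is spanned by rank-one elements is not obtained by ``reducing to a frame and using the Peirce decomposition'': the off-diagonal Peirce spaces $\cV^-_{ij}$ of a frame contain elements of rank $2$, and Peirce combinatorics alone do not exhibit them as spans of rank-one elements. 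In characteristic $\neq2$ the spanning is true, but the honest route is Loos's dichotomy (every element of a simple pair is diagonalizable or defective) combined with the vanishing of the defect in characteristic $\neq2$, which is uncomfortably close to the conclusion being proved. In characteristic $2$ --- the setting of the cited source, and the only case where the lemma says something beyond nondegeneracy of $t$ --- the defect can be nonzero, rank-one elements need not span $\cV^-$, and your final step (as well as the eigenvalue argument for $t(\cV_2^+,\cV_0^-)=0$, where $i-j=2$) breaks down; one would still have to show $t\bigl(\Def(\cV^+),\Def(\cV^-)\bigr)=0$ by other means. So the argument is serviceable for the characteristic-$\neq2$ use the paper makes of the lemma once the two facts above are properly sourced, but it does not establish the statement in the generality in which it is asserted.
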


In this paper we only consider the case with $\chr\FF\neq2$, and in this case the defect of a semisimple Jordan pair is always zero (see \cite[Theorem~2]{L91a}).

\begin{proposition}[{\cite[1.9.(a)]{ALM05}}] \label{orbits}
Let $\cV$ be a simple finite-dimensional Jordan pair of rank $r$ over an algebraically closed field and such that $\Def(\cV)=0$, and let $\sigma\in\{\pm\}$. Then the automorphism group $\Aut\cV$ and the inner automorphism group $\Inn\cV$ have the same orbits on $\cV^\sigma$, and these orbits are described as follows: two elements $x,y \in \cV^\sigma$ belong to the same orbit if and only if $\rk(x)=\rk(y)$. Hence there are $r+1$ orbits, corresponding to the possible values $0,\dots,r$ of the rank function.
\end{proposition}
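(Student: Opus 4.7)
The plan is to reduce the statement to the conjugacy of frames under inner automorphisms (Theorem~\ref{orbitframes}), using the hypothesis $\Def(\cV)=0$ together with simplicity to write every element of $\cV^\sigma$ as a sum of pairwise orthogonal rank one idempotents. Once this diagonalization is available, two elements of the same rank correspond to two frames, which are automatically conjugate under $\Inn(\cV)$.

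First I would check that the rank function is invariant under arbitrary automorphisms. Since a Jordan pair homomorphism $f=(f^+,f^-)$ satisfies $f^\sigma(Q(x)y)=Q(f^\sigma(x))f^{-\sigma}(y)$ and $f^{-\sigma}$ is bijective, $f^\sigma$ carries the principal inner ideal $[x]=Q(x)\cV^{-\sigma}$ onto $[f^\sigma(x)]$ and the inner ideal $(x)=\FF x+[x]$ onto $(f^\sigma(x))$. Hence any chain $[x_0]\subsetneq\cdots\subsetneq[x_n]$ of principal inner ideals with generators in $(x)$ maps to such a chain inside $(f^\sigma(x))$, so $\rk(x)=\rk(f^\sigma(x))$. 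Thus $\Aut(\cV)$- and $\Inn(\cV)$-orbits on $\cV^\sigma$ are both contained in the level sets of $\rk$.

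For the converse, take $x,y\in\cV^\sigma$ with $\rk(x)=\rk(y)=k$. If $k=0$, nondegeneracy (Theorem~\ref{semisimple}) forces $x=y=0$ and there is nothing to do. Assume $k\ge 1$. Because $\cV$ is simple with $\Def(\cV)=0$, the dichotomy of \cite[Cor.~1]{L91a} ensures every nonzero element of $\cV^\sigma$ is diagonalizable, so one may write $x=e_1^\sigma+\cdots+e_k^\sigma$ with pairwise orthogonal division (equivalently, rank one) idempotents $e_i$, the number of summands being forced to equal $k$ by rank additivity for orthogonal elements. Extending $(e_1,\dots,e_k)$ to a frame $(e_1,\dots,e_r)$ of $\cV$, and analogously writing $y=c_1^\sigma+\cdots+c_k^\sigma$ inside a frame $(c_1,\dots,c_r)$, Theorem~\ref{orbitframes} supplies $g\in\Inn(\cV)$ with $g(e_i)=c_i$ for all $i$, whence $g^\sigma(x)=y$. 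Therefore the inner orbit and the full automorphism orbit of any $x\in\cV^\sigma$ coincide with $\{z\in\cV^\sigma\med\rk(z)=\rk(x)\}$, yielding exactly $r+1$ orbits indexed by $0,1,\dots,r$.

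The only delicate point is the diagonalization: one must invoke simplicity together with $\Def(\cV)=0$ to rule out a defective component of $x$, and then use rank additivity for orthogonal elements and the identification of rank one idempotents with division idempotents in the semisimple setting to conclude that the decomposition has exactly $\rk(x)$ summands. All of these ingredients are already in place in the preliminaries, so once Theorem~\ref{orbitframes} and the non-defectiveness of every nonzero element are invoked, the rest of the argument is routine bookkeeping.
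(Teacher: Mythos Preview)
The paper does not actually give its own proof of this proposition: it simply cites \cite[1.9.(a)]{ALM05} and states the result. So there is no in-paper argument to compare against.

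Your proof is correct and is essentially the standard argument one finds behind the cited reference. The invariance of rank under automorphisms via transport of chains of principal inner ideals is fine. For the converse, your use of simplicity together with $\Def(\cV)=0$ to force every nonzero element to be diagonalizable (via \cite[Cor.~1]{L91a}), the identification of division idempotents with rank one idempotents, rank additivity on orthogonal elements to pin down the number of summands, extension to a frame, and then Theorem~\ref{orbitframes} to conjugate the two frames by an inner automorphism, are exactly the right ingredients. The only step you might want to make slightly more explicit is the extension of an orthogonal system $(e_1,\dots,e_k)$ of division idempotents to a frame: this follows because the Peirce $0$-space $\cV_0(e_1+\cdots+e_k)$ is again semisimple (Theorem~\ref{semisimple}), so one can iteratively pick further division idempotents there until the system is maximal, i.e., a frame.
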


\begin{proposition}[{\cite[1.10.(a)]{ALM05}}] \label{invertibleorbits}
Let $\cV$ be a simple finite-dimensional Jordan pair containing invertible elements over an algebraically closed field and satisfying $\Def(\cV)=0$. Then $\Aut\cV$ acts transitively on $(\cV^\sigma)^\times$.
\end{proposition}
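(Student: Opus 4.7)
The plan is to reduce the statement to Proposition~\ref{orbits} (orbit classification by rank). Since by that proposition the orbits of $\Aut\cV$ on $\cV^\sigma$ correspond bijectively to the rank values $0,1,\dots,r$ where $r=\rk\cV$, transitivity on $(\cV^\sigma)^\times$ is equivalent to showing that every invertible element of $\cV^\sigma$ has rank $r$.

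To prove this, fix $x\in(\cV^\sigma)^\times$ and set $x^{-1}:=Q^\sigma(x)^{-1}x\in\cV^{-\sigma}$. Using the standard identity $Q^{-\sigma}(x^{-1})=Q^\sigma(x)^{-1}$ for invertible elements of a Jordan pair (see \cite[Section 3]{L75}), the pair $e=(x,x^{-1})$ (with entries placed according to $\sigma$) is an idempotent of $\cV$ whose Peirce-$2$ projector is $E_2^\sigma=Q^\sigma(x)Q^{-\sigma}(x^{-1})=\id_{\cV^\sigma}$. Orthogonality of the Peirce projectors then forces $\cV_2(e)=\cV$ and $\cV_0(e)=\cV_1(e)=0$. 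Since $\Def(\cV)=0$, the nonzero element $x$ is diagonalizable, so we can write $x=d_1^\sigma+\cdots+d_t^\sigma$ for pairwise orthogonal division idempotents $d_1,\dots,d_t$; by additivity of the rank on orthogonal sums, $\rk(x)=t$. The family $(d_1,\dots,d_t)$ is in fact a frame of $\cV$: any division idempotent orthogonal to every $d_i$ is automatically orthogonal to $e$ and hence lies in $\cV_0(e)=0$. By the definition of the rank of $\cV$ (Loos \cite[Def.~15.18]{L75}) we conclude $t=r$, whence $\rk(x)=r$.

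Combining, every invertible element of $\cV^\sigma$ has rank $r$, and therefore lies in the unique $\Aut\cV$-orbit of rank-$r$ elements provided by Proposition~\ref{orbits}. The main technical step is the identification of $(d_1,\dots,d_t)$ as a frame, which ultimately rests on the vanishing of $\cV_0(e)$. An alternative, slightly more direct route is to apply Theorem~\ref{orbitframes} to the frames associated with any two invertible elements $x$ and $y$; this yields, at once, an inner automorphism taking $x$ to $y$ as the sum of its frame components, and gives the stronger conclusion that $\Inn\cV$ (not only $\Aut\cV$) is transitive on $(\cV^\sigma)^\times$.
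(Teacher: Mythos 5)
Your argument is correct, but note that the paper does not actually prove this statement: Proposition~\ref{invertibleorbits} is quoted verbatim from \cite[1.10.(a)]{ALM05}, so there is no in-paper proof to match. What you have done is show that the result follows from the other ingredients the paper recalls, namely Proposition~\ref{orbits}, Theorem~\ref{orbitframes}, and the diagonalizability of non-defective elements, which is a legitimate and self-contained derivation. The chain of reasoning checks out: $e=(x,x^{-1})$ is an idempotent with $E_2^\sigma=Q^\sigma(x)Q^{-\sigma}(x^{-1})=\id$, hence $\cV_0(e)=0$; $\Def(\cV)=0$ forces $x$ to be diagonalizable, say $x=d_1^\sigma+\cdots+d_t^\sigma$, and rank additivity on orthogonal sums gives $\rk(x)=t$; and the maximality of $(d_1,\dots,d_t)$ follows because the sum idempotent $d=\sum d_i$ coincides with $e$ (its $(-\sigma)$-component equals $x^{-1}$ by injectivity of $Q^\sigma(x)$ applied to $Q^\sigma(x)\bigl(\sum d_i^{-\sigma}\bigr)=x=Q^\sigma(x)x^{-1}$) and $\cV_0(d)=\bigcap_i\cV_0(d_i)$. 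These two small identifications are the only places where you lean on standard facts without spelling them out, and both are routine. Your closing observation is also the cleaner route: once each invertible element is written as the sum of a frame, Theorem~\ref{orbitframes} directly produces an inner automorphism matching the two frames term by term, giving transitivity of $\Inn\cV$ on $(\cV^\sigma)^\times$, which is stronger than the stated claim and is essentially how the cited source argues.
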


\begin{remark} \label{remarkorbitidempotents}
Given a finite-dimensional semisimple Jordan pair $\cV$, each idempotent $e$ of rank $r$ decomposes as a sum of $r$ orthogonal idempotents of rank $1$ (see \cite[Cor.~2 of Th.~1]{L91a}). By \eqref{peirceproperties}, we also have $\cV_2^\sigma(e) = \im Q_{e^\sigma}$, so the rank of $e$ in $\cV$ coincides with the rank of $e$ in $\cV_2(e)$.

Furthermore, if $\cV$ is simple, all sets consisting of $n$ orthogonal idempotents $e_1,\dots, e_n$ of fixed ranks $r_1,\dots, r_n$, respectively, are in the same orbit under the automorphism group. Indeed, first note that the Peirce subpaces $\cV_2(e_i)$ are semisimple Jordan pairs (because the vNr property is inherited by these subpairs and by Theorem~\ref{semisimple}); hence the idempotent $e_i$ decomposes as sum of $r_i$ orthogonal idempotents $e_{i, 1},\dots,e_{i, r_i}$ of rank $1$ in the corresponding Peirce subspace $\cV_2(e_i)$, and it suffices to apply Theorem~\ref{orbitframes}. In particular, idempotents of rank $r$ are in the same orbit.
\end{remark}

\subsection{Gradings on Cayley and Albert algebras} \label{sectionCayleyAlbert}

We assume that the reader is familiar with {\em Hurwitz algebras}, i.e., unital composition algebras (for a basic introduction see \cite[Chap. 2]{ZSSS}), and also with the exceptional Jordan algebra, usually called the {\em Albert algebra} (see \cite{J68}). A classification of gradings on Hurwitz algebras was given in \cite{Eld98}. On the other hand, a classification of gradings on the Albert algebra over an algebraically closed field of characteristic different from $2$ was obtainded in \cite{EK12a}. The reader may consult \cite[Chapters 4 and 5]{EKmon}.

\smallskip

Recall that a {\em composition algebra} is an algebra $C$ (not necessarily associative) with a quadratic form $n\colon C \to \FF$, called the {\em norm}, which is nondegenerate and multiplicative. The $8$-dimensional Hurwitz algebras are called {\em Cayley algebras} or {\em octonion algebras}. Since our field $\FF$ is algebraically closed, there is only one Cayley algebra up to isomorphism, which will be denoted by $\cC$. The {\em standard involution} of a Hurwitz algebra is given by $x \mapsto \bar x := n(x,1)1 - x$, where $n(x,y)=n(x+y)-n(x)-n(y)$.

 Recall that, as $\chr\FF\neq2$, there are two fine gradings up to equivalence on $\cC$, which are a $\ZZ^2$-grading, also called {\em Cartan grading}, and a $\ZZ_2^3$-grading. (In case that $\chr\FF=2$, the Cartan grading is the only fine grading up to equivalence on $\cC$.) There is a homogeneous basis associated to the Cartan grading (often called {\em good basis} or {\em canonical basis} by other autors), which will be referred to as {\em Cartan basis}, and the product for this basis is given by the next table:
\begin{center} \label{cartanproduct}
\begin{tabular}{c||cc|ccc|ccc|}
 & $e_1$ & $e_2$ & $u_1$ & $u_2$ & $u_3$ & $v_1$ & $v_2$ & $v_3$ \\
 \hline\hline
 $e_1$ & $e_1$ & 0 & $u_1$ & $u_2$ & $u_3$ & 0 & 0 & 0 \\
 $e_2$ & 0 & $e_2$ & 0 & 0 & 0 & $v_1$ & $v_2$ & $v_3$ \\
 \hline
 $u_1$ & 0 & $u_1$ & 0 & $v_3$ & $-v_2$ & $-e_1$ & 0 & 0 \\
 $u_2$ & 0 & $u_2$ & $-v_3$ & 0 & $v_1$ & 0 & $-e_1$ & 0 \\
 $u_3$ & 0 & $u_3$ & $v_2$ & $-v_1$ & 0 & 0 & 0 & $-e_1$ \\
 \hline
 $v_1$ & $v_1$ & 0 & $-e_2$ & 0 & 0 & 0 & $u_3$ & $-u_2$ \\
 $v_2$ & $v_2$ & 0 & 0 & $-e_2$ & 0 & $-u_3$ & 0 & $u_1$ \\
 $v_3$ & $v_3$ & 0 & 0 & 0 & $-e_2$ & $u_2$ & $-u_1$ & 0 \\
 \hline
\end{tabular}
\end{center}
The decomposition $\cC = \FF e_1 \oplus \FF e_2 \oplus U \oplus V$, with $U=\lspan\{u_1,u_2,u_3\}$ and $V=\lspan\{v_1,v_2,v_3\}$, is the {\em Peirce decomposition} associated to the idempotents $e_1$ and $e_2$. Note that the elements of the Cartan basis are isotropic for the norm, and paired as follows: $n(e_1,e_2) = 1 = n(u_i,v_i)$, $n(e_i, u_j) = n(e_i, v_j) = n(u_j, v_k) = 0$ for any $i=1,2$ and $j\neq k = 1,2,3$, and $n(u_i, u_j) = n(v_i, v_j) = 0$ for any $i,j=1,2,3$. The degree of the Cartan grading is defined by $\deg(e_1)=0=\deg(e_2)$, $\deg(u_1)=(1,0)=-\deg(v_1)$, $\deg(u_2)=(0,1)=-\deg(v_2)$ and $\deg(v_3)=(1,1)=-\deg(u_3)$.

On the other hand, any homogeneous orthonormal basis $\{x_i\}_{i=1}^8$ of $\cC$ associated to the $\ZZ_2^3$-grading (this grading only exists if $\chr\FF\neq2$ and can be obtained applying three times the Cayley-Dickson doubling process, see \cite{Eld98} for the construction), will be called a {\em Cayley-Dickson basis} of $\cC$, and we will usually assume that $x_1=1$. Note that, if $\{x_i\}_{i=1}^8$ and $\{y_i\}_{i=1}^8$ are Cayley-Dickson bases, then there exist some $\varphi \in \Aut\cC$, signs $s_i \in \{\pm1\}$ and permutation $\sigma$ of the indices such that $\varphi(x_i) = s_i y_{\sigma(i)}$.

Also, recall that $\cC$ with the new product $x*y := \bar x \bar y$ is called the {\em para-Cayley algebra}, which is sometimes denoted by $\bar \cC$.

\smallskip

The Albert algebra $\alb$ is defined as the hermitian $3\times3$-matrices over $\cC$, that is 
\begin{align*}\alb=\cH_3(\cC,{}^{-}) 
=& \left\{ \left( \begin{array}{ccc} \alpha_1 & \bar a_3 &  a_2 \\ a_3 & \alpha_2 & \bar a_1 \\ \bar a_2 & a_1 & \alpha_3 \end{array} \right) \med \alpha_i\in \FF, \; a_i\in\cC \right\} \\
=& \FF E_1 \oplus \FF E_2 \oplus \FF E_3 \oplus \iota_1(\cC) \oplus \iota_2(\cC) \oplus \iota_3(\cC),
\end{align*}
where \begin{align*}
E_1&=\left( \begin{array}{ccc} 1 &  0 &  0 \\ 0 & 0 & 0 \\ 0 & 0 & 0 \end{array} \right), &
E_2&=\left( \begin{array}{ccc} 0 &  0 &  0 \\ 0 & 1 & 0 \\ 0 & 0 & 0 \end{array} \right), &
E_3&=\left( \begin{array}{ccc} 0 &  0 &  0 \\ 0 & 0 & 0 \\ 0 & 0 & 1 \end{array} \right), \\
\iota_1(a)&= 2 \left( \begin{array}{ccc} 0 &  0 &  0 \\ 0 & 0 & \bar a \\ 0 & a & 0 \end{array} \right), &
\iota_2(a)&= 2 \left( \begin{array}{ccc} 0 &  0 &  a \\ 0 & 0 & 0 \\ \bar a & 0 & 0 \end{array} \right), &
\iota_3(a)&= 2 \left( \begin{array}{ccc} 0 &  \bar a &  0 \\ a & 0 & 0 \\ 0 & 0 & 0 \end{array} \right),
\end{align*}
for all $a\in\cC$, with multiplication $XY=\frac{1}{2}(X\cdot Y+Y\cdot X)$, where $X\cdot Y$ denotes the usual product of matrices. Then, $E_i$ are orthogonal idempotents with $\sum E_i=1$, and
\begin{equation} \begin{aligned}
& E_i\iota_i(a)=0, \quad E_{i+1}\iota_i(a)=\frac{1}{2}\iota_i(a)=E_{i+2}\iota_i(a), \\
& \iota_i(a)\iota_{i+1}(b)=\iota_{i+2}(\bar a\bar b), \quad \iota_i(a)\iota_i(b)=2n(a,b)(E_{i+1}+E_{i+2}),
\end{aligned} \end{equation}
where the subscripts are taken modulo 3.

Any element $X = \sum_{i=1}^3(\alpha_iE_i + \iota_i(a_i))$ of the Albert algebra satisfies the degree $3$ equation
\begin{equation} 
X^3 - T(X)X^2 + S(X)X - N(X)1 = 0, 
\end{equation}
where the linear form $T$ (called the \emph{trace}), the quadratic form $S$, and the cubic form $N$ (called the \emph{norm}) are given by:
\begin{equation} \begin{aligned}
& T(X) = \alpha_1 + \alpha_2 + \alpha_3, \\
& S(X) = \frac{1}{2}(T(X)^2 - T(X^2)) = \sum_{i=1}^3(\alpha_{i+1}\alpha_{i+2} -4 n(a_i)), \\ 
& N(X) = \alpha_1\alpha_2\alpha_3 + 8n(a_1, \bar a_2 \bar a_3) -4\sum_{i=1}^3 \alpha_i n(a_i).
\end{aligned} \end{equation}
The Albert algebra $\alb$ has a {\em Freudenthal adjoint} given by $x^\#:=x^2-T(x)x+S(x)1$, with linearization $x\times y:=(x+y)^\#-x^\#-y^\#$.

\smallskip

There are four fine gradings up to equivalence on $\alb$, with universal groups: $\ZZ^4$ (the Cartan grading), $\ZZ_2^5$, $\ZZ\times\ZZ_2^3$ and $\ZZ_3^3$ (the last one does not occur if $\chr\FF=3$). We recall the construction of these gradings now.

\medskip

Let $B_1 = \{e_i, u_j, v_j \med i=1,2, \; j=1,2,3 \}$ be a Cartan basis of $\cC$. We will call the basis $\{E_i, \iota_i(x) \med x\in B_1, \, i=1,2,3 \}$ a {\em Cartan basis} of $\alb$. The $\ZZ^4$-grading on $\alb$ is defined using this basis as follows:
\begin{equation}\begin{aligned}
& \deg E_i = 0, \quad \deg \iota_i(e_1)=a_i=-\deg\iota_i(e_2), \\
& \deg \iota_i(u_i)=g_i=-\deg\iota_i(v_i), \\
& \deg \iota_i(u_{i+1}) = a_{i+2} + g_{i+1} = -\deg \iota_i(v_{i+1}), \\  
& \deg \iota_i(u_{i+2}) = -a_{i+1} + g_{i+2} = -\deg \iota_i(v_{i+2}),
\end{aligned}\end{equation}
for $i=1,2,3$ mod $3$, and where
\begin{equation*}\begin{aligned}
a_1 = (1,0,0,0), \quad a_2 = (0,1,0,0), \quad a_3 = (-1,-1,0,0), \\
g_1 = (0,0,1,0), \quad g_2 = (0,0,0,1), \quad g_3 = (0,0,-1,-1).
\end{aligned}\end{equation*}

\medskip

Let now $B_2 = \{x_i\}_{i=1}^8$ be a Cayley-Dickson basis of $\cC$ with degree map $\deg_\cC$. The $\ZZ_2^5$-grading on $\alb$ is constructed by imposing that the elements of the basis $\{E_i, \iota_i(x) \med x\in B_2, \; i=1,2,3 \}$ are homogeneous with:
\begin{equation}\begin{aligned}
& \deg E_i = 0, \quad \deg \iota_1(x) = (\bar 1, \bar 0, \deg_\cC x), \\
& \deg \iota_2(x) = (\bar 0, \bar 1, \deg_\cC x), \quad \deg \iota_3(x) = (\bar 1, \bar 1, \deg_\cC x).
\end{aligned}\end{equation}

\medskip

Take $\bi\in\FF$ with $\bi^2=-1$. The $\ZZ\times\ZZ_2^3$-grading on $\alb$ is constructed using the following elements of $\alb$:
\begin{equation} \label{notGradAlbertZxZ23} \begin{aligned}
& E=E_1, \quad \widetilde{E}=1-E=E_2+E_3, \quad S_\pm=E_3-E_2\pm\frac{\bi}{2}\iota_1(1), \\ 
& \nu(a)=\bi\iota_1(a),  \quad \nu_\pm(x)=\iota_2(x)\pm\bi\iota_3(\bar x),
\end{aligned}\end{equation}
for any $a\in\cC_0 = \{ y\in\cC \med \text{tr}(y)=0 \}$ and $x\in\cC$, where the product is:
\begin{equation}\begin{aligned}
& E\widetilde{E}=0, \quad ES_\pm=0, \quad E\nu(a)=0, \quad E\nu_\pm(x)=\frac{1}{2}\nu_\pm(x), \\
& \widetilde{E}S_\pm=S_\pm, \quad \widetilde{E}\nu(a)=\nu(a), \quad \widetilde{E}\nu_\pm(x)=\frac{1}{2}\nu_\pm(x), \\
& S_\pm S_\pm=0, \quad S_+S_-=2\widetilde{E}, \quad S_\pm\nu(a)=0, \\
& S_\pm\nu_\mp(x)=\nu_\pm(x), \quad S_\pm\nu_\pm(x)=0, \\
& \nu(a)\nu(b)=-2n(a,b)\widetilde{E}, \quad \nu(a)\nu_\pm(x)=\pm\nu_\pm(xa),\\
& \nu_\pm(x)\nu_\pm(y)=2n(x,y)S_\pm, \quad \nu_+(x)\nu_-(y)=2n(x,y)(2E+\widetilde{E})-\nu(\bar xy-\bar yx),
\end{aligned}\end{equation}
for any $x,y \in\cC$ and $a,b\in\cC_0$.

Fix a Cayley-Dickson basis $B_2$ of $\cC$, with degree map $\deg_\cC$. The degree map of the $\ZZ\times\ZZ_2^3$-grading is given by:
\begin{equation} \label{gradAlbertZxZ23} \begin{aligned}
& \deg S_\pm = (\pm 2, \bar0, \bar0, \bar0), \quad \deg \nu_\pm(x) = (\pm1, \deg_\cC x), \\
& \deg E = 0 = \deg \widetilde{E}, \quad \deg \nu(a) = (0, \deg_\cC a),
\end{aligned}\end{equation}
for homogeneous $x\in \cC$ and $a\in \cC_0$.

\medskip

Finally, we recall the construction of the $\ZZ_3^3$-grading on $\alb$. Let $\omega$ be a cubic root of $1$ in $\FF$. Consider the order $3$ automorphism $\tau$ of $\cC$ given by $e_i \mapsto e_i$ for $i=1,2$ and $u_j \mapsto u_{j+1}$, $v_j \mapsto v_{j+1}$ for $j=1,2,3$. Write $\tilde{\iota}_i(x) = \iota_i(\tau^i(x))$ for $i=1,2,3$, and $x\in\cC$. Then, the $\ZZ_3^3$-grading is determined by the homogeneous generators
\begin{equation*}
X_1 = \sum_{i=1}^3 \tilde{\iota}_i(e_1), \quad X_2 = \sum_{i=1}^3 \tilde{\iota}_i(u_1), \quad X_3 = \sum_{i=1}^3 \omega^{-i} E_i,
\end{equation*}
with degrees
\begin{equation} \label{Z33grading}
\deg X_1  = (\bar1,\bar0,\bar0), \quad \deg X_2  = (\bar0,\bar1,\bar0), \quad \deg X_3  = (\bar0,\bar0,\bar1).
\end{equation}


\section{Generalities about gradings}

\subsection{Gradings on Jordan pairs and triple systems}

Let $\cV=(\cV^+,\cV^-)$ be a Jordan pair and let $S$ be a set. Given two decompositions of vector spaces $\Gamma^\sigma: \cV^\sigma=\bigoplus_{s\in S}\cV^\sigma_s$, we will say that $\Gamma=(\Gamma^+,\Gamma^-)$ is an $S$-{\em grading} on $\cV$ if for any $s_1,s_2,s_3\in S$ there is $s\in S$ such that $\{\cV^\sigma_{s_1},\cV^{-\sigma}_{s_2},\cV^\sigma_{s_3}\}\subseteq \cV^\sigma_s$ for all $\sigma\in\{+,-\}$. In this case, we also say that $\Gamma$ is a {\em set grading} on $\cV$. The set $\supp\Gamma=\supp\Gamma^+\cup\supp\Gamma^-$ is called the {\em support} of the grading, where $\supp\Gamma^\sigma:=\{s\in S\med\cV^\sigma_s\neq0\}$. The vector space $\cV_s^+\oplus\cV_s^-$ is called the {\em homogeneous component of degree} $s$. If $0\neq x\in \cV^\sigma_s$, we say that $x$ is {\em homogeneous of degree} $s$, and we write $\deg(x)=s$. 

Let $\Gamma^\sigma: \cV^\sigma=\bigoplus_{s\in S}\cV^\sigma_s$ and $\widetilde{\Gamma}^\sigma: \cV^\sigma=\bigoplus_{t\in T}\cV^\sigma_t$ be two set gradings on a Jordan pair $\cV$. We say that $\Gamma$ is a {\em refinement} of $\widetilde{\Gamma}$, or that $\widetilde{\Gamma}$ is a {\em coarsening} of $\Gamma$, if for any $s\in S$ there is $t\in T$ such that $\cV^\sigma_s\subseteq \cV^\sigma_t$ for $\sigma\in\{+,-\}$. The refinement is said to be {\em proper} if some containment $\cV^\sigma_s\subseteq \cV^\sigma_t$ is strict. A set grading with no proper refinement is called a {\em fine} grading.

\medskip

Let $G$ be an abelian group. Given two decompositions $\Gamma^\sigma: \cV^\sigma=\bigoplus_{g \in G}\cV^\sigma_g$, we will say that $\Gamma=(\Gamma^+,\Gamma^-)$ is a $G$-{\em grading} on $\cV$ if $\{\cV^\sigma_g,\cV^{-\sigma}_h,\cV^\sigma_k\}\subseteq \cV^\sigma_{g+h+k}$ for any $g,h,k\in G$ and $\sigma\in\{+,-\}$. A set grading by a set $S$ on $\cV$ will be called {\em realizable as a group grading}, or a {\em group grading}, if $S$ is contained in some abelian group $G$ such that the subspaces $\cV^\sigma_g := \cV^\sigma_s$ for $g=s\in S$ and $\cV^\sigma_g := 0$ for $g\notin S$ define a $G$-grading. In this paper, by a {\em grading} we will mean a group grading. In particular, a grading is called {\em fine} if it has no proper refinements in the class of group gradings. We will not consider gradings by nonabelian groups.

Let $\Gamma$ be a set grading on $\cV$. The \emph{universal group} of $\Gamma$, which is denoted by $\Univ(\Gamma)$, is defined as the abelian group generated by $\supp\Gamma$ with the relations $s_1+s_2+s_3=s$ when $0\neq\{\cV^\sigma_{s_1},\cV^{-\sigma}_{s_2},\cV^\sigma_{s_3}\}\subseteq \cV^\sigma_s$ for some $\sigma\in\{+,-\}$. Note that this defines a group grading $\Gamma'$ by $\Univ(\Gamma)$, which is a coarsening of $\Gamma$, and it is clear that $\Gamma$ and $\Gamma'$ have the same homogeneous components if and only if $\Gamma$ is realizable as a group grading. 

Suppose that a group grading $\Gamma$ on $\cV$ admits a realization as a $G_0$-grading for some abelian group $G_0$. Then $G_0$ is isomorphic to the universal group of $\Gamma$ if and only if for any other realization of $\Gamma$ as a $G$-grading there is a unique homomorphism $G_0 \rightarrow G$ that restricts to the identity on $\supp\Gamma$.

Given a $G$-grading $\Gamma$ on $\cV$ and a group homomorphism $\alpha \colon G\rightarrow H$, we define the \emph{induced} $H$-grading ${}^\alpha\Gamma$ determined by setting $\cV_h^\sigma := \bigoplus_{g\in\alpha^{-1}(h)}\cV^\sigma_g$. Then ${}^\alpha\Gamma$ is a coarsening of $\Gamma$. In case $\Gamma$ is given by its universal group, i.e., $G = \Univ(\Gamma)$, then any coarsening of $\Gamma$ (in the class of group gradings) is of the form ${}^\alpha\Gamma$ for some homomorphism $\alpha \colon \Univ(\Gamma) \rightarrow H$.

\begin{example}
Consider the Jordan pair $\cV = (\FF,\FF)$ associated to the Jordan algebra $J=\FF$, i.e., with products $U_x(y) = x^2y$ for $x,y\in\FF$. Then, the trivial grading on $\cV$ has universal group $\ZZ_2$ and support $\{\bar1\}$. On the other hand, for a nonzero Jordan pair with zero product, the trivial grading has universal group $\ZZ$ and support $\{1\}$.
\end{example}

Let $\Gamma_1^\sigma: \cV^\sigma=\bigoplus_{s\in S}\cV^\sigma_s$ and $\Gamma_2^\sigma: \cW^\sigma=\bigoplus_{t\in T}\cW^\sigma_t$ be graded Jordan pairs. An isomorphism of Jordan pairs $\varphi=(\varphi^+,\varphi^-) \colon \cV\rightarrow \cW$ is said to be an {\em equivalence} of graded Jordan pairs if, for each $s\in S$, there is (a unique) $t\in T$ such that $\varphi^\sigma(\cV^\sigma_s)=\cW^\sigma_t$ for all $\sigma\in\{+,-\}$. In that case, $\Gamma_1$ and $\Gamma_2$ are said to be {\em equivalent}.

Given a $G$-grading $\Gamma$ on $\cV$, the \emph{automorphism group of $\Gamma$}, $\Aut(\Gamma)$, is the group of self-equivalences of $\Gamma$. The \emph{stabilizer of $\Gamma$}, $\Stab(\Gamma)$, is the group of $G$-automorphisms of $\Gamma$, i.e., the group of automorphisms of $\cV$ that fix the homogeneous components. The \emph{diagonal group of $\Gamma$}, $\Diag(\Gamma)$, is the subgroup of $\Stab(\Gamma)$ consisting of the automorphisms that act by multiplication by a nonzero scalar on each homogeneous component. The \emph{Weyl group of $\Gamma$} is the quotient group $\cW(\Gamma)=\Aut(\Gamma)/\Stab(\Gamma)$, which can be regarded as a subgroup of $\Sym(\supp\Gamma)$ and also of $\Aut(\Univ(\Gamma))$.

\begin{proposition} \label{finegradingproperty} 
Let $\Gamma$ be a fine grading on a Jordan pair $\cV$ and let $G$ be its universal group. Then, there is a group homomorphism $\pi \colon G \rightarrow \ZZ$ such that $\pi(g)=\sigma1$ if $\cV^\sigma_g\neq0$ for some $\sigma\in\{+,-\}$. In particular, $\supp\Gamma^+$ and $\supp\Gamma^-$ are disjoint.
\end{proposition}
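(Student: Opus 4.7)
The plan is to produce the homomorphism $\pi$ by exhibiting a $G \times \ZZ$-refinement of $\Gamma$ that mirrors the built-in ``$\pm$-parity'' of a Jordan pair, and then to invoke fineness. Concretely, for each $(g, n) \in G \times \ZZ$, set
\begin{equation*}
\widetilde{\cV}^\sigma_{(g,n)} := \begin{cases} \cV^\sigma_g & \text{if } n = \sigma 1, \\ 0 & \text{otherwise,} \end{cases}
\end{equation*}
so that $\widetilde{\Gamma}^\sigma \colon \cV^\sigma = \bigoplus_{(g,n)} \widetilde{\cV}^\sigma_{(g,n)}$. I would first check that $\widetilde{\Gamma}$ is a valid $G \times \ZZ$-grading of the Jordan pair: if $x \in \cV^\sigma_{g_1}$, $y \in \cV^{-\sigma}_{g_2}$, $z \in \cV^\sigma_{g_3}$, the sum of $\widetilde{\Gamma}$-degrees is $(g_1, \sigma) + (g_2, -\sigma) + (g_3, \sigma) = (g_1+g_2+g_3, \sigma)$, which is precisely the $\widetilde{\Gamma}$-degree of $\{x,y,z\} \in \cV^\sigma_{g_1+g_2+g_3}$.

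By construction $\widetilde{\Gamma}$ is a refinement of $\Gamma$, since every $\widetilde{\cV}^\sigma_{(g,n)}$ is contained in $\cV^\sigma_g$. Next I would argue that fineness forces $\supp \Gamma^+ \cap \supp \Gamma^- = \emptyset$: if some $g \in G$ satisfied $\cV^+_g \neq 0$ and $\cV^-_g \neq 0$, then at the index $(g, +1)$ one has $\widetilde{\cV}^+_{(g,+1)} = \cV^+_g$ but $\widetilde{\cV}^-_{(g,+1)} = 0 \subsetneq \cV^-_g$, so $\widetilde{\Gamma}$ would be a \emph{proper} refinement, contradicting the fineness of $\Gamma$.

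With disjoint supports in hand, the map $\pi$ is defined directly from the presentation of $G = \Univ(\Gamma)$. Send each generator $g \in \supp \Gamma$ to $\sigma 1 \in \ZZ$, where $\sigma$ is the unique sign with $g \in \supp \Gamma^\sigma$ (well-defined by disjointness). To see that this extends to a group homomorphism $\pi \colon G \to \ZZ$, verify that it respects the defining relations of $\Univ(\Gamma)$: whenever $0 \neq \{\cV^\sigma_{s_1}, \cV^{-\sigma}_{s_2}, \cV^\sigma_{s_3}\} \subseteq \cV^\sigma_{s}$, one has $s_1, s_3 \in \supp \Gamma^\sigma$, $s_2 \in \supp \Gamma^{-\sigma}$, $s \in \supp \Gamma^\sigma$, and therefore $\pi(s_1) + \pi(s_2) + \pi(s_3) = \sigma - \sigma + \sigma = \sigma = \pi(s)$. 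The property $\pi(g) = \sigma 1$ when $\cV^\sigma_g \neq 0$ then holds by construction, and the disjointness of the supports is the ``in particular'' claim.

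The only genuine content is the construction and verification of $\widetilde{\Gamma}$; the rest is a bookkeeping argument using the universal property of $\Univ(\Gamma)$. The main thing to be careful about is matching the paper's definition of ``refinement'' (comparing components index-by-index in both $\sigma = +$ and $\sigma = -$), which is exactly what makes the parity argument yield a \emph{strict} containment and force disjoint supports.
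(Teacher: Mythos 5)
Your proof is correct and follows essentially the same route as the paper: both construct the parity refinement $\widetilde{\cV}^\sigma_{(g,\sigma 1)}=\cV^\sigma_g$, $\widetilde{\cV}^\sigma_{(g,-\sigma 1)}=0$ of $\Gamma$ by $G\times\ZZ$ and then invoke fineness. The only (harmless) difference is at the end: the paper extracts $\pi$ directly from the universal property of $\Univ(\Gamma)$ as the second component of the unique homomorphism $\phi\colon G\to G\times\ZZ$, whereas you first deduce disjointness of the supports and then define $\pi$ on the presentation of $\Univ(\Gamma)$ and verify the defining relations — both are valid.
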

\begin{proof}
Define a $G\times\ZZ$-grading $\Gamma'$ on $\cV$ by means of $\cV^\sigma_{(g,\sigma1)} := \cV^\sigma_g$ and $\cV^\sigma_{(g,-\sigma1)}:=0$. Then, $\Gamma'$ is a refinement of $\Gamma$. But $\Gamma$ is fine, so $\Gamma'$ and $\Gamma$ have the same homogeneous components. Since $G=\Univ(\Gamma)$, there is a (unique) homomorphism $\phi \colon G \rightarrow G\times\ZZ$ such that $\cV^\sigma_{\phi(g)} = \cV^\sigma_g$ for all $g\in G$ and $\sigma\in\{+,-\}$. Therefore, $\phi$ has the form $\phi(g)=(g,\pi(g))$ for some homomorphism  $\pi \colon G \rightarrow \ZZ$. By definition of $\Gamma'$, $\pi$ satisfies $\pi(g)=\sigma1$ if $\cV^\sigma_g\neq0$. In particular, $\supp\Gamma^+ = \pi^{-1}(1)$ and $\supp\Gamma^- = \pi^{-1}(-1)$ are disjoint.
\end{proof}

Let $\cT$ be a Jordan triple system and $S$ a set. Consider a decomposition $\Gamma:\cT=\bigoplus_{s\in S}\cT_s$. We call $\Gamma$ an {\em $S$-grading} if, for any $s_1,s_2,s_3\in S$, there is $s\in S$ such that $\{\cT_{s_1},\cT_{s_2},\cT_{s_3}\}\subseteq\cT_s$.

Let $G$ be an abelian group and consider a decomposition $\Gamma:\cT=\bigoplus_{g\in G}\cT_g$. We say that $\Gamma$ is a {\em $G$-grading} if $\{\cT_{g},\cT_{h},\cT_{k}\}\subseteq\cT_{g+h+k}$ for any $g, h, k\in G$. A set grading is said to be {\em realizable as a group grading}, or a {\em group grading}, if $S$ is contained in some abelian group $G$ such that the subspaces $\cT_g := \cT_s$ for $g=s\in S$ and $\cT_g := 0$ for $g\notin S$ define a $G$-grading.

The rest of definitions about gradings on Jordan triple systems are analogous to those given for graded Jordan pairs.

\begin{df} \label{homogeneousbilinearform}
Given a graded algebra $A$, a bilinear form $b\colon A \times A \to \FF$ will be called {\em homogeneous of degree $0$}, or simply \emph{homogeneous}, if we have $g+h=0$ whenever $b(A_g, A_h)\neq0$. (Analogous definition for a bilinear form on a graded Jordan triple system.) Similarly, given a graded Jordan pair $\cV$, a bilinear form $b\colon \cV^+\times\cV^- \rightarrow \FF$ will be called \emph{homogeneous} if we have $g+h=0$ whenever $b(\cV^+_g,\cV^-_h)\neq0$.
\end{df}

Let $J$ be a Jordan algebra. Consider its associated Jordan pair $\cV=(J,J)$ and Jordan triple system $\cT=J$. Then, any $G$-grading $\Gamma$ on $J$ is a $G$-grading on $\cT$. In the same way, any $G$-grading $\Gamma$ on $\cT$ (or on $J$) induces a $G$-grading on $\cV$, given by $(\Gamma,\Gamma)$.
We say that a $G$-grading $\widetilde{\Gamma}$ on $\cV$ is a $G$-grading on $J$ (respectively on $\cT$) when $\widetilde{\Gamma}$ equals $(\Gamma,\Gamma)$ for some $G$-grading $\Gamma$ on $J$ (respectively on $\cT$). If $\varphi=(\varphi^+,\varphi^-)\in\Aut(\cV)$, denote $\widehat{\varphi}:=(\varphi^-,\varphi^+)\in\Aut(\cV)$. Notice that $\widehat{\varphi_1\varphi_2}=\widehat{\varphi}_1\widehat{\varphi}_2$ and $\widehat{1}_\cV=1_\cV$, so $\; {}_{\widehat{}} \in\Aut(\Aut(\cV))$. Moreover, $\widehat{\widehat{\varphi}}=\varphi$ and $\Aut(\cT)=\{\varphi\in\Aut(\cV) \med \widehat{\varphi}=\varphi\}$. We can consider, with natural identifications, that $\Aut(J)\leq\Aut(\cT)\leq\Aut(\cV)$. 

Let $\Gamma_J$ be a $G$-grading on a Jordan algebra $J$ and $\Gamma_\cT$ the same $G$-grading on the Jordan triple system $\cT=J$. Since $\Aut(J)\leq\Aut(\cT)$, we have $\Aut(\Gamma_J)\leq\Aut(\Gamma_\cT)$ and $\Stab(\Gamma_J)\leq\Stab(\Gamma_\cT)$. Thus, 
\begin{align*}
\cW(\Gamma_J) & = \Aut(\Gamma_J) / \Stab(\Gamma_J) = \Aut(\Gamma_J) / (\Stab(\Gamma_\cT) \cap \Aut(\Gamma_J)) \\
& \cong (\Aut(\Gamma_J) \cdot \Stab(\Gamma_\cT)) / \Stab(\Gamma_\cT) \leq \Aut(\Gamma_\cT) / \Stab(\Gamma_\cT) = \cW(\Gamma_\cT).
\end{align*}
In the same manner, if $\Gamma_\cT$ is $G$-grading on a Jordan triple system $\cT$ and $\Gamma_\cV = (\Gamma_\cT, \Gamma_\cT)$ is the induced $G$-grading on the associated Jordan pair $\cV$, we have natural identifications: $\Aut(\Gamma_\cT)\leq\Aut(\Gamma_\cV)$, $\Stab(\Gamma_\cT)\leq\Stab(\Gamma_\cV)$ and $\cW(\Gamma_\cT)\leq\cW(\Gamma_\cV)$.

\medskip

Let $\Gamma$ be a $G$-grading on a Jordan pair $\cV$ with degree $\deg$. Fix $g\in G$. For any homogeneous elements $x^+\in \cV^+$ and $y^-\in \cV^-$, set $\deg_g(x^+):=\deg(x^+)+g$, $\deg_g(y^-):=\deg(y^-)-g$. This defines a new $G$-grading, which will be denoted by $\Gamma^{[g]}$ and called the {\em $g$-shift of $\Gamma$}. Note that, although $\Gamma$ and $\Gamma^{[g]}$ may fail to be equivalent (because the shift may collapse or split a homogeneous subspace of $\cV^+$ with another of $\cV^-$), the intersection of their homogeneous components with $\cV^\sigma$ coincide for each $\sigma$. It is clear that $(\Gamma^{[g]})^{[h]}=\Gamma^{[g+h]}$. Similarly, if $\Gamma$ is a $G$-grading on a Jordan triple system $\cT$ and $g\in G$ has order $2$, we can define the {\em $g$-shift} $\Gamma^{[g]}$ with the new degree $\deg_g(x) := \deg(x) + g$.

\bigskip

A nice introduction to affine group schemes, including the relation between gradings on algebras and their automorphism group schemes, can be found in \cite[Section 1.4 and Appendix A]{EKmon}; note that these results also hold for Jordan pairs, and we will use them in this section without mentioning.

\begin{df}
If $A$ is an algebra and $R$ is an associative commutative unital $\FF$-algebra, we will denote the $R$-algebra $A \otimes R$ by $A_R$. Denote by $\AAut(A)$ the automorphism group scheme of $A$, so that $\AAut(A)(R) = \Aut_R(A_R)$. Recall that $G$-gradings on $A$ correspond to morphisms $G^D \to \AAut(A)$. The morphism corresponding to a grading $\Gamma$ will be denoted by $\eta_\Gamma$. For any $R$-point $f\in G^D(R)$, $f$ is a group homomorphism $G\rightarrow R^\times$, and $\eta_\Gamma(f)$ is defined by
\[
\eta_\Gamma(f)(x_g\otimes r)=x_g\otimes f(g)r,
\]
for any $g\in G$, $x_g\in A_g$ and $r\in R$. We will use similar notations for Jordan pairs and triple systems. 
\end{df}

Note that the homogeneous components of a grading $\Gamma$ are, in a way, the eigenspaces of the action of $G^D$ via $\eta_\Gamma$.

The following result is a generalization of \cite[Theorem~3.7(a), Eq.~(1)]{N85} to the case of affine group schemes and $\chr\FF\neq2$. 

\begin{theorem} \label{triplesystemautom}
Let $\FF$ be an arbitrary field of characteristic not $2$. Let $J$ be a finite-dimensional central simple Jordan $\FF$-algebra with associated Jordan triple system $\cT$. Then, there is an isomorphism of affine group schemes $\AAut(\cT) \simeq \AAut(J) \times \mmu_2$.
\end{theorem}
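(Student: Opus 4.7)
The plan is to define the natural morphism $\Phi \colon \AAut(J) \times \mmu_2 \to \AAut(\cT)$ by $(\varphi,\epsilon) \mapsto \epsilon\varphi$ on $R$-points, and then to check that it is bijective on $R$-points for every commutative associative unital $\FF$-algebra $R$. The morphism is well-defined because $U_{\epsilon x} = \epsilon^2 U_x = U_x$ for $\epsilon \in \mmu_2(R)$, so scaling by $\epsilon$ preserves the triple product whenever $\varphi$ is a Jordan algebra automorphism. Injectivity on $R$-points is then trivial: if $\epsilon\varphi = \mathrm{id}_{J_R}$, evaluating at $1$ forces $\epsilon = 1$ and hence $\varphi = \mathrm{id}_{J_R}$.

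For surjectivity I would start with $\psi \in \Aut_R(\cT_R)$ and set $e := \psi(1) \in J_R$. From $\psi(y) = \psi(U_1 y) = U_e\psi(y)$ one deduces $U_e = \mathrm{id}_{J_R}$, whence $e^2 = U_e(1) = 1$, and the fundamental identity $U_e = 2L_e^2 - L_{e^2}$ yields $L_e^2 = \mathrm{id}_{J_R}$. I would then form the idempotent $f := (1+e)/2 \in J_R$ (legitimate since $\chr\FF\neq 2$) and consider its Peirce decomposition $J_R = J_R(f;0) \oplus J_R(f;1/2) \oplus J_R(f;1)$. Since $L_e = 2L_f - \mathrm{id}$, the operator $L_e$ acts as $-\mathrm{id}$, $0$, and $+\mathrm{id}$ on the three Peirce components respectively, so the condition $L_e^2 = \mathrm{id}$ forces $J_R(f;1/2) = 0$. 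Consequently $J_R$ splits as the direct sum of the ideals $J_R(f;0)$ and $J_R(f;1)$, and an easy verification using the Peirce product rules shows that the projection $L_f$ lies in the centroid of $J_R$.

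The main obstacle is to deduce from this that $f \in R\cdot 1$. For that I would invoke the central simplicity of $J$: since $J$ is a finite-dimensional central simple Jordan algebra over $\FF$, its centroid is $\FF$, and this property is preserved under arbitrary scalar extension, so the centroid of $J_R$ is $R\cdot 1$. Therefore $f = L_f(1) \in R\cdot 1$, and hence $e = 2f - 1 = \epsilon\cdot 1$ for a unique $\epsilon \in R$ with $\epsilon^2 = 1$, that is, $\epsilon \in \mmu_2(R)$. Setting $\theta := \epsilon^{-1}\psi$ yields an automorphism of $\cT_R$ that fixes $1$, and the recovery identity $2xy = \{x,1,y\} = (U_{x+y} - U_x - U_y)(1)$ for the Jordan product shows immediately that $\theta \in \Aut_R(J_R)$. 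Thus $\psi = \Phi(\theta,\epsilon)$, proving surjectivity. No step of the argument uses algebraic closedness of $\FF$, which accounts for the final assertion of the theorem.
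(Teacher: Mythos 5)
Your proof is correct, but it reaches the key conclusion $\psi(1)\in R^\times\cdot 1$ by a genuinely different route than the paper. The paper passes to the associated Lie triple system $\cT^-$ with product $[x,y,z]=\{x,y,z\}-\{y,x,z\}=-2(x,z,y)$, computes $Z(\cT^-)=Z(J)=\FF 1$ from the associator identities, observes that any $\psi\in\Aut_R(\cT_R)$ is also an automorphism of $(\cT^-)_R$ and so preserves $R1$, and then extracts $r^2=1$ from $\varphi(\{1,1,1\})=r^31$. You instead stay entirely inside the Jordan algebra: from $U_{\psi(1)}=\id$ you get $\psi(1)^2=1$ and $L_{\psi(1)}^2=\id$, the Peirce-$\tfrac12$ space of the idempotent $f=(1+\psi(1))/2$ dies, $L_f$ becomes a centroidal projection, and central simplicity (via base-change stability of the centroid, i.e.\ $M(J)=\End_\FF(J)$ for finite-dimensional central simple $J$) forces $f\in R\cdot 1$. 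Both arguments ultimately rest on the same input --- that the relevant ``center'' of $J_R$ is $R1$ for every $R$, which requires a flat base-change remark in either version --- so neither is more general; the paper's route is shorter, while yours avoids introducing the Lie triple system at the cost of invoking the Peirce decomposition over an arbitrary ring $R$ (valid since $\tfrac12\in\FF$) and the centroid fact. The remaining steps (well-definedness of $(\varphi,\epsilon)\mapsto\epsilon\varphi$ since $U_{\epsilon x}=U_x$ for $\epsilon^2=1$, injectivity by evaluating at $1$, and recovery of the bilinear product from $U_{x,y}(1)$ once $1$ is fixed) match the paper's closing argument, and indeed no step uses algebraic closedness.
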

\begin{proof}
Recall that the product of $\cT$ is given by $\{x,y,z\} := x(yz) + z(xy) - (xz)y$. Denote by $\cT^-$ the Lie triple system associated to $J$, that is, $\cT^- = J$ with the product $[x,y,z]:=\{x,y,z\}-\{y,x,z\}$. Then we have that $[x,y,z] = -2((xz)y-x(zy)) = -2(x,z,y)$. The center of $\cT^-$ is defined by $Z(\cT^-) := \{x\in J \med [x,J,J] = 0 \} = \{x\in J \med (x,J,J) = 0 \}$. From the identities $(x,y,z) = -(z,y,x)$ and $(x,y,z) + (y,z,x) + (z,x,y) = 0$ we obtain that
$Z(\cT^-) = \{x\in J \med (x,J,J) = (J,x,J) = (J,J,x) = 0 \} = Z(J) = \FF1$. In consequence, for each associative commutative unital $\FF$-algebra $R$ we have $Z((\cT^-)_R) = R1$. Note that for each $\varphi\in\Aut_R(\cT_R)$ we also have $\varphi\in\Aut_R((\cT^-)_R)$, and hence $\varphi(R1) = R1$. In particular, $\varphi(1) = r1$ for some $r\in R$. Since $\varphi$ is bijective, there is some $s\in R$ such that $1 = \varphi(s1) = s\varphi(1) = sr1$, which shows that $r\in R^\times$. On the other hand, we have $r1 = \varphi(1) = \varphi(\{1,1,1\}) = \{\varphi(1),\varphi(1),\varphi(1)\} = r^3 1$ with $r\in R^\times$, which implies that $r^2 = 1$, that is $r\in \mmu_2(R)$. 

Recall that the automorphisms of a Jordan algebra $J$ are exactly the automorphisms of the associated Jordan triple system $\cT_J$ that fix the unit $1$ of $J$. Indeed, if $f\in\Aut(\cT_J)$ with $f(1)=1$, then, since $\{x,1,z\} = xz$ for all $x,z\in J$, we have $f(xz) = f(\{x,1,z\}) = \{f(x), f(1), f(z)\} = \{f(x), 1, f(z)\} = f(x)f(z)$, hence $f\in\Aut(J)$.

Note that the map $\delta_r \colon x \mapsto rx$ is an order $2$ automorphism of $\cT_R$ and $\delta_r\varphi(1) = 1$, so that $\delta_r\varphi\in \Aut_R(J_R)$. Hence $\varphi = \delta_r \psi = \psi \delta_r$ with $\psi \in \Aut_R(J_R)$. We conclude that $\Aut_R(\cT_R) \cong \Aut_R(J_R) \times \mmu_2(R)$ for each $R$.
\end{proof}

\begin{corollary} \label{triplegrads} 
Let $J$ be a finite-dimensional central simple Jordan $\FF$-algebra with associated Jordan triple system $\cT$. Then, the map that sends a $G$-grading on $J$ to the same $G$-grading on $\cT$ gives a bijective correspondence from the equivalence classes of gradings on $J$ to the equivalence classes of gradings on $\cT$.
\end{corollary}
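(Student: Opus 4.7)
The plan is to exploit the correspondence between $G$-gradings on an algebra (or triple system) $A$ and morphisms of affine group schemes $G^D\to\AAut(A)$, combined with the splitting $\AAut(\cT) \simeq \AAut(J) \times \mmu_2$ provided by Theorem~\ref{triplesystemautom}. First I would observe that the forward map is well-defined on equivalence classes: the triple product of $\cT$ is expressed in terms of the Jordan product of $J$, so every $G$-grading on $J$ is automatically a $G$-grading on $\cT$, and an equivalence of gradings on $J$ is in particular an automorphism of $\cT$ that permutes homogeneous components.

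For injectivity, suppose $\Gamma_1,\Gamma_2$ are $G$-gradings on $J$ whose induced gradings on $\cT$ are equivalent via some $\varphi\in\Aut(\cT)$. By Theorem~\ref{triplesystemautom} applied to the base field $\FF$, $\varphi = \psi\,\delta_r$ for some $\psi\in\Aut(J)$ and $r\in\mmu_2(\FF)=\{\pm1\}$. Since $\delta_r$ is the scalar $r\cdot\id$ and preserves every subspace of $J$, the map $\psi$ sends each homogeneous component of $\Gamma_1$ to a homogeneous component of $\Gamma_2$; hence $\psi$ is an equivalence of $\Gamma_1$ and $\Gamma_2$ as gradings on $J$.

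For surjectivity, let $\Gamma$ be a $G$-grading on $\cT$ with associated morphism $\eta_\Gamma\colon G^D\to\AAut(\cT)$. Composing with the isomorphism of Theorem~\ref{triplesystemautom} produces a pair $(\eta_1,\eta_2)$ with $\eta_1\colon G^D\to\AAut(J)$ and $\eta_2\colon G^D\to\mmu_2$. The morphism $\eta_1$ corresponds to a $G$-grading $\Gamma_J\colon J=\bigoplus_{g\in G}J_g$, while (by Cartier duality for diagonalizable group schemes) $\eta_2$ corresponds to a unique element $g_0\in G$ with $2g_0=0$, acting on an $R$-point $f\colon G\to R^\times$ as multiplication by $f(g_0)$. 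Consequently $\eta_\Gamma(f)$ acts on $J_g\otimes R$ by $f(g+g_0)$, so the homogeneous component of $\Gamma$ of degree $h$ is $J_{h-g_0}$. The decomposition of $\cT$ underlying $\Gamma$ thus coincides with the decomposition induced by $\Gamma_J$, the only difference being a relabeling by the shift $g_0$. The identity map $\cT\to\cT$ is then an equivalence between $\Gamma$ and the grading on $\cT$ induced by $\Gamma_J$, proving surjectivity.

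The main obstacle I foresee is accounting correctly for the $\mmu_2$-factor: it contributes an involution $g_0\in G$ that shifts the grading, but this shift is harmless in the category of equivalence classes, since equivalence of gradings does not require the grading groups or the labeling to match. The scheme-theoretic formulation of Theorem~\ref{triplesystemautom} makes this precise without requiring any separate combinatorial analysis of shifts.
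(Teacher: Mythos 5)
Your proposal is correct and follows essentially the same route as the paper: both rest on the splitting $\AAut(\cT)\simeq\AAut(J)\times\mmu_2$ of Theorem~\ref{triplesystemautom} together with the correspondence between $G$-gradings and morphisms $G^D\to\AAut(\cdot)$, with the $\mmu_2$-factor contributing only a harmless order-two shift of degrees. Your write-up merely makes explicit (via Cartier duality and the injectivity/surjectivity split) the steps the paper's terser proof leaves implicit.
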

\begin{proof}
Let $\Gamma$ be a $G$-grading on $\cT$ and $\eta_\Gamma\colon G^D \to\AAut(\cT)$ its associated morphism. Consider the projection morphism $\pi\colon \AAut(J)\times\mmu_2 \to \AAut(J)$ and the isomorphism $f\colon\AAut(\cT) \to \AAut(J)\times\mmu_2$ of Theorem~\ref{triplesystemautom}. Also, note that the elements of $\mmu_2(R)$ are identified with the scalar automorphisms of $\cT_R$ of the form $r1$ with $r\in R^\times$ and $r^2 = 1$, which implies that the composition $\pi \circ f \circ \eta_\Gamma \colon G^D \to\AAut(J)$ determines the equivalence class of $\Gamma$. Then the result follows because the morphisms $G^D \to\AAut(J)$ are in correspondence with the equivalence classes of gradings on $J$.
\end{proof}

\begin{remark} 
Note that fine gradings on $\cT$ correspond to maximal quasitori of $\AAut(\cT)$, which are the direct product of a maximal quasitorus of $\AAut(J)$ and $\mmu_2$.
\end{remark}

\begin{corollary} \label{tripleWeyl} 
Let $J$ be a finite-dimensional central simple Jordan $\FF$-algebra with associated Jordan triple system $\cT$. Let $\Gamma_J$ be a $G$-grading on $J$ and $\Gamma_\cT$ the same $G$-grading on $\cT$. Then $\cW(\Gamma_\cT)=\cW(\Gamma_J)$.
\end{corollary}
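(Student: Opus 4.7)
The plan is to combine Theorem~\ref{triplesystemautom} with the inclusion $\cW(\Gamma_J)\leq\cW(\Gamma_\cT)$ that was already established earlier in the excerpt (in the discussion following the definition of the Weyl group). Since that inclusion gives one direction, the whole task reduces to showing surjectivity of the natural map $\cW(\Gamma_J)\hookrightarrow\cW(\Gamma_\cT)$, i.e., that every self-equivalence of $\Gamma_\cT$ can be represented, modulo $\Stab(\Gamma_\cT)$, by an automorphism of $J$.

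The key step is as follows. Take an arbitrary $\varphi\in\Aut(\Gamma_\cT)\subseteq\Aut(\cT)$. By Theorem~\ref{triplesystemautom} applied over $R=\FF$, we can decompose $\varphi=\delta_r\psi$, where $\psi\in\Aut(J)$ and $\delta_r$ is the scalar automorphism $x\mapsto rx$ with $r\in\mmu_2(\FF)=\{\pm 1\}$. Since $\delta_r$ acts as a nonzero scalar on every subspace of $\cT$, it belongs to $\Diag(\Gamma_\cT)\leq\Stab(\Gamma_\cT)$. Hence $\psi=\delta_r^{-1}\varphi$ permutes the homogeneous components of $\Gamma_\cT$ in exactly the same way as $\varphi$ does, so $\psi\in\Aut(\Gamma_\cT)$; and since the homogeneous components of $\Gamma_\cT$ and $\Gamma_J$ coincide, this means $\psi\in\Aut(\Gamma_J)$.

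Finally, the identity $\varphi\,\Stab(\Gamma_\cT)=\psi\,\Stab(\Gamma_\cT)$ shows that the class of $\varphi$ in $\cW(\Gamma_\cT)=\Aut(\Gamma_\cT)/\Stab(\Gamma_\cT)$ lies in the image of the natural map from $\cW(\Gamma_J)$. This gives the reverse inclusion and completes the proof. I do not expect any serious obstacle here: the structural content is entirely in Theorem~\ref{triplesystemautom}, and the only subtlety worth pointing out is that the scalar factor $\delta_r$ coming from $\mmu_2$ is automatically homogeneous of degree $0$ (indeed diagonal) for any grading on $\cT$, which is why it can be absorbed into $\Stab(\Gamma_\cT)$ without affecting the Weyl class.
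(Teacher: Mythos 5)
Your proof is correct and follows essentially the same route as the paper: the paper's proof invokes Theorem~\ref{triplesystemautom} to get $\Aut(\Gamma_\cT)\cong\Aut(\Gamma_J)\times\{\pm1\}$ and says the result follows, while you simply make explicit the (correct) reason why, namely that the $\mmu_2$-factor $\delta_r=\pm\id$ lies in $\Diag(\Gamma_\cT)\leq\Stab(\Gamma_\cT)$ and therefore does not change the Weyl class.
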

\begin{proof}
From Theorem~\ref{triplesystemautom} we know that $\Aut(\cT) \cong \Aut(J) \times \{\pm1\}$. Hence $\Aut(\Gamma_\cT) \cong \Aut(\Gamma_J) \times \{\pm1\}$ and the result follows.
\end{proof}

\begin{proposition} \label{unitytriples}
Let $J$ be a  Jordan $\FF$-algebra with unity $1$, and let $\cT$ be its associated Jordan triple system. Let $\Gamma$ be a $G$-grading on $\cT$. 
If $J$ is central simple, then $1$ is homogeneous. Moreover, if $G = \Univ(\Gamma)$ and $1$ is homogeneous, then $\deg(1)$ has order $2$.
\end{proposition}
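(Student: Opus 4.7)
I would first establish homogeneity of $1$ by invoking Theorem~\ref{triplesystemautom}. Since $J$ is central simple, that result gives $\AAut(\cT) \simeq \AAut(J) \times \mmu_2$, and for any associative commutative unital $\FF$-algebra $R$ every automorphism $\varphi \in \Aut_R(\cT_R)$ factors uniquely as $\psi\,\delta_r$, where $\psi \in \Aut_R(J_R)$ fixes $1 \otimes 1$ and $\delta_r$ is multiplication by $r \in \mmu_2(R)$. Applied to the morphism $\eta_\Gamma \colon G^D \to \AAut(\cT)$ encoding the grading, this yields $\eta_\Gamma(f)(1 \otimes 1) = r_f\,(1 \otimes 1)$ for every $R$-point $f \in G^D(R)$, and the assignment $f \mapsto r_f$ is a morphism of affine group schemes $G^D \to \mmu_2$. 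By Cartier duality this corresponds to a unique element $g_0 \in G$ with $2g_0 = 0$ such that $r_f = f(g_0)$; hence $\FF\cdot 1$ sits in the joint $g_0$-weight space of the $G^D$-action, which is exactly the homogeneous component $\cT_{g_0}$, so $1 \in \cT_{g_0}$.

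For the second assertion I would argue directly from the triple product. Using $\{x,y,z\} = x(yz) + z(xy) - (xz)y$, a one-line computation gives $\{1,1,1\} = 1 \neq 0$. Writing $g := \deg(1)$, the grading axiom forces $\{1,1,1\} \in \{\cT_g, \cT_g, \cT_g\} \subseteq \cT_{3g}$, while on the other hand $\{1,1,1\} = 1 \in \cT_g$. Since the decomposition $\cT = \bigoplus_h \cT_h$ is a direct sum and $1 \neq 0$, this forces $3g = g$, i.e., $2g = 0$, so $\deg(1)$ has order (dividing) $2$.

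The main obstacle is the bookkeeping in the first part: identifying $\FF\cdot 1$ as the $g_0$-weight space of the $G^D$-action requires invoking the standard dictionary between $G$-gradings and morphisms $G^D \to \AAut(\cdot)$ recalled above, together with the Cartier-dual correspondence between morphisms $G^D \to \mmu_2$ and homomorphisms $\ZZ/2\ZZ \to G$. The second part is then a near-immediate consequence of the grading axiom once $\{1,1,1\} = 1$ is written down; the hypothesis $G = \Univ(\Gamma)$ is not strictly needed for the relation $2\deg(1) = 0$ (the same argument works in any $G$), but it is the natural setting in which the degree of $1$ is canonically associated to the grading.
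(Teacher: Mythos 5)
Your first paragraph is correct and is essentially the paper's argument, made slightly more explicit: the paper simply observes that $\FF 1$ is $\AAut(\cT)$-invariant (since $\AAut(\cT)\simeq\AAut(J)\times\mmu_2$ and $1$ is fixed by $\AAut(J)$), hence invariant under the action of $G^D$ via $\eta_\Gamma$, so $1$ is homogeneous; your Cartier-duality bookkeeping $G^D\to\mmu_2\leftrightarrow g_0\in G$ with $2g_0=0$ is a fine way to spell this out.

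The second part, however, has a genuine gap. The statement asserts that $\deg(1)$ has order \emph{exactly} $2$, i.e.\ also that $\deg(1)\neq 0$, and your argument only yields $2\deg(1)=0$ (order dividing $2$). Your parenthetical claim that the hypothesis $G=\Univ(\Gamma)$ ``is not strictly needed'' is precisely backwards: that hypothesis is what rules out $\deg(1)=0$, and without it the conclusion is false --- realize the trivial grading on $\cT$ over the trivial group $G=\{0\}$, so that $1$ is homogeneous of degree $0$. The paper closes this gap as follows: the trivial grading on $\cT$ has universal group $\ZZ_2$ with support $\{\bar1\}$ (the single support element $s$ satisfies the defining relation $s+s+s=s$). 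Since $G=\Univ(\Gamma)$, this trivial $\ZZ_2$-grading, being a coarsening of $\Gamma$, is of the form ${}^\alpha\Gamma$ for an epimorphism $\alpha\colon \Univ(\Gamma)\to\ZZ_2$ which sends every element of $\supp\Gamma$ to $\bar1$; in particular $\alpha(\deg(1))=\bar1\neq\bar0$, so $\deg(1)\neq 0$. Combined with $2\deg(1)=0$ (which you do establish, and which the paper gets from $U_1(1)=1$), this gives order exactly $2$. You should add this step and retract the remark that the universal-group hypothesis is dispensable.
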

\begin{proof}
We know that $1$ is invariant under $\AAut(J)$. Hence, if $J$ is central simple, $\FF1$ is invariant under $\AAut(\cT) = \AAut(J)\times\mmu_2$, and also under $G^D$ for any $G$-grading (where $G^D$ acts via the morphism $\eta_\Gamma\colon G^D \to \AAut(\cT)$ producing the grading). In consequence, $1$ is homogeneous.

Suppose now that $G = \Univ(\Gamma)$ and that $1$ is homogeneous. Note that the trivial grading on $\cT$ has universal group $\ZZ_2$ and support $\{\bar1\}$. Since $G = \Univ(\Gamma)$, the trivial grading is induced from $\Gamma$ by some epimorphism $\varphi \colon \Univ(\Gamma) \to \ZZ_2$. Since $\varphi$ sends all elements of the support to $\bar1$, $\deg(1)$ has at least order $2$. On the other hand, $U_1(1) = 1$ implies that $2\deg(1) = 0$, and we can conclude that $\deg(1)$ has order $2$.
\end{proof}

\begin{remark}
Given a unital Jordan algebra $J$ with associated Jordan triple system $\cT$ and a grading $\Gamma$ on $\cT$, it is not true in general that $1$ is homogeneous. For example, take $J = \cT = \FF \times \FF$ and consider the $\ZZ_2^2$-grading on $\cT$ given by $\cT_{(\bar1,\bar0)} = \FF \times 0$ and $\cT_{(\bar0,\bar1)} = 0\times \FF$.
\end{remark}

\begin{proposition}\label{regulargradspairs} 
Let $J$ be a Jordan $\FF$-algebra with unity $1$, and $G$ an abelian group. Consider the associated Jordan pair $\cV=(J,J)$.
If $\Gamma$ is a set grading on $\cV$ such that $1^+$ (or $1^-$) is homogeneous, then the restriction of $\Gamma$ to $J=\cV^+$ induces a set grading on $J$. 
If $\Gamma$ is a $G$-grading on $\cV$ such that $1^+$ (or $1^-$) is homogeneous, then the restriction of the shift $\Gamma^{[g]}$, with $g=-\deg(1^+)$, to $J=\cV^+$ induces a $G$-grading $\Gamma_J$ on $J$. Moreover, if $G=\Univ(\Gamma)$ then the universal group $\Univ(\Gamma_J)$ is isomorphic to the subgroup of $\Univ(\Gamma)$ generated by  $\supp \Gamma^{[g]}$, and if in addition $\Gamma$ is fine we also have  that $\Univ(\Gamma)$ is isomorphic to $\Univ(\Gamma_J) \times \ZZ$.
\end{proposition}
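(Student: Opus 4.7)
My plan is first to establish the symmetry that ``$1^+$ homogeneous of degree $d$'' is equivalent to ``$1^-$ homogeneous of degree $-d$'' in any $G$-grading on $\cV$. The key observation is that $Q^+(1^+)\colon \cV^-\to\cV^+$ coincides with the identity map $U_1 = \id_J$; hence in the graded setting one has the containments $\cV^-_h \subseteq \cV^+_{2d+h}$, which a dimension count (summing over $h$, both sides totalling $\dim J$) forces to be equalities, and taking $h=-d$ gives $1\in\cV^-_{-d}$. With both $1^\pm$ homogeneous, the identity $xy=\frac{1}{2}\{x,1^-,y\}$ for $x,y\in J=\cV^+$ shows immediately that the Jordan product respects homogeneity, yielding the set grading statement. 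For the group grading case, after shifting by $g=-d$ both $1^+$ and $1^-$ acquire degree $0$, so the triple product rule gives $J_sJ_t\subseteq \cV^+_{s+0+t}=J_{s+t}$ with $J_s:=\cV^+_s$ in shifted degrees, which is the desired $G$-grading $\Gamma_J$.

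Next, assuming $G=\Univ(\Gamma)$, I set $H:=\langle\supp\Gamma^{[g]}\rangle\le G$ and define a natural surjective homomorphism $\phi\colon\Univ(\Gamma_J)\to H$ sending each generator $s\in\supp\Gamma_J=\supp\Gamma^{[g]}$ to itself; well-definedness is clear from $J_sJ_t\subseteq J_{s+t}$. To prove injectivity, I will construct an auxiliary homomorphism $\alpha\colon G\to\Univ(\Gamma_J)\times\ZZ$ given on generators of $G$ by $\alpha(t)=(t+g,1)$ for $t\in\supp\Gamma^+$ and $\alpha(h)=(h-g,-1)$ for $h\in\supp\Gamma^-$. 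A direct calculation then yields $\alpha(s)=(s,0)$ for every $s\in\supp\Gamma_J\subseteq H$, so the composition of $\alpha|_H$ with projection onto the first coordinate is a left inverse of $\phi$, establishing $\Univ(\Gamma_J)\cong H$.

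The main obstacle is verifying that $\alpha$ really is a well-defined homomorphism, i.e., that it respects the defining triple-product relations $s_1+s_2+s_3=s$ of $G=\Univ(\Gamma)$. The $\ZZ$-coordinate is automatic, and the first coordinate reduces to an equation of the form $\tilde s_1+\tilde s_2+\tilde s_3=\tilde s$ in $\Univ(\Gamma_J)$, where the tildes denote shifted degrees. To derive this from the binary-product relations defining $\Univ(\Gamma_J)$, I will expand the pair triple product via the Jordan algebra identity $\{x,y,z\}=2(x(zy)+z(xy)-(xz)y)$ and exploit that a nonzero triple product forces at least one of the three summands to be nonzero; that summand provides two chained binary-product relations in $\Univ(\Gamma_J)$ whose combination gives the required equality.

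Finally, for the fine case, Proposition~\ref{finegradingproperty} supplies a homomorphism $\pi\colon G\to\ZZ$ with $\pi(\supp\Gamma^\pm)=\{\pm1\}$. A short calculation shows $\pi(\supp\Gamma^{[g]})=\{0\}$, so $H\subseteq\ker\pi$. Conversely, any $k\in\ker\pi$, written as $\sum n_is_i+\sum m_jh_j$ with $s_i\in\supp\Gamma^+$ and $h_j\in\supp\Gamma^-$, satisfies $\sum n_i=\sum m_j$, and the rewriting $k=\sum n_i(s_i+g)+\sum m_j(h_j-g)$ exhibits $k$ as a $\ZZ$-combination of generators of $H$. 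The short exact sequence $0\to\ker\pi\to G\xrightarrow{\pi}\ZZ\to0$ splits via $1\mapsto d$, whence $\Univ(\Gamma)\cong H\times\ZZ\cong\Univ(\Gamma_J)\times\ZZ$.
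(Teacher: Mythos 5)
Your handling of the first two claims follows the paper's route ($Q^+(1^+)=\id_J$ forces $\cV^-_h=\cV^+_{2d+h}$, hence $1^-$ is homogeneous of degree $-d$; then $xy=\tfrac12\{x,1^-,y\}$ gives the grading on $J$ after shifting). Your proof of the final claim is genuinely different from and cleaner than the paper's: the paper embeds $H$ into $H\times\langle g_0\rangle$, shifts, and invokes the universal property twice, whereas your identification $H=\ker\pi$ together with the splitting of $\pi\colon G\to\ZZ$ via $1\mapsto\deg(1^+)$ is direct and complete. One caveat you share with the paper: the symmetry ``$1^+$ homogeneous $\Leftrightarrow$ $1^-$ homogeneous'' is proved by an argument (injectivity of $h\mapsto 2d+h$ plus the direct-sum comparison) that only works for group gradings; for a bare set grading the index map need not be injective and $1^-$ can fail to be homogeneous (take $J=\FF\times\FF$ with $\cV^+$ undecomposed and $\cV^-=(\FF\times0)\oplus(0\times\FF)$). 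Since the paper's own proof asserts the same coincidence of components for set gradings, I only flag this.

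The genuine gap is in your proof that $\Univ(\Gamma_J)\cong H$. The map $\alpha\colon G\to\Univ(\Gamma_J)\times\ZZ$ is defined by cases according to whether a generator lies in $\supp\Gamma^+$ or in $\supp\Gamma^-$, and this is ill-posed when those supports intersect: the two prescriptions already disagree in the $\ZZ$-coordinate ($+1$ versus $-1$). Disjointness is guaranteed by Proposition~\ref{finegradingproperty} only for \emph{fine} gradings, but the claim $\Univ(\Gamma_J)\cong H$ is stated without fineness, and the overlap really occurs: for the trivial grading on $\cV=(\FF,\FF)$ one has $G=\Univ(\Gamma)=\ZZ_2$ with $\supp\Gamma^+=\supp\Gamma^-=\{\bar1\}$, and no homomorphism from $\ZZ_2$ can send $\bar1$ to an element of $\Univ(\Gamma_J)\times\ZZ$ with second coordinate $1$. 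The paper sidesteps this by arguing abstractly: it realizes $(\Gamma_J,\Gamma_J)$ as a coarsening of $\Gamma$ and extracts the epimorphism $\Univ(\Gamma)\to\Univ(\Gamma_J)$ from the universal property, never introducing a $\ZZ$-factor at this stage. Your chained binary-relation verification of the triple-product relations (via $\{x,y,z\}=2(x(zy)+z(xy)-(xz)y)$ and the fact that a nonzero triple product has a nonzero monomial) is exactly the right computation; but you must either restrict the explicit construction of $\alpha$ to the disjoint-support case (which suffices for the fine-grading conclusion) or prove the third claim by a construction that does not distinguish $\supp\Gamma^+$ from $\supp\Gamma^-$.
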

\begin{proof} 
Let $\Gamma$ be a set grading on $\cV$ with $1^+$ homogeneous. Since $U_{1^+}(y^-)=y^+$ for any $y$, the homogeneous components of $\cV^+$ and $\cV^-$ coincide. But from $\{x,1,z\}=xz$ with $\chr\FF\neq2$, it follows that $\Gamma$ induces a set grading on $J=\cV^+$, where $J_s=\cV^+_s$.

Assume that $\Gamma$ is also a $G$-grading. From $U_{1^+}(1^-)=1^+$, we get $\deg(1^+)+\deg(1^-)=0$. Take $g:=-\deg(1^+)=\deg(1^-)$. The grading $\Gamma^{[g]}$ satisfies $\deg_g(1^+)=0=\deg_g(1^-)$. But since $U_{1^+}(x^-)=x^+$, we have $\deg_g(x^+)=\deg_g(x^-)$. From $\{x,1,z\}=xz$ we obtain $\deg_g(x)+\deg_g(z)=\deg_g(xz)$, so $\Gamma^{[g]}$ induces a $G$-grading on $J=\cV^+$. 

Suppose now that $G=\Univ(\Gamma)$ and set $H = \langle \supp \Gamma^{[g]} \rangle$. Note that $\Gamma^{[g]}$ can be regarded as a $\Univ(\Gamma)$-grading and also as an $H$-grading; and similarly $\Gamma_J$ can be regarded as a $\Univ(\Gamma_J)$-grading and as an $H$-grading. By the universal property of the universal group, the $H$-grading $\Gamma_J$ is induced from the $\Univ(\Gamma_J)$-grading $\Gamma_J$ by an homomorphism $\varphi_1 \colon \Univ(\Gamma_J) \to H$ that restricts to the identity in the support. On the other hand, the $\Univ(\Gamma_J)$-grading $\Gamma_J$ induces a $\Univ(\Gamma_J)$-grading $(\Gamma_J, \Gamma_J)$ on $\cV$ that is a coarsening of $\Gamma$, and therefore $(\Gamma_J,\Gamma_J)$ is induced from $\Gamma$ by some epimorphism $\varphi \colon \Univ(\Gamma) \to \Univ(\Gamma_J)$. Let $\varphi_2 \colon H \to \Univ(\Gamma_J)$ be the restriction of $\varphi$ to $H$. Note that $g\in\ker\varphi$, which implies that the $\Univ(\Gamma_J)$-grading $(\Gamma_J, \Gamma_J)$ is induced from the $H$-grading $\Gamma^{[g]}$ by $\varphi_2$, and also that $\varphi_2$ is an epimorphism which is the identity in the support. Since each epimorphism $\varphi_i$ is the identity in the support, both compositions $\varphi_1\varphi_2$ and $\varphi_2\varphi_1$ must be the identity and hence $\Univ(\Gamma_J) \cong H$.

Assume now that $\Gamma$ is fine and denote by $\Gamma_H$ the grading $\Gamma^{[g]}$ regarded as an $H$-grading. Note that $\Univ(\Gamma) = \langle \supp\Gamma^{[g]}, g \rangle = \langle \supp\Gamma_H, g \rangle = \langle H, g \rangle$. Consider $H$ as a subgroup of $H \times \ZZ \cong H \times \langle g_0 \rangle$, where the element $g_0$ has infinite order. The $H$-grading $\Gamma_H$ can be regarded as an $H\times \langle g_0 \rangle$-grading, and the shift $(\Gamma_H)^{[g_0]}$ defines another $H\times \langle g_0 \rangle$-grading where $\deg(1^+) = g_0$. Since the $H\times \langle g_0 \rangle$-grading $(\Gamma_H)^{[g_0]}$ is a coarsening of the $\Univ(\Gamma)$-grading $\Gamma$ (because $\Gamma$ is fine and by Proposition~\ref{finegradingproperty}), by the universal property there is an epimorphism $\Univ(\Gamma) = \langle H, g \rangle \to H\times \langle g_0 \rangle$ that sends $-g \mapsto g_0$ and fixes the elements of $H$. In consequence, $H \cap \langle g \rangle = 0$, $\langle g \rangle \cong \ZZ$, and we can conclude that $\Univ(\Gamma) = \langle H, g \rangle \cong H \times \ZZ \cong \Univ(\Gamma_J) \times \ZZ$.
\end{proof}

\begin{proposition}\label{regulargradstriples} 
Let $J$ be a Jordan $\FF$-algebra with unity $1$, and $G$ an abelian group. Consider the associated Jordan triple system $\cT$.
If $\Gamma$ is a set grading on $\cT$ such that $1$ is homogeneous, then $\Gamma$ induces a set grading on $J$. 
If $\Gamma$ is a $G$-grading on $\cT$ such that $1$ is homogeneous, then the shift $\Gamma^{[g]}$ with $g=\deg(1)$ induces a $G$-grading $\Gamma_J$ on $J$. Moreover, if $G=\Univ(\Gamma)$ then $\Univ(\Gamma_J)$ is isomorphic to the subgroup of $\Univ(\Gamma)$ generated by $\supp \Gamma^{[g]}$, and if in addition $\Gamma$ is fine we also have that  $\Univ(\Gamma)$ is isomorphic to $\Univ(\Gamma_J) \times \ZZ_2$.
\end{proposition}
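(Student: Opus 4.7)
The plan is to mirror the proof of Proposition~\ref{regulargradspairs}, using the formulas $xy=\frac12\{x,1,y\}$ and $\{1,x,1\}=x$ in place of the corresponding identities for pairs, and replacing the role of $\ZZ$ by $\ZZ_2$ (since the shift in a Jordan triple system is only defined for elements of order~$2$). The argument splits cleanly into four steps.

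First, for the set-grading statement, I would observe that $xy=\frac12\{x,1,y\}$ (valid in characteristic $\neq2$) immediately sends homogeneous elements of $J$ to homogeneous elements of $\cT$, giving the induced set grading on $J$ with the same components.

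Second, for the $G$-grading statement, the identity $\{1,x,1\}=x$ translates into $2\deg(1)+\deg(x)=\deg(x)$, so $g:=\deg(1)$ has order dividing $2$ and the shift $\Gamma^{[g]}$ is a legitimate $G$-grading on $\cT$. A direct computation using $xy=\frac12\{x,1,y\}$ then gives $\deg_g(xy)=\deg(x)+\deg(y)+2g=\deg_g(x)+\deg_g(y)$, so $\Gamma^{[g]}$ restricts to a $G$-grading $\Gamma_J$ on $J$.

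Third, to identify $\Univ(\Gamma_J)$ with $H:=\langle\supp\Gamma^{[g]}\rangle$, I would apply the universal property in both directions, exactly as in Proposition~\ref{regulargradspairs}. Viewing $\Gamma^{[g]}$ as an $H$-grading realizes $\Gamma_J$ over $H$, giving an epimorphism $\varphi_1\colon\Univ(\Gamma_J)\to H$ which is the identity on the support. Conversely, lifting the $\Univ(\Gamma_J)$-grading $\Gamma_J$ to a grading $\widetilde\Gamma_J$ on $\cT$ (using $\{x,y,z\}=x(yz)+z(xy)-(xz)y$) produces a coarsening of $\Gamma$, hence an epimorphism $\alpha\colon\Univ(\Gamma)\to\Univ(\Gamma_J)$ with $\alpha(g)=0$ (since $1$ lies in degree $0$ of $\Gamma_J$). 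The restriction $\alpha|_H$ is the identity on the support, so $\alpha|_H$ and $\varphi_1$ are mutually inverse.

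Fourth, and this is where I expect the main subtlety, I would handle the fine case by embedding $H\hookrightarrow H\times\ZZ_2$ with generator $\bar g_0$ of $\ZZ_2$, extending the $H$-grading $\Gamma_H$ trivially in the $\ZZ_2$ component, and shifting by $\bar g_0$ (a legal triple-system shift, since $\bar g_0$ has order $2$) to obtain an $H\times\ZZ_2$-grading $(\Gamma_H)^{[\bar g_0]}$ with $\deg(1)=\bar g_0$. Since this new grading has the same homogeneous components as $\Gamma$ and $\Gamma$ is fine, the universal property provides an epimorphism $\Univ(\Gamma)=\langle H,g\rangle\to H\times\ZZ_2$ fixing $H$ and sending $g\mapsto\bar g_0$. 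This forces $H\cap\langle g\rangle=0$ and $\langle g\rangle\cong\ZZ_2$, yielding $\Univ(\Gamma)\cong H\times\ZZ_2\cong\Univ(\Gamma_J)\times\ZZ_2$. The main obstacle here, compared to the pair case, is that I must replace the infinite cyclic refinement $H\times\ZZ$ used in Proposition~\ref{regulargradspairs} (available because positive and negative components of a fine pair grading are separated by Proposition~\ref{finegradingproperty}) with the finite cyclic refinement $H\times\ZZ_2$, and carefully verify that the order-$2$ shift together with the fineness of $\Gamma$ still forces the group-theoretic splitting.
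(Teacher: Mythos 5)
Your proposal is correct and follows essentially the same route as the paper: the paper's own proof reduces the statement to the argument of Proposition~\ref{regulargradspairs} after noting (via Proposition~\ref{unitytriples}, equivalently your identity $\{1,x,1\}$ forcing $2\deg(1)=0$) that the shift by $\deg(1)$ is legitimate, with $\ZZ_2$ replacing $\ZZ$ in the final splitting step exactly as you describe.
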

\begin{proof} 
Let $\Gamma$ is a set grading on $\cT$ with $1$ homogeneous; since $\{x,1,z\}=xz$ with $\chr\FF\neq2$ it follows that $\Gamma$ induces a set grading on $J$. Assume from now on that $\Gamma$ is a $G$-grading on $\cT$ with $1$ homogeneous of degree $g$.
Proposition~\ref{unitytriples} shows that $g$ has order $1$ or $2$. Hence the shift $\Gamma^{[g]}$ defines a $G$-grading on $\cT$ with degree $\deg_g(x) = \deg(x)+g$, where $\deg_g(1) = 0$. Set $H = \langle \supp\Gamma^{[g]}\rangle$. The rest of the proof follows using the same arguments of the proof of Proposition~\ref{regulargradspairs}, but using $\cT$ instead of the Jordan pair $\cV=(J,J)$.
\end{proof}

\subsection{Gradings induced by the TKK construction}

Let $\cV$ be a Jordan pair. Recall that the inner derivations are defined by $\nu(x,y):=(D(x,y),-D(y,x)) \in \mathfrak{gl}(\cV^+)\oplus\mathfrak{gl}(\cV^-)$, where $(x,y) \in\cV$. Consider the 3-graded Lie algebra $L=\TKK(\cV)=L^{-1}\oplus L^0\oplus L^1$ defined by the TKK construction, due to Tits, Kantor and Koecher (see \cite{CS11} and references therein). That is, 
$$ L^{-1}=\cV^-, \quad L^1=\cV^+, \quad L^0=\lspan\{\nu(x,y) \med (x,y) \in \cV \}, $$ 
and the multiplication is given by 
$$ [a+X+b,c+Y+d]:=(Xc-Ya)+([X,Y]+\nu(a,d)-\nu(c,b))+(Xd-Yb) $$
for each $X,Y\in L^0$, $a,c\in L^1$, $b,d\in L^{-1}$. This 3-grading will be called the TKK-\emph{grading}. A $G$-grading on $L=\TKK(\cV)$ will be called TKK-\emph{compatible} if $L^1$ and $L^{-1}$ are $G$-graded subspaces (and, therefore, so is $L^0=[L^1,L^{-1}]$). In this case, we denote $L^n_g = L^n \cap L_g$ for $n\in\{-1,0,1\}$ and $g\in G$.

Consider a Jordan pair $\cV$ with associated Lie algebra $L=\TKK(\cV)$. Let $\Gamma$ be a $G$-grading on $\cV$.  For each homogeneous $x\in \cV^+_g$ and $y\in \cV^-_h$, $D(x,y)$ is a graded endomorphism of $\cV^+$ of degree $g+h$, and similarly for $\cV^-$. Hence $L^0$ is graded with $L^0_g=\{\nu\in L^0 \med [\nu,L^\sigma_h]\subseteq L^\sigma_{g+h}\ \forall h\in G\}$, and 
we can extend $\Gamma$ to a TKK-compatible $G$-grading $E_G(\Gamma) : L = \bigoplus_{g\in G} L_g$, where $ L^1_g=\cV^+_g$, $L^{-1}_g=\cV^-_g$ and $L^0_g = \lspan\{\nu(x,y) \med \deg(x^+) + \deg(y^-) =g \}$. Conversely, any TKK-compatible $G$-grading $\widetilde{\Gamma}$ on $L$ restricts to a $G$-grading $R_G(\widetilde{\Gamma})$ on $\cV$, because $\{x^+, y^-, z^+\} = [[x^+, y^-], z^+]$.

Denote by $\Grad_G(\cV)$ the set of $G$-gradings on $\cV$, and by $\TKKGrad_G(L)$ the set of TKK-compatible $G$-gradings on $L$. We will call $E_G \colon \Grad_G(\cV) \rightarrow \TKKGrad_G(L)$ the {\em extension} map and $R_G \colon \TKKGrad_G(L) \rightarrow \Grad_G(\cV)$ the {\em restriction} map.

\begin{theorem} \label{correspondence} 
Let $\cV$ be a Jordan pair with associated Lie algebra $L=\TKK(\cV)$, and let $G$ be an abelian group. Then, the maps $E_G$ and $R_G$ are inverses of each other. Coarsenings are preserved by the correspondence, i.e., given a $G_i$-grading $\Gamma_i$ on $\cV$ with extended $G_i$-grading $\widetilde{\Gamma}_i = E_{G_i}(\Gamma_i)$ on $L$, for $i=1,2$,  and a homomorphism $\alpha \colon G_1\rightarrow G_2$, then $\Gamma_2 = {}^\alpha\Gamma_1$ if and only if $\widetilde{\Gamma}_2 = {}^\alpha\widetilde{\Gamma}_1$. If $G=\Univ(\Gamma)$, then $G=\Univ(E_G(\Gamma))$. Moreover, $\Gamma$ is fine and $G=\Univ(\Gamma)$ if and only if $E_G(\Gamma)$ is fine and $G=\Univ(E_G(\Gamma))$.
\end{theorem}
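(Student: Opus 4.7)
The plan is to establish the four assertions in the order listed, with the bijection $E_G \leftrightarrow R_G$ doing essentially all the work and the remaining statements reducing to it.

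For the bijection, $R_G\circ E_G=\id$ is immediate: by construction $E_G(\Gamma)$ sets $L^\sigma_g=\cV^\sigma_g$ for $\sigma=\pm$, so restriction returns $\Gamma$. For $E_G\circ R_G=\id$, given a TKK-compatible $\widetilde{\Gamma}$, only the $L^0$ part is in question. The inclusion $\lspan\{\nu(x,y):\deg x+\deg y=g\}\subseteq L^0_g$ follows from $[L^1_h,L^{-1}_k]\subseteq L^0_{h+k}$. For the reverse inclusion, since $L^0=\lspan\{\nu(x,y)\med (x,y)\in\cV\}$, decomposing $x$ and $y$ into their homogeneous parts in $\widetilde{\Gamma}$ and taking the $g$-component shows $L^0_g$ is spanned by the homogeneous $\nu(x_h,y_k)$ with $h+k=g$. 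The preservation of coarsenings is then a direct check: if $\cV^\sigma_{g_2}=\bigoplus_{g_1\in\alpha^{-1}(g_2)}\cV^\sigma_{g_1}$, the same decomposition carries over to $L^{\pm 1}$ tautologically and to $L^0$ via the spanning description; conversely, restriction recovers the coarsening on $\cV$.

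For the universal group claim, put $\widetilde{\Gamma}=E_G(\Gamma)$ and $\widetilde U=\Univ(\widetilde{\Gamma})$; the canonical epimorphism is $\phi\colon\widetilde U\to G$. To build $\psi\colon G\to \widetilde U$, view $\widetilde{\Gamma}$ as its own universal-group grading on $L$, and note that the $\widetilde U$-grading it defines has the \emph{same} homogeneous components as $\widetilde{\Gamma}$ (only the labels change). In particular, the $L^{\pm 1}$ components do not split, so its restriction to $\cV$ is a $\widetilde U$-grading coinciding with $\Gamma$ as a set grading; by the universal property of $G=\Univ(\Gamma)$, this determines a homomorphism $\psi\colon G\to\widetilde U$ which is the identity on $\supp\Gamma$. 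Then $\phi\psi=\id_G$ on generators of $G$. For $\psi\phi=\id_{\widetilde U}$, it is the identity on $\hat s$ for $s\in\supp\Gamma$, and for $\hat t$ with $t\in\supp L^0\setminus\supp\Gamma$ the spanning description produces homogeneous $x\in\cV^+_{s_1}$, $y\in\cV^-_{s_2}$ with $\nu(x,y)\neq 0$ and $s_1+s_2=t$; this single nonzero element then has $\widetilde U$-degree equal both to the component label $\hat t$ and to $\hat{s_1}+\hat{s_2}$ (by Lie-bracket compatibility), forcing $\hat t=\hat{s_1}+\hat{s_2}$ and hence $\psi\phi(\hat t)=\hat t$.

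Finally, for the fineness equivalence the forward direction is a short chase through the previous parts. Suppose $\Gamma$ is fine with $G=\Univ(\Gamma)$ and let $\widetilde{\Gamma}''$ be any refinement of $\widetilde{\Gamma}$ by a group $G''$. Since $L^{\pm 1}$ are unions of $\widetilde{\Gamma}$-components, they are unions of $\widetilde{\Gamma}''$-components too, so $\widetilde{\Gamma}''$ is TKK-compatible and $R_{G''}(\widetilde{\Gamma}'')$ is a $G''$-grading on $\cV$ that refines $\Gamma$. Fineness of $\Gamma$ forces equality as set gradings, so by the universal property some $\eta\colon G\to G''$ gives $R_{G''}(\widetilde{\Gamma}'')={}^\eta\Gamma$; applying the bijection and the coarsening clause then yields $\widetilde{\Gamma}''=E_{G''}({}^\eta\Gamma)={}^\eta\widetilde{\Gamma}$, so $\widetilde{\Gamma}''$ is simultaneously a refinement and a coarsening of $\widetilde{\Gamma}$ and thus not proper. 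Conversely, if $E_G(\Gamma)$ is fine with $G=\Univ(E_G(\Gamma))$, any refinement $\Gamma'$ of $\Gamma$ extends to a refinement $E_{G'}(\Gamma')$ of $E_G(\Gamma)$, which cannot be proper; restricting shows $\Gamma$ is fine, and then the universal group statement applied in reverse gives $G=\Univ(\Gamma)$.

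The main obstacle is the middle argument identifying $\hat t$ with $\hat{s_1}+\hat{s_2}$ in $\widetilde U$: one must be careful that the homogeneous components of $\widetilde{\Gamma}$ coincide with those of its universal-group grading, so that the uniqueness of the $\widetilde U$-degree of a single nonzero element $\nu(x,y)$ can be leveraged to collapse the two a priori distinct labels into one.
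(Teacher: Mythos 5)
Your treatment of the bijection, the preservation of coarsenings, and the universal-group claim is correct and runs essentially parallel to the paper's proof; in particular, your argument that a single nonzero $\nu(x,y)$ forces $\hat t=\hat{s_1}+\hat{s_2}$ in $\Univ(\widetilde{\Gamma})$ is just the detailed version of the paper's assertion that $\Univ(\widetilde{\Gamma})$ is generated by $\supp\Gamma$. The forward direction of the fineness equivalence is also sound.

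The gap is in the converse direction of the fineness claim. The assertion that ``any refinement $\Gamma'$ of $\Gamma$ extends to a refinement $E_{G'}(\Gamma')$ of $E_G(\Gamma)$'' is unjustified and false for an arbitrary grading group $G'$: the components of $E_{G'}(\Gamma')$ inside $L^{\pm1}$ do refine those of $E_G(\Gamma)$, but the $L^0$-components need not, because a refinement of set gradings carries no homomorphism $G'\to G$, so two pairs $(s_1',s_2')$ and $(t_1',t_2')$ with $s_1'+s_2'=t_1'+t_2'$ in $G'$ may satisfy $s_1+s_2\neq t_1+t_2$ in $G$, merging two $L^0$-components of $E_G(\Gamma)$ into a single component of $E_{G'}(\Gamma')$. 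The Remark immediately following this theorem in the paper exhibits exactly this failure: two gradings with identical homogeneous components whose extensions are a proper coarsening of one another rather than mutual refinements. The repair is to take $G'=\Univ(\Gamma')$ and invoke the fact that a refinement between group gradings is then witnessed by a homomorphism $\beta\colon\Univ(\Gamma')\to G$ with $\Gamma={}^\beta\Gamma'$, after which your coarsening clause gives $E_G(\Gamma)={}^\beta E_{G'}(\Gamma')$ and hence a genuine (and proper) refinement. The closing appeal to ``the universal group statement applied in reverse'' is likewise invalid as stated: the implication $G=\Univ(E_G(\Gamma))\Rightarrow G=\Univ(\Gamma)$ is precisely what that Remark refutes, so one must first establish fineness of $\Gamma$ and then compare $E_G(\Gamma)$ with $E_{\Univ(\Gamma)}(\Gamma)$ through the forward implication. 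The paper avoids both pitfalls by recasting ``fine with universal group $G$'' as a statement purely about coarsenings along homomorphisms, which transfers across the correspondence in both directions simultaneously.
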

\begin{proof}
By construction, $E_G$ and $R_G$ are inverses of each other.

Assume that $\Gamma_2 = {}^\alpha\Gamma_1$ for some homomorphism $\alpha\colon G_1\rightarrow G_2$. Then, $\cV^\sigma_g\subseteq\cV^\sigma_{\alpha(g)}$ for any $\sigma=\pm$ and $g\in G_1$. Thus, $L^{\sigma1}_g\subseteq L^{\sigma1}_{\alpha(g)}$, which implies that $L^0_g=\sum_{g_1+g_2=g}[L^1_{g_1},L^{-1}_{g_2}] \subseteq \sum_{g_1+g_2=g}[L^1_{\alpha(g_1)},L^{-1}_{\alpha(g_2)}] \subseteq L^0_{\alpha(g)}$. Hence, $\widetilde{\Gamma}_1$ refines $\widetilde{\Gamma}_2$ and $\widetilde{\Gamma}_2 = {}^\alpha\widetilde{\Gamma}_1$. Conversely, if $\widetilde{\Gamma}_2 = {}^\alpha\widetilde{\Gamma}_1$, by restriction we obtain $\Gamma_2 = {}^\alpha\Gamma_1$. We have proved that coarsenings are preserved.

Consider $\widetilde{\Gamma}=E_G(\Gamma)$ with $G=\Univ(\Gamma)$. Note that $\Univ(\Gamma)$ and $\Univ(\widetilde{\Gamma})$ are generated by $\supp\Gamma$. Since the $\Univ(\widetilde{\Gamma})$-grading $\widetilde{\Gamma}$ restricts to $\Gamma$ as a $\Univ(\widetilde{\Gamma})$-grading, there is a unique homomorphism $G=\Univ(\Gamma)\rightarrow\Univ(\widetilde{\Gamma})$ that is the identity in $\supp\Gamma$; conversely, $\Gamma$ extends to $\widetilde{\Gamma}$ as a $G$-grading, so there is a unique homomorphism $\Univ(\widetilde{\Gamma})\rightarrow G$ that is the identity in $\supp\widetilde{\Gamma}$ (and in $\supp\Gamma$); therefore the compositions $\Univ(\widetilde{\Gamma})\rightarrow G \rightarrow\Univ(\widetilde{\Gamma})$ and $G \rightarrow\Univ(\widetilde{\Gamma}) \rightarrow G$ are the identity map, and $G=\Univ(\widetilde{\Gamma})$.

Suppose again that $\widetilde{\Gamma}=E_G(\Gamma)$. Note that if $\widetilde{\Gamma}$ is fine in the class of $\TKK$-compatible gradings, then the supports of $L^0$, $L^1$ and $L^{-1}$ are disjoint and therefore $\widetilde{\Gamma}$ is also fine in the class of all abelian group gradings on $L$. Now, note that $\Gamma$ is a fine $G$-grading on $\cV$ with $G=\Univ(\Gamma)$ if and only if $\Gamma$ satisfies the following property: if $\Gamma = {}^\alpha\Gamma_0$ for some $G_0$-grading $\Gamma_0$, where $G_0$ is generated by $\supp\Gamma_0$, and $G$ is generated by $\supp\Gamma$, and $\alpha \colon G_0 \rightarrow G$ is an homomorphism, then $\alpha$ is an isomorphism. The same is true for TKK-compatible gradings. Since the coarsenings are preserved in the correspondence, so does this property, and therefore, $\Gamma$ is fine and $G=\Univ(\Gamma)$ if and only if $\widetilde{\Gamma}$ is fine and $G=\Univ(\widetilde{\Gamma})$.
\end{proof}

\begin{remark} The fact that $\widetilde{\Gamma}=E_G(\Gamma)$ with $G=\Univ(\widetilde{\Gamma})$, in general, does not imply that $G=\Univ(\Gamma)$. We will show this now.

First, take a $G$-grading $\Gamma_\cV$ on a Jordan pair $\cV$ such that there are nonzero elements in $\supp L^0$ for $\widetilde{\Gamma}_\cV=E_G(\Gamma_\cV)$, and assume that $G=\Univ(\Gamma_\cV)$ (it is not hard to find examples satisfying this). By Theorem~\ref{correspondence}, $G=\Univ(\widetilde{\Gamma}_\cV)$. Now, consider the Jordan pair $\cW=\cV\oplus\cV'$ given by two copies of $\cV$. There is a $G\times G$-grading $\Gamma$ on $\cW$, where $\cW^\sigma_{(g,0)} = \cV^\sigma_g$, $\cW^\sigma_{(0,g)} = \cV'^\sigma_g$. Besides, $\Univ(\Gamma)=G\times G$, so by Theorem~\ref{correspondence}, we have $\Univ(\widetilde{\Gamma})=G\times G$ too, where $\widetilde{\Gamma}=E_{G\times G}(\Gamma)$. It suffices to find a proper coarsening $\widetilde{\Gamma}_1$ of $\widetilde{\Gamma}$ such that the restricted grading on $\cW$ has the same homogeneous components as $\Gamma$. Actually, if $G_1=\Univ(\widetilde{\Gamma}_1)$, then $\widetilde{\Gamma}_1=E_{G_1}R_{G_1}(\widetilde{\Gamma}_1)=E_{G_1}(\Gamma)$ (where $\Gamma$ is regarded as a $G_1$-grading) would be a proper coarsening of $\widetilde{\Gamma}=E_{\Univ(\Gamma)}(\Gamma)$, and therefore $G_1 \ncong \Univ(\Gamma)$.

Consider the $G\times\ZZ$-grading $\Gamma_1$ on $\cW$ given by $\cW^\sigma_{(g,0)} = \cV^\sigma_g$, $\cW^\sigma_{(g,\sigma1)} = \cV'^\sigma_g$. Then, $\Gamma_1$ and $\Gamma$ have the same homogeneous components. The extension $\widetilde{\Gamma}_1 = E_{G\times\ZZ}(\Gamma_1)$ is a proper coarsening of $\widetilde{\Gamma}$, because $\widetilde{\Gamma}_1$ satisfies $\supp L^0 \cap \supp L'^0 = \supp L^0 \neq \{0\}$ (where $L'=\TKK(\cV')$) and for $\widetilde{\Gamma}$ we had $\supp L^0 \cap \supp L'^0 = \{0\}$. This proves the claim of the Remark.
\end{remark}

\smallskip

Any automorphism $\varphi=(\varphi^+,\varphi^-)$ of $\cV$ extends in a unique way to an automorphism $\widetilde{\varphi}$ of $L$ (that leaves $L_{-1}$ and $L_1$ invariant, so $L_0$ is invariant too). Indeed, it must satisfy $\widetilde{\varphi}(\nu(x^+,y^-))=\widetilde{\varphi}([x^+,y^-])=[\varphi^+(x^+),\varphi^-(y^-)]=\nu(\varphi^+(x^+),\varphi^-(y^-))=\varphi\nu(x^+,y^-)\varphi^{-1}$, and this formula indeed defines an automorphism $\widetilde{\varphi}$ of $L$. Then, we can identify $\Aut\cV$ with a subgroup of Aut$L$, and so we have $\Aut\Gamma\leq\Aut\widetilde{\Gamma}$ and $\Stab\Gamma\leq\Stab\widetilde{\Gamma}$. We can also identify $\cW(\Gamma)\leq\cW(\widetilde{\Gamma})$. Indeed, 
\begin{align*}
\cW(\Gamma) &= \Aut\Gamma/\Stab\Gamma=\Aut\Gamma/(\Stab\widetilde{\Gamma} \cap \Aut\Gamma) \\
& \cong(\Aut\Gamma\cdot\Stab\widetilde{\Gamma})/\Stab\widetilde{\Gamma}\leq\Aut\widetilde{\Gamma}/\Stab\widetilde{\Gamma}=\cW(\widetilde{\Gamma}).
\end{align*}

But although $\cW(\Gamma)\leq\cW(\widetilde{\Gamma})$, these Weyl groups do not coincide in general, at least for the bi-Cayley and Albert Jordan pairs. Actually, we will see that their fine gradings are, up to equivalence, of the form $\Gamma=(\Gamma^+,\Gamma^-)$, where $\Gamma^+$ and $\Gamma^-$ have the same homogeneous components, and hence there is an order 2 automorphism of $L$ that interchanges $\cV^+\leftrightarrow \cV^-$ and belongs to $\Aut\widetilde{\Gamma}\setminus\Aut\Gamma$, so $\cW(\Gamma) < \cW(\widetilde{\Gamma})$.


\subsection{Some facts about gradings on semisimple Jordan pairs}

\begin{remark}\label{remarkgrads}
Notice that, as a consequence of Equation~\eqref{peirceproperties}, the Peirce spaces associated to an idempotent $e$ define a $\ZZ$-grading $\Gamma$ where the subspace $\cV^\sigma_i$ has degree $\sigma(i+1)$, and $\supp\Gamma=\{\pm1,\pm2,\pm3\}$. If a Jordan pair $\cV$ has a $G$-grading $\Gamma$, an idempotent $e=(e^+,e^-)$ of $\cV$ will be called \emph{homogeneous} if $e^\sigma$ is homogeneous in $\cV^\sigma$ for each $\sigma$. In that case, we have $\deg(e^+)+\deg(e^-)=0$, which implies that the projections $E_i^\sigma=E_i^\sigma(e)$ are homogeneous maps of degree $0$, and therefore the Peirce spaces $\cV_i^\sigma=E_i^\sigma(\cV^\sigma)$ are graded. If in addition the graded Jordan pair $\cV$ is semisimple, then any nonzero homogeneous element $x=e^\sigma\in\cV^\sigma_g$ can be completed to a homogeneous idempotent $e = (e^+, e^-) \in\cV$; indeed, we can take a homogeneous element $y\in\cV^{-\sigma}_{-g}$ such that $Q_x y=x$ (because $\cV$ is vNr and the quadratic products are homogeneous maps), and in consequence $e=(e^+,e^-)$ with $e^{-\sigma} := Q_y x$ is a homogeneous idempotent.

Since homogeneous elements are completed to homogeneous idempotents and these produce graded Peirce subspaces, it follows that we can always choose a maximal orthogonal system of idempotents whose elements happen to be homogeneous.
\end{remark}

\begin{theorem} \label{dim1}
Let $\cV$ be a finite-dimensional semisimple Jordan pair and $\Gamma$ a $G$-grading on $\cV$. Then: 
\begin{itemize}
\item[1)] If $g\in \supp\Gamma^\sigma$, the subpair $(\cV^\sigma_g, \cV^{-\sigma}_{-g})$ is semisimple.
\item[2)] For any subgroup $H\leq G$ with $H \cap \supp\Gamma \neq\emptyset$, the subpair given by $\cV^\sigma_H := \bigoplus_{h\in H}\cV^\sigma_h$ is semisimple. 
\item[3)] If $\Gamma$ is fine, the homogeneous components are $1$-dimensional.
\end{itemize}
\end{theorem}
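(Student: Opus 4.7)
The three parts all rest on Remark~\ref{remarkgrads}, which lets us complete any nonzero homogeneous element of a $G$-graded semisimple Jordan pair to a homogeneous idempotent of matching degree, and on Theorem~\ref{semisimple}, which identifies von Neumann regularity with semisimplicity in the finite-dimensional setting (dcc on principal inner ideals being automatic). My plan is to prove parts~1) and~2) directly from these ingredients, and then use part~1) together with the Peirce $\ZZ$-grading attached to a rank-$1$ idempotent to handle part~3) by contradiction.

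For part~1), every nonzero element of $\cV^\sigma_g$ is homogeneous of degree $g$, so Remark~\ref{remarkgrads} yields a partner in $\cV^{-\sigma}_{-g}$ showing it is vNr inside the subpair $(\cV^\sigma_g, \cV^{-\sigma}_{-g})$; the analogous statement on the other side then gives vNr-ity of the whole subpair, and hence semisimplicity via Theorem~\ref{semisimple}. For part~2), the key move is to pass to the coarsened $G/H$-grading of $\cV$, under which $\cV_H^\sigma = \bigoplus_{h \in H} \cV^\sigma_h$ becomes the homogeneous component of degree $0 + H$, which is nonzero thanks to the assumption $H \cap \supp\Gamma \neq \emptyset$. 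Applying Remark~\ref{remarkgrads} to this coarsened grading on the (still semisimple) pair $\cV$ provides, for each nonzero $x \in \cV_H^\sigma$, a partner $y \in \cV^{-\sigma}_{-(0+H)} = \cV_H^{-\sigma}$ with $Q(x)y = x$, so $\cV_H$ is vNr and therefore semisimple.

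For part~3), suppose $\Gamma$ is fine and, for contradiction, that $\dim \cV^\sigma_g \geq 2$ for some $g$. By part~1), $W = (\cV^\sigma_g, \cV^{-\sigma}_{-g})$ is semisimple; pick any nonzero element of $W^\sigma$, complete it via Remark~\ref{remarkgrads} to a homogeneous idempotent $e \in W$, and then use Remark~\ref{remarkorbitidempotents} to write $e$ as a sum of orthogonal rank-$1$ idempotents lying in $W_2(e) \subseteq W$. Any such summand $e_1$ is automatically homogeneous of degree $(g,-g)$ because $W^\sigma = \cV^\sigma_g$ and $W^{-\sigma} = \cV^{-\sigma}_{-g}$ are single $G$-homogeneous components. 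Since $\FF$ is algebraically closed and $e_1^\sigma$ has rank $1$, we have $\im Q_{e_1^\sigma} = \FF e_1^\sigma$, so the Peirce-$2$ space $\cV^\sigma_2(e_1)$ is one-dimensional. The Peirce $\ZZ$-grading of $\cV$ attached to $e_1$ (Remark~\ref{remarkgrads}) is compatible with $\Gamma$ because $e_1$ is $G$-homogeneous, so the two gradings combine into a valid $G \times \ZZ$-grading. It refines $\Gamma$ properly: $\cV^\sigma_g$ meets $\cV^\sigma_2(e_1) = \FF e_1^\sigma$ in dimension $1$ while $\dim \cV^\sigma_g \geq 2$, so at least one of its Peirce-$0$ or Peirce-$1$ slices is nonzero. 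This contradicts the fineness of $\Gamma$.

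The subtle point is ensuring the combined $G \times \ZZ$-grading in part~3) is genuinely a proper refinement; this is guaranteed by selecting a \emph{rank-$1$} homogeneous idempotent, where algebraic closure of $\FF$ forces the Peirce-$2$ part of $\cV^\sigma$ to collapse to a line and thereby splits $\cV^\sigma_g$ once its dimension exceeds one.
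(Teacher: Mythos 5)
Parts 1) and 2) of your proposal are correct and follow the same route as the paper. The problem is in part 3), at the step ``Since $\FF$ is algebraically closed and $e_1^\sigma$ has rank $1$, we have $\im Q_{e_1^\sigma}=\FF e_1^\sigma$, so the Peirce-$2$ space $\cV^\sigma_2(e_1)$ is one-dimensional.'' The idempotent $e_1$ has rank $1$ only in the subpair $W=(\cV^\sigma_g,\cV^{-\sigma}_{-g})$, not a priori in $\cV$: rank $1$ in $W$ gives $Q_{e_1^\sigma}(W^{-\sigma})=\FF e_1^\sigma$, whereas $\cV^\sigma_2(e_1)=Q_{e_1^\sigma}(\cV^{-\sigma})$ is the image over \emph{all} of $\cV^{-\sigma}$ and can be strictly larger. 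Concretely, take $\cV=(J,J)$ with $J=\FF^4$ (componentwise product) and the $\ZZ_2$-grading $J_{\bar0}=\lspan\{(1,1,1,1),(1,1,-1,-1)\}$, $J_{\bar1}=\lspan\{(1,-1,1,-1),(1,-1,-1,1)\}$. The homogeneous idempotent $e_1$ with $e_1^{\pm}=(1,1,0,0)$ has rank $1$ in $W=(J_{\bar0},J_{\bar0})$, yet $\cV^+_2(e_1)=\lspan\{(1,0,0,0),(0,1,0,0)\}$ is two-dimensional and $e_1$ has rank $2$ in $\cV$. So the equality $\cV^\sigma_2(e_1)=\FF e_1^\sigma$, on which your final ``proper refinement'' sentence rests, is false in general; this is exactly the pitfall the paper flags with the parenthetical ``(but not necessarily in $\cV$).''

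The weaker statement you actually need, $\cV^\sigma_g\cap\cV^\sigma_2(e_1)=\FF e_1^\sigma$, is true, but it requires one more observation, which is precisely the paper's computation: $Q_{e_1^\sigma}$ is a homogeneous map of degree $2g$, so the degree-$g$ component of $\im Q_{e_1^\sigma}$ is $Q_{e_1^\sigma}(\cV^{-\sigma}_{-g})=Q_{e_1^\sigma}(W^{-\sigma})=W^\sigma_2(e_1)=\FF e_1^\sigma$, which is a proper nonzero subspace of $\cV^\sigma_g$ once $\dim\cV^\sigma_g\geq 2$. With that line inserted in place of the unjustified claim, your contradiction with fineness goes through, and the rest of your part 3) (rank-one idempotent of the component subpair, Peirce $\ZZ$-grading, combined $G\times\ZZ$-refinement) coincides with the paper's argument.
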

\begin{proof}
1) Take $0\neq x\in\cV^\sigma_g$. By Remark~\ref{remarkgrads}, $x$ can be completed to an idempotent of $\cW=(\cV^\sigma_g, \cV^{-\sigma}_{-g})$, so $x$ is vNr in $\cW$. Hence, $\cW$ is vNr too, and by Theorem~\ref{semisimple}, $\cW$ is semisimple.

2) Consider the epimorphism $\alpha \colon G \rightarrow \bar G=G/H$ and the induced $\bar G$-grading $\bar\Gamma = {}^\alpha\Gamma$. Then, $\cV_H$ coincides with the $\bar G$-graded subpair $(\cV^\sigma_{\bar0}, \cV^{-\sigma}_{\bar0})$, that is semisimple by 1).

3) Let $\Gamma$ be fine and assume by contradiction that $\dim\cV^\sigma_g >1$, where we can assume without loss of generality that $\sigma = +$. Then, the subpair $\cW=(\cV^+_g,\cV^-_{-g})$ is semisimple by 1). By Theorem~\ref{semisimple}, $\cW$ is nondegenerate, so we can consider the rank function. We can take an element $x\in\cW^+$ of rank $1$ in $\cW$ (but not necessarily in $\cV$), and complete it to an idempotent $e=(x,y)$ of $\cW$. As in Remark~\ref{remarkgrads}, the Peirce spaces of the Peirce decomposition associated to $e$ are the homogeneous components of a $\ZZ$-grading on $\cV$, which is compatible with $\Gamma$ because the Peirce spaces are graded with respect to $\Gamma$. Thus, combining the $\ZZ$-grading with $\Gamma$ we get a $G\times\ZZ$-grading that refines $\Gamma$, given by $\cV^\sigma_{(g,i)} = \cV^\sigma_g \cap \cV^\sigma_i$. Since $\rk_\cW(x)=1$ and $\FF=\bar\FF$, we have $\FF x = Q(x)\cW^- =: \cW^+_2(e)$. But then, $\cV^+_g \cap \cV^+_2(e) = \cV^+_g \cap Q(x)\cV^- = \cV^+_g \cap Q(x)\cV^-_{-g} = \cV^+_g \cap Q(x)\cW^- = \cW^+_2(e) = \FF x \subsetneqq \cV^+_g$ and the refinement is proper, which contradicts that $\Gamma$ is fine.
\end{proof}

The next Corollary is a nice application of the above results to the study of gradings on Jordan algebras.

\begin{corollary}
Let $J$ be a finite-dimensional semisimple Jordan algebra and $\Gamma$ a fine $G$-grading on $J$ with $\dim J_0 = 1$. Then, all the homogeneous components of $\Gamma$ have dimension $1$.
\end{corollary}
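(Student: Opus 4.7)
The plan is to deduce the result from Theorem~\ref{dim1}(3) by passing to the associated Jordan pair $\cV = (J,J)$. First I would consider the induced grading $\Gamma_\cV = (\Gamma_J,\Gamma_J)$ on $\cV$; since $J$ is finite-dimensional semisimple, so is $\cV$. Note that $\Gamma_J$ being fine as a grading on $J$ does not automatically make $\Gamma_\cV$ fine on $\cV$, so the next step is to choose a fine group grading $\widetilde{\Gamma}$ on $\cV$ that refines $\Gamma_\cV$; such a refinement exists because $\cV$ is finite-dimensional (equivalently, one can enlarge $\eta_{\Gamma_\cV}(G^D)$ inside $\AAut(\cV)$ to a maximal quasitorus).

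The crucial observation is that $\dim J_0 = 1$ forces $J_0 = \FF\cdot 1$, so the one-dimensional subspace $\FF\cdot 1^+$ is the whole zero-component of $\Gamma_\cV^+$. In any refinement, this subspace cannot split further and must therefore be entirely contained in a single homogeneous component of $\widetilde{\Gamma}^+$; in other words, $1^+$ is homogeneous with respect to $\widetilde{\Gamma}$. I can then apply Proposition~\ref{regulargradspairs} to the shifted grading $\widetilde{\Gamma}^{[-\deg(1^+)]}$ to obtain a group grading $\widetilde{\Gamma}_J$ on $J$ whose homogeneous components, viewed as subspaces of $J = \cV^+$, are exactly those of $\widetilde{\Gamma}^+$.

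Because $\widetilde{\Gamma}^+$ refines $\Gamma_\cV^+ = \Gamma_J$ as a set decomposition, the transferred grading $\widetilde{\Gamma}_J$ refines $\Gamma_J$ in the class of group gradings on $J$. Fineness of $\Gamma_J$ then forces $\widetilde{\Gamma}_J$ and $\Gamma_J$ to have the same homogeneous components. Finally, Theorem~\ref{dim1}(3) applied to the semisimple pair $\cV$ and the fine grading $\widetilde{\Gamma}$ gives that every component of $\widetilde{\Gamma}$ is one-dimensional, hence every component of $\widetilde{\Gamma}^+ = \widetilde{\Gamma}_J$ is one-dimensional, and therefore so is every component of $\Gamma_J$.

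The main point requiring care is the middle step: verifying that $1^+$ is homogeneous in the fine refinement (which relies precisely on the hypothesis $\dim J_0 = 1$) and then correctly invoking Proposition~\ref{regulargradspairs} to produce a bona fide group grading on $J$ that refines $\Gamma_J$. Once that bridge between the Jordan pair world and the Jordan algebra world is in place, the rest is a direct application of the dimension theorem for fine gradings on semisimple Jordan pairs.
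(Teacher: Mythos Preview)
Your proposal is correct and follows essentially the same route as the paper: pass to the Jordan pair $\cV=(J,J)$, refine the induced grading to a fine grading on $\cV$ (whose components are $1$-dimensional by Theorem~\ref{dim1}(3)), use $\dim J_0=1$ to guarantee that $1^+$ remains homogeneous in the refinement, and then invoke Proposition~\ref{regulargradspairs} to transport the refinement back to $J$, contradicting (or, in your direct formulation, matching) the fineness of $\Gamma$. The only cosmetic difference is that the paper phrases the last step by contradiction, whereas you argue directly that the transferred grading must coincide with $\Gamma$; you are also more explicit than the paper about why $1^+$ stays homogeneous, which is exactly where the hypothesis $\dim J_0=1$ enters.
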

\begin{proof}
Assume by contradiction that some homogeneous component has dimension bigger than $1$. Consider the Jordan pair $\cV = (J,J)$ and let $\widetilde{\Gamma} = (\Gamma, \Gamma)$ be the induced $G$-grading on $\cV$. Since some component of $\widetilde{\Gamma}$ has dimension bigger than $1$, we can refine $\widetilde{\Gamma}$ to a fine grading $\widetilde{\Gamma}'$ on $\cV$, which will have all components of dimension $1$. Then, by Proposition~\ref{regulargradspairs}, the shift $\widetilde{\Gamma}'^{[g]}$ for $g=-\deg(1^+)$ restricts to a group grading on $J$, which is a proper refinement of $\Gamma$, a contradiction. 
\end{proof}

Some examples of homogeneous bilinear forms are given by trace forms: this is the case of gradings on Hurwitz algebras, matrix algebras, and the Albert algebra. Other well-known example is the Killing form of a graded semisimple Lie algebra. The generic trace plays the same role for graded Jordan pairs and graded triple systems.

\begin{proposition} \label{lemmatracejordanpairs} 
Let $\cV$ be a finite-dimensional simple Jordan pair. Then, the generic trace of $\cV$ is homogeneous for any grading on $\cV$.
\end{proposition}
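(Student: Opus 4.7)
The plan is to exploit the fact that the generic trace of a Jordan pair is canonically defined from the pair structure (see \cite[Def.~16.2]{L75}), and hence is invariant under the full automorphism group scheme $\AAut(\cV)$. I will then translate this invariance into the homogeneity statement by interpreting a grading as a morphism of group schemes, as recalled in the paragraph preceding the proposition.

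First I would recall that the generic trace form $t\colon \cV^+ \times \cV^- \to \FF$ is invariant under automorphisms of $\cV$, and more generally, for any associative commutative unital $\FF$-algebra $R$, the $R$-linear extension $t_R\colon \cV^+_R \times \cV^-_R \to R$ is invariant under $\Aut_R(\cV_R) = \AAut(\cV)(R)$, since the generic minimal polynomial behaves functorially under base change. Next, given a $G$-grading $\Gamma$ on $\cV$, I would consider the associated morphism $\eta_\Gamma\colon G^D \to \AAut(\cV)$ and use the description of its $R$-points: for $f \in G^D(R) = \Hom(G, R^\times)$, the automorphism $\eta_\Gamma(f)$ sends a homogeneous element $z \otimes r$ with $z \in \cV^\sigma_{\deg z}$ to $z \otimes f(\deg z)r$.

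The core computation is then short. Fix homogeneous $x \in \cV^+_g$ and $y \in \cV^-_h$. Invariance of $t_R$ under $\eta_\Gamma(f)$ applied to $x \otimes 1$ and $y \otimes 1$ yields
\[
f(g+h)\,t(x,y) = t(x,y) \quad \text{in } R.
\]
If $t(x,y) \neq 0$ then, since $t(x,y) \in \FF^\times$ is a unit in $R$, we obtain $f(g+h) = 1$ for every $R$-point $f$ of $G^D$ and every $R$. Specializing to $R = \FF[G]$ with $f$ the canonical embedding $g \mapsto [g]$ forces $g + h = 0$ in $G$. This is precisely the homogeneity condition in Definition~\ref{homogeneousbilinearform}.

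The main conceptual step is the first one, namely verifying that the generic trace is $\AAut(\cV)$-invariant in the scheme-theoretic sense (not merely invariant under $\FF$-rational automorphisms). I expect this to be the only delicate point, but it follows from the fact that the generic minimal polynomial of a Jordan pair is constructed functorially, so the coefficient $m_1 = t$ of this polynomial commutes with base change and with automorphisms over any $R$. Once this is granted, simplicity of $\cV$ is not really used in the argument; it is invoked only implicitly, to guarantee that $t$ is the canonical bilinear form considered in \cite[Def.~16.2]{L75}.
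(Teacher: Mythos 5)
Your proposal is correct and follows essentially the same route as the paper: both arguments rest on the $\AAut(\cV)$-invariance of the generic trace (via \cite[Proposition~16.7]{L75} on the generic minimal polynomial) and then evaluate this invariance on the automorphism of $\cV_{\FF G}$ determined by the grading, i.e.\ the $\FF G$-point of $G^D$ given by the canonical embedding $g\mapsto g$. Your closing observation that simplicity plays no real role is also consistent with the paper's proof, which only uses finite-dimensionality.
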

\begin{proof}
Suppose that $\cV$ is $G$-graded. We know by \cite[Proposition~16.7]{L75} that the minimal polynomial $m(T,X,Y)$ of a finite-dimensional Jordan pair $\cV$ is invariant by the automorphism group scheme $\AAut(\cV)$. Hence the generic trace $t$ is $\AAut(\cV)$-invariant, i.e., $t(\varphi^+(x), \varphi^-(y))=t(x,y)$ for all $\varphi\in\Aut_R\cV_R$, $x\in\cV^+_R$, $y\in\cV^-_R$ and $R$ an associative commutative unital $\FF$-algebra. In particular, if we take the group algebra $R = \FF G$ we can consider the automorphism $\varphi$ of $\cV_R$ given by $\varphi^\sigma(v^\sigma_g \otimes 1) = v^\sigma_g \otimes g$ for each $\sigma = \pm$ and each homogeneous element $v^\sigma_g \in \cV^\sigma_g$. In order to avoid confusion, the binary operation in $G$ will be denoted multiplicatively here. Now, fix homogeneous elements $v^+_g \in \cV^+_g$ and $v^-_h \in \cV^-_h$. On the one hand, we know that $t(\varphi^+(v^+_g \otimes 1),\varphi^-(v^-_h \otimes 1)) = t(v^+_g \otimes 1, v^-_h \otimes 1) = t(v^+_g ,v^-_h) \otimes 1$ by $\Aut_R(\cV_R)$-invariance. On the other hand, $t(\varphi^+(v^+_g \otimes 1),\varphi^-(v^-_h \otimes 1)) = t(v^+_g \otimes g, v^-_h \otimes h) = t(v^+_g, v^-_h) \otimes gh$ by definition of $\varphi$. Therefore, $t(v^+_g ,v^-_h) \otimes 1 = t(v^+_g, v^-_h) \otimes gh$. We conclude that $gh=1$ whenever $t(v^+_g ,v^-_h) \neq 0$. 
\end{proof}

\begin{remark} \label{homogeneouskernel}
Assume that we have a graded finite-dimensional simple Jordan pair $\cV$. Hence the generic trace form $t$ of $\cV$ is homogeneous by Proposition~\ref{lemmatracejordanpairs}. If $t$ is nondegenerate, $\cV^{\sigma}_g$ and $\cV^{-\sigma}_{-g}$ are dual relative to $t$ and have the same dimension. For any homogeneous element $x\in\cV^\sigma_g$, define $t_x:\cV^{-\sigma}\rightarrow \FF$, $y\mapsto t(x,y)$. Since $t$ is homogeneous, the subspace $\ker(t_x)$ is graded too; we will use this fact in some proofs later on.
\end{remark}

\section{Exceptional Jordan pairs and triple systems}

In this section we first recall the definitions of the well-known Jordan pairs and triple systems of types bi-Cayley and Albert. We also give examples of gradings on these Jordan systems and compute their universal groups. These examples will be used in the following section to classify the gradings up to equivalence.

\subsection{Jordan pairs and triple systems of types bi-Cayley and Albert}

\begin{df}
The {\em Albert pair} is the Jordan pair associated to the Albert algebra, that is, $\cV_\alb:=(\alb,\alb)$ with the products $Q^\sigma_x(y)=U_x(y):=2L^2_x(y)-L_{x^2}(y)$. Its associated Jordan triple system $\cT_\alb:=\alb$, with the product $Q_x = U_x$, will be called the {\em Albert triple system}.  It is well-known (see \cite[Theorem~1]{MC70} and \cite[Theorem~1]{MC69}) that the $U$-operator can be written as $U_x(y) = T(x,y)x - x^\#\times y$.

We will write for short the vector spaces $\cB:= \cC\oplus\cC$, $\cC^\sigma_1:=(\cC\oplus0)^\sigma$ and $\cC^\sigma_2:=(0\oplus\cC)^\sigma$. Let $n$ be the norm of $\cC$. The quadratic form $q\colon\cB\to\FF$, $q((x_1,x_2)):=n(x_1)+n(x_2)$, will be called the \emph{norm} of $\cB$. The nondegenerate bilinear form defined by $t\colon \cB\times\cB\rightarrow\FF$, $t(x,y)=n(x_1,y_1)+n(x_2,y_2)$ for $x = (x_1,x_2)$, $y = (y_1,y_2) \in\cB$ (i.e., $t$ is the linearization of $q$) will be called the \emph{trace} of $\cB$. 

Denote by $\cV_\alb^{12}$ the Jordan subpair $(\cV_\alb)_1(e)$ of the Peirce decomposition $\cV_\alb = (\cV_\alb)_0(e) \oplus (\cV_\alb)_1(e) \oplus (\cV_\alb)_2(e)$ relative to the idempotent $e=(E_3,E_3)$; that is, $(\cV_\alb^{12})^+=(\cV_\alb^{12})^-:=\iota_1(\cC)\oplus\iota_2(\cC)$. Identifying $\iota_1(\cC)\oplus\iota_2(\cC)\equiv\cC\oplus\cC=\cB$, the trace $t$ of $\cB$ is also defined as a map $(\cV_\alb^{12})^+ \times (\cV_\alb^{12})^- \to \FF$.
\end{df}

\begin{proposition} 
The quadratic and triple products of $\cV_\alb^{12}$ are given by:
\begin{equation*}
U_x(y)= 4t(x,y)x 
 -4n(x_1)\iota_1(y_1) -4n(x_2)\iota_2(y_2) 
 -4\iota_1 \big( \bar y_2(x_2x_1) \big) -4\iota_2 \big( (x_2x_1)\bar y_1 \big),
\end{equation*}
and
\begin{align*}
\{x,y,z\}= U_{x,z}(y)= &~ 4t(x,y)z + 4t(z,y)x 
  -4n(x_1,z_1)\iota_1(y_1) -4n(x_2,z_2)\iota_2(y_2) \\
& -4\iota_1 \big( \bar y_2(x_2z_1+z_2x_1) \big) -4\iota_2 \big( (x_2z_1+z_2x_1)\bar y_1 \big),
\end{align*}
for all $x=\iota_1(x_1)+\iota_2(x_2)$, $y=\iota_1(y_1)+\iota_2(y_2)\in\iota_1(\cC)\oplus\iota_2(\cC)$.
\end{proposition}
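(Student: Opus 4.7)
The plan is to derive both formulas directly from the known expression $U_x(y) = T(x,y)x - x^\#\times y$ stated earlier in the paper, using the multiplication rules on $\alb$ collected in the preceding subsection. Throughout, write $x = \iota_1(x_1)+\iota_2(x_2)$, $y = \iota_1(y_1)+\iota_2(y_2)$ (and similarly for $z$), and observe that all three have zero diagonal part, so $T(x) = T(y) = T(z) = 0$.

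First, I would evaluate $T(x,y) := T(xy)$. Using $\iota_i(a)\iota_i(b) = 2n(a,b)(E_{i+1}+E_{i+2})$ and $\iota_i(a)\iota_{i+1}(b) = \iota_{i+2}(\bar a\bar b)$, the Jordan product $xy$ decomposes as a diagonal piece $2n(x_1,y_1)(E_2+E_3) + 2n(x_2,y_2)(E_1+E_3)$ plus an $\iota_3$-piece that contributes nothing to the trace. Summing the three diagonal coefficients yields $T(x,y) = 4\bigl(n(x_1,y_1)+n(x_2,y_2)\bigr) = 4t(x,y)$, which already produces the $4t(x,y)x$ term of the formula.

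Next, I would compute the Freudenthal adjoint $x^\# = x^2 - T(x)x + S(x)\,1$. Since $T(x) = 0$ and $S(x) = -4(n(x_1)+n(x_2))$, a short calculation of $x^2$ (again using the product rules above) gives
\begin{equation*}
x^\# \;=\; -4n(x_1)E_1 \,-\, 4n(x_2)E_2 \,+\, 2\iota_3(\bar x_1\bar x_2).
\end{equation*}
Then I would apply the linearization $a\times b = 2ab - T(a)b - T(b)a + S(a,b)\,1$, with $a = x^\#$ and $b = y$. Here $T(y) = 0$, $T(x^\#) = -4(n(x_1)+n(x_2))$, and $S(x^\#,y) = -T(x^\# y) = 0$ (the product $x^\# y$ has no diagonal part). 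The only nontrivial piece is $2x^\# y$, which I would expand using the Peirce rules $E_i\iota_i = 0$, $E_{i+1}\iota_i = E_{i+2}\iota_i = \tfrac{1}{2}\iota_i$, together with $\iota_3(c)\iota_1(b) = \iota_2(\bar c\bar b)$ and $\iota_3(c)\iota_2(b) = \iota_1(\bar b\bar c)$, applied to $c = \bar x_1\bar x_2$. Combining $2x^\# y$ with the term $-T(x^\#)y$ collapses the coefficients of $\iota_1(y_1)$ and $\iota_2(y_2)$ to $4n(x_1)$ and $4n(x_2)$ respectively, yielding
\begin{equation*}
x^\#\times y \;=\; 4n(x_1)\iota_1(y_1) + 4n(x_2)\iota_2(y_2) + 4\iota_1\bigl(\bar y_2(x_2x_1)\bigr) + 4\iota_2\bigl((x_2x_1)\bar y_1\bigr).
\end{equation*}
Substituting into $U_x(y) = T(x,y)x - x^\#\times y$ gives the first displayed formula. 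Finally, the triple product $\{x,y,z\} = U_{x+z}(y) - U_x(y) - U_z(y)$ follows by linearizing in $x$: the bilinear quantities $t(x,y)$, $n(x_i)$, and the product $x_2x_1$ polarize to $t(x,y)z + t(z,y)x$, $n(x_1,z_1)$, and $x_2z_1 + z_2x_1$, respectively, producing the stated expression.

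The only real obstacle is bookkeeping: keeping the cyclic indexing of the $\iota_i$'s, the involutions, and the order of the octonion factors consistent. No conceptual difficulty arises, since $T(x)$ vanishes for these elements and kills many cross terms, and the ground field has characteristic different from $2$ so that the linearization step is reversible.
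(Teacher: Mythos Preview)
Your argument is correct, and it takes a genuinely different route from the paper's own proof. The paper works directly from the definition $U_x(y)=2L_x^2(y)-L_{x^2}(y)$: it computes $xy$, then $L_x^2(y)$, then $x^2$ and $L_{x^2}(y)$, and finally combines them. You instead invoke the McCrimmon--Freudenthal identity $U_x(y)=T(x,y)x-x^\#\times y$ (recorded in the paper just before this Proposition), so that the $4t(x,y)x$ term is immediate and the remaining work is isolating $x^\#$ and then $x^\#\times y$. Both computations are of comparable length and rely on the same multiplication table for $\alb$; your version has the advantage that the two pieces of the final formula are visible from the outset, while the paper's version avoids any appeal to the cross product and stays entirely within the Jordan-algebra definition of $U_x$. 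Your handling of the auxiliary identities ($T(x)=0$, $T(x^\#)=-4(n(x_1)+n(x_2))$, $S(x^\#,y)=0$) is accurate, and the linearization to obtain $\{x,y,z\}$ is exactly what the paper does as well.
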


\begin{proof} 
Take $x,y\in\iota_1(\cC)\oplus\iota_2(\cC)$. Then,
\begin{align*}
xy=& \big( \iota_1(x_1)+\iota_2(x_2) \big) \big( \iota_1(y_1)+\iota_2(y_2) \big) \\
=&2n(x_1,y_1)(E_2+E_3)+2n(x_2,y_2)(E_3+E_1)+\iota_3(\bar x_1 \bar y_2 + \bar y_1 \bar x_2), \\[.3cm]
L^2_x(y)=&2n(x_1,y_1)\iota_1(x_1) + n(x_2,y_2)\iota_1(x_1) + \iota_2 \big( (y_2x_1)\bar x_1 + (x_2y_1)\bar x_1 \big) \\
&+ n(x_1,y_1)\iota_2(x_2) + 2n(x_2,y_2)\iota_2(x_2) + \iota_1 \big( \bar x_2(y_2x_1) + \bar x_2(x_2y_1) \big) \\
=& 2t(x,y)x + n(x_1)\iota_2(y_2) + n(x_2)\iota_1(y_1) - \iota_1 \big( \bar y_2(x_2x_1) \big) - \iota_2 \big( (x_2x_1)\bar y_1 \big), 
\end{align*}
so we get
\begin{align*}
x^2=& 4n(x_1)(E_2+E_3) + 4n(x_2)(E_3+E_1) + 2\iota_3(\bar x_1 \bar x_2), \\[.3cm]
L_{x^2}(y) =& 4n(x_1)\iota_1(y_1) + 2n(x_2)\iota_1(y_1) + 2\iota_2 \big( (x_2x_1)\bar y_1 \big) \\
&+ 2n(x_1)\iota_2(y_2) + 4n(x_2)\iota_2(y_2) + 2\iota_1 \big( \bar y_2(x_2x_1) \big) \\
=& \big( 4n(x_1)+2n(x_2) \big) \iota_1(y_1) + \big( 2n(x_1)+4n(x_2) \big) \iota_2(y_2) \\
&+ 2\iota_2 \big( (x_2x_1)\bar y_1 \big) +2\iota_1 \big( \bar y_2(x_2x_1) \big). 
\end{align*}
Then, by substituting $U_x(y):=2L_x^2(y)-L_{x^2}(y)$ we obtain the first expression, and its linearization is the second one.
\end{proof}

\begin{df} \label{parB} 
Define the {\em bi-Cayley pair} as the Jordan pair $\cV_\cB:=(\cB,\cB)$ with products:
\begin{equation} \begin{aligned}
Q^\sigma_x(y) = Q_x(y)= & t(x,y)x - \Big( n(x_1)y_1+\bar y_2(x_2x_1),  n(x_2)y_2+(x_2x_1)\bar y_1 \Big) \\
=& \Big( x_1\bar y_1x_1+\bar x_2(y_2x_1), x_2\bar y_2x_2+(x_2y_1)\bar x_1 \Big),  \\[.3cm]
\{x,y,z\}^\sigma = \{x,y,z\}=& Q_{x,z}(y) \\ 
=& t(x,y)z + t(z,y)x  \\
& - \Big( n(x_1,z_1)y_1+\bar y_2(x_2z_1+z_2x_1), n(x_2,z_2)y_2+(x_2z_1+z_2x_1)\bar y_1 \Big) \\
=& \Big( x_1(\bar y_1z_1)+z_1(\bar y_1x_1)+\bar x_2(y_2z_1)+\bar z_2(y_2x_1), \\
& \hspace{2cm}  (x_2\bar y_2)z_2+(z_2\bar y_2)x_2+(x_2y_1)\bar z_1+(z_2y_1)\bar x_1 \Big).
\end{aligned} \end{equation}
\end{df}

Since the products of $\cV_\cB$ and $\cV_\alb^{12}$ are proportional, it is clear that $\cV_\cB$ is a Jordan pair and the map $\cV_\alb^{12} \rightarrow \cV_\cB$, $\iota_1(x_1)+\iota_2(x_2)\mapsto(2x_1,2x_2)$ is an isomorphism of Jordan pairs if $\chr\FF\neq2$. We also define the {\em bi-Cayley triple system} as the Jordan triple system $\cT_\cB:=\cB$ associated to the bi-Cayley pair $\cV_\cB$, so its quadratic and triple products are defined as for $\cV_\cB$.

\begin{df}
Consider $\mathcal{M}_{1\times2}:=(\mathcal{M}_{1\times2}(\cC),\mathcal{M}_{1\times2}(\cC^\text{op}))$, which is known to be a simple Jordan pair (see \cite{L75}). The quadratic products are given by
$Q_x(y)=x(y^*x)$, where $y^*$ denotes $y$ trasposed with coefficients in the opposite algebra. Considering elements in $\cC$, we can write:
\begin{equation} \begin{aligned}
Q^+_x(y)=& x(yx)= \Big( x_1y_1x_1+x_2(y_2x_1), x_1(y_1x_2)+x_2y_2x_2 \Big), \\
Q^-_y(x)=& (yx)y= \Big( y_1x_1y_1+(y_1x_2)y_2, (y_2x_1)y_1+y_2x_2y_2 \Big),
\end{aligned} \end{equation}
where we have omitted some parentheses using the alternativity of $\cC$.
\end{df}

Although the following result is probably known, the author does not know of a reference, so we include the proof.

\begin{proposition} \label{isombicayleypairs}
The Jordan pairs $\cV_\cB$ and $\mathcal{M}_{1\times2}$ are isomorphic.
\end{proposition}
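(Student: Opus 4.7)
The plan is to exhibit a concrete linear bijection $\varphi = (\varphi^+,\varphi^-)$ and then verify the quadratic identity in each coordinate. The formulas for the quadratic products of $\cV_\cB$ differ from those of $\mathcal{M}_{1\times 2}$ only in that various entries appear under the standard involution $\bar{\phantom{x}}$ of $\cC$. Since $\bar{\phantom{x}}$ is an anti-automorphism, we can absorb those bars by twisting only certain components of the map.

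Concretely, I would define
\[
\varphi^+\colon \cB\to \mathcal{M}_{1\times 2}(\cC),\qquad (x_1,x_2)\mapsto (x_1,\bar x_2),
\]
\[
\varphi^-\colon \cB\to \mathcal{M}_{1\times 2}(\cC^{\text{op}}),\qquad (y_1,y_2)\mapsto (\bar y_1,y_2).
\]
Both maps are linear bijections (with inverses given by the same recipe, since $\overline{\bar z}=z$). The asymmetry in conjugation is exactly what is needed to convert the formulas of $\cV_\cB$ into those of $\mathcal{M}_{1\times 2}$ on the nose.

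Next, I would check the identity $\varphi^\sigma(Q^\sigma_x(y))=Q^\sigma_{\varphi^\sigma(x)}(\varphi^{-\sigma}(y))$ coordinate by coordinate. For the first coordinate of $Q^+$, the $\cV_\cB$-expression $x_1\bar y_1 x_1+\bar x_2(y_2 x_1)$ already matches the $\mathcal{M}_{1\times 2}$-formula once $x_1,x_2,y_1,y_2$ are replaced by $x_1,\bar x_2,\bar y_1,y_2$, so no manipulation is needed. For the second coordinate, one applies $\bar{\phantom{x}}$ to $x_2\bar y_2 x_2 + (x_2 y_1)\bar x_1$; using the anti-automorphism law $\overline{abc}=\bar c\bar b\bar a$, this becomes $\bar x_2 y_2\bar x_2 + x_1(\bar y_1\bar x_2)$, which is precisely the second coordinate of $Q^+_{\varphi^+(x)}(\varphi^-(y))$. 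The computation for $Q^-$ is entirely parallel, with the roles of $x$ and $y$ interchanged; here one uses that applying $\bar{\phantom{x}}$ to the first coordinate of $Q^-_y(x)$ of $\cV_\cB$ produces the first coordinate of $Q^-_{\varphi^-(y)}(\varphi^+(x))$ in $\mathcal{M}_{1\times 2}$.

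The only real obstacle is bookkeeping: one must keep track carefully of which component of each element gets conjugated under $\varphi$, of where the alternativity of $\cC$ is being invoked (so that unbracketed products like $x_1\bar y_1 x_1$ or $y_1 x_1 y_1$ are unambiguous), and of the fact that $\varphi^-$ lands in $\mathcal{M}_{1\times 2}(\cC^{\text{op}})$, so that multiplications appearing under $Q^-$ are opposite-multiplications. Once these conventions are fixed, the identities fall out mechanically from $\overline{abc}=\bar c\,\bar b\,\bar a$, and the proposition follows.
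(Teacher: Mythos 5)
Your proposal is correct and follows essentially the same route as the paper: exhibit an explicit linear bijection that inserts the standard involution on selected components and verify $\varphi^\sigma(Q_x y)=Q_{\varphi^\sigma(x)}(\varphi^{-\sigma}(y))$ by direct computation using $\overline{ab}=\bar b\,\bar a$. Your map $(x_1,x_2)\mapsto(x_1,\bar x_2)$, $(y_1,y_2)\mapsto(\bar y_1,y_2)$ differs from the paper's $(x_1,x_2)\mapsto(\bar x_2,x_1)$, $(y_1,y_2)\mapsto(y_2,\bar y_1)$ only by composition with the slot-swapping automorphism of $\mathcal{M}_{1\times2}$, and both check out.
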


\begin{proof} 
There is an isomorphism  $\varphi=(\varphi^+,\varphi^-) \colon \cV_\cB\rightarrow\mathcal{M}_{1\times2}$ given by:
$$ \varphi^+ \colon (x_1,x_2) \mapsto (\bar x_2,x_1), \hspace{1cm} \varphi^- \colon (y_1,y_2) \mapsto (y_2,\bar y_1). $$
Indeed,
\begin{align*}
\varphi^+(Q_xy)=& \varphi^+ \big( x_1\bar y_1x_1+\bar x_2(y_2x_1), (x_2\bar y_2)x_2+(x_2y_1)\bar x_1 \big) \\
=& \big(\bar x_2y_2\bar x_2+x_1(\bar y_1\bar x_2), x_1\bar y_1x_1+\bar x_2(y_2x_1) \big), \\[.2cm]
Q^+_{\varphi^+(x)} \big( \varphi^-(y) \big) =& Q^+_{(\bar x_2,x_1)}(y_2,\bar y_1)= \big( \bar x_2y_2\bar x_2+x_1(\bar y_1\bar x_2),\bar x_2(y_2x_1)+x_1\bar y_1x_1 \big), \\[.2cm]
\varphi^-(Q_yx)=& \varphi^- \big( y_1\bar x_1y_1+\bar y_2(x_2y_1), (y_2\bar x_2)y_2+(y_2x_1)\bar y_1 \big) \\
=& \big( y_2\bar x_2y_2+(y_2x_1)\bar y_1, \bar y_1x_1\bar y_1+(\bar y_1\bar x_2)y_2 \big), \\[.2cm]
Q^-_{\varphi^-(y)} \big( \varphi^+(x) \big) =& Q^-_{(y_2,\bar y_1)}(\bar x_2,x_1)= \big( y_2\bar x_2y_2+(y_2x_1)\bar y_1, (\bar y_1\bar x_2)y_2+\bar y_1x_1\bar y_1 \big),
\end{align*}
so we get $\varphi^+(Q_xy)=Q^+_{\varphi^+(x)} \big( \varphi^-(y) \big)$ and $\varphi^-(Q_yx)=Q^-_{\varphi^-(y)} \big( \varphi^+(x) \big)$.
\end{proof}

The generic trace form of $\cV_\alb$ is given by $T(x,y) := T(xy)$ where $T$ is the trace form of $\alb$ (\cite[17.10]{L75}), and the generic trace form of $\mathcal{M}_{1\times2}$ is given by $\text{tr}(xy^*)=\text{tr}(x_1y_1+x_2y_2)$ (\cite[17.9]{L75}), where $\text{tr}$ denotes the trace of $\cC$. Thus, applying the isomorphism in Proposition~\ref{isombicayleypairs}, we get that the generic trace of $\cV_\cB$ is the bilinear form $t = n \perp n$, that is, $t(x^+, y^-) = n(x_1, y_1) + n(x_2, y_2)$ for $x=(x_1,x_2)$, $y=(y_1,y_2) \in \cB$. (Note that $t = \frac{1}{4}T|_{\cV_\cB}$.) Also, we will refer to $t$, respectively to $T$, as the trace of $\cT_\cB$, respectively of $\cT_\alb$.

\begin{lemma} \label{lemmatrace} 
For any grading on the Jordan pairs and triple systems of types bi-Cayley or Albert, the trace is homogeneous.
\end{lemma}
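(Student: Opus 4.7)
The plan is to reduce the statement to the already-proved Proposition~\ref{lemmatracejordanpairs} on the homogeneity of the generic trace of a finite-dimensional simple Jordan pair.

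First I would dispose of the pair case. Since $\cV_\cB$ and $\cV_\alb$ are finite-dimensional simple Jordan pairs (the latter obviously, the former via the isomorphism $\cV_\cB\cong\mathcal{M}_{1\times2}$ in Proposition~\ref{isombicayleypairs}), and their generic traces are precisely $t$ and $T$ respectively, Proposition~\ref{lemmatracejordanpairs} applied to each of them gives immediately that $t$ (resp.\ $T$) is homogeneous with respect to any group grading.

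Next I would handle the triple system case by reducing to the pair case. Let $\Gamma\colon\cT=\bigoplus_{g\in G}\cT_g$ be a $G$-grading on $\cT\in\{\cT_\cB,\cT_\alb\}$. As recalled in Section~2.1, the assignment $\cV^\sigma_g:=\cT_g$ defines a $G$-grading $(\Gamma,\Gamma)$ on the associated Jordan pair $\cV=(\cT,\cT)$, namely $\cV_\cB$ or $\cV_\alb$. By the pair case already established, the generic trace of $\cV$ is homogeneous with respect to $(\Gamma,\Gamma)$, i.e., $t(\cV^+_g,\cV^-_h)=0$ whenever $g+h\neq 0$. Translating back, this reads $t(\cT_g,\cT_h)=0$ whenever $g+h\neq 0$, which is exactly the homogeneity of the trace form on the graded triple system.

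There is essentially no obstacle: the whole content of the lemma is the observation that the generic trace of a Jordan triple system coincides with the generic trace of its associated Jordan pair, so that Proposition~\ref{lemmatracejordanpairs} applies verbatim once one passes from $\cT$ to $\cV=(\cT,\cT)$. I would just spell out these two short reductions and cite Proposition~\ref{lemmatracejordanpairs} to close the argument.
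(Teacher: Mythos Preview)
Your proposal is correct and follows exactly the paper's approach: the paper's proof is the single sentence ``Consequence of Proposition~\ref{lemmatracejordanpairs} and the fact that gradings on a triple system extend to gradings on the associated Jordan pair,'' which is precisely the two reductions you spell out.
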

\begin{proof}
Consequence of Proposition~\ref{lemmatracejordanpairs} and the fact that gradings on a triple system extend to gradings on the associated Jordan pair.
\end{proof}

\subsection{Some automorphisms} \label{automorphismssubsection}
In order to study the gradings on the Jordan pairs and triple systems under consideration, we will need to use some automorphisms defined in this section.
 
\begin{notation}
Recall that for any automorphism $\varphi = (\varphi^+, \varphi^-)$ of $\cV_\cB$ or $\cV_\alb$, the pair $(\varphi^-, \varphi^+)$ is also an automorphism, which we denote by $\widehat{\varphi}$. 

Denote by $\bar\tau_{12}$ the order 2 automorphism of $\alb$ (and therefore of $\cT_\alb$ and $\cV_\alb$) given by $E_1\leftrightarrow E_2$, $E_3\mapsto E_3$, $\iota_1(x)\leftrightarrow\iota_2(\bar x)$, $\iota_3(x)\mapsto\iota_3(\bar x)$. Similarly, we define $\bar\tau_{23}$ and $\bar\tau_{13}$.

Identifying $\cB$ with $\iota_1(\cC)\oplus\iota_2(\cC)$, the automorphism $\bar\tau_{12}$ of $\alb$ restricts to one of $\cT_\cB$ (and therefore of $\cV_\cB$), denoted also by $\bar\tau_{12}$, and given by: $$\bar\tau_{12} \colon \cB\rightarrow\cB, \quad (x_1,x_2)\mapsto (\bar x_2,\bar x_1).$$

\smallskip

Take $\lambda_1,\lambda_2,\lambda_3\in \FF^\times$ and $\mu_i:=\lambda^{-1}_i\lambda_{i+1}\lambda_{i+2}$. Define $c_{\lambda_1,\lambda_2,\lambda_3}$ by 
\begin{equation}\label{eq:iotas}
\begin{split}
&\iota_i(x)^+\mapsto\iota_i(\lambda_i x)^+,\quad \iota_i(x)^-\mapsto\iota_i(\lambda_i^{-1} x)^-,\\
& E_i^+\mapsto\mu_i E_i^+,\quad E_i^-\mapsto\mu_i^{-1} E_i^-.
\end{split} 
\end{equation}
One checks that $c_{\lambda_1,\lambda_2,\lambda_3}$ is an automorphism of $\cV_\alb$ (these were considered before, for example in \cite[1.6]{G01}). If $\lambda\in \FF^\times$, denote $c_\lambda:=c_{\lambda,\lambda,\lambda}$.

The automorphisms $c_{\lambda_1,\lambda_2,\lambda_3}$ restrict to $\cV_\cB$. For $\lambda,\mu\in \FF^\times$ define $c_{\lambda,\mu} \in \Aut \cV_\cB$ given by:
\begin{equation*}
c^+_{\lambda,\mu} \colon (x_1,x_2) \mapsto (\lambda x_1,\mu x_2), \quad
c^-_{\lambda,\mu} \colon (y_1,y_2) \mapsto (\lambda^{-1}y_1,\mu^{-1}y_2).
\end{equation*}
We also write $c_\lambda:=c_{\lambda,\lambda}$ (which is consistent with notation introduced in the previous paragraph).
\end{notation}

\begin{proposition} 
For each $a\in\cC$, there is an automorphism $\varphi_a$ of $\cV_\cB$ given by:
\begin{equation*}
\varphi_a^+ \colon (x_1,x_2) \mapsto (x_1-\bar x_2a, x_2), \quad
\varphi_a^- \colon (y_1,y_2) \mapsto (y_1, a\bar y_1+y_2),
\end{equation*}
for any $x_1,x_2,y_1,y_2\in\cC$.
\end{proposition}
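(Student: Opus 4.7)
The plan is to verify that $\varphi_a=(\varphi_a^+,\varphi_a^-)$ is a bijective pair of $\FF$-linear maps that preserves the quadratic products. Linearity is immediate, and a direct substitution shows $\varphi_{-a}\varphi_a=\id=\varphi_a\varphi_{-a}$ on each side; for instance, $\varphi_{-a}^+(\varphi_a^+(x_1,x_2))=\varphi_{-a}^+(x_1-\bar x_2 a,\,x_2)=(x_1-\bar x_2 a+\bar x_2 a,\,x_2)=(x_1,x_2)$, and analogously for $\varphi_a^-$. Thus $\varphi_a$ is invertible.

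The content of the statement is therefore the Jordan pair identity
\[
\varphi_a^\sigma\bigl(Q^\sigma(x)y\bigr)=Q^\sigma\bigl(\varphi_a^\sigma(x)\bigr)\varphi_a^{-\sigma}(y),\qquad \sigma=\pm,
\]
for all $x\in\cV_\cB^\sigma$, $y\in\cV_\cB^{-\sigma}$. Since $\chr\FF\neq 2$, it suffices to check the triple-product version, and by symmetry it is enough to verify the case $\sigma=+$. One substitutes the explicit formulas from Definition~\ref{parB}: writing $x'=\varphi_a^+(x)=(x_1-\bar x_2 a,\,x_2)$ and $y'=\varphi_a^-(y)=(y_1,\,a\bar y_1+y_2)$, one expands both sides componentwise. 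The second components match readily because $\varphi_a$ fixes $x_2$ and the second component of $Q_x^+y$. The first component requires expanding $(x_1-\bar x_2 a)\bar y_1(x_1-\bar x_2 a)+\bar x_2\bigl((a\bar y_1+y_2)(x_1-\bar x_2 a)\bigr)$ and showing that the extra terms assemble into $-\overline{(Q_x^+ y)_2}\,a$, which is what the application of $\varphi_a^+$ to $Q_x^+y$ produces.

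The essential tools are the alternative-algebra identities in $\cC$, namely left and right alternativity $x(xy)=x^2y$, $(yx)x=yx^2$, flexibility $(xy)x=x(yx)$, and the Moufang identities, together with the involution identities $x+\bar x=n(x,1)1$ and $x\bar x=n(x)1$. These allow one to rewrite the troublesome triple products $\bar x_2(a\bar y_1\cdot\bar x_2 a)$, $(\bar x_2 a)\bar y_1(\bar x_2 a)$, and so on, in a common form that cancels correctly. The main obstacle is bookkeeping: because $\cC$ is nonassociative, every three-factor product involving $a$ and elements of $\cC$ must be rearranged via the Moufang/alternative laws rather than by brute reassociation.

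An alternative route, which I would keep in mind as a check, is to transport $\varphi_a$ through the isomorphism $\cV_\cB\cong\mathcal{M}_{1\times 2}$ of Proposition~\ref{isombicayleypairs}. A short computation shows that $\varphi_a$ corresponds to the ``shear'' automorphism $(u_1,u_2)\mapsto(u_1,\,u_2-u_1 a)$ on $\mathcal{M}_{1\times 2}(\cC)$ and $(v_1,v_2)\mapsto(v_1+a v_2,\,v_2)$ on $\mathcal{M}_{1\times 2}(\cC^{\mathrm{op}})$, and these are the standard elementary unipotent automorphisms of the matrix Jordan pair, whose automorphism property is a classical consequence of the Moufang identities in $\cC$.
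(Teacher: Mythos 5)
Your strategy---direct componentwise verification of $\varphi_a^\sigma(Q^\sigma(x)y)=Q^\sigma(\varphi_a^\sigma(x))\varphi_a^{-\sigma}(y)$ via the alternative and Moufang identities of $\cC$---is viable, but it is genuinely different from the paper's argument, which avoids the octonion computation altogether. The paper observes that $\varphi_a$ is exactly the inner automorphism $\beta((a,0),(0,1))=(B((a,0),(0,1)),\,B((0,1),(a,0))^{-1})$: one computes $D((a,0),(0,1))(u_1,u_2)=(\bar u_2a,0)$, $D((0,1),(a,0))(v_1,v_2)=(0,a\bar v_1)$ and $Q_{(a,0)}Q_{(0,1)}=0=Q_{(0,1)}Q_{(a,0)}$, so the Bergmann operators are $\id$ minus a square-zero map and the stated formulas drop out, while the automorphism property is the general fact that $\beta(x,y)$ is an automorphism for quasi-invertible $(x,y)$ (equivalently, $\varphi_a=\exp(-\nu((a,0),(0,1)))$ with a nilpotent inner derivation). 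That route costs nothing beyond two one-line evaluations of the triple product; yours costs a genuine Moufang computation but is self-contained at the level of $\cC$. Your side remark that under Proposition~\ref{isombicayleypairs} the map becomes the elementary shear on $\mathcal{M}_{1\times2}(\cC)$ is correct, though as a proof it only relocates the same octonion computation.

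Two points need attention before your version is complete. First, the first-component identity \emph{is} the content of the proposition, and you describe the expansion without performing it; even the ``easy'' second component already needs the middle Moufang identity $(x_2y_1)(\bar a x_2)=x_2(y_1\bar a)x_2$ to cancel, so the cancellations must actually be exhibited. Second, ``by symmetry it is enough to verify $\sigma=+$'' is not automatic: $\varphi_a^+$ and $\varphi_a^-$ have different shapes, and the definition of homomorphism requires both identities. Either carry out the $\sigma=-$ computation as well, or justify the reduction, e.g.\ by noting that for bijective $h^\pm$ on a pair containing invertible elements, $h^+Q^+(x)=Q^+(h^+x)h^-$ for all $x$ together with (QJP3) applied to an invertible $x$ yields $Q^-(h^-y)=h^-Q^-(y)(h^+)^{-1}$, which is the $\sigma=-$ condition.
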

\begin{proof} 
It suffices to check that $\varphi_a$ is the inner automorphism $\beta((a,0),(0,1))$. (Notice that $\varphi_a$ is the exponential of the derivation $d_a = -\nu((a,0),(0,1))$, which is nilpotent of order $2$ and given by $d_a^+(x_1,x_2)=(-\bar x_2a,0)$, $d_a^-(y_1,y_2)=(0,a\bar y_1)$.)
\end{proof}

\begin{remark} 
Since $\varphi_a\varphi_b=\varphi_{a+b}$ for any $a,b\in\cC$, these automorphisms generate an abelian subgroup of $\Aut\cV_\cB$ isomorphic to $(\cC,+)$. The same is true for $\widehat{\varphi}_a := (\varphi_a^-,\varphi_a^+)$, $a\in\cC$. Note that, since $\cB=\cC\oplus\cC$, we can write 
$$ \varphi_a^+ = \left(\begin{array}{cc} 1 & -r_{\bar a} \\ 0 & 1 \end{array}\right), \quad  \varphi_a^- = \left(\begin{array}{cc} 1 & 0 \\ l_{\bar a} & 1 \end{array}\right), $$ 
where $l_a$, $r_a$ denote the left and right multiplications by $a$ in the para-Cayley algebra $\bar \cC$. This matrix notation is useful to make computations with these automorphisms.
\end{remark}

\begin{proposition}\label{automfiofA} 
Let $\lambda\in \FF$ and $a\in\cC$ be such that $n(a)+\lambda^2=1$. There is an automorphism $\phi_1(a,\lambda)$ of $\alb$ given by:
\begin{align*}
x=\sum_{i=1}^3 \big( \alpha_iE_i+\iota_i(x_i) \big) \; \mapsto &  \;  \alpha_1E_1 + \big( \alpha_2\lambda^2+\alpha_3n(a)+2\lambda n(\bar a,x_1) \big) E_2 \\
& + \big( \alpha_2n(a)+\alpha_3\lambda^2-2\lambda n(\bar a,x_1) \big) E_3 \\
& +\iota_1 \Big( x_1+ \big( \frac{1}{2}\alpha_3\lambda-\frac{1}{2}\alpha_2\lambda-n(\bar a,x_1) \big) \bar a \Big) \\
& +\iota_2(\lambda x_2-\bar x_3a)+\iota_3(\lambda x_3+a\bar x_2).
\end{align*}
\end{proposition}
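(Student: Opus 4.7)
The approach is direct verification. Since $\phi_1(a,\lambda)$ is manifestly linear and bijective (its inverse turns out to be $\phi_1(-a,\lambda)$, which lies in the same family because $n(-a)+\lambda^2 = n(a)+\lambda^2 = 1$), it only remains to check that $\phi_1(a,\lambda)$ respects the product on $\alb$. First I would check that $\phi_1(a,\lambda)$ preserves the unit $1 = E_1+E_2+E_3$: applying the formula to this element, the coefficient of each $E_i$ becomes $\lambda^2+n(a)=1$ (using also that $\phi_1(a,\lambda)(E_1)=E_1$), while the two $\iota_1(\bar a)$ contributions coming from $\phi_1(a,\lambda)(E_2)$ and $\phi_1(a,\lambda)(E_3)$ have opposite signs ($-\tfrac12\lambda$ and $+\tfrac12\lambda$) and cancel.

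To verify multiplicativity, I would reduce to checking the images of the fundamental products among the spanning set $\{E_i,\iota_j(x) : 1\le i,j\le 3,\; x\in\cC\}$, namely $E_iE_j$, $E_i\iota_j(x)$ and $\iota_i(x)\iota_j(y)$, whose values are recalled in Section~\ref{sectionCayleyAlbert}. Products involving $E_1$ are trivially preserved because $\phi_1(a,\lambda)$ fixes $E_1$ and acts as a scalar ($0$ or $\tfrac12$) on each $E_1$-Peirce component on either side. The nontrivial identities to verify are therefore: (a) the idempotency and orthogonality of $E_2$ and $E_3$ under $\phi_1(a,\lambda)$; (b) the mixed products $\iota_i(x)\iota_{i+1}(y) = \iota_{i+2}(\bar x\bar y)$ whenever the indices involve $2$ and $3$; and (c) the diagonal products $\iota_i(x)\iota_i(y) = 2n(x,y)(E_{i+1}+E_{i+2})$ for $i\in\{2,3\}$. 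In every case, the identity reduces, after expansion, to the scalar constraint $\lambda^2+n(a)=1$ together with standard octonion identities: alternativity, $a\bar a = n(a)\cdot 1$, the polarization $a\bar b + b\bar a = 2n(a,b)\cdot 1$, and the Moufang law.

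The main obstacle is case (c) for $i\in\{2,3\}$: after applying $\phi_1(a,\lambda)$, the right-hand side acquires nonzero components in each of $\FF E_1, \FF E_2, \FF E_3$ and $\iota_1(\cC)$, and these must recombine to exactly $2n(x,y)(E_{i+1}+E_{i+2})$. This cancellation requires the full polarized form of $\lambda^2+n(a)=1$ and a careful handling of depth-two octonion products such as $\bar a(\bar y a)$ and $(xa)\bar a$. A more conceptual (but less elementary) alternative would be to identify $\phi_1(a,\lambda)$ as an element of the stabilizer of $E_1$ in $\Aut(\alb)\cong F_4$, which is known to be isomorphic to $\Spin_9$ acting on the $E_1$-Peirce decomposition via its natural $9$-dimensional and spin $16$-dimensional representations; the pairs $(a,\lambda)$ with $n(a)+\lambda^2=1$ parametrize a natural $8$-sphere of transformations inside this $\Spin_9$, so the existence of $\phi_1(a,\lambda)$ follows structurally from this identification.
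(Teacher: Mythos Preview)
Your proposal is correct and follows the same approach as the paper: the paper's proof consists of the single word ``Straightforward,'' meaning exactly the direct verification you outline. Your sketch is in fact more detailed than what the paper provides, and the organizing observations you make (that $\phi_1(-a,\lambda)$ is the inverse, that $\phi_1(a,\lambda)$ fixes $E_1$ and preserves the $E_1$-Peirce decomposition so that all products with $E_1$ are automatic) are the natural ones for carrying out the computation efficiently.
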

\begin{proof} Straightforward. \end{proof}

\begin{proposition} \label{phialambda}
Let $a\in\cC$ and $\lambda\in \FF$ be such that $n(a)+\lambda^2=1$. There is an automorphism of $\cT_\cB$ given by:
$$ \varphi_{a,\lambda} \colon (x_1,x_2)\mapsto(\lambda x_1-\bar x_2a,a\bar x_1+\lambda x_2). $$
Moreover, $\varphi_{a,\lambda}\in \Ort^+(\cB,q)$. 
\end{proposition}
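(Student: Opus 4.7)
My plan is to realize $\varphi_{a,\lambda}$ as the restriction to a Peirce subtriple of an Albert automorphism of the same shape as Proposition~\ref{automfiofA}, then obtain orthogonality from invariance of the generic trace, and finally upgrade from $\Ort$ to $\Ort^+$ by a connectedness argument.

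For the first part, by the cyclic symmetry of the indices in Proposition~\ref{automfiofA}, the analogous construction yields an automorphism $\phi_3(a,\lambda)$ of $\alb$ that fixes $E_3$ and whose action on the Peirce-$1$ subspace $\iota_1(\cC) \oplus \iota_2(\cC)$ is $\iota_1(x_1) + \iota_2(x_2) \mapsto \iota_1(\lambda x_1 - \bar x_2 a) + \iota_2(a\bar x_1 + \lambda x_2)$. Since $\phi_3(a,\lambda) \in \Aut(\alb) \leq \Aut(\cT_\alb)$ fixes $E_3$, it preserves the Peirce decomposition relative to $(E_3,E_3)$ and its restriction is an automorphism of the Peirce-$1$ subtriple; under the identification of this subtriple with $\cT_\cB$ (as in the isomorphism $\cV_\alb^{12} \to \cV_\cB$ that follows Definition~\ref{parB}), the restriction becomes exactly $\varphi_{a,\lambda}$.

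For the orthogonality claim, the $\AAut$-invariance of the minimal polynomial used in the proof of Proposition~\ref{lemmatracejordanpairs} shows that any automorphism of $\cT_\cB$ preserves the generic trace $t$; since $t$ is the polarization of $q$ and $\chr\FF\neq 2$, $\varphi_{a,\lambda}$ preserves $q$. As a sanity check, this may also be verified directly: expanding
\[
q(\varphi_{a,\lambda}(x_1,x_2)) = n(\lambda x_1 - \bar x_2 a) + n(a\bar x_1 + \lambda x_2)
\]
and using the identity $n(xy,z) = n(y,\bar x z) = n(x,z\bar y)$ for the Cayley norm to cancel the $\lambda$-linear cross terms yields $(\lambda^2 + n(a))\,q(x_1,x_2) = q(x_1,x_2)$.

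Finally, to refine to $\Ort^+(\cB,q)$, I use connectedness. The set $X = \{(a,\lambda) \in \cC \oplus \FF \med n(a) + \lambda^2 = 1\}$ is an irreducible smooth affine quadric of dimension $8$ over the algebraically closed field $\FF$, hence Zariski connected. The assignment $(a,\lambda) \mapsto \varphi_{a,\lambda}$ defines a morphism of algebraic varieties $X \to \Ort(\cB,q)$, and since $\varphi_{0,1} = \id \in \Ort^+(\cB,q)$ lies in its image, the whole (connected) image must lie in the identity component $\Ort^+(\cB,q)$. The main obstacle is really Step~1: a direct verification of $\varphi_{a,\lambda}(Q_x y) = Q_{\varphi_{a,\lambda}(x)}(\varphi_{a,\lambda}(y))$ would be computationally intricate, so the reduction to $\phi_3(a,\lambda)$ is the most economical route, and Steps~2 and~3 are then essentially formal.
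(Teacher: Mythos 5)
Your proof is correct. Step~1 coincides with the first argument the paper itself sketches: the paper notes that $\varphi_{a,\lambda}$ is the restriction of $\phi_1(a,\lambda)$ to $\iota_2(\cC)\oplus\iota_3(\cC)$, and your $\phi_3(a,\lambda)$ acting on $\iota_1(\cC)\oplus\iota_2(\cC)$ is the same device up to the cyclic symmetry of the indices. Where you genuinely diverge is in the rest. The paper's ``official'' proof instead exhibits $\varphi_{a,\lambda}$ explicitly as a product of the transvection-type automorphisms $\varphi_b$ and $\widehat{\varphi}_b$ of $\cV_\cB$ (with $b$ a suitable multiple of $a$, splitting into the cases $n(a)=0$ and $n(a)\neq0$); this single factorization simultaneously shows that $\varphi_{a,\lambda}$ is an automorphism fixed by the hat involution, hence lies in $\Aut\cT_\cB\leq\Ort(\cB,q)$, and that $\det\varphi_{a,\lambda}=1$ because each factor is unipotent. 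You obtain membership in $\Ort(\cB,q)$ from invariance of the generic trace (the paper instead cites $Q_x(x)=q(x)x$, see Remark~\ref{remarkPhialambda}) and then upgrade to $\Ort^+$ by noting that $(a,\lambda)\mapsto\varphi_{a,\lambda}$ is a morphism from the irreducible quadric $\{n(a)+\lambda^2=1\}$ into $\Ort(\cB,q)$ whose image contains the identity; this is a sound and pleasantly soft argument that avoids both the case split and the determinant computation. The trade-off is that the paper's factorization is reused later (Remark~\ref{remarkGeneratorsbiCayley} and Theorem~\ref{generatorsAutTB}) to place $\varphi_{a,\lambda}$ inside the subgroup generated by the $\varphi_b$, $\widehat{\varphi}_b$, which your argument does not provide.
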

\begin{proof} 
Note that, if we identify $\cB$ with $\iota_2(\cC)\oplus\iota_3(\cC)\subseteq\alb$, then $\varphi_{a,\lambda}$ is the restriction of $\phi_1(a,\lambda)$ to $\cB$, so it is an automorphism. We will give a different proof now. 
In case $n(a)=0$, $\lambda=\pm1$, define $\varphi:=\lambda \widehat{\varphi}_{\lambda a} \varphi_{\lambda a} \in \Aut\cV_\cB$, and in case $n(a)\neq0$, define $\varphi:=\widehat{\varphi}_{\mu a}\varphi_a\widehat{\varphi}_{\mu a}\in\Aut\cV_\cB$ with $\mu=\frac{1-\lambda}{n(a)}$. In both cases, it is checked that $\varphi_{a,\lambda} = \varphi\in\Aut\cT_\cB \leq \Ort(\cB,q)$. Since $\det(\varphi_a^\pm)=1=\det(\widehat{\varphi}_a^\pm)$ for any $a\in\cC$, we also have $\det(\varphi_{a,\lambda})=1$, and so $\varphi_{a,\lambda}\in \Ort^+(\cB,q)$.
\end{proof}

\begin{remark} \label{remarkPhialambda}
In $\cT_\cB$ we have $Q_x(x)=q(x)x$ for any $x\in\cB$ and, as a consequence, $\Aut\cT_\cB\leq \Ort(\cB,q)$. Since $\cB = \cC\oplus\cC$, we can write $\varphi_{a,\lambda} = \left(\begin{array}{cc} \lambda & -r_{\bar a} \\ l_{\bar a} & \lambda \end{array}\right)$, where $l_{a}$, $r_{a}$ are the left and right multiplications by $a$ in the para-Cayley algebra $\bar \cC$.
\end{remark}

\begin{df} 
Let $V$ be a finite-dimensional vector space and $q \colon V\rightarrow F$ a nondegenerate quadratic form. Recall that the map $\tau(a)=a$, $a\in V$, is extended to an involution of the Clifford algebra $\mathfrak{Cl}(V,q)$, called the {\em standard involution}. The map $\alpha(a)=-a$, $a\in V$, extended to an automorphism of $\mathfrak{Cl}(V,q)$, produces the standard $\ZZ_2$-grading $\mathfrak{Cl}(V,q)=\mathfrak{Cl}(V,q)_{\bar0}\oplus\mathfrak{Cl}(V,q)_{\bar1}$. The {\em Clifford group} of $\mathfrak{Cl}(V,q)$ is defined as $\Gamma=\Gamma(V,q) := \{ x \in \mathfrak{Cl}(V,q)^\times \med x\cdot V \cdot x^{-1} \subseteq V\}$. Here $\cdot$ denotes the product of $\mathfrak{Cl}(V,q)$. The subgroup $\Gamma^+:= \Gamma \cap \mathfrak{Cl}(V,q)_{\bar0}$ is called the {\em even Clifford group}. The {\em spin group} is defined by $\Spin(V,q):=\{ x \in \Gamma^+ \med x \cdot \tau(x)=1 \}$. Note that $\Spin(V,q)$ is generated by the elements of the form $x \cdot y$ where $x, y \in V$ and $q(x)q(y) = 1$.

For each $u\in\Spin(V,q)$, define the map $\chi_u \colon V\to V$, $x\mapsto u\cdot x\cdot u^{-1}$. It is well-known that $\chi_u$ belongs to the special orthogonal group $\Ort^+(V,q)$, and $\Ort'(V,q):=\{\chi_u \med u\in\Spin(V,q) \}$ is called the \emph{reduced orthogonal group}. Moreover, if $q$ has maximal Witt index then $\Ort'(V,q) \trianglelefteq \Ort^+(V,q)$ and $\Ort^+(V,q)/\Ort'(V,q) \cong \FF^\times/(\FF^\times)^2$ (see \cite[4.8]{J89}), where $(\FF^\times)^2$ is the multiplicative group of squares of $\FF^\times$. Here $\FF$ is assumed to be algebraically closed, so we have $\Ort^+(V,q)=\Ort'(V,q)$. A triple $(f_1,f_2,f_3)\in\Ort(\cC,n)^3$ is said to be \emph{related} if $f_1(\bar x \bar y) = \overline{f_2(x)} ~ \overline{f_3(y)}$ for any $x,y\in\cC$. Note that if $(f_1,f_2,f_3)$ is a related triple, then $(f_2,f_3,f_1)$ is also a related triple. Related triples have the property that $f_i \in \Ort'(\cC,n)$, and there is a group isomorphism 
$$ \Spin(\cC,n) \longrightarrow \{\text{related triples in $\Ort(\cC,n)^3$} \}, \quad u \mapsto (\chi_u,\rho_u^+,\rho_u^-), $$
for certain associated maps $\rho_u^+$ and $\rho_u^-$ (see e.g. \cite{Eld00} for more details).
\end{df}

\begin{remark} \label{remarkrelatedtriples}
Note that, if $(f_1,f_2,f_3)\in\Ort(\cC,n)^3$ is a related triple, then it is easy to check that $(f_1, f_2)$ is an automorphism of the bi-Cayley triple system. It is well-known that the map $\alb \to \alb$, $E_i \mapsto E_i$, $\iota_i(x) \mapsto \iota_i(f_i(x))$ for $i=1,2,3$, is an automorphism of the Albert algebra (see e.g. \cite[Corollary~5.6]{EKmon}).
\end{remark}

\begin{lemma} \label{lemmaAlberto} 
For any $x_1,x_2\in\cC$ of norm $1$, there is a related triple $(f_1,f_2,f_3)$ in $\Ort(\cC,n)^3$ such that $f_i(x_i)=1$ for $i=1,2$. 
Besides, for any $f_1\in \Ort^+(\cC,n)$, there are $f_2,f_3\in \Ort^+(\cC,n)$ such that $(f_1,f_2,f_3)$ is a related triple in $\Ort(\cC,n)^3$.
\end{lemma}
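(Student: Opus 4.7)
The plan is to establish the second statement first (pure triality) and then leverage it as a transitivity tool for the first. For the second statement, since $\FF$ is algebraically closed of characteristic different from $2$, the paragraph preceding the lemma gives $\Ort^+(\cC,n)=\Ort'(\cC,n)$, so any $f_1\in\Ort^+(\cC,n)$ arises as $\chi_u$ for some $u\in\Spin(\cC,n)$. The isomorphism $u\mapsto(\chi_u,\rho_u^+,\rho_u^-)$ between $\Spin(\cC,n)$ and the set of related triples in $\Ort(\cC,n)^3$ (recalled just before the lemma) then provides $f_2:=\rho_u^+$ and $f_3:=\rho_u^-$ in $\Ort^+(\cC,n)$, completing $(f_1,f_2,f_3)$ to a related triple.

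For the first statement, given $x_1,x_2\in\cC$ of norm $1$, I would begin by combining the second statement with the transitivity of $\Ort^+(\cC,n)$ on the variety $S:=\{a\in\cC:n(a)=1\}$ (a standard consequence of Witt's theorem for our field) to pick a related triple $(g_1,g_2,g_3)$ with $g_1(x_1)=1$. Setting $y:=g_2(x_2)\in S$, it then suffices to produce a second related triple $(h_1,h_2,h_3)$ with $h_1(1)=1$ and $h_2(y)=1$: the componentwise composition $(h_1g_1,h_2g_2,h_3g_3)$ is a related triple whose first two components send $x_1$ and $x_2$ respectively to $1$.

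To produce such $(h_1,h_2,h_3)$, I would exploit the structure of the subgroup $H\leq\Spin(\cC,n)$ consisting of elements $u$ with $\chi_u(1)=1$. The stabilizer of $1$ in $\Ort^+(\cC,n)$ equals $\Ort^+(\cC_0,n|_{\cC_0})\cong\mathrm{SO}(7,\FF)$, so $H$ is a connected form of $\Spin(7,\FF)$ of dimension $21$ that acts on $\cC$ via $u\mapsto\rho_u^+$. A direct calculation with the defining relation $f_1(\bar x\bar y)=\overline{f_2(x)}\cdot\overline{f_3(y)}$ shows that any related triple satisfying $f_1(1)=f_2(1)=1$ automatically has $f_3(1)=1$ and moreover $f_1=f_2=f_3\in\Aut(\cC)$, so the $H$-stabilizer of $1\in\cC$ under $\rho^+$ is the diagonal copy of $\Aut(\cC)\cong G_2$, of dimension $14$. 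Orbit--stabilizer then gives an $H$-orbit of $1\in S$ of dimension $7=\dim S$; combining this with the irreducibility of $S$ and the classical fact that the $8$-dimensional spin representation of $\Spin(7,\FF)$ acts transitively on norm-$1$ vectors (with stabilizer conjugate to $G_2$ at every point) yields that the $H$-orbit of $1$ is all of $S$, producing the required $(h_1,h_2,h_3)$.

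The main obstacle is precisely this last transitivity step: pure dimension counting gives only that the $H$-orbit of $1$ is open and dense in $S$, and upgrading this to the full variety requires either the cited Lie-theoretic fact or, equivalently, a uniform-stabilizer argument (the stabilizer of any $y\in S$ in $H$ being conjugate to the stabilizer of $1$ via any lift of an element of $\Ort^+(\cC,n)$ carrying $1$ to $y$) combined with the irreducibility of $S$.
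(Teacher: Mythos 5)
Your proof of the second statement is exactly the paper's: surjectivity of $\chi\colon\Spin(\cC,n)\to\Ort'(\cC,n)=\Ort^+(\cC,n)$ plus the isomorphism $u\mapsto(\chi_u,\rho_u^+,\rho_u^-)$. For the first statement the paper gives no argument at all -- it simply cites \cite[Lemma~5.25]{EKmon} -- so your contribution is a genuine alternative route. The reduction is sound: componentwise composition of related triples is again a related triple (they form a group isomorphic to $\Spin(\cC,n)$), your computation that a related triple with $f_1(1)=f_2(1)=1$ forces $f_3(1)=1$ and $f_1=f_2=f_3\in\Aut(\cC)$ is correct, and the dimension count $21-14=7$ is right. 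What you buy is a conceptual proof (triality plus $\Spin_7/G_2\simeq$ the norm-one quadric) in place of the explicit construction of a related triple carried out in \cite{EKmon}; what you pay is dependence on the classical transitivity of the $8$-dimensional spin representation of $\Spin(7,\FF)$ on norm-one vectors, which you must import as a black box (it is true and citable, e.g.\ Springer--Veldkamp). One caveat: the ``uniform-stabilizer'' variant you offer as an equivalent justification is circular as stated. Conjugating $\Stab_H(y)$ by a lift $w$ of an element $g\in\Ort^+(\cC,n)$ with $g(1)=y$ does not land back inside $H$: the subgroup $H$ is cut out by the condition $\chi_u(1)=1$, while the action on $S$ goes through $\rho_u^+$, so $\chi_{w^{-1}uw}$ fixes $\chi_w^{-1}(1)$ rather than $1$, and $\rho^+_{w^{-1}uw}$ fixes $(\rho^+_w)^{-1}(y)$, which need not be $1$ unless $w$ itself lies in $H$ and satisfies $\rho_w^+(1)=y$ -- which is precisely the transitivity you are trying to establish. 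So either invoke the classical fact outright, or fall back on the explicit triples of \cite[Lemma~5.25]{EKmon}.
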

\begin{proof} The first statement was proved in \cite[Lemma~5.25]{EKmon}. For the second part, since $\chi \colon \Spin(\cC,n) \to \Ort'(\cC,n) = \Ort^+(\cC,n)$ is onto, we can write $f_1=\chi_u$ for some $u\in\Spin(\cC,n)$, and $(\chi_u,\rho_u^+,\rho_u^-)$ is a related triple.
\end{proof}

Consider $\mathfrak{Cl}(\cC,n)$ with the the $\ZZ_2$-grading given by $\deg(x)=\bar1$ for each $x\in\cC$, and the standard involution of the Clifford algebra, that is, sending $x \mapsto x$ for $x\in\cC$. Consider $\End(\cC\oplus\cC)$ with the $\ZZ_2$-grading that has degree $\bar0$ on the endomorphisms that preserve the two copies of $\cC$ and degree $\bar1$ on the endomorphisms that swap these two copies, and the involution given by the adjoint relative to the quadratic form $n\perp n$ on $\cC\oplus\cC$.

The next result is a slight modification of \cite[Proposition~(35.1)]{KMRT98}:

\begin{proposition}\label{isomphi} 
Denote by $l_x$, $r_x$ the left and right multiplications in the para-Cayley algebra $\bar\cC=(\cC,*)$.  Then, the map 
\begin{equation*} 
\Phi \colon \cC\rightarrow \End(\cC\oplus\cC),  \quad x\mapsto \left(\begin{array}{cc} 0 & r_{\bar x} \\ l_{\bar x} & 0 \end{array}\right),
\end{equation*}
defines an isomorphism of superalgebras $\Phi \colon \mathfrak{Cl}(\cC,n)\rightarrow\End (\cC\oplus\cC)$ that preserves the involution.
\end{proposition}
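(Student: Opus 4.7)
The plan is to use the universal property of the Clifford algebra. Recall that $\mathfrak{Cl}(\cC,n)$ is characterized by the property that any linear map $f\colon \cC \to A$ into an associative $\FF$-algebra with $f(x)^2 = n(x)\cdot 1_A$ for all $x\in \cC$ extends uniquely to a unital algebra homomorphism. So the first step is to verify that $\Phi(x)^2 = n(x)\,\id$ in $\End(\cC\oplus\cC)$. A block computation gives
\[
\Phi(x)^2 = \begin{pmatrix} r_{\bar x} l_{\bar x} & 0 \\ 0 & l_{\bar x} r_{\bar x} \end{pmatrix},
\]
so I would compute $l_{\bar x} r_{\bar x}$ and $r_{\bar x} l_{\bar x}$ using that, in the para-Cayley algebra, $l_{\bar x}(v) = x\bar v$ and $r_{\bar x}(v) = \bar v\, x$. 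Applying alternativity of $\cC$ (giving $x(\bar x v) = n(x)v$ and $(v\bar x)x = n(x)v$), both diagonal blocks equal $n(x)\,\id_\cC$, and the universal property then produces a unital algebra homomorphism $\Phi\colon \mathfrak{Cl}(\cC,n) \to \End(\cC\oplus\cC)$.

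Next, I would check compatibility with the superalgebra structure and the involution. The grading on $\mathfrak{Cl}(\cC,n)$ is generated by declaring $\cC$ to be of odd degree, while on $\End(\cC\oplus\cC)$ the odd part consists of the maps that swap the two copies of $\cC$; since $\Phi(x)$ is exactly such a swap, $\Phi$ sends generators of odd degree to odd elements and is therefore a homomorphism of superalgebras. For the involutions, the standard involution $\tau$ on $\mathfrak{Cl}(\cC,n)$ fixes the generators, so it suffices to see that each $\Phi(x)$ is self-adjoint with respect to $n\perp n$. This reduces to showing $(r_{\bar x})^\ast = l_{\bar x}$ with respect to $n$ on $\cC$, which follows from the standard composition algebra identity $n(ab,c) = n(a, c\bar b) = n(b,\bar a c)$ together with $n(\bar u, \bar v) = n(u,v)$: one computes
\[
n(r_{\bar x}(u), v) = n(\bar u\, x, v) = n(\bar u, v\bar x) = n(u, x\bar v) = n(u, l_{\bar x}(v)).
\]
Hence $\Phi(x)^\ast = \Phi(x)$, so $\Phi$ intertwines the two involutions.

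Finally, I would prove bijectivity by a dimension count combined with simplicity. Since $n$ is nondegenerate of even dimension $8$ over an algebraically closed field of characteristic $\neq 2$, the algebra $\mathfrak{Cl}(\cC,n)$ is central simple of dimension $2^8 = 256$. Because $\Phi$ is nonzero (indeed $\Phi(x)\neq 0$ for any $x\neq 0$ in $\cC$), its kernel is a proper two-sided ideal of a simple algebra, hence zero, so $\Phi$ is injective. Since $\dim\End(\cC\oplus\cC) = 16^2 = 256 = \dim \mathfrak{Cl}(\cC,n)$, the injection is a bijection, concluding that $\Phi$ is an isomorphism of superalgebras preserving the involution.

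I do not expect serious obstacles: the only delicate point is getting the signs and conjugations right in the para-Cayley formulas for $l_{\bar x}, r_{\bar x}$ and their adjoints, which is essentially bookkeeping with the identities $n(ab,c) = n(a,c\bar b)$ and the alternativity of $\cC$.
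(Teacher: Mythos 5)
Your proposal is correct and follows essentially the same route as the paper: verify $\Phi(x)^2 = n(x)\,\id$ to invoke the universal property of the Clifford algebra, conclude bijectivity from simplicity of $\mathfrak{Cl}(\cC,n)$ together with the equality of dimensions, and reduce involution-preservation to the adjoint identity $l_{\bar x}^\ast = r_{\bar x}$. The only difference is that you spell out the block computations and the composition-algebra identities that the paper leaves implicit.
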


\begin{proof} Since $\Phi(x)^2=n(x)\text{id}$ for $x\in\cC$, it follows that $\Phi$ extends to a homomorphism of superalgebras. But since $\mathfrak{Cl}(\cC,n)$ is simple and has the same dimension as $\End(\cC\oplus\cC)$, we have that $\Phi$ is an isomorphism. From $l^*_x=r_x$, we deduce that $\Phi$ is an isomorphism of algebras with involution.  
\end{proof}

\begin{remark}
Given $a\in\cC$ with $n(a)=1$, we have 
$$\Phi(a)=\left(\begin{array}{cc} 0 & r_{\bar a} \\ l_{\bar a} & 0 \end{array}\right)=
\left(\begin{array}{cc} 0 & -r_{\bar a} \\ l_{\bar a} & 0 \end{array}\right)\left(\begin{array}{cc} 1 & 0 \\ 0 & -1 \end{array}\right) = \varphi_{a,0}c_{1,-1}\in\Aut\cT_\cB,$$
and in particular, $\RT:=\Phi(\text{Spin}(\cC,n))\leq\Aut\cT_\cB$. 

For any $u\in\text{Spin}(\cC,n)$, $\Phi(u)=\left(\begin{smallmatrix}\alpha&0\\ 0&\beta\end{smallmatrix}\right)$ if and only if $(\bar\chi_u,\alpha,\beta)$ is a related triple (see \cite[Theorem~5.5]{EKmon}), with $\bar\chi_u(a)=\overline{\chi_u(\bar a)}$, so 
\[
\begin{split}
\RT&=\Phi(\text{Spin}(\cC,n))={\Bigr\{}\begin{pmatrix} \alpha&0\\ 0&\beta\end{pmatrix} : \alpha,\beta\in\Ort(\cC,n)\\ 
&\qquad\text{and there is $\gamma\in\Ort(\cC,n)$ such that $(\gamma,\alpha,\beta)$ is a related triple}{\Bigr\}},
\end{split}
\]
and this explains our notation $\RT$.
The subgroup $\RT\cong\text{Spin}(\cC,n)$ is generated by the elements of the form 
\[
\Phi(a)\Phi(b) = \left(\begin{array}{cc} r_{\bar a}l_{\bar b} & 0 \\ 0 & l_{\bar a}r_{\bar b} \end{array}\right)=\varphi_{a,0}c_{1,-1}\varphi_{b,0}c_{1,-1}=\varphi_{a,0}\varphi_{-b,0},
\]
with $n(a)=n(b)=1$. Note that the group $\Aut\cC$ embeds in $\RT$ because for any automorphism $f$ of $\cC$, $(f,f,f)$ is a related triple.
\end{remark}

\begin{remark} \label{remarkGeneratorsbiCayley}
Consider the subgroup $G_\cV = \langle \varphi_a, \widehat{\varphi}_a, c_\lambda \med a\in\cC, \lambda\in\FF^\times \rangle$ of $\Aut\cV_\cB$ and the subgroup $G_\cT = \langle \varphi_{a,\lambda} \med a\in\cC, \lambda\in\FF, n(a)+\lambda^2=1 \rangle$ of $\Aut\cT_\cB$. (We will prove later that $G_\cV = \Aut\cV_\cB$ and $G_\cT = \Aut\cT_\cB$.) It follows from the proof of Proposition~\ref{phialambda} that $G_\cT \leq G_\cV$. The group $\RT$ of related triples is contained in the subgroup generated by the automorphisms $\varphi_{a,0}$ with $n(a)=1$, so we have $\RT \leq G_\cT$. Also, $(-\id,\id,-\id)$ is a related triple, so $c_{1,-1}=(\id,-\id)\in\RT\leq G_\cT$ and hence $\bar\tau_{12} = \varphi_{1,0}c_{1,-1} \in G_\cT$. 

We claim that $c_{\lambda,\mu} \in G_\cV$ for any $\lambda,\mu \in \FF^\times$. For any $\lambda\in\FF^\times$ and $a\in\cC$ such that $\lambda n(a)=1$, we have $c_{\lambda, 1} = c_{-\sqrt{\lambda}} \varphi_{\sqrt{\lambda}a,0} \varphi_a \widehat{\varphi}_{\lambda a} \varphi_a \in G_\cV$. But since $c_\mu$ belongs to $G_\cV$ for any $\mu\in\FF^\times$, we deduce that $c_{\lambda,\mu} = c_{\lambda\mu^{-1}, 1} c_\mu \in G_\cV$ for any $\lambda,\mu \in \FF^\times$.
\end{remark}

\begin{remark}
Let $J$ be a unital Jordan algebra with associated Jordan pair $\cV = (J, J)$. Let $\Str(J)$ denote the \emph{structure group} of $J$, i.e., the group consisting of all the \emph{autotopies}, that is, the elements $g\in \GL(J)$ such that $U_{g(x)} = g U_x g^\#$ for some $g^\#\in\GL(J)$ and all $x\in J$. The structure group functor $\SStr(J)$ is defined by $\SStr(J)(R) = \Str_R(J_R)$. There is an isomorphism of group schemes $\AAut(\cV) \to \SStr(J)$, which is given by $\Aut_R(\cV_R) \to \Str_R(J_R)$, $(\varphi^+,\varphi^-) \mapsto \varphi^+$ for each associative commutative unital $\FF$-algebra $R$ (see \cite[Proposition~2.6]{L79} and \cite[Proposition~1.8]{L75} for more details). 

Let $M(\alb)$ and $M_1(\alb)$ be the groups of similarities and isometries for the norm of $\alb$ (notation as in \cite[Chap.IX]{J68}). By \cite[Chap.V, Th.4]{J68}, $\alb$ is reduced, so by \cite[Chap.IX, Ex.2]{J68}, a linear map $\alb\rightarrow\alb$ is a norm similarity if and only if it is an isotopy; that is, $M(\alb) = \Str(\alb)$. Also, if we identify $\langle c_\lambda \med \lambda\in \FF^\times\rangle\cong \FF^\times$, we have $M(\alb) = \FF^\times\cdot M_1(\alb)$.

For each norm similarity $\varphi$ of $\alb$, denote $\varphi^\dagger:=(\varphi^{-1})^*$, where $*$ denotes the adjoint relative to the trace form $T$ of $\alb$. Since the trace is invariant under automorphisms, it follows that
the automorphisms of $\cV_\alb$ are exactly the pairs $(\varphi, \varphi^\dagger)$ where $\varphi$ is a norm similarity of $\alb$. We know from \cite[Lemma~1.7]{G01} that, if $\varphi = (\varphi^+, \varphi^-)$ is an automorphism of $\cV_\alb$ where the norm similarity $\varphi^\sigma$ has multiplier $\lambda_\sigma$ then $\lambda_+\lambda_-=1$; also $\varphi^\sigma(x^\#)=\lambda_\sigma\varphi^{-\sigma}(x)^\#$.
Moreover, we have $\varphi^\sigma(x^{-1})=\varphi^{-\sigma}(x)^{-1}$ for each $x\in\alb^\times$ (because  $U_{\varphi^{-\sigma}(x)}\varphi^\sigma(x^{-1}) = \varphi^{-\sigma}(U_xx^{-1}) = \varphi^{-\sigma}(x)$ for each $x\in\alb^\times$).
\end{remark}

\subsection{Orbits of the automorphism groups of bi-Cayley systems}

The trace forms of the bi-Cayley and Albert pairs are nondegenerate, so by Proposition~\ref{orbits}, there are exactly three orbits in $\cB^\sigma$ for the bi-Cayley pair, and four orbits in $\alb^\sigma$ for the Albert pair, all of them determined by the rank function.

\begin{notation} 
Recall that the norm of the vector space $\cB$ is the quadratic form $q=n\perp n \colon \cB\rightarrow\FF$, given by $q(x)=n(x_1)+n(x_2)$ for $x = (x_1, x_2) \in\cB$. For $i=0,1,2$, denote by $\cO_i$ the subset of $\cB$ of elements of rank $i$ for the bi-Cayley pair. For each $\lambda\in \FF$, set $\cO_2(\lambda):=\{x\in\cO_2 \med q(x)=\lambda\}$. Thus $\cO_2=\dot{\bigcup}_{\lambda\in \FF}\cO_2(\lambda)$.
\end{notation}

\begin{lemma} \label{orbitsbiCayleytriple} 
The different orbits for the action of $\Aut \cT_\cB$ on $\cB$ are exactly $\cO_0=\{0\}$, $\cO_1$ and $\cO_2(\lambda)$ with $\lambda\in \FF$. Moreover, for $0\neq x=(x_1,x_2)\in \cB$ we have $x\in\cO_1$ if and only if $x_2x_1=0$ and $n(x_1)=n(x_2)=0$. The orbits are the same if we consider the action of the subgroup $G_\cT = \langle \varphi_{a,\lambda} \med a\in\cC, \lambda\in\FF, n(a)+\lambda^2=1 \rangle$.
\end{lemma}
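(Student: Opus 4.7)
The plan is to identify two $\Aut\cT_\cB$-invariants on $\cB$ that together produce the claimed stratification, and then to verify transitivity of the subgroup $G_\cT$ on each stratum. The two invariants are the quadratic form $q$ and the rank function: a short calculation in $\cC$ using alternativity, namely $\bar x_2(x_2 x_1) = n(x_2)\,x_1$ and $(x_2 x_1)\bar x_1 = n(x_1)\,x_2$, gives $Q_x(x) = q(x)\,x$ from the formula in Definition~\ref{parB}, so $\Aut\cT_\cB \leq \Ort(\cB,q)$; and rank is preserved by $\Aut\cT_\cB \leq \Aut\cV_\cB$ via Proposition~\ref{orbits}. Together these show that every $\Aut\cT_\cB$-orbit lies in one of $\cO_0$, $\cO_1 \cap q^{-1}(\lambda)$, or $\cO_2(\lambda)$.

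To characterize $\cO_1$, I would use
\[
Q_x(y) = t(x,y)\,x - \bigl(n(x_1)y_1 + \bar y_2(x_2 x_1),\; n(x_2)y_2 + (x_2 x_1)\bar y_1\bigr),
\]
and impose $Q_x\cB \subseteq \FF x$ for $x\neq 0$. Specializing to $y$ with $y_2 = 0$ and letting $y_1$ vary forces $n(x_1) = 0$ together with $(x_2 x_1)\bar y_1 \in \FF x_2$ for all $y_1\in\cC$, hence $x_2 x_1 = 0$; the symmetric choice $y_1 = 0$ yields $n(x_2) = 0$. Conversely, under these conditions $Q_x(y) = t(x,y)\,x \in \FF x$. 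In particular $\cO_1 \subseteq q^{-1}(0)$, so $q$ does not further refine $\cO_1$.

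For transitivity of $G_\cT$ on $\cO_1$, I take $(e_1, 0)$ as reference and use $\bar\tau_{12} \in G_\cT$ to reduce to $x_1 \neq 0$. Because $\Ort^+(\cC, n)$ acts transitively on nonzero isotropic vectors of $\cC$ and, by Lemma~\ref{lemmaAlberto} combined with cyclic permutation of related triples, every element of $\Ort^+(\cC, n)$ arises as the relevant slot of a related triple, there is $(\alpha,\beta)\in\RT$ with $\alpha(x_1) = e_1$. After this move the new second coordinate $\beta(x_2)$ lies in the right annihilator of $e_1$ in $\cC$, namely $\FF e_2 \oplus U$, which is totally isotropic of dimension $4$. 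A $\varphi_{a,1}$ with $a = -\beta(x_2) + c$ for a suitable correction $c \in \FF e_1 \oplus V$ enforcing $n(a) = 0$ (possible because $\FF e_1 \oplus V$ is itself totally isotropic, and pairs non-degenerately with $\FF e_2\oplus U$) kills the second slot, leaving a rank-$1$ element $(y_1, 0)$ with $y_1$ nonzero isotropic; a final $\RT$-move sends it to $(e_1, 0)$.

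For transitivity of $G_\cT$ on each $\cO_2(\lambda)$ the strategy is analogous. For $\lambda \neq 0$, take the reference $(\sqrt{\lambda}\cdot 1, 0)$ and apply $\bar\tau_{12}$ if necessary to ensure $n(x_1) \neq 0$ (possible since $\lambda \neq 0$ forces at least one of $n(x_1), n(x_2)$ to be nonzero). An $\RT$-move places $x_1$ in $\FF\cdot 1$, and a direct calculation shows that $\varphi_{a,\mu}$ with $\mu^2 = n(x_1)/\lambda$ and $a = -\mu x_2/\sqrt{n(x_1)}$ satisfies $n(a) + \mu^2 = 1$ and sends the normalized element to $(\sqrt{\lambda}\cdot 1, 0)$. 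For $\lambda = 0$, the reference $(e_1, v_1)$ is rank~$2$ because $v_1 e_1 = v_1 \neq 0$; after normalizing $x_1$ to $e_1$, the rank-$2$ condition $x_2 x_1 \neq 0$ confines the admissible $x_2$ to a subset on which the stabilizer of $e_1$ in $\RT$, together with further $\varphi_{a,\mu}$-moves, acts transitively.

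The main technical obstacle is the $\cO_2(0)$ case with $n(x_1) = n(x_2) = 0$. In this regime neither $q$ nor the individual norms provide additional rigidity, and the direct elimination that works for $\lambda \neq 0$ breaks down because the equation $\mu^2 = n(x_1)/\lambda$ becomes singular. The fix requires a more delicate composition of $\RT$, $\bar\tau_{12}$, and $\varphi_{a,\mu}$ that uses the Peirce decomposition of $\cC$ relative to $\{e_1, e_2\}$ to generate a nonzero norm in the first coordinate before normalizing; verifying that the resulting composite belongs to $G_\cT$ and reaches the reference is the most calculation-heavy step of the proof.
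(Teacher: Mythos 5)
Your overall strategy coincides with the paper's: use $Q_x(x)=q(x)x$ to get $\Aut\cT_\cB\leq\Ort(\cB,q)$, combine with rank-invariance to confine orbits to the strata $\cO_0$, $\cO_1$, $\cO_2(\lambda)$, characterize $\cO_1$ by forcing $Q_x\cB\subseteq\FF x$, and then prove transitivity of $G_\cT$ on each stratum by composing $\bar\tau_{12}$, related triples and maps $\varphi_{a,\mu}$. Your arguments for $\cO_1$ and for $\cO_2(\lambda)$ with $\lambda\neq0$ are correct and essentially identical to the paper's (your $\mu^2=n(x_1)/\lambda$ and $a=-\mu x_2/\sqrt{n(x_1)}$ are the paper's choices after normalizing $x_1$; the ``correction $c$'' in the $\cO_1$ step is superfluous, since $n(\beta(x_2))=0$ already because $\FF e_2\oplus U$ is totally isotropic).

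However, there is a genuine gap: transitivity on $\cO_2(0)$ is asserted, not proved. You correctly observe that the elimination used for $\lambda\neq0$ degenerates, and you state that one must ``generate a nonzero norm in the first coordinate before normalizing,'' but you never exhibit the automorphisms that do this, and this is exactly where the real work lies. Concretely, two steps are missing. First, for $x$ with $n(x_1)=n(x_2)=0$ and $x_2x_1\neq0$ one needs an explicit element of $G_\cT$ producing a non-isotropic component; the paper splits on whether $n(x_1,\bar x_2)\neq0$ (apply $\varphi_{\mu 1,\mu}$ with $\mu=1/\sqrt2$) or $n(x_1,\bar x_2)=0$ (normalize $x_1=e_1$, write $x_2=\gamma e_2+u+v$ with $v\neq0$ forced by rank $2$, choose $u_1$ with $vu_1=e_2$ and apply $\varphi_{u_1,1}$). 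Second, even once both components are non-isotropic, related triples only normalize within fixed norms, so one still has to move $(\lambda 1,\bi\lambda 1)$ to $(1,\bi 1)$ for arbitrary $\lambda\neq0$; the paper does this with $\varphi_{a,\mu}$ for a trace-zero $a$ with $n(a)=(\lambda^2-1)/(2\lambda^2)$. Neither step appears in your proposal, and your chosen reference $(e_1,v_1)$ sits precisely in the untreated totally isotropic subcase, so you have not even verified that it is connected to elements with non-isotropic components. Until these computations are supplied, the claim that $\cO_2(0)$ is a single orbit — and hence the full orbit classification — is unproved.
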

\begin{proof} 
Recall that $\cO_i$, for $i=0,1,2$, are the orbits of the bi-Cayley pair. Also, note that $\Aut \cT_\cB \leq \Ort(\cB,q)$. Hence, the sets $\cO_0$, $\cO_1$, and $\cO_2(\lambda)$ for $\lambda\in \FF$, are disjoint unions of orbits of the bi-Cayley triple system. 

First, we will check that $\cO_2(\lambda)$ is an orbit for each $\lambda\neq0$. Take $x\in\cO_2(\lambda^2)$ with $\lambda\neq0$. We claim that $x$ belongs to the orbit of $(\lambda 1,0)$. By applying $\bar\tau_{12}$ if necessary, we can assume that $n(x_1)\neq0$. Since $q(x)=\lambda^2\neq0$, $n(x_1)\neq-n(x_2)$ and we can take $\mu\in \FF^\times$ such that $\mu^{-2}=1+\frac{n(x_2)}{n(x_1)}$. The element $a=-\mu n(x_1)^{-1}x_2x_1$ satisfies $n(a)+\mu^2=1$, so we can consider the automorphism $\varphi_{a,\mu}$ (see Proposition~\ref{phialambda}). Then, $\varphi_{a,\mu}(x)=(\mu(1-\frac{n(x_2)}{n(x_1)})x_1,0)$, and by Lemma~\ref{lemmaAlberto}, this element is in the orbit of $(\lambda 1,0)$. Hence, $\cO_2(\lambda^2)$ is an orbit for each $\lambda\neq0$. Since $\FF$ is algebraically closed, $\cO_2(\lambda)$ is an orbit too.

Second, given $0\neq x\in\cB$ we claim that $x\in\cO_1$ if and only if $x_2x_1=0$ and $n(x_1)=n(x_2)=0$. Indeed, $x\in\cO_1$ means that $Q_x\cB = \FF x$, i.e., $(n(x_1)y_1 + \bar y_2(x_2x_1), n(x_2)y_2 + (x_2x_1)\bar y_1) =  t(x,y)x - Q_x(y)$ must belong to $\FF x$ for any $y\in\cB$, which is equivalent to say that $x_2x_1=0$ and $n(x_1)=n(x_2)=0$.

Third, we will prove that $\cO_1$ is an orbit. Take $x=(x_1,x_2)\in\cO_1$. We know that $n(x_1)=n(x_2)=0$ and $x_2x_1=0$. Then, using $\bar\tau_{12}$ if necessary, we can assume that $x_1\neq0$ and $n(x_1)=0$, and by Lemma~\ref{lemmaAlberto} we can also assume that $x_1=e_1$ is a nontrivial idempotent. Take $e_2:=1-e_1$, and consider the Peirce decomposition of $\cC$ associated to the idempotents $e_i$ as always. Since $x_2x_1=0$, we have $x_2=\lambda e_2+u$ with $\lambda\in\FF$, $u\in U$ (see Subsection \ref{sectionCayleyAlbert}). Thus, $x=(e_1,\lambda e_2+u)$. But taking $a=-\lambda e_2-u$ and $\mu=1$ we have $n(a)+\mu^2=1$, so $\varphi_{a,1}$ is an automorphism. Therefore, $\varphi_{a,1}(x)=(e_1+(\lambda e_1+\bar u)(\lambda e_2+u),\lambda e_2+u-(\lambda e_2+u)e_2)=(e_1,0)$. This proves that $\cO_1$ is an orbit.

Finally, we claim that $\cO_2(0)$ is an orbit. Take $x\in\cO_2(0)$, and fix $\bi\in \FF$ with $\bi^2=-1$. It suffices to prove that $x$ is in the orbit of $(1,\bi 1)$. But we will prove first that if $n(x_1)=n(x_2)=0$, then there is an automorphism $\varphi$ of $\cT_\cB$ such that the two components of $\varphi(x)$ are nonisotropic. Indeed, since $x\notin\cO_1$ and $n(x_1)=n(x_2)=0$, we must have $x_2x_1\neq0$, and hence $x_1,x_2\neq0$. If $n(x_1,\bar x_2)\neq0$, it suffices to take $\mu=\frac{1}{\sqrt{2}}$ and apply $\varphi=\varphi_{\mu1,\mu}$ to $x$ to obtain an element with nonisotropic components. Otherwise, $n(x_1,\bar x_2)=0=n(x_i)$ and by Lemma~\ref{lemmaAlberto}, we can assume that $x_1=e_1$ is a nontrivial idempotent. Consider the idempotents $e_1$, $e_2:=1-e_1$ with their Peirce decomposition $\cC=\FF e_1\oplus \FF e_2\oplus U\oplus V$, so we have $x_2=\gamma e_2+u+v$ for some $\gamma\in \FF$, $u\in U$, $v\in V$. Since $x_2x_1\neq0$, we have $v\neq0$. Take $u_1\in U$ with $vu_1=e_2$, so we obtain $\varphi_{u_1,1}(x)=(1-\gamma u_1+uu_1,\gamma e_2+u+u_1+v)$, which has the first component nonisotropic. In conclusion, there is an automorphism $\varphi$ of $\cT_\cB$ such that $\varphi(x)$ has both components nonisotropic. By Lemma~\ref{lemmaAlberto}, we can assume that $x=(\lambda 1, \bi\lambda 1)$, for certain $0\neq\lambda\in \FF$. Take $a\in\cC$ with $\textnormal{tr}(a)=0$ and $n(a)=\frac{\lambda^2-1}{2\lambda^2}$, and $\mu\in \FF$ such that $n(a)+\mu^2=1$. Then, $y:=\varphi_{a,\mu}(x)=(\lambda\mu 1-\lambda\bi a,\lambda a+\lambda\mu\bi 1)$, and we have $n(y_1)=\lambda^2n(\mu1-\bi a)=\lambda^2(\mu^2-n(a))=\lambda^2(1-2n(a))=1$; since $\varphi_{a,\mu}\in \Ort(\cB,q)$, we obtain $n(y_2)=-1$. By Lemma~\ref{lemmaAlberto} again, we can assume that $x=(1,\bi 1)$, and therefore $\cO_2(0)$ is an orbit.
\end{proof}

We have a similar result for the orbits of the bi-Cayley pair:

\begin{lemma} \label{orbitsbiCayleypair} 
The orbits of $\cB^+$ under the action of the group $\Aut\cV_\cB$ coincide with the orbits under the action of $G_\cV = \langle \varphi_a, \widehat{\varphi}_a, c_\lambda \med a\in\cC, \; \lambda\in\FF^\times \rangle$.
\end{lemma}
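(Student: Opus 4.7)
The plan is to combine the general orbit classification for simple Jordan pairs with the triple-system orbit description already obtained, and then use the scaling automorphisms $c_\lambda$ and the pair-specific automorphisms $\widehat{\varphi}_a$ to collapse the finer $G_\cT$-partition of $\cO_2$ into a single $G_\cV$-orbit. Since the bi-Cayley pair is simple, finite-dimensional, of rank $2$, with $\Def(\cV_\cB)=0$ over an algebraically closed field, Proposition~\ref{orbits} shows that $\Aut\cV_\cB$ has exactly three orbits on $\cB^+$, namely $\cO_0=\{0\}$, $\cO_1$, and $\cO_2$. As $G_\cV\le\Aut\cV_\cB$, the $G_\cV$-orbits refine this decomposition, so it suffices to check that $G_\cV$ acts transitively on $\cO_1$ and on $\cO_2$.

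For $\cO_1$, I would appeal to the inclusion $G_\cT\le G_\cV$ recorded in Remark~\ref{remarkGeneratorsbiCayley}, together with Lemma~\ref{orbitsbiCayleytriple}, which identifies $\cO_1$ as a single $G_\cT$-orbit. Thus $\cO_1$ is already a $G_\cV$-orbit.

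For $\cO_2$, Lemma~\ref{orbitsbiCayleytriple} gives the finer decomposition $\cO_2=\dot{\bigcup}_{\lambda\in\FF}\cO_2(\lambda)$ into $G_\cT$-orbits parametrized by the value of the quadratic form $q$. The first step is to observe that $c_\lambda\in G_\cV$ acts by $x\mapsto\lambda x$ on $\cB^+$, hence $c_\lambda(\cO_2(\mu))=\cO_2(\lambda^2\mu)$. Since $\FF$ is algebraically closed, $(\FF^\times)^2=\FF^\times$, so all the pieces $\cO_2(\mu)$ with $\mu\neq 0$ are fused into a single $G_\cV$-orbit. The remaining (and the only nontrivial) step is to connect $\cO_2(0)$ with one of these. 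For this I would pick the explicit element $x=(1,\bi 1)\in\cO_2(0)$ (note $q(x)=n(1)+n(\bi 1)=1-1=0$, and $x\notin\cO_1$ because $x_2x_1=\bi 1\neq 0$ violates the criterion of Lemma~\ref{orbitsbiCayleytriple}), and apply $\widehat{\varphi}_1\in G_\cV$, whose $+$-component sends $(x_1,x_2)\mapsto(x_1,\bar x_1+x_2)$. This yields $(1,1+\bi 1)$, with $q=1+n(1+\bi 1)=3\neq 0$; being the image under an automorphism, it still lies in $\cO_2$, hence in some $\cO_2(\mu)$ with $\mu\neq 0$.

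The main obstacle is simply the need to verify that $\widehat{\varphi}_a$ genuinely changes the value of $q$; this is where one uses that $\widehat{\varphi}_a$ is an automorphism of the \emph{pair} but not of the triple system, so it need not preserve $q$, and an explicit computation as above closes the argument. Combining the two steps yields $\cO_2$ as a single $G_\cV$-orbit, completing the proof.
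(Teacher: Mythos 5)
Your strategy is sound and genuinely different from the paper's. The paper does not reuse Lemma~\ref{orbitsbiCayleytriple} at all: after recalling the three rank orbits, it shows by a case analysis with $\varphi_a$, $\widehat{\varphi}_a$ and $\bar\tau_{12}$ (split according to whether some $n(x_i)\neq 0$, whether $n(x_1,\bar x_2)\neq0$, and the Peirce components of $x_2$) that every element of $\cB^+$ can be moved by $G_\cV$ into $\cC_1=\cC\oplus 0$, and then distinguishes the three $G_\cV$-orbits inside $\cC_1$ using related triples, the scalings $c_\lambda$, and the orbit invariant $\dim\im Q_x$. Your route instead imports the transitivity of $G_\cT$ on $\cO_1$ and on each $\cO_2(\lambda)$ from Lemma~\ref{orbitsbiCayleytriple}, uses $G_\cT\le G_\cV$ from Remark~\ref{remarkGeneratorsbiCayley}, fuses the $\cO_2(\mu)$ with $\mu\neq0$ via $c_\lambda$ and $(\FF^\times)^2=\FF^\times$, and then only has to join $\cO_2(0)$ to the rest by exhibiting one element of $G_\cV$ that does not preserve $q$. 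This is shorter, at the price of resting the whole weight on the (longer) proof of Lemma~\ref{orbitsbiCayleytriple}; the paper's self-contained case analysis is what that shorter proof trades away.

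One computation needs repair. You have $n((1+\bi)1)=(1+\bi)^2n(1)=1+2\bi+\bi^2=2\bi$, not $2$, so $q\bigl(\widehat{\varphi}_1^+(1,\bi 1)\bigr)=1+2\bi$ rather than $3$. This is still nonzero unless $2\bi=-1$, which forces $\bi^2=1/4=-1$, i.e.\ $\chr\FF=5$; so for one of the two choices of $\bi$ in characteristic $5$ your specific element lands back in $\cO_2(0)$ and the fusion step fails. The fix is immediate: more generally $q\bigl(\widehat{\varphi}_a^+(1,\bi 1)\bigr)=n(1)+n(a+\bi 1)=n(a)+\bi\, n(a,1)$, so taking $a$ with $n(a,1)=0$ and $n(a)\neq0$ gives value $n(a)\neq0$ in every characteristic (equivalently, in the bad case just start from $(1,-\bi 1)\in\cO_2(0)$, whose image under $\widehat{\varphi}_1^+$ has $q=1-2\bi=2\neq0$). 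With that adjustment the argument is complete.
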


\begin{proof}
First, recall that $\Aut\cV_\cB$ has $3$ orbits on $\cB^+$, determined by the rank function, that can take values $0$, $1$ and $2$ (see Proposition~\ref{orbits}). From now on, consider the action of $G_\cV$ on $\cB^+$. We have to prove that the orbits under the action of $G_\cV$ are $\cO_0$, $\cO_1$ and $\cO_2$. Clearly, $\cO_0 = \{0\}$ is an orbit of this action. Recall from Remark~\ref{remarkGeneratorsbiCayley} that $G_\cV$ contains the subgroup of related triples and $\bar\tau_{12}$. By Lemma~\ref{lemmaAlberto}, two nonzero elements of $\cC_1=\cC\oplus0$ of the same norm are in the same orbit under the action of $G_\cV$ (because $G_\cV$ contains the subgroup of related triples). Using automorphisms of type $c_\lambda$ and the fact that $\FF = \bar \FF$, we also deduce that two nonisotropic elements of $\cC_1$ belong to the same orbit; a representative element of this orbit is $(1,0)$. Note that $\dim\im Q_x$ is an invariant of the orbit of each element $x\in\cB$. Given $0\neq z\in\cC$ with $n(z)=0$, we have $\dim\im Q_0 = 0$, $\dim\im Q_{(z,0)} = 1$ and $\dim\im Q_{(1,0)} = 8$; consequently, there are exactly $3$ orbits on $\cC_1$. It suffices to prove that each element of $\cB$ belongs to an orbit of $\cC_1$. Fix $x=(x_1,x_2)\in\cB$ with $x_1,x_2\neq0$; we claim that there is an automorphism $\varphi$ in $G_\cV$ such that $\varphi^+(x)\in\cC_1$.

Assume that $n(x_i)\neq0$ for some $i=1,2$. We can apply $\bar\tau_{12}$ if necessary to assume that $n(x_1)\neq0$. Then, take $a=-n(x_1)^{-1}x_2x_1$, so we have $\widehat{\varphi}^+_a(x)=(x_1,0)\in\cC_1$. 

Now, consider the case with $n(x_1) = 0 = n(x_2)$. In the case that $n(x_1,\bar x_2)\neq0$, take $a=1$, so we get that $\varphi^+_a(x)$ has a nonisotropic component, which is the case that we have considered above. Otherwise, we are in the case that $n(x_i)=0=n(x_1,\bar x_2)$. By Lemma~\ref{lemmaAlberto}, without loss of generality we can assume that $e_1 := x_1$ is a nontrivial idempotent of $\cC$. Consider the associated Peirce decomposition $\cC=\FF e_1 \oplus \FF e_2 \oplus U\oplus V$ associated to the idempotents $e_1$ and $e_2 = 1-e_1$. Since $n(e_2, x_2)=n(\bar x_1, x_2)=0$, we have $x_2=\lambda e_2+u+v$ for certain elements $\lambda\in \FF$, $u\in U$, $v\in V$. There are two cases now:

$\bullet$ In case $v\neq0$, we can take $u_1\in U$ such that $vu_1=e_2$, so 
$\varphi^+_{u_1}(x)=(e_1-(\lambda\bar e_2+\bar u+\bar v)u_1,x_2)=(1-\lambda u_1+uu_1,x_2)$, where the first component is nonisotropic (it has norm 1), which is the case considered above.

$\bullet$ In case $v=0$, we have that $\widehat{\varphi}^+_{-\lambda1}(x)=\varphi^-_{-\lambda1}(x)=(e_1,u)$, and we can assume that $x=(e_1,u)$. But if $u\neq0$, there is $v\in V$ such that $uv=-e_1$, so $\bar\tau_{12} \varphi^+_v(x) = \bar\tau_{12} (e_1-\bar uv,u) = \bar\tau_{12} (0,u) = (-u, 0) \in \cC_1$, and we are done.
\end{proof}

\subsection{Automorphism groups of bi-Cayley systems}

In this subsection, we will give an explicit description of the automorphism groups of the bi-Cayley pair and triple system.

\begin{theorem} \label{generatorsAutVB}
The group $\Aut\cV_\cB$ is generated by the automorphisms of the form $\varphi_a$, $\widehat{\varphi}_a$ and $c_\lambda$ (with $a\in\cC$, $\lambda\in\FF^\times$). 
\end{theorem}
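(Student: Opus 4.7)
The plan is to reduce an arbitrary $\varphi\in\Aut\cV_\cB$ to the identity by successive composition with elements of $G_\cV := \langle \varphi_a,\widehat{\varphi}_a,c_\lambda\med a\in\cC,\ \lambda\in\FF^\times\rangle$, working relative to the rank-two idempotent $e=((1,0),(1,0))$. Recall from Remark~\ref{remarkGeneratorsbiCayley} that $G_\cV$ already contains $\RT$, $\bar\tau_{12}$ and every $c_{\lambda,\mu}$.

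First, by Lemma~\ref{orbitsbiCayleypair}, $G_\cV$ acts transitively on $\cO_2$, so after composing on the left with an element of $G_\cV$ I may assume $\varphi^+(1,0)=(1,0)$. A direct computation with the $Q$-formula shows that the idempotent condition on $((1,0),\varphi^-(1,0))$ forces $\varphi^-(1,0)=(1,b)$ for some $b\in\cC$; since $\varphi_{-b}^+$ fixes $(1,0)$ and $\varphi_{-b}^-(1,b)=(1,0)$, a further composition with $\varphi_{-b}$ gives $\varphi(e)=e$. Hence $\varphi$ preserves the Peirce decomposition $\cV^\sigma=\cC_1\oplus\cC_2$ (with $\cV_2^\sigma=\cC_1$, $\cV_1^\sigma=\cC_2$, $\cV_0^\sigma=0$), so I may write $\varphi^+(x_1,x_2)=(f(x_1),h(x_2))$ and $\varphi^-(y_1,y_2)=(g(y_1),k(y_2))$ with $f(1)=g(1)=1$.

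Specialising the identity $\varphi^+(Q_xy)=Q_{\varphi^+(x)}\varphi^-(y)$ to $x=(1,x_2),\ y=(y_1,0)$ and then to $x=(x_1,1),\ y=(0,y_2)$, and simplifying using Artin's theorem to resolve the resulting four-fold products in $\cC$, I expect to obtain $g(y)=\overline{f(\bar y)}$, $h(z)=h(1)\,\overline{f(\bar z)}$ and $k(z)=h(1)f(z)/n(h(1))$, with $n(h(1))\neq0$ forced by the invertibility of $\varphi^+|_{\cC_2}$, and crucially $f(ab)=f(a)f(b)$ for all $a,b\in\cC$. Thus $f\in\Aut\cC$; since $f$ commutes with the standard involution, $h(z)=h(1)f(z)$. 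By Remark~\ref{remarkrelatedtriples} the related triple $(f,f,f)$ produces an element of $\RT\leq G_\cV$ acting as $(f,f)$ on both $\cV^\pm$, and composing its inverse with $\varphi$ reduces to the case $f=\id$, leaving $\varphi^+(x_1,x_2)=(x_1,ux_2)$ and $\varphi^-(y_1,y_2)=(y_1,uy_2/n(u))$ with $u=h(1)$.

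Finally, I pick $\mu\in\FF^\times$ with $\mu^2=n(u)$ and set $w=u/\mu\in\cC$, so $n(w)=1$; then $(\id,L_w,R_{w^{-1}})$ is a related triple (a direct verification using the Moufang-loop identity $(ab)(b^{-1}c)=ac$), yielding an element $\psi\in\RT$ that acts as $(x_1,x_2)\mapsto(x_1,wx_2)$ on both $\cV^\pm$. The composition $c_{1,\mu}\circ\psi\in G_\cV$ matches $\varphi$ exactly on $\cV^+$ and $\cV^-$, so $\varphi\in G_\cV$. The main technical obstacle is the Peirce-level computation in the third paragraph: the mixed-Peirce identities must be used with sufficient precision to force $f$ to be an honest automorphism of $\cC$ (rather than only of its Jordan-algebra structure), which requires exploiting the alternative-algebra structure of $\cC$ and carefully controlling associators via Artin's theorem.
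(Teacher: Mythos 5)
Your reduction to the stabilizer of the rank-two idempotent $e=((1,0),(1,0))$ is sound and matches the paper's strategy (transitivity on $\cO_2$, the idempotent computation forcing $\varphi^-(1,0)=(1,b)$, correction by $\varphi_{-b}$). The proof breaks down in the analysis of that stabilizer, in two places that stem from the same omission.

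First, the specializations you propose do \emph{not} yield $f(ab)=f(a)f(b)$. From $x=(1,x_2)$, $y=(y_1,0)$ one gets $Q_{(1,x_2)}(y_1,0)=(\bar y_1,x_2y_1)$ and hence $g(y)=\overline{f(\bar y)}$ and $h(ab)=h(a)\,\overline{f(\bar b)}$, so $h(z)=u\,\overline{f(\bar z)}$ with $u=h(1)$; the other specialization gives $k(z)=uf(z)/n(u)$ and $f(ba)=\bar u\bigl((uf(b))f(a)\bigr)/n(u)$. The last relation forces multiplicativity of $f$ only when $u$ lies in the nucleus of $\cC$, i.e.\ $u\in\FF^\times$, which you have not arranged. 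And indeed $f$ need not be multiplicative at this stage: the stabilizer of $(1,0)^{\pm}$ in $\Aut\cV_\cB$ contains (via $\RT$) all related triples $(f_1,f_2,f_3)$ with $f_1(1)=1$, a copy of $\Spin(7)$ whose first components sweep out all of $\mathrm{O}^+(\cC_0,n)$ extended by $f_1(1)=1$ — far more than $\Aut\cC=G_2$. Second, the final step is false: $(\id,L_w,R_{w^{-1}})$ is \emph{not} a related triple for general $w$ with $n(w)=1$. The identity $(ab)(b^{-1}c)=ac$ you invoke fails in $\cC$ (substituting $c=bd$ it would give $(ab)d=a(bd)$ for all $a,b,d$, i.e.\ associativity); related triples whose first component is the identity are only $(\id,\id,\id)$ and $(\id,-\id,-\id)$. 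Correspondingly, your normal form $\varphi^+=(x_1,ux_2)$, $\varphi^-=(y_1,uy_2/n(u))$ is not an automorphism of $\cV_\cB$ unless $u\in\FF^\times$ (in which case it is $c_{1,u}$). The missing ingredient is exactly the paper's intermediate normalization: after fixing $(1,0)^{\pm}$, write $\varphi(0,1)^+=(0,a)^+$, note $n(a)\neq0$ since this element has rank $2$, rescale with $c_{1,\lambda}$ to get $n(a)=1$, and use Lemma~\ref{lemmaAlberto} to find a related triple in $G_\cV$ carrying $(0,a)$ back to $(0,1)$. Only once $\varphi$ fixes all four of $(1,0)^{\pm}$, $(0,1)^{\pm}$ do the $Q$- and triple-product identities force the two diagonal blocks to coincide and be an automorphism of $\cC$, which lies in $\RT\leq G_\cV$.
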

\begin{proof}
Take $\varphi\in\Aut\cV_\cB$ and call $G_\cV = \langle \varphi_a, \widehat{\varphi}_a, c_\lambda \med a\in\cC, \lambda\in\FF^\times \rangle$. We have to prove that $\varphi\in G_\cV$. Recall from Remark~\ref{remarkGeneratorsbiCayley} that related triples and automorphisms of type $c_{\lambda,\mu}$ belong to $G_\cV$.

By Lemma~\ref{orbitsbiCayleypair}, there is some element $\varphi'$ of $G_\cV$ such that $\varphi'\varphi(1,0)^+ = (1,0)^+$. Thus, without loss of generality (changing $\varphi$ with $\varphi'\varphi$) we can assume that $\varphi(1,0)^+ = (1,0)^+$. Since the image of the idempotent $((1,0)^+,(1,0)^-)$ is an idempotent of the form $((1,0)^+,(a,b)^-)$, it must be $(1,0)^+ = Q_{(1,0)^+} (a,b)^- = (\bar a,0)^+$, hence $a=1$. The composition $\varphi_{-b}\varphi$ fixes $(1,0)^\pm$, so we can assume (changing $\varphi$ with $\varphi_{-b}\varphi$) that the same holds for $\varphi$. In consequence, the subspaces $\cC_1^\sigma = \im Q_{(1,0)^\sigma}$ and $\cC_2^\sigma = \ker Q_{(1,0)^{-\sigma}}$ must be $\varphi$-invariant. Write $\varphi(0,1)^+ = (0,a)^+$ with $a\in\cC$. Since the element $\varphi(0,1)^+$ has rank $2$, we have $n(a)\neq0$, and composing with an automorphism of type $c_{1,\lambda}$ if necessary we can also assume that $n(a)=1$. Then, by Lemma~\ref{lemmaAlberto}, composing with a related triple we can assume that $\varphi$ fixes $(1,0)^+$ and $(0,1)^+$. Note that the subspaces $\cC_i^\sigma$ are still $\varphi$-invariant and we can write $\varphi^\sigma = \phi_1^\sigma \times \phi_2^\sigma$ with $\phi_i^\sigma \in \GL(\cC_i^\sigma)$. Then, since $(1,0)^+ = \varphi(1,0)^+ =  \varphi (Q_{(1,0)^+} (1,0)^-) = Q_{(1,0)^+} (\phi_1^-(1),0) = (\overline{\phi_1^-(1)},0)^+$, we have $\phi_1^-(1) = 1$, and similarly $\phi_2^-(1) = 1$. Therefore, $\varphi$ fixes the elements $(1,0)^\pm$ and $(0,1)^\pm$. 

Denote $\cC_0 = \{a\in\cC \med \text{tr}(a)=0 \}$. Since the trace $t$ of $\cV_\cB$ is invariant by automorphisms and $(1,0)^\pm$ are fixed by $\varphi$, we obtain that the subspaces $(\cC_0 \oplus 0)^\pm$ are $\varphi$-invariant (note that $\text{tr}(a) = t((a,0), (1,0))$), and the same holds for $(0 \oplus \cC_0)^\pm$. For each $z\in\cC_0$, we have $Q_{(1,0)^+} (z,0)^- = (-z,0)^+$, which implies that $(-\phi_1^+(z),0)^+ = \varphi Q_{(1,0)^+} (z,0)^- = Q_{(1,0)^+} (\phi_1^-(z),0)^- = (\overline{\phi_1^-(z)},0)^+ = (-\phi_1^-(z),0)^+$. Hence $\phi_1^+ = \phi_1^-$ and, in the same manner, $\phi_2^+ = \phi_2^-$. With abuse of notation, we can omit the superscript $\sigma$ and write $\varphi = \phi_1 \times \phi_2$. On the other hand, for each $z\in\cC_0$ we have $\{(1,0)^+, (0,1)^-, (0,z)^+\} = (-z,0)^+$, from where we get that $(-\phi_1(z),0)^+ = \varphi \{(1,0)^+, (0,1)^-, (0,z)^+\} = \{(1,0)^+, (0,1)^-, (0,\phi_2(z))^+\} = (-\phi_2(z),0)^+$. Thus, $\phi_1 = \phi_2$ and, with more abuse of notation we can omit the subscript $i=1,2$ and write $\varphi = \phi \times \phi$, where $\phi \in \GL(\cC)$. Moreover, applying $\varphi$ to the equality $\{(0,1), (0,x), (y,0)\} = (xy,0)$ we obtain $\phi(xy) = \phi(x)\phi(y)$, which shows that $\phi\in\Aut\cC$. Since $\Aut\cC \leq \RT \leq G_\cV$ (with the obvious identifications), we have $\varphi \in G_\cV$ and we are done. 
\end{proof}

\begin{theorem} \label{generatorsAutTB}
The group $\Aut\cT_\cB$ is generated by the automorphisms of the form $\varphi_{a,\lambda}$ (with $a\in\cC$ and $\lambda\in\FF$ such that $n(a)+\lambda^2 = 1$).
\end{theorem}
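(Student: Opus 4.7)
The plan is to mimic the strategy of Theorem~\ref{generatorsAutVB}, but while staying inside the subgroup $G_\cT = \langle \varphi_{a,\lambda} \med a\in\cC,\ \lambda\in\FF,\ n(a)+\lambda^2=1 \rangle$. By Remark~\ref{remarkGeneratorsbiCayley}, $G_\cT$ already contains the reduced-orthogonal subgroup $\RT$ of related triples and the involution $\bar\tau_{12}$, and by Lemma~\ref{orbitsbiCayleytriple} the orbits of $G_\cT$ on $\cB$ agree with those of $\Aut\cT_\cB$. So given $\varphi\in\Aut\cT_\cB$, since $(1,0)\in\cO_2(1)$ there is some $\psi\in G_\cT$ with $\psi\varphi(1,0)=(1,0)$, and I may replace $\varphi$ by $\psi\varphi$ and assume $\varphi(1,0)=(1,0)$.

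The element $(1,0)$ is a tripotent with $Q_{(1,0)}(y_1,y_2)=(\bar y_1,0)$, so the associated Peirce decomposition of $\cT_\cB$ is $\cB=(\cC\oplus 0)\oplus(0\oplus\cC)$, and both summands are $\varphi$-invariant. Write $\varphi=\phi_1\times\phi_2$ with $\phi_i\in\GL(\cC)$; since $\Aut\cT_\cB\le\Ort(\cB,q)$ (Remark~\ref{remarkPhialambda}), each $\phi_i$ lies in $\Ort(\cC,n)$, and $\phi_1(1)=1$. Setting $a:=\phi_2(1)$, applying $\varphi$ to $Q_{(0,1)}(0,1)=(0,1)$ forces $n(a)=1$. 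By Lemma~\ref{lemmaAlberto} I can pick a related triple $(f_1,f_2,f_3)$ with $f_1(1)=1$ and $f_2(a)=1$; the corresponding automorphism $(f_1,f_2)\in\RT\le G_\cT$, and after composing I may assume that $\varphi$ fixes both $(1,0)$ and $(0,1)$.

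It remains to show that the common map is an algebra automorphism of $\cC$ so that $\varphi\in\RT$. A direct computation with the triple product formula in Definition~\ref{parB} gives
\begin{equation*}
\{(z,0),(0,1),(0,u)\}=(\bar u\,z,0),\qquad \{(0,w),(1,0),(v,0)\}=(0,w\bar v),
\end{equation*}
for all $u,v,w,z\in\cC$. Applying $\varphi$ to the first identity yields $\phi_1(\bar u z)=\overline{\phi_2(u)}\,\phi_1(z)$, and to the second yields $\phi_2(w\bar v)=\phi_2(w)\,\overline{\phi_1(v)}$. Specializing $z=1$ and $w=1$ (using $\phi_i(1)=1$) gives the compatibility $\phi_2(u)=\overline{\phi_1(\bar u)}$; substituting this back into the first identity produces $\phi_1(\bar u z)=\phi_1(\bar u)\,\phi_1(z)$, hence $\phi:=\phi_1\in\Aut\cC$. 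Then $\phi_2(u)=\overline{\phi(\bar u)}=\phi(u)$ as well, so $\varphi=(\phi,\phi)$. Since $(\phi,\phi,\phi)$ is a related triple, $\varphi\in\RT\le G_\cT$, and we are done.

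The main obstacle, such as it is, lies in Step~3: producing, after fixing $(1,0)$, a related triple in $G_\cT$ that simultaneously fixes $1$ in the first slot and sends $a$ to $1$ in the second. This is exactly what Lemma~\ref{lemmaAlberto} provides, so the whole argument reduces to an orbit reduction followed by a short triple-product calculation.
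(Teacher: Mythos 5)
Your proposal is correct and follows essentially the same route as the paper: reduce via Lemma~\ref{orbitsbiCayleytriple} to $\varphi(1,0)=(1,0)$, use invariance of $\im Q_{(1,0)}$ and $\ker Q_{(1,0)}$ together with Lemma~\ref{lemmaAlberto} to also fix $(0,1)$, and then identify the diagonal action with an automorphism of $\cC$ lying in $\RT\leq G_\cT$. The only difference is cosmetic: where the paper defers the last step to ``the same arguments as in the proof of Theorem~\ref{generatorsAutVB},'' you carry it out explicitly with the identities $\{(z,0),(0,1),(0,u)\}=(\bar u z,0)$ and $\phi_2(u)=\overline{\phi_1(\bar u)}$, which check out.
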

\begin{proof}
Take $\varphi\in\Aut\cT_\cB$ and call $G_\cT = \langle \varphi_{a,\lambda} \med a\in\cC, \lambda\in\FF, n(a)+\lambda^2=1 \rangle$. We have to prove that $\varphi\in G_\cT$. By Lemma~\ref{orbitsbiCayleytriple}, there is some element $\varphi'$ of $G_\cT$ such that $\varphi'\varphi(1,0) = (1,0)$. Thus, without loss of generality (changing $\varphi$ with $\varphi'\varphi$) we can assume that $\varphi(1,0) = (1,0)$. Now, the subspaces $\cC_1 = \im Q_{(1,0)}$ and $\cC_2 = \ker Q_{(1,0)}$ must be $\varphi$-invariant. Write $\varphi(0,1) = (0,a)$ with $a\in\cC$. We know from Remark~\ref{remarkPhialambda} that $\Aut\cT_\cB \leq \Ort(\cB,q)$, so we have $n(a) = q(0,a) = q(0,1) = 1$. Then, by Lemma~\ref{lemmaAlberto}, composing with a related triple we can assume without loss of generality that $\varphi$ fixes $(1,0)$ and $(0,1)$. Since the subspaces $\cC_i$ are $\varphi$-invariant, we can write $\varphi = \phi_1 \times \phi_2$ with $\phi_i \in \GL(\cC_i)$. With the same arguments as in the proof of Theorem~\ref{generatorsAutVB} we deduce that $\phi_1 = \phi_2 \in \Aut\cC$, and therefore $\varphi$ belongs to $G_\cT$. 
\end{proof}

We introduce now some notation that will be used in the following results of this section:

\begin{notation} \label{notSpinGroups}
We extend the norm $n$ on $\cC$ to a ten-dimensional vector space $W=\cC\perp (\FF e \oplus \FF f)$ with $n(e)=n(f)=0$ and $n(e,f)=1$. Fix $\bi\in\FF$ with $\bi^2=-1$ and note that the elements $x=e+f$ and $y=\bi(e-f)$ are orthogonal of norm $1$. Then, $e=(x-\bi y)/2$, $f=(x+\bi y)/2$. Also, denote $V = \cC \perp \FF x \subseteq W$.
\end{notation}

\begin{lemma} \label{lemmaSpinGroups}
With notation as above, we have $\Spin(W,n) = \langle 1+a\cdot e, 1+a\cdot f \med a\in \cC \rangle$ and $\Spin(V,n) = \langle \lambda 1+a\cdot x \med \lambda\in\FF, a\in \cC, n(a)+\lambda^2 = 1 \rangle$. 
\end{lemma}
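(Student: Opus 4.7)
The plan is to handle both parts by analyzing the image under the twofold covering $\chi\colon\Spin\to \Ort^+$ (which is surjective with kernel $\{\pm 1\}$ since $\FF$ is algebraically closed).

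For $\Spin(V,n)$, I would first verify that $u:=\lambda 1 + a\cdot x$ lies in $\Spin(V,n)$: it is in the even part of the Clifford algebra, and since $a\perp x$ we have $x\cdot a = -a\cdot x$ and hence $(a\cdot x)^2 = -a\cdot a\cdot x\cdot x = -n(a)$, giving $u\cdot\tau(u) = (\lambda + a\cdot x)(\lambda - a\cdot x) = \lambda^2 + n(a) = 1$. Next I would compute $\chi_u$ on $V$ directly: for $b\in\cC$, using $x\cdot a\cdot x = -a$ and the relations in the Clifford algebra, one gets $\chi_u(b) = b - n(a,b)(a+\lambda x)$ and $\chi_u(x) = (\lambda^2-n(a))x + 2\lambda a$. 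Setting $y := a + \lambda x$, which is a unit vector of $V$ (since $n(y)=n(a)+\lambda^2=1$), one recognises $\chi_u$ as the composition $s_y\circ s_x$ of the reflections of $V$ along $y$ and $x$. As $(\lambda,a)$ varies over $\{\lambda^2+n(a)=1\}$, the vector $y$ sweeps all unit vectors of $V$. By Cartan-Dieudonn\'e, since $\FF$ is algebraically closed every element of $\Ort^+(V,n)$ is a product of reflections along unit vectors, and $s_{y_1}s_{y_2} = (s_{y_1}s_x)(s_x s_{y_2})$ shows that such products generate $\Ort^+(V,n)$. The element $-1 = (-1)\cdot 1 + 0\cdot x$ lies in the generating set and $\ker(\chi|_{\Spin(V,n)}) = \{\pm1\}$, so the subgroup generated equals $\Spin(V,n)$.

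For $\Spin(W,n)$, the analogous verification uses $(a\cdot e)^2 = -n(a)\cdot n(e) = 0$ and $\tau(1+a\cdot e) = 1 - a\cdot e$ to give $(1+a\cdot e)\tau(1+a\cdot e) = 1$, so $1+a\cdot e\in\Spin(W,n)$; similarly for $1+a\cdot f$. A direct calculation shows
\[
\chi_{1+a\cdot e}(b) = b - n(a,b)e,\quad \chi_{1+a\cdot e}(e)=e,\quad \chi_{1+a\cdot e}(f) = f + a - n(a)e,
\]
for $b\in\cC$, and symmetrically for $\chi_{1+a\cdot f}$. Thus $U^+:=\chi(\{1+a\cdot e\})$ and $U^-:=\chi(\{1+a\cdot f\})$ are the unipotent radicals of the two opposite maximal parabolics of $\Ort^+(W,n)$ stabilising the isotropic lines $\FF e$ and $\FF f$. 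Differentiating gives nilpotent operators $X_a, Y_b\in\mathfrak{so}(W,n)$ whose bracket acts as $[X_a,Y_b](c) = n(a,c)b - n(b,c)a$ on $\cC$ and as $\mp n(a,b)$ on $e,f$; these span the Levi subalgebra $\mathfrak{l}\cong\mathfrak{so}(\cC,n)\oplus\FF$. Therefore $\mathfrak{u}^++\mathfrak{u}^-+[\mathfrak{u}^+,\mathfrak{u}^-] = \mathfrak{so}(W,n)$.

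To conclude, the subgroup $H:=\langle 1+a\cdot e, 1+a\cdot f\med a\in\cC\rangle\le\Spin(W,n)$ is generated by the two connected closed subvarieties $\{1+a\cdot e\med a\in\cC\}$ and $\{1+a\cdot f\med a\in\cC\}$. By a standard result on algebraic groups (Borel, \emph{Linear Algebraic Groups}, Prop.~2.2), $H$ is then a closed connected subgroup of $\Spin(W,n)$; since its Lie algebra contains $\mathfrak{u}^++\mathfrak{u}^-+[\mathfrak{u}^+,\mathfrak{u}^-]=\mathfrak{so}(W,n) = \mathrm{Lie}(\Spin(W,n))$, we get $H=\Spin(W,n)$.

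The main obstacle is the generation step for $\Spin(W,n)$: the generators $\{\lambda+a\cdot x\}$ of $\Spin(V,n)$ provided by the second part do not visibly factor into products of the form $1+b\cdot e$ and $1+c\cdot f$, so a direct computational proof appears unpleasant. The cleanest route is the Lie-algebraic one above, which sidesteps the need to exhibit $-1$ explicitly as a product of generators (it follows automatically from the dimension count and connectedness).
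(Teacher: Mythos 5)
Your proposal is correct, but for the $\Spin(W,n)$ half it takes a genuinely different route from the paper. For $\Spin(V,n)$ the two arguments are close in spirit: the paper simply factors the standard generators $(\lambda_1x+a_1)\cdot(\lambda_2x+a_2)$ of $\Spin(V,n)$ as a product of two elements $\lambda1+a\cdot x$, whereas you pass to $\Ort^+(V,n)$, identify $\chi_{\lambda1+a\cdot x}$ as $s_{a+\lambda x}s_x$, and invoke Cartan--Dieudonn\'e together with the observation that $-1$ lies in the generating set; both are complete, and the paper's version is slightly more economical since it never leaves the Clifford algebra. For $\Spin(W,n)$ the paper stays entirely computational: it reduces to generators of the form $(a+\lambda e+\mu f)\cdot x$ with $n(a)+\lambda\mu=1$ and exhibits each of these explicitly as a word in $1+b\cdot e$, $1+c\cdot f$ (with a case split on $\lambda=\mu$ versus $\lambda\neq\mu$, the first case piggybacking on the $\Spin(V,n)$ generators). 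You instead recognize the two generating families as the unipotent radicals $U^{\pm}$ of opposite parabolics, check that $\mathfrak{u}^{+}+\mathfrak{u}^{-}+[\mathfrak{u}^{+},\mathfrak{u}^{-}]=\frso(W,n)$, and conclude by the standard fact that a subgroup generated by irreducible subvarieties through the identity is closed and connected, hence equals $\Spin(W,n)$ by a dimension count. Your route buys conceptual transparency and avoids the factorization bookkeeping (including the need to produce $-1$ by hand), at the price of importing algebraic-group machinery (Borel's generation lemma, smoothness of group varieties, $\mathrm{d}\chi$ being an isomorphism in characteristic $\neq2$) into what is otherwise an elementary Clifford-algebra lemma; the paper's route is self-contained and makes the factorizations available for later explicit use. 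Your Lie-bracket computation and the identification of the Levi are correct, so I see no gap.
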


\begin{proof} 
First, note that $e\cdot f+f\cdot e=1$, hence $e\cdot f\cdot e=e$ and $f\cdot e\cdot f=f$ in the Clifford algebra $\mathfrak{Cl}(W,n)$. Besides, $x \cdot x = 1$. For each $a\in\cC$, it is easily checked that $(1+a\cdot e)\cdot\tau(1+a\cdot e)=(1+a\cdot e)\cdot(1+e\cdot a)=1$, and also $(1+a\cdot e)\cdot W\cdot (1+e\cdot a)\subseteq W$, so $1+a\cdot e, 1+a\cdot f\in\Spin(W,n)$. Then $G_W := \langle 1+a\cdot e, 1+a\cdot f \med a\in\cC\rangle\leq\Spin(W,n)$. Similarly, $G_V := \langle \lambda 1+a\cdot x \med \lambda\in\FF, a\in \cC, n(a)+\lambda^2 = 1 \rangle \leq \Spin(V,n)$.

Now, note that $\Spin(V,n)$ is generated by elements of the form $(\lambda_1 x + a_1) \cdot (\lambda_2 x + a_2) = (\lambda_1 1+ a_1 \cdot x) \cdot (\lambda_2 1 - a_2 \cdot x)$ with $\lambda_i\in\FF$, $a_i\in\cC$ such that $\lambda_i^2 + n(a_i) = 1$. Therefore, $\Spin(V,n) = G_V$.

Since $(a_1+\lambda_1 e+\mu_1 f) \cdot (a_2+\lambda_2 e+\mu_2 f) = (a_1+\lambda_1 e+\mu_1 f) \cdot x \cdot (-a_2+\mu_2 e+\lambda_2 f) \cdot x$, it is clear that $\Spin(W,n)$ is generated by the elements of the form $g=(a+\lambda e+\mu f)\cdot x$ with $a\in\cC$, $\lambda,\mu\in\FF$ and $n(a)+\lambda\mu=1$, so it suffices to prove that these generators belong to $G_W$.

$\bullet$ Case $\lambda=\mu$. The generator has the form $g=(a+\lambda x)\cdot x=\lambda 1 +a\cdot x$ (i.e., a generator of $G_V$). If $n(a)\neq0$ we can write $\lambda 1 +a\cdot x = (1+\nu a\cdot f) \cdot (1+a\cdot e) \cdot (1+\nu a\cdot f)\in G_W$ with $\nu=\frac{1-\lambda}{n(a)}=\frac{1}{1+\lambda}$ (because $n(a)+\lambda^2=1$. This implies in particular that $-1\in G_W$, because if $a\in\cC$ satisfies $n(a)=1$ and we take $\lambda=0$, then $-1 = (a \cdot x)\cdot(a \cdot x) = (0 + a \cdot x)\cdot(0 + a \cdot x) \in G_W$. On the other hand, if $n(a)=0$, then $\lambda\in\{\pm1\}$ and we can write $\lambda 1 +a\cdot x = \lambda1 \cdot (1+\nu a\cdot f) \cdot (1+\lambda a\cdot e) \cdot (1+\nu a\cdot f)\in G_W$ with $\nu=\lambda/2$. 

$\bullet$ Case $\lambda\neq\mu$. The generator has the form $g=(a+\lambda e+\mu f)\cdot x$. Without loss of generality, we can assume that $\mu\neq0$ (the case $\lambda\neq0$ is similar).

Take $\alpha = \mu^2 \in\FF^\times$ and $b\in\cC$ with $n(b)\alpha=1$. Then, $(1+b\cdot e) \cdot (1+\alpha b\cdot f) \cdot (1+b\cdot e) = b\cdot(e+\alpha f) = \mu b \cdot (\mu^{-1} e + \mu f) = (\mu^{-1} e + \mu f) \cdot (-\mu b)$, and note that $n(-\mu b)=1$. In consequence, for any $b\in\cC$ with $n(b)=1$ we have $(\mu^{-1} e+\mu f)\cdot b\in G_W$, and therefore $(\mu^{-1} e+\mu f)\cdot x = ((\mu^{-1} e+\mu f)\cdot b)\cdot (b\cdot x)\in G_W$ (because $b \cdot x \in G_W$ by the case $\lambda = \mu$). Then, $(1-\mu a\cdot f)\cdot(1-\mu^{-1} a\cdot e)\cdot g = (\mu^{-1}e+\mu f)\cdot x\in G_W$, and therefore $g\in G_W$.
\end{proof}

The groups $\Aut\cV_\cB$ and $\Aut\cT_\cB$ are explicitly described by the following Theorem.

\begin{theorem} \label{ThAutomGroups}
With the same notation as above, define the linear maps $\Phi^\pm \colon W \to \End (\cC \oplus \cC)$ by
$$\Phi^\pm(a)=\left(\begin{array}{cc} 0 & r_{\bar a} \\ l_{\bar a} & 0\end{array}\right), \quad
\Phi^\pm(x)=\left(\begin{array}{cc} 1 & 0 \\ 0 & -1 \end{array}\right), \quad
\Phi^\pm(y)=\left(\begin{array}{cc} \pm\bi & 0 \\ 0 & \pm\bi \end{array}\right),$$
where $a\in\cC$. Then, the linear map
\begin{equation*}
\Psi \colon W \to \End (\cB \oplus \cB), \quad w \mapsto \left(\begin{array}{cc} 0 & \Phi^+(w) \\ \Phi^-(w) & 0 \end{array}\right),
\end{equation*}
defines an algebra isomorphism $\Psi \colon \mathfrak{Cl}(W, n) \to \End (\cB \oplus \cB)$. Moreover, if we identify each $\varphi \in \Aut\cV_\cB$ with
\begin{equation*} 
\left(\begin{array}{cc} \varphi^+ & 0 \\ 0 & \varphi^- \end{array}\right) \in \End(\cB \oplus \cB),
\end{equation*}
then $\Psi$ restricts to a group isomorphism $\Spin(W, n) \to \langle \varphi_a, \widehat{\varphi}_a \med a\in\cC \rangle \leq \Aut\cV_\cB$, which in turn restricts to a group isomorphism $\Spin(V, n) \to \Aut\cT_\cB$. Furthermore, $\Aut\cV_\cB \cong \Gamma^+(W, n)/\langle -\bi z \rangle$ with $\langle -\bi z \rangle \cong \ZZ_2$, where $z = \Psi^{-1}(c_\bi)$. (Recall that $\Gamma^+(W,n)$ is the even Clifford group.)
\end{theorem}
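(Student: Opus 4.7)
First, I would verify that $\Psi$ extends to an algebra isomorphism, for which it suffices to check that $\Psi(w)^2 = n(w)\,I_{\cB \oplus \cB}$ for every $w \in W$. Writing $w = a + \alpha x + \beta y$ with $a \in \cC$ and $\alpha, \beta \in \FF$, and setting $p = \alpha + \beta\bi$, one finds $\Phi^+(w) = \left(\begin{smallmatrix} p & r_{\bar a} \\ l_{\bar a} & -\bar p\end{smallmatrix}\right)$ and $\Phi^-(w) = \left(\begin{smallmatrix} \bar p & r_{\bar a} \\ l_{\bar a} & -p\end{smallmatrix}\right)$. A direct block-matrix computation of the two diagonal blocks $\Phi^+(w)\Phi^-(w)$ and $\Phi^-(w)\Phi^+(w)$ of $\Psi(w)^2$, using the para-Cayley identities $r_{\bar a}l_{\bar a} = l_{\bar a}r_{\bar a} = n(a)\id_\cC$ and the fact that the scalar $p$ commutes with $r_{\bar a}$ and $l_{\bar a}$, shows both blocks equal $(p\bar p + n(a))\,I_\cB = n(w)\,I_\cB$. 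By the universal property of the Clifford algebra, $\Psi$ extends to an algebra homomorphism $\mathfrak{Cl}(W,n)\to\End(\cB\oplus\cB)$; since $\mathfrak{Cl}(W,n)$ is simple (Clifford algebra of a nondegenerate form in even dimension $10$), this extension is injective, and $\dim\mathfrak{Cl}(W,n) = 2^{10} = (\dim(\cB\oplus\cB))^2$ forces bijectivity.

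Next, I would compute the action of $\Psi$ on the generators of $\Spin(W,n)$ from Lemma~\ref{lemmaSpinGroups}. From $e = (x - \bi y)/2$ and $f = (x + \bi y)/2$ one obtains $\Phi^+(e) = \left(\begin{smallmatrix}1 & 0 \\ 0 & 0\end{smallmatrix}\right)$, $\Phi^-(e) = \left(\begin{smallmatrix}0 & 0 \\ 0 & -1\end{smallmatrix}\right)$, with the roles of the two diagonal entries swapped for $f$. A short calculation then gives $\Psi(1 + a\cdot e) = \varphi_a$ and $\Psi(1 + a\cdot f) = \widehat\varphi_a$ under the identification of $\Aut\cV_\cB$ with block-diagonal matrices $\Diag(\varphi^+,\varphi^-)$. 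Since $\Psi$ is injective and its image on $\Spin(W,n)$ contains the generating set $\{\varphi_a, \widehat\varphi_a : a\in\cC\}$, this yields the group isomorphism $\Spin(W,n)\cong\langle\varphi_a,\widehat\varphi_a\rangle$. The analogous computation $\Psi(\lambda + a\cdot x) = \Diag(\varphi_{a,\lambda},\varphi_{a,\lambda})$, combined with Theorem~\ref{generatorsAutTB}, establishes the restriction $\Spin(V,n)\cong\Aut\cT_\cB$.

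For the final assertion, the plan is to use $\Gamma^+(W,n) = \FF^\times\cdot\Spin(W,n)$ (with $\FF^\times\cap\Spin(W,n) = \{\pm1\}$) and define a group homomorphism $\Gamma^+(W,n)\to\Aut\cV_\cB$ by $\lambda u\mapsto c_\lambda\Psi(u)$. Well-definedness, despite the ambiguity $\lambda u = (-\lambda)(-u)$, follows from $c_{-\lambda}\Psi(-u) = c_{-\lambda}c_{-1}\Psi(u) = c_\lambda\Psi(u)$, using $\Psi(-1) = -I = c_{-1}$; the homomorphism property is immediate because $c_\lambda$ commutes with $\Psi(u)$ (both block-diagonal, $c_\lambda$ scalar on each block); and surjectivity follows from Theorem~\ref{generatorsAutVB}, as the image already contains every $c_\lambda$ (take $u=1$) and every $\varphi_a$, $\widehat\varphi_a$ (take $\lambda=1$). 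The crucial step is the kernel computation: if $c_\lambda\Psi(u) = \id$, then $\Psi(u)$ must act as a scalar on each half-spin module $\cB^\pm$, forcing $u\in Z(\Spin(W,n))$. I would then invoke that for $\dim W = 10$ this center is cyclic of order $4$, generated by the volume element $\omega = v_1\cdots v_{10}$ with respect to an orthonormal basis; the formula $\omega^2 = (-1)^{10\cdot 9/2} = -1$ implies that $\omega$ acts on the two nonisomorphic half-spin representations $\cB^\pm$ as $\pm\bi\cdot\id$, so $\Psi(\omega) = c_\bi$, giving $z = \omega$. Since $(-\bi z)^2 = \bi^2\omega^2 = 1$, we have $\langle-\bi z\rangle\cong\ZZ_2$, and a case-by-case check through $Z(\Spin(W,n)) = \{\pm 1, \pm\omega\}$ isolates the kernel as exactly $\{1, -\bi z\}$. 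The main obstacle I anticipate is this last step: identifying $z$ with the volume element and correctly tracking the action of $\omega$ on the two half-spin representations requires some care with Clifford-algebra conventions that lie slightly outside the main thread of the paper.
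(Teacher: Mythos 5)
Your proposal is correct and follows essentially the same route as the paper: verify $\Psi(w)^2=n(w)\,\mathrm{id}$ and conclude by simplicity of $\mathfrak{Cl}(W,n)$ plus a dimension count, match the generators $1+a\cdot e$, $1+a\cdot f$, $\lambda 1+a\cdot x$ with $\varphi_a$, $\widehat{\varphi}_a$, $\varphi_{a,\lambda}$ and invoke Theorems~\ref{generatorsAutVB} and \ref{generatorsAutTB} together with Lemma~\ref{lemmaSpinGroups}, and finally compute the kernel of $\lambda u\mapsto c_\lambda\Psi(u)$ by reducing to $Z(\Spin(W,n))$. The paper packages this last step through the auxiliary epimorphisms $\Lambda,\widetilde{\Lambda}$ defined on $\FF^\times\times\Spin(W,n)$ and cites $Z(\Spin(W,n))\cong\ZZ_4$ abstractly (normalizing $z$ so that $\Psi(z)=c_\bi$) rather than identifying $z$ with the volume element, but the content is the same.
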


\begin{proof} 
Fix $a\in\cC$. First, note that $\Psi(a)^2 = n(a) \id$, $\Psi(x)^2 = \Psi(y)^2 = \id$. Also, the matrices $\Psi(a)$, $\Psi(x)$ and $\Psi(y)$ anticommute, so we have $\Psi(w)^2 = n(w)\id$ for each $w\in W$. Therefore, the linear map $W \to \End(\cB \oplus \cB)$, $w \mapsto \Psi(w)$, extends to an algebra homomorphism $\mathfrak{Cl}(W, n) \to \End(\cB \oplus \cB)$. Since $\mathfrak{Cl}(W, n)$ is simple and has the same dimension as $\End(\cB \oplus \cB)$, it follows that $\Psi$ is an isomorphism.

It can be checked that $\Psi$ sends $\lambda1 + a \cdot x \mapsto \varphi_{a,\lambda}$ (where $n(a)+\lambda^2=1$). We know by Theorem~\ref{generatorsAutTB} that $\Aut\cT_\cB = \langle \varphi_{a,\lambda} \med  a\in\cC, \lambda\in\FF, n(a)+\lambda^2 = 1 \rangle$, and on the other hand, by Lemma~\ref{lemmaSpinGroups} we have $\Spin(V, n) = \langle \lambda1 + a \cdot x \med a\in\cC, \lambda\in\FF, n(a)+\lambda^2=1 \rangle$, so that $\Psi$ restricts to an isomorphism $\Spin(V, n) \to \Aut\cT_\cB$.

Furthermore, $\Psi$ sends $1+a\cdot e \mapsto \varphi_a$, $1+a\cdot f \mapsto \widehat{\varphi}_a$. By Theorem~\ref{generatorsAutVB} we have that $\Aut\cV_\cB = \langle \varphi_a, \widehat{\varphi}_a, c_\lambda \med a\in\cC, \; \lambda\in \FF^\times\rangle$ and, by Lemma~\ref{lemmaSpinGroups}, we have $\Spin(W, n)=\langle1+a\cdot e,1+a\cdot f \med a\in\cC\rangle$. Consequently, $\Psi$ restricts to a group isomorphism $\Spin(W, n) \to \langle \varphi_a, \widehat{\varphi}_a \med a\in\cC \rangle$. Moreover, we obtain a group epimorphism 
\begin{equation} \label{LambdaEpimorphism}
\Lambda \colon \FF^\times \times \Spin(W, n) \to \Aut\cV_\cB, \quad (\lambda, x) \mapsto c_\lambda \circ \Psi(x). 
\end{equation}

It is well-known that $Z(\Spin(W, n)) = \langle z \rangle \cong \ZZ_4$, with $z^4 = 1$ and $z\notin\FF$ (also $Z(\mathfrak{Cl}(W, n)_{\bar0}) = \FF1 + \FF z$). Since $\Psi$ restricts to an isomorphism $\Spin(W, n) \to \langle \varphi_a, \widehat{\varphi}_a \rangle \leq \Aut\cV_\cB$, replacing $z$ by $-z$ if necessary, we have $\Psi(z)^\pm = \pm\bi \id_\cB$ (because $\Psi(z)\in Z(\Aut\cV_\cB) = \langle c_\lambda \med \lambda\in\FF^\times \rangle$ and $z^4 = 1$). Hence $\Psi(z) = c_\bi$ (note that this implies that $\Psi^{-1}(c_\lambda) = \frac{1}{2}(\lambda + \lambda^{-1})1 + \frac{1}{2\bi}(\lambda - \lambda^{-1})z$).

We claim that $\ker\Lambda = \langle (-\bi, z) \rangle$. It is clear that $\langle (-\bi, z) \rangle \leq \ker \Lambda$. Fix $(\lambda, x) \in\ker\Lambda$, so that $\Lambda(\lambda, x) = c_\lambda \circ \Psi(x) = 1$, i.e.,
\begin{equation*}
\left(\begin{array}{cc} \lambda \id^+ & 0 \\ 0 & \lambda^{-1} \id^- \end{array}\right)
\left(\begin{array}{cc} \Psi(x)^+ & 0 \\ 0 & \Psi(x)^- \end{array}\right)
= \left(\begin{array}{cc} \id^+ & 0 \\ 0 & \id^- \end{array}\right),
\end{equation*}
which in turn implies that $\Psi(x)^\pm \in \FF^\times\id$ and $\Psi(x)\in Z(\Aut\cV_\cB)$. Recall again that $\Psi$ restricts to an isomorphism $\Spin(W, n) \to \langle \varphi_a, \widehat{\varphi}_a \med a\in\cC \rangle \leq \Aut\cV_\cB$, so that $x\in Z(\Spin(W, n)) = \langle z \rangle$ and therefore $\ker\Lambda = \langle (-\bi, z) \rangle \cong \ZZ_4$. Therefore, we obtain $(\FF^\times \times \Spin(W, n))/\langle (-\bi, z) \rangle \cong \Aut\cV_\cB$.

Define a new epimorphism by means of
\begin{equation}
\widetilde{\Lambda} \colon \FF^\times \times \Spin(W, n) \to \Gamma^+(W, n), \quad (\lambda, x) \mapsto \lambda x.
\end{equation}
Then, $\ker \widetilde{\Lambda} = \langle (-1, -1) \rangle \cong \ZZ_2$ and $(\FF^\times \times \Spin(W, n))/\langle (-1, -1) \rangle \cong \Gamma^+(W, n)$.

Finally, note that $(-1, -1)\in \ker\Lambda$. Hence, the epimorphism $\Lambda$ factors through $\widetilde{\Lambda}$, and we obtain an epimorphism $\Gamma^+(W, n) \to \Aut\cV_\cB$ with kernel $\widetilde{\Lambda}\langle (-\bi,z) \rangle = \langle -\bi z \rangle \cong \ZZ_2$.
\end{proof}

\bigskip

Although not needed in what follows, the results in the previous Theorem may be stated in terms of affine group schemes, as indicated on the next result, where the same notations as in the previous Theorem are used. For the definitions of the affine group schemes corresponding to the spin or Clifford groups, the reader is referred to \cite{KMRT98}.

\begin{theorem} \label{ThSchemesBicayley}
Let $\FF$ be an arbitrary field of characteristic not $2$.  
\begin{itemize}
\item The affine group scheme $\AAut(\cT_\cB)$ is isomorphic to $\SSpin(V,n)$.
\item The affine group scheme $\AAut(\cV_\cB)$ is isomorphic to ${\boldsymbol\Gamma}^+(W,n)/\mmu_2$.
\end{itemize}
\end{theorem}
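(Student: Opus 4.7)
The goal is to promote the group-theoretic isomorphisms of the previous theorem to isomorphisms of affine group schemes over an arbitrary field $\FF$ of characteristic not $2$. The key observation is that every construction involved---the quadratic spaces $(V,n)$ and $(W,n)$, their Clifford algebras, the map $\Psi$, and the Jordan products on $\cT_\cB$ and $\cV_\cB$---is defined by polynomial formulas with coefficients in $\ZZ[1/2]$, so the entire setup base changes to every unital associative commutative $\FF$-algebra $R$.

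First I would construct natural transformations of group-valued functors. For each $\FF$-algebra $R$, the map $\Psi$ extends to $\Psi_R\colon\mathfrak{Cl}(W_R,n_R)\to\End_R(\cB_R\oplus\cB_R)$; restricting to $\Spin(V_R,n_R)$ and $\Gamma^+(W_R,n_R)$ produces candidate homomorphisms into $\Aut_R((\cT_\cB)_R)$ and $\Aut_R((\cV_\cB)_R)$. To verify that the images really preserve the Jordan products, I would note that the preservation identities are polynomial equations in the coordinates of the generic element of $\SSpin(V,n)$ (respectively $\boldsymbol{\Gamma}^+(W,n)$); these polynomials vanish on all $\bar\FF$-points by the previous theorem, and since the source schemes are geometrically integral, they vanish identically on the coordinate ring. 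Naturality in $R$ then produces morphisms of affine group schemes
\[
\theta_\cT\colon\SSpin(V,n)\to\AAut(\cT_\cB),\qquad \theta_\cV\colon\boldsymbol{\Gamma}^+(W,n)\to\AAut(\cV_\cB).
\]

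Second, to prove these are isomorphisms I would invoke the standard criterion: a morphism from a smooth affine group scheme to another affine group scheme is an isomorphism provided it is bijective on $\bar\FF$-points and induces an isomorphism on Lie algebras. Smoothness of $\SSpin(V,n)$ and $\boldsymbol{\Gamma}^+(W,n)$ is classical in characteristic not $2$. Bijectivity on $\bar\FF$-points comes from the previous theorem, once the $\langle -\bi z\rangle$ factor appearing there is recognized as reflecting the non-$\FF$-rational nature of $\bi$ and $c_\bi$ rather than a genuine kernel at the scheme level. The Lie algebra check then reduces to identifying $\Lie\AAut(\cT_\cB)=\Der(\cT_\cB)$ with $\frso(V,n)$ and $\Lie\AAut(\cV_\cB)=\Der(\cV_\cB)$ with $\frso(W,n)\oplus\FF$; the differential of $\Psi$ (applied to dual numbers) provides the required maps, and injectivity follows at once from injectivity of $\Psi_R$.

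The main obstacle is the Lie algebra identification for the pair, namely showing that $\Der(\cV_\cB)$ has dimension $46$ and that the differential of $\theta_\cV$ fills it out. This can be extracted from the TKK construction: $\TKK(\cV_\cB)$ is a Lie algebra of type $E_6$ of dimension $78$, and the degree-zero component of its natural $3$-grading has dimension $78-16-16=46$, which matches $\dim(\frso(W,n)\oplus\FF)$. The inner derivations $\nu(x,y)$ together with the scaling direction provided by the differential of the family $c_\lambda$ account for all of $\Der(\cV_\cB)$, completing the identification; the analogous (and easier) count for $\cT_\cB$ yields the $36$-dimensional $\frso(V,n)$. Combined with smoothness and bijectivity on geometric points, this gives the desired scheme isomorphisms.
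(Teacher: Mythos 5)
You have identified the right general strategy for the first bullet (functoriality of $\Psi$ plus a smoothness/Lie-algebra criterion), but the proposal breaks down in two places.

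The serious gap is in the second bullet. The map $\theta_\cV$ you propose does not exist: for $\lambda u\in\Gamma^+(W,n)$ with $\lambda^2\neq1$ one has $\Psi(\lambda u)=\lambda\Psi(u)$, which scales \emph{both} $\cB^+$ and $\cB^-$ by $\lambda$ and is therefore not an automorphism of the Jordan pair (an automorphism scaling $\cV^+$ by $\lambda$ must scale $\cV^-$ by $\lambda^{-1}$, as $c_\lambda$ does). So restricting $\Psi$ to $\Gamma^+(W_R,n_R)$ does not land in $\Aut_R((\cV_\cB)_R)$. Moreover, even the natural surjection $\Gamma^+(W,n)\to\Aut\cV_\cB$ that one does get (by factoring $\Lambda(\lambda,x)=c_\lambda\circ\Psi(x)$ through $\widetilde\Lambda$) is \emph{not} bijective on $\bar\FF$-points: Theorem~\ref{ThAutomGroups} exhibits the kernel $\langle-\bi z\rangle\cong\ZZ_2$, and since $\bi\in\FF$ whenever $\FF$ is algebraically closed, this is a genuine two-element kernel, not an artifact of non-rationality as you claim. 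Consequently your ``bijective on geometric points plus isomorphism on Lie algebras'' criterion cannot be applied to any morphism you can write down between these two group schemes. The paper's route is structurally different: it never produces a direct morphism $\mathbf{\Gamma}^+(W,n)\to\AAut(\cV_\cB)$, but instead realizes both $\AAut(\cV_\cB)$ and $\mathbf{\Gamma}^+(W,n)$ as quotients of the common cover $\GG_m\times\SSpin(W,n)$ (using smoothness of $\AAut(\cV_\cB)$ from Loos, surjectivity of $\Lambda$ on $\bar\FF$-points, and separability of $\mathrm{d}\Lambda$), and then compares the two quotients. Any correct proof of this bullet has to deal with the order-two kernel rather than wish it away.

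For the first bullet your outline agrees with the paper's proof, except at its one nontrivial point: the \emph{upper} bound $\dim\Der(\cT_\cB)\leq36$. Injectivity of $\mathrm{d}\Psi$ (immediate from simplicity of $\frso(V,n)$) only gives the lower bound; what must be ruled out is that $\cT_\cB$ has derivations beyond the image. Your ``analogous and easier count'' via TKK does not do this — counting $\Innder$ or the degree-zero part of a $3$-graded Lie algebra bounds the inner derivations, not $\Der$. The paper's argument is genuinely more delicate: it identifies $\cT_\cB$ with the Peirce $\tfrac12$-space of the Albert algebra, views it as a Lie triple system whose standard enveloping algebra is the simple Lie algebra $\Der(\alb)$ of type $F_4$, and invokes the fact that in characteristic $\neq2$ all derivations of that Lie algebra are inner to conclude $\Der(\cT_\cB)\cong\Der(\alb)\subo$, of dimension $36$. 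Some argument of this kind (or a citation for smoothness of $\AAut(\cT_\cB)$) is needed; as written, your proposal assumes the conclusion of this step.
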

\begin{proof}
The morphism $\Psi$ in the proof of Theorem~\ref{ThAutomGroups} is functorial, so it induces a morphism of affine group schemes $\Psi:\SSpin(V,n)\rightarrow \AAut(\cT_\cB)$. If $\bar\FF$ denotes an algebraic closure of $\FF$, the corresponding homomorphism $\Psi_{\bar\FF}$ of $\bar\FF$-points is an isomorphism by Theorem~\ref{ThAutomGroups}, and the differential $\textrm{d}\Psi:\Lie(\SSpin(V,n))\cong\frso(V,n)\rightarrow \Lie(\AAut(\cT_\cB))=\Der(\cT_\cB)$ is one-to-one, because it is nonzero and the orthogonal Lie algebra $\frso(V,n)$ is simple. To prove that $\Psi$ is an isomorphism is then enough to prove that $\textrm{d}\Psi$ is surjective, or that the dimension of $\Der(\cT_\cB)$ equals the dimension of $\frso(V,n)$, which is $36$. 

For this, let $\cJ$ be the Albert algebra $\alb$ and $e$ the idempotent $E_3$ of $\alb$. Then $\cT_\cB$ is the Peirce component $\cJ_{\frac{1}{2}}(e)=\{x\in\cJ \med ex=\frac{1}{2}x\}$, with triple product inherited from the one in $\cJ$: $\{x,y,z\}=x(yz)+z(xy)-(xz)y$. Then $\cT_\cB$ is a Lie triple system too (see the proof of Theorem~\ref{triplesystemautom}), with $[x,y,z]=\{x,y,z\}-\{y,x,z\}=-2(x,z,y)=-2[L_x,L_y](z)$. The simple Lie algebra $\Der(\cJ)$, which is simple of type $F_4$ (see \cite[Chapter IX, \S 1]{J68}) is graded by $\ZZ_2$, with $\Der(\cJ)\subo=\{d\in\Der(\cJ) \med d(e)=0\}$, of dimension $36$, and $\Der(\cJ)\subuno=\{[L_e,L_x] \med x\in \cT_\cB\}$. Moreover, the assignment $x\mapsto -\frac{1}{8}[L_e,L_x]$ gives an isomorphism of Lie triple systems $(\cT_\cB,[.,.,.])\rightarrow \Der(\cJ)\subuno$. Also $[[L_e,L_x],[L_e,L_y]]=-\frac{1}{4}[L_x,L_y]$ (see the proof of \cite[Chapter IX, Theorem 17]{J68}), and hence $\Der(\cJ)$ is, up to isomorphism, the standard enveloping Lie algebra of our Lie triple system $\Der(\cJ)\subuno$.

Any derivation of the Jordan triple system $(\cT_\cB,\{.,.,.\})$ induces a derivation of $(\cT_\cB,[.,.,.])$ which, in turn, induces an even derivation of its standard enveloping algebra $\Der(\cJ)$. Since the characteristic is not $2$, any derivation of $\Der(\cJ)$ is inner (see \cite[6.7]{Jantzen}), and we conclude that $\Der(\cT_\cB)\cong\Der(\cJ)\subo$, and hence its dimension is $36$. This finishes the proof of the first assertion.

\smallskip

The homomorphism $\Lambda$ in equation~\eqref{LambdaEpimorphism} is functorial and hence it induces a morphism of affine group schemes $\Lambda:\GG_m \times\SSpin(W,n)\rightarrow \AAut(\cV_\cB)$. This last group scheme:
 $\AAut(\cV_\cB)$, is smooth (see \cite[6.5]{L79}), and $\Lambda$ is surjective for $\bar\FF$-points, as shown in the proof of Theorem~\ref{ThAutomGroups}. We conclude (see e.g. \cite[Theorem A.48]{EKmon}), that $\Lambda$ is a quotient map. Moreover, $\textrm{d}\Lambda: \Lie(\GG_m \times\SSpin(W,n))=\FF\times\frso(W,n)\rightarrow \Lie(\AAut(\cV_\cB))=\Der(\cV_\cB)$ is one-to-one, because neither $\FF$ nor the simple Lie algebra $\frso(W,n)$ are in the kernel, and hence $\textrm{d}\Lambda$ is bijective. Therefore $\Lambda$ is separable and $\ker\Lambda$ is smooth (\cite[(22.13)]{KMRT98}). We conclude that $\AAut(\cV_\cB)$ is isomorphic to the quotient $(\GG_m \times\SSpin(W,n))/\boldsymbol{\mu}_4$. 
 
The same arguments apply to the natural morphism $\GG_m \times\SSpin(W,n)\rightarrow \mathbf{\Gamma}^+(W,n)$, whose kernel is the copy of $\mmu_2$ inside $\ker\Lambda=\mmu_4$. Therefore, both $\AAut(\cV_\cB)$ and $\mathbf{\Gamma}^+(W,n)$ are  quotients of $\GG_m \times\SSpin(W,n)$, and using the isomorphism $\mmu_4/\mmu_2\simeq \mmu_2$, we get that $\AAut(\cV_\cB)$ is isomorphic to the quotient $\mathbf{\Gamma}^+(W,n)/\mmu_2$.
\end{proof}

\subsection{Construction of fine gradings on the bi-Cayley pair}

Given a grading on $\cV_\cB$ such that $\cC_i^\sigma$ are graded subspaces for $i=1,2$, and $\sigma=\pm$, we will denote by $\deg^\sigma_i$ the restriction of $\deg$ to $\cC^\sigma_i$.

Recall that the trace of $\cV_\cB$ is homogeneous for each grading, i.e., we have that $t(x^+,y^-)\neq0$ implies $\deg(y^-) = -\deg(x^+)$ for $x^+$, $y^-$ homogeneous elements of the grading. Hence, to give a grading on $\cV_\cB$ it suffices to give the degree map on $\cV_\cB^+$.

\begin{example} \label{CDbasisbiCayleypair} 
Since $\chr\FF\neq2$, we can take a Cayley-Dickson basis $\{x_i\}_{i=0}^7$ of $\cC$, as in Section~\ref{sectionCayleyAlbert}. Let $\deg_\cC$ denote the associated degree of the $\ZZ_2^3$-grading on $\cC$. Then, we will call the set $\{(x_i,0)^\sigma, (0,x_i)^\sigma \med \sigma=\pm \}_{i=0}^7$ a {\em Cayley-Dickson basis} of $\cV_\cB$. It is checked directly that we have a fine $\ZZ^2 \times \ZZ_2^3$-grading on $\cV_\cB$ that is given by $\deg^+_1(x_i)=(1,0,\deg_\cC(x_i))=-\deg^-_1(x_i)$ and $\deg^+_2(x_i)=(0,1,\deg_\cC(x_i))=-\deg^-_2(x_i)$, and will be called the {\em Cayley-Dickson grading} on $\cV_\cB$. This grading is fine because its homogeneous components have dimension $1$.
\end{example}

Note that, for the Cayley-Dickson basis, the triple product is determined by:
\begin{itemize}
\item[i)] $\{(x_i,0),(x_j,0),(x_k,0)\}=(2\delta_{ij}x_k+2\delta_{jk}x_i-2\delta_{ik}x_j, 0)$,
\item[ii)] $\{(x_i,0),(0,x_j),(x_k,0)\}=0$,
\item[iii)] $\{(x_i,0),(x_j,0),(0,x_k)\}=(0, 2\delta_{ij}x_k-(x_kx_i)\bar x_j)$.
\end{itemize}
The rest of the cases are obtained by symmetry in the first and third components of the triple product, and using the automorphism $\bar\tau_{12} \colon \cC\oplus0 \leftrightarrow 0\oplus\cC$, $(x_1,x_2)^\sigma \mapsto (\bar x_2,\bar x_1)^\sigma$.

\begin{example} \label{CartanbasisbiCayleypair}
Let $\{z_i\}_{i=1}^8$ be a Cartan basis of $\cC$, as in Section~\ref{sectionCayleyAlbert}. Then, $\{(z_i,0)^\sigma, (0,z_i)^\sigma \med \sigma=\pm \}_{i=1}^8$ will be called a {\em Cartan basis} of $\cV_\cB$. It is checked directly that we have a fine $\ZZ^6$-grading on $\cV_\cB$ determined by

\begin{center}
\begin{tabular}{|c|l|l|}
 \hline
 deg & $\cC^+_1$ & $\cC^+_2$ \\
 \hline
 $e_1$ & $(0,0,1,0,0,0)$ & $(0,0,0,0,1,0)$ \\
 $e_2$ & $(0,0,0,1,0,0)$ & $(0,0,0,0,0,1)$ \\
 \hline
 $u_1$ & $(1,0,0,1,0,0)$ & $(1,0,0,0,1,0)$ \\
 $u_2$ & $(0,1,0,1,0,0)$ & $(0,1,0,0,1,0)$ \\
 $u_3$ & $(-1,-1,1,0,-1,1)$ & $(-1,-1,1,-1,0,1)$ \\
 \hline
 $v_1$ & $(-1,0,1,0,0,0)$ & $(-1,0,0,0,0,1)$ \\
 $v_2$ & $(0,-1,1,0,0,0)$ & $(0,-1,0,0,0,1)$ \\
 $v_3$ & $(1,1,0,1,1,-1)$ & $(1,1,-1,1,1,0)$ \\
 \hline
\end{tabular}
\end{center}

\noindent and $\deg(x^+)+\deg(y^-)=0$ for any elements $x^+$, $y^-$ of the Cartan basis such that $t(x^+,y^-)\neq0$, and will be called the {\em Cartan grading} on $\cV_\cB$. This grading is fine because its homogeneous components have dimension $1$. (Notice that the projection on the two first coordinates of the group coincides with the Cartan $\ZZ^2$-grading on $\cC$, which behaves well with respect to the product on $\cV_\cB$, so it suffices to show that the projection on the last four coordinates behaves well with respect to the product.)
\end{example}

We will prove now that the grading groups of these gradings are their universal groups.

\begin{proposition} \label{universalCDbiCayleypair}
The Cayley-Dickson grading on the bi-Cayley pair has universal group $\ZZ^2 \times \ZZ_2^3$.
\end{proposition}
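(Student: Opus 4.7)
Let $U$ denote the universal group of the Cayley--Dickson grading $\Gamma$, and let $\pi\colon U \to G := \ZZ^2 \times \ZZ_2^3$ be the canonical homomorphism that is the identity on $\supp\Gamma$. Since the support generates $G$, the map $\pi$ is automatically surjective, so the task reduces to proving injectivity. The strategy is to extract enough relations from explicit triple products of Cayley--Dickson basis elements to cut $U$ down to the size of $G$.

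Introduce the notation $u_i := \deg^+_1(x_i)$ and $v_i := \deg^+_2(x_i)$ in $U$, for $i = 0, \ldots, 7$. Applying $\{x,y,x\} = 2Q(x)y$ to the triple $\{(x_i,0)^+,(x_i,0)^-,(x_i,0)^+\} = 2(x_i,0)^+$ (the element $((x_i,0),(x_i,0))$ is an idempotent) forces $\deg^-_\sigma(x_i) = -\deg^+_\sigma(x_i)$, so $U$ is generated by the $u_i$ and $v_i$. I then exploit three families of triple products. The first, $\{(x_i,0)^+,(x_j,0)^-,(x_i,0)^+\} = -2(x_j,0)^+$ for $i \neq j$, gives $2(u_i - u_j) = 0$ and, symmetrically, $2(v_i - v_j) = 0$. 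The second, a one-line application of the triple-product formula in $\cV_\cB$, yields $\{(0,x_i)^+,(0,1)^-,(1,0)^+\} = (-x_i,0)^+$ for $i \geq 1$, hence $v_i - v_0 = u_i - u_0$ and therefore $v_i = v_0 + (u_i - u_0)$ in $U$. The third, $\{(x_i,0)^+,(x_j,0)^-,(0,1)^+\} = (0,\pm x_l)^+$ for distinct $i, j \in \{1,\ldots,7\}$, where $\deg_\cC(x_l) = \deg_\cC(x_i) + \deg_\cC(x_j)$, after substituting from the first two families becomes $(u_i - u_0) + (u_j - u_0) = u_l - u_0$.

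Combining these, $U$ is generated by $u_0$, $v_0$, and $T := \langle u_i - u_0 \med i = 1, \ldots, 7 \rangle$; the assignment $\deg_\cC(x_i) \mapsto u_i - u_0$ extends (by the third family together with the $2$-torsion coming from the first) to a surjective group homomorphism $\ZZ_2^3 \to T$, whence $|T| \leq 8$. On the other hand $\pi$ sends $T$ onto $\{0\}\times\{0\}\times\ZZ_2^3$, so $|T| \geq 8$ and $\pi|_T$ is an isomorphism. Finally, any relation $nu_0 + mv_0 + t = 0$ in $U$ with $t \in T$ projects under $\pi$ to $(n,m,\pi(t)) = 0$, forcing $n = m = 0$ and then $t = 0$; hence $U \cong \ZZ^2 \oplus T \cong \ZZ^2 \times \ZZ_2^3$, so $\pi$ is an isomorphism. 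The main subtlety is spotting the triple product in the second family, which collapses the $u$- and $v$-systems into a single $2$-torsion part and makes the $\ZZ_2^3$-structure coming from the third family visible; once that reduction is in place the remainder is bookkeeping.
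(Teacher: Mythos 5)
Your proof is correct and follows essentially the same route as the paper's: the same triple products yield the same relations ($2(u_i-u_j)=0$, $v_i-v_0=u_i-u_0$, and the $\ZZ_2^3$-compatibility $(u_i-u_0)+(u_j-u_0)=u_l-u_0$), and the conclusion is the standard universal-group bookkeeping, merely phrased as injectivity of $\pi\colon U\to\ZZ^2\times\ZZ_2^3$ rather than as constructing an epimorphism $\ZZ^2\times\ZZ_2^3\to G$ for an arbitrary realization $G$. The only minor variation is that you obtain $\deg^-=-\deg^+$ from the idempotent identity $\{x,x,x\}=2x$ instead of from homogeneity of the generic trace, which is equally valid.
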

\begin{proof}
Let $\{x_i\}_{i=0}^7$ be a Cayley-Dickson basis of $\cC$ with $x_0=1$. Let $\Gamma$ be a realization as a $G$-grading of the associated Cayley-Dickson grading on $\cV_\cB$, for some abelian group $G$. For each element $x$ of the Cayley-Dickson basis of $\cV_\cB$ we have $t(x^+,x^-)\neq0$, and since the trace is homogeneous, it has to be $\deg(x^+)+\deg(x^-)=0$. Define $g_i=\deg^+_1(x_i)=-\deg^-_1(x_i)$, $a=g_0=\deg_1^+(1)$ and $b=\deg^+_2(1)$. If $i\neq j$, then $Q_{(x_i,0)^+}^+(x_j,0)^-=(-x_j,0)^+$, so that we have $2g_i=2g_j$. Thus, $a_i:=g_i-g_0$ has order $\leq2$, and we have $\deg^+_1(x_i)=a_i+a$. If $i\neq0$, $\{(x_i,0)^+,(1,0)^-,(0,1)^+\}=(0,-x_i)^+$, so $\deg^+_2(x_i)=a_i+b$. If $0\neq i\neq j\neq0$, we have $\{(x_i,0)^+,(x_j,0)^-,(0,1)^+\}=(0,-x_i\bar x_j)^+$, and we get $\deg^+_2(x_ix_j)=(a_i+a_j)+b$, and also $\{(0,x_i)^+,(0,1)^-,(x_j,0)^+\}=(-x_ix_j,0)^+$, so $\deg^+_1(x_ix_j)=(a_i+a_j)+a$. Therefore, $\deg_\cC(x_i):=a_i$ defines a group grading by $\langle a_i \rangle$ on $\cC$ that is a coarsening of the $\ZZ_2^3$-grading on $\cC$. Therefore, there is an epimorphism $\ZZ^2 \times \ZZ_2^3 \to G$ that sends $(1,0,\bar0,\bar0,\bar0) \mapsto a$, $(0,1,\bar0,\bar0,\bar0) \mapsto b$, and restricts to an epimorphism $0 \times \ZZ_2^3 \to \langle a_i \rangle$, so we conclude that $\ZZ^2 \times \ZZ_2^3$ is the universal group.
\end{proof}

\begin{proposition} \label{universalCartanbiCayleypair}
The Cartan grading on the bi-Cayley pair has universal group $\ZZ^6$.
\end{proposition}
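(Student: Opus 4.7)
The strategy is analogous to the proof of Proposition~\ref{universalCDbiCayleypair}: given an arbitrary realization of the Cartan grading as a $G$-grading, I will select six elements of $G$ corresponding to the six standard generators of $\ZZ^6$ and use nonzero triple products of the Cartan-basis elements to express every degree in $\supp \Gamma^+$ as an integer combination of these six. By Lemma~\ref{lemmatrace}, the degrees in $\supp \Gamma^-$ are then determined by trace homogeneity, yielding a homomorphism $\ZZ^6 \to G$ through which $\Gamma$ factors, and so $\ZZ^6$ is the universal group.

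Concretely, I set $\alpha_i = \deg((e_i, 0)^+)$, $\beta_i = \deg((0, e_i)^+)$ for $i = 1, 2$, and $\gamma_j = \deg((u_j, 0)^+) - \alpha_2$ for $j = 1, 2$, so that in the notation of Example~\ref{CartanbasisbiCayleypair}, $(\gamma_1, \gamma_2, \alpha_1, \alpha_2, \beta_1, \beta_2)$ plays the role of the standard basis of $\ZZ^6$. The following nonzero triple products, verified using the Cayley multiplication table of Subsection~\ref{sectionCayleyAlbert}, provide the key degree equations:
\begin{align*}
\{(u_j, 0)^+, (e_2, 0)^-, (v_j, 0)^+\} &= (-e_2, 0)^+, \\
\{(u_j, 0)^+, (0, v_j)^-, (0, e_1)^+\} &= (-e_2, 0)^+, \\
\{(0, v_j)^+, (0, e_1)^-, (u_j, 0)^+\} &= (e_2, 0)^+, \\
\{(u_3, 0)^+, (u_1, 0)^-, (0, u_2)^+\} &= (0, -e_2)^+,
\end{align*}
for $j = 1, 2$, together with analogous products for the remaining ``third'' basis elements $(v_3, 0)$, $(0, u_3)$, $(0, v_3)$.

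Solving these equations (using trace homogeneity to convert negative degrees to positive-support degrees) recovers exactly the formulas of the table in Example~\ref{CartanbasisbiCayleypair}: for instance, $\deg((v_j, 0)^+) = \alpha_1 - \gamma_j$, $\deg((0, u_j)^+) = \gamma_j + \beta_1$, $\deg((0, v_j)^+) = \beta_2 - \gamma_j$, and $\deg((u_3, 0)^+) = \alpha_1 - \beta_1 + \beta_2 - \gamma_1 - \gamma_2$. These identities define a group homomorphism $\pi \colon \ZZ^6 \to G$ carrying each $\ZZ^6$-degree of the table to the corresponding $G$-degree, so $\Gamma$ is induced from its realization as a $\ZZ^6$-grading via $\pi$, establishing $\ZZ^6$ as the universal group. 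The main obstacle is selecting a minimal and consistent family of nonzero triple products that determine all ten non-primary degrees while imposing no extra relations among $\alpha_i, \beta_i, \gamma_j$; once that list is fixed, the remaining verifications reduce to routine Cayley-algebra arithmetic.
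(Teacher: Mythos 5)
Your proposal is correct and follows essentially the same route as the paper: fix the degrees of $(e_i,0)^+$, $(0,e_i)^+$ and (shifted) $(u_j,0)^+$ as six generators, use trace homogeneity together with a handful of nonzero triple products to express every remaining degree as an integer combination of these, and conclude via the resulting epimorphism $\ZZ^6\to G$. The specific triple products you list differ from the paper's but are verified to give the same degree relations, so the two arguments are interchangeable.
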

\begin{proof} 
Let $\{e_i,u_j,v_j \med i=1,2; j=1,2,3\}$ be a Cartan basis of $\cC$. Let $\Gamma$ be a realization as a $G$-grading of the associated Cartan grading on $\cV_\cB$, for some abelian group $G$. Recall that if $t(x^+,y^-)\neq0$ for homogeneous elements $x^+$, $y^-$, since the trace is homogeneous we have $\deg(x^+)+\deg(y^-)=0$, and therefore the degree is determined by its values in $\cV_\cB^+$. Put $a_1=\deg_1^+(e_1)$, $a_2=\deg_1^+(e_2)$, $b_1=\deg_2^+(e_1)$, $b_2=\deg_2^+(e_2)$. To simplify the degree map, define $g_i$ ($i=1,2$) by means of $\deg_1^+(u_1)=g_1+a_2$, $\deg_1^+(u_2)=g_2+a_2$, $g_3=-g_1-g_2$. Then we deduce:
\begin{align*}
& \{(e_1,0)^-,(u_i,0)^+,(0,e_1)^-\}=(0,u_i)^- \; (i=1,2) \Rightarrow \deg_2^+(v_i)=-g_i+b_2 (i=1,2), \\
& \{(v_i,0)^+,(e_2,0)^-,(0,e_2)^+\}=(0,-v_i)^+ \; (i=1,2) \Rightarrow \deg_1^+(v_i)=-g_i+a_1 (i=1,2), \\
& \{(u_i,0)^+,(e_1,0)^-,(0,e_1)^+\}=(0,-u_i)^+ \; (i=1,2) \Rightarrow \deg_2^+(u_i)=g_i+b_1 (i=1,2), \\
& \{(u_2,0)^+,(e_2,0)^-,(0,u_1)^+\}=(0,-v_3)^+ \Rightarrow \deg_2^+(v_3)=-g_3-a_1+a_2+b_1, \\
& \{(v_2,0)^+,(e_1,0)^-,(0,v_1)^+\}=(0,-u_3)^+ \Rightarrow \deg_2^+(u_3)=g_3+a_1-a_2+b_2, \\
& \{(0,u_3)^+,(0,e_2)^-,(e_2,0)^+\}=(-u_3,0)^+ \Rightarrow \deg_1^+(u_3)=g_3+a_1-b_1+b_2, \\
& \{(0,v_3)^+,(0,e_1)^-,(e_1,0)^+\}=(-v_3,0)^+ \Rightarrow \deg_1^+(v_3)=-g_3+a_2+b_1-b_2.
\end{align*}
The relations above show that the set $\{a_1, a_2, b_1, b_2, g_1, g_2\}$ generates $G$. Hence, there is an epimorphism $\ZZ^6 \to G$ determined by
\begin{align*}
& (1,0,0,0,0,0) \mapsto g_1, \quad (0,0,1,0,0,0) \mapsto a_1, \quad (0,0,0,0,1,0) \mapsto b_1, \\
& (0,1,0,0,0,0) \mapsto g_2, \quad (0,0,0,1,0,0) \mapsto a_2, \quad (0,0,0,0,0,1) \mapsto b_2,
\end{align*}
and therefore the universal group is $\ZZ^6$.
\end{proof}

\subsection{Construction of fine gradings on the Albert pair}

Recall that the trace of $\cV_\alb$ is homogeneous for each grading, i.e., we have that $T(x^+,y^-)\neq0$ implies $\deg(y^-) = -\deg(x^+)$ for $x^+$, $y^-$ homogeneous elements of the grading. Hence, to give a grading on $\cV_\alb$ it suffices to give the degree map on $\alb^+$.

\begin{example} \label{ExCDAlbertpair}
Consider a Cayley-Dickson basis $\{x_i\}_{i=0}^7$ on $\cC$ with associated $\ZZ_2^3$-grading and degree map $\deg_\cC$. It can be checked directly that we have a fine $\ZZ^3\times\ZZ_2^3$-grading on $\cV_\alb$, with homogeneous basis $\{ E_j^\sigma, \iota_j(x_i)^\sigma \}$, and determined by 
\begin{align*}
& \deg(E_1^+)=(-1,1,1,\bar0,\bar0,\bar0), & \quad \deg(\iota_1(x_i)^+)=(1,0,0,\deg_\cC(x_i)), \\
& \deg(E_2^+)=(1,-1,1,\bar0,\bar0,\bar0), & \quad \deg(\iota_2(x_i)^+)=(0,1,0,\deg_\cC(x_i)), \\
& \deg(E_3^+)=(1,1,-1,\bar0,\bar0,\bar0), & \quad \deg(\iota_3(x_i)^+)=(0,0,1,\deg_\cC(x_i)),
\end{align*}
and $\deg(x^+)+\deg(y^-)=0$ for any elements $x^+$, $y^-$ of the homogeneous basis such that $t(x^+,y^-)\neq0$,
and will be called the {\em Cayley-Dickson grading} on $\cV_\alb$.
\end{example}

\begin{example} \label{Ex3Albertpair}
Consider the $\ZZ_3^3$-grading on $\alb$ as a grading on $\cV_\alb$ and denote its degree map by $\deg_\alb$. Then, we can define a fine $\ZZ \times \ZZ_3^3$-grading on $\cV_\alb$ by $\deg(x^\sigma) = (\sigma1, \deg_\alb(x))$. (Note that, if we identify $\ZZ_3^3$ with a subgroup of $\ZZ \times \ZZ_3^3$, our new grading is just the $g$-shift of the $\ZZ_3^3$-grading on $\cV_\alb$ with $g=(1,\bar0,\bar0,\bar0)$.)
\end{example}

\begin{example} \label{ExCartanAlbertpair}
Using the triple product, one can check directly that we have a fine $\ZZ^7$-grading on $\cV_\alb$, where the degree map on $\alb^+$ is given by

\begin{center}
\begin{tabular}{|c|l|l|l|}
 \hline
 deg & $\iota_1(\cC)^+$ & $\iota_2(\cC)^+$ & $\iota_3(\cC)^+$ \\
 \hline
 $e_1$ & $(1,0,0,0,0,0,0)$ & $(0,0,0,0,0,1,0)$ & $(0,0,0,0,0,0,1)$ \\
 $e_2$ & $(0,1,0,0,0,0,0)$ & $(-1,-2,1,1,1,1,0)$ & $(2,1,-1,-1,-1,0,1)$ \\
 \hline
 $u_1$ & $(0,0,1,0,0,0,0)$ & $(0,-1,1,0,0,1,0)$ & $(1,1,0,-1,-1,0,1)$ \\
 $u_2$ & $(0,0,0,1,0,0,0)$ & $(0,-1,0,1,0,1,0)$ & $(1,1,-1,0,-1,0,1)$ \\
 $u_3$ & $(0,0,0,0,1,0,0)$ & $(0,-1,0,0,1,1,0)$ & $(1,1,-1,-1,0,0,1)$ \\
 \hline
 $v_1$ & $(1,1,-1,0,0,0,0)$ & $(-1,-1,0,1,1,1,0)$ & $(1,0,-1,0,0,0,1)$ \\
 $v_2$ & $(1,1,0,-1,0,0,0)$ & $(-1,-1,1,0,1,1,0)$ & $(1,0,0,-1,0,0,1)$ \\
 $v_3$ & $(1,1,0,0,-1,0,0)$ & $(-1,-1,1,1,0,1,0)$ & $(1,0,0,0,-1,0,1)$ \\
 \hline
\end{tabular}
\\
\begin{tabular}{|c|l|}
 \hline
 $E_1^+$ & $(0,-1,0,0,0,1,1)$ \\
 $E_2^+$ & $(2,2,-1,-1,-1,-1,1)$ \\
 $E_3^+$ & $(-1,-1,1,1,1,1,-1)$ \\
 \hline
\end{tabular} 
\end{center}

\noindent and $\deg(y^-) := -\deg(x^+)$ if $T(x^+,y^-)\neq0$, where $y \in \{E_i, \iota_i(z) \med i=1,2,3,\ z\in B_\cC\}$ and $B_\cC$ denotes the associated Cartan basis on $\cC$, and will be called the {\em Cartan grading} on $\cV_\alb$. (The proof is similar to the proof of Proposition~\ref{universalCartanbiCayleypair}, and using the fact that the trace is homogeneous.)
\end{example}

\begin{proposition}
The Cayley-Dickson grading on the Albert pair has universal group $\ZZ^3 \times \ZZ_2^3$.
\end{proposition}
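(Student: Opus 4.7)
The plan is to follow the strategy of Proposition~\ref{universalCDbiCayleypair}. Fix a realization of the Cayley-Dickson grading on $\cV_\alb$ as a $G$-grading for some abelian group $G$. By Lemma~\ref{lemmatrace} the trace $T$ is homogeneous, and since it pairs each homogeneous basis element $\iota_j(x_i)^+$ or $E_j^+$ nontrivially with its minus counterpart, $\deg(y^-)=-\deg(y^+)$ on the full basis; hence it suffices to determine the degrees on $\alb^+$. Set $b_j:=\deg(\iota_j(1)^+)$ for $j=1,2,3$ and $a_{j,i}:=\deg(\iota_j(x_i)^+)-b_j$ for $i=0,\dots,7$.

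First I will show that each $a_{j,i}$ has order dividing $2$. In $\alb$ we have $\iota_j(x_i)\iota_j(x_k)=2n(x_i,x_k)(E_{j+1}+E_{j+2})$, which vanishes for $i\neq k$ by orthogonality of the Cayley-Dickson basis, while $\iota_j(x_i)^2=4(E_{j+1}+E_{j+2})$. Applying $U_x=2L_x^2-L_{x^2}$ therefore yields $U_{\iota_j(x_i)^+}(\iota_j(x_k)^-)=-4\,\iota_j(x_k)^+$ for $i\neq k$, and reading off degrees gives $2\deg(\iota_j(x_i)^+)=2\deg(\iota_j(x_k)^+)$; specializing $k=0$ produces $2a_{j,i}=0$.

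The core of the argument is the triple product identity
\begin{equation*}
\{E_i^+,\iota_{i+1}(a)^-,\iota_i(b)^+\}=2\,\iota_{i+2}(\bar b\bar a)^+,
\end{equation*}
which follows from a direct calculation using the Peirce relations $E_i\iota_i=0$, $E_{i+1}\iota_i=\tfrac12\iota_i=E_{i+2}\iota_i$, and the product formula $\iota_i(x)\iota_{i+1}(y)=\iota_{i+2}(\bar x\bar y)$. Specializing $a=b=1$ gives $\deg(E_i^+)=-b_i+b_{i+1}+b_{i+2}$, determining the idempotent degrees. Specializing $a=b=x_s$ with $s\neq 0$, and using $\bar x_s\bar x_s=x_s^2=-1$ in the Cayley-Dickson basis, collapses the resulting relation (after cancelling the $E_i$-degree) to $a_{i,s}=a_{i+1,s}$; cycling over $i$ shows $a_{j,s}$ is independent of $j$, and we write $g_s$ for the common value. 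Finally, specializing $a=x_s$, $b=x_t$ for distinct nonzero $s,t$ and using $\bar x_s\bar x_t=\pm x_{s\oplus t}$, with $\oplus$ the sum in $\ZZ_2^3$, gives $g_{s\oplus t}=g_s+g_t$, so $s\mapsto g_s$ extends to a group homomorphism $\ZZ_2^3\to G$.

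Collecting everything, the degree map on the homogeneous basis is given by $\deg(\iota_j(x_s)^+)=b_j+g_s$ and $\deg(E_i^+)=-b_i+b_{i+1}+b_{i+2}$, so the assignment
\begin{equation*}
\ZZ^3\times\ZZ_2^3\longrightarrow G,\qquad (n_1,n_2,n_3,\bar s)\longmapsto n_1b_1+n_2b_2+n_3b_3+g_s,
\end{equation*}
is a well-defined group homomorphism that restricts to the identity on $\supp\Gamma$ (sending $(e_j,\bar s)\mapsto\deg(\iota_j(x_s)^+)$ and $(-e_i+e_{i+1}+e_{i+2},\bar 0)\mapsto\deg(E_i^+)$). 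Since these elements generate $\ZZ^3\times\ZZ_2^3$, the homomorphism is uniquely determined, which verifies the universal property and yields $\Univ(\Gamma)\cong\ZZ^3\times\ZZ_2^3$. The only non-routine ingredient is the triple-product identity above; the rest is bookkeeping.
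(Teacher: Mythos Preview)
Your argument is correct and follows essentially the same strategy as the paper: realize the grading in an arbitrary group $G$, use trace homogeneity to reduce to $\alb^+$, show that the degrees are completely determined by the three generators $b_j=\deg(\iota_j(1)^+)$ together with a homomorphism $\ZZ_2^3\to G$, and conclude via the universal property. The computations you give (order $2$ for the $a_{j,i}$, the degree of $E_i^+$, independence of $a_{j,s}$ on $j$, and additivity of $s\mapsto g_s$) are all correct.

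The only tactical difference is in how the relations are extracted. The paper restricts to the bi-Cayley subpair $\iota_1(\cC)\oplus\iota_2(\cC)$ and invokes the already-proven Proposition~\ref{universalCDbiCayleypair} to obtain $\deg(\iota_j(x_i)^+)=b_j+a_i$ for $j=1,2$ (and by symmetry for $j=3$), thereby recycling the triple-product computations done there. You instead work directly in $\cV_\alb$ and use the single identity $\{E_i^+,\iota_{i+1}(a)^-,\iota_i(b)^+\}=2\,\iota_{i+2}(\bar b\bar a)^+$, which handles all three copies symmetrically in one stroke and simultaneously pins down $\deg(E_i^+)$. Your route is slightly more self-contained; the paper's is shorter because it reuses prior work. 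Both arrive at the same epimorphism $\ZZ^3\times\ZZ_2^3\to G$.
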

\begin{proof} 
Consider a realization as $G$-grading, with $G$ an abelian group, of the Cayley-Dickson grading on $\cV_\alb$. Identify $\iota_1(\cC)\oplus\iota_2(\cC)$ with $\cB$, and notice that the restriction of the grading to these homogeneous components is the Cayley-Dickson grading on $\cV_\cB$. Call $g_i=\deg(\iota_1(x_i)^+)=-\deg(\iota_1(x_i)^-)$, $a=g_0=\deg(\iota_1(1)^+)$, $b=\deg(\iota_2(1)^+)$, $c=\deg(\iota_3(1)^+)$ and $a_i = g_i - g_0$. Using the same arguments of the proof of Proposition~\ref{universalCDbiCayleypair}, we deduce that $\deg(\iota_1(x_i)^+)=a+a_i$, $\deg(\iota_2(x_i)^+)=b+a_i$, $\deg(\iota_3(x_i)^+)=c+a_i$, and also that $\deg_\cC(x_i) := a_i$ defines a group grading which is a coarsening of the $\ZZ_2^3$-grading on $\cC$. Therefore, there is an epimorphism $\ZZ^3 \times \ZZ_2^3 \to G$ that sends $(1,0,0,\bar0,\bar0,\bar0) \mapsto a$, $(0,1,0,\bar0,\bar0,\bar0) \mapsto b$, $(0,0,1,\bar0,\bar0,\bar0) \mapsto c$ and restricts to an epimorphism $0 \times \ZZ_2^3 \to \langle a_i \rangle$. We conclude that $\ZZ^3 \times \ZZ_2^3$ is the universal group.
\end{proof}

\begin{proposition} \label{propCDalb2}
The fine $\ZZ \times\ZZ_2^3$-grading on $\alb$ in \eqref{gradAlbertZxZ23}, considered as a grading on $\cV_\alb$, admits a unique fine refinement, up to relabeling, which has universal group $\ZZ^3 \times \ZZ_2^3$ and is equivalent to the Cayley-Dickson grading.
\end{proposition}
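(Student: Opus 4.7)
The plan is to analyze the unique non-$1$-dimensional homogeneous component of the grading $\Gamma_0=(\Gamma,\Gamma)$ induced on $\cV_\alb$ by the $\ZZ\times\ZZ_2^3$-grading $\Gamma$ on $\alb$ of \eqref{gradAlbertZxZ23}, and then identify the resulting fine refinement with the Cayley-Dickson grading.

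First, inspection of \eqref{notGradAlbertZxZ23}--\eqref{gradAlbertZxZ23} shows that every homogeneous component of $\Gamma_0$ in each $\cV_\alb^\pm$ is $1$-dimensional except the degree-zero component $\FF E\oplus\FF\widetilde{E}$, which has dimension~$2$. By Theorem~\ref{dim1} any fine refinement $\Gamma'$ on $\cV_\alb$ has $1$-dimensional components, and by Proposition~\ref{finegradingproperty} its supports $\supp\Gamma'^+$ and $\supp\Gamma'^-$ are disjoint. Hence the only freedom in producing $\Gamma'$ from $\Gamma_0$ lies in splitting the $2$-dimensional degree-zero piece of each $\cV_\alb^\pm$ into two $1$-dimensional subspaces.

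Next I would show that the only admissible splitting is $L_1^\sigma=\FF E^\sigma$, $L_2^\sigma=\FF\widetilde{E}^\sigma$. Writing the pieces as $\FF x_k^+$ with $x_k=\alpha_k E+\beta_k\widetilde{E}$ and $\FF y_l^-$ with $y_l=\alpha'_l E+\beta'_l\widetilde{E}$, the relation $E\widetilde{E}=0$ yields $\{x_i,y_j,x_k\}=\alpha_i\alpha'_j\alpha_k E+\beta_i\beta'_j\beta_k\widetilde{E}$. Since $\{x_1,y_j,x_1\}$ has degree $2g_1+h_j$, which in a fine grading cannot equal either $g_1$ or $g_2$ for the appropriate $j$, this triple product must vanish; elementary case-analysis on $\alpha_i\alpha'_j=0$ and $\beta_i\beta'_j=0$ then forces each $x_k$ (and each $y_l$) to be a scalar multiple of either $E$ or $\widetilde{E}$. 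Combined with the pairing provided by the generic trace (Proposition~\ref{lemmatracejordanpairs}), which is diagonal on the degree-zero component since $T(E,\widetilde{E})=0$ while $T(E,E),T(\widetilde{E},\widetilde{E})\ne 0$, the splitting is thereby forced.

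Third I would compute the universal group of $\Gamma'$. Set $a_1=\deg(E^+)$, $a_2=\deg(\widetilde{E}^+)$, $c=\deg(\nu_+(x_0)^+)$ with $x_0=1$ of a Cayley-Dickson basis of $\cC$, $t=\deg(S_+^+)-a_2$, and let $\gamma_i$ record the shift from $\nu_+(x_0)$ to $\nu_+(x_i)$ (so $\gamma_0=0$, $2\gamma_i=0$, and $\gamma_i+\gamma_j=\gamma_k$ whenever $x_ix_j\in\FF^\times x_k$). The product identities $\{\nu_\pm(x),\nu(a),\widetilde{E}\}=\pm\nu_\pm(xa)$ and $\{\nu_+(x),\nu_+(x),\widetilde{E}\}=4n(x)S_+$, together with the homogeneity of $T$, express every remaining degree linearly in $a_1,a_2,c,t,\gamma_i$. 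The key identity $\{\nu_-(x)^+,S_+^-,\nu_-(x)^+\}\in\FF^\times E^+$ then yields the single relation $a_1=2c-t-a_2$, so the universal group is generated freely by $a_2,c,t,\gamma_1,\gamma_2,\gamma_4$ subject only to $2\gamma_i=0$, and is therefore isomorphic to $\ZZ^3\times\ZZ_2^3$.

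Finally, since the Cayley-Dickson grading of Example~\ref{ExCDAlbertpair} is itself a fine $\ZZ^3\times\ZZ_2^3$-grading on $\cV_\alb$ admitting a coarsening to a $\ZZ\times\ZZ_2^3$-grading equivalent to $\Gamma_0$ (via an automorphism of $\cV_\alb$ decomposing $\widetilde{E}$ as a sum of two orthogonal rank-one idempotents inside $\cV_2((\widetilde{E},\widetilde{E}))$ and transporting the $\nu_\pm(x_i)$-type components into the $\iota_2,\iota_3$ directions), the uniqueness established in the previous steps identifies $\Gamma'$ with the Cayley-Dickson grading up to equivalence and relabeling. The main obstacle I expect is in Step~3: one must select precisely the right family of triple-product identities to derive the relation $a_1=2c-t-a_2$ and simultaneously verify that no further relation collapses the universal group below $\ZZ^3\times\ZZ_2^3$, since the Jordan pair yields many triple products and only a careful selection produces a clean presentation.
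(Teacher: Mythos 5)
Your overall architecture (split the unique $2$-dimensional component, compute the universal group, then match with the Cayley--Dickson grading) is viable, but two of its steps have real gaps. In Step~2, the justification for the forced splitting is incorrect as stated: taking the trace pairing into account, one has $h_1=-g_1$, $h_2=-g_2$, so $\{x_1,y_2,x_1\}$ has degree $2g_1-g_2$, and nothing a priori excludes $2g_1-g_2=g_2$ (i.e.\ $2(g_1-g_2)=0$ in the grading group), so the product need not vanish on degree grounds alone. The diagonal splitting can still be forced, but only by a longer case analysis combining several identities such as $\{x_1,y_1,x_1\}\in\FF x_1$, $\{x_2,y_2,x_1\}\in\FF x_1$ and their symmetric counterparts; the paper sidesteps all of this by using homogeneous elements \emph{outside} the degree-zero component: $U_{\nu_+(1)}(S_-)=16E$ forces $E^\sigma$ to be homogeneous in any refinement, and then $\{\nu_+(1),E,\nu_-(1)\}=8\widetilde E$ forces $\widetilde E^\sigma$ to be homogeneous. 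The more serious gap is in Step~4: the existence of an automorphism of $\cV_\alb$ carrying the Cayley--Dickson grading onto a refinement of the $\ZZ\times\ZZ_2^3$-grading is precisely the substantive content of the equivalence claim, and you assert it rather than construct it. The paper's proof consists essentially of exhibiting this automorphism explicitly, namely $\varphi=c_{\bi,1,\bi}\,\phi_1(-\tfrac{1}{\sqrt2}1,\tfrac{1}{\sqrt2})$, and verifying in \eqref{equivalenceCD} that it maps the Cayley--Dickson homogeneous basis ($E_i$, $\iota_i(x)$) onto homogeneous elements ($E$, $S_\pm$, $\widetilde E$, $\nu(a)$, $\nu_\pm(x)$) of the refinement.

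Note also that once that automorphism is in hand, your Step~3 becomes unnecessary: the universal group $\ZZ^3\times\ZZ_2^3$ is inherited from the Cayley--Dickson grading, whose universal group was computed in the preceding proposition. Your direct presentation of the universal group via the relation $a_1=2c-t-a_2$ is plausible (the identity $\{\nu_-(x),S_+,\nu_-(x)\}=32n(x)E$ is correct), but as you acknowledge it is the step most likely to harbour an overlooked relation, and carrying it out rigorously would require checking that the listed generators and relations exhaust all constraints coming from the triple product. I would recommend replacing Steps~2--4 by the paper's two identities plus the explicit automorphism.
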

\begin{proof} 
With the notation in \eqref{notGradAlbertZxZ23}, since $U_{\nu_+(1)}(S_-)=16E$ and $\{\nu_+(1),E,\nu_-(1)\}=8\widetilde{E}$, it follows that $E^\sigma$ and $\widetilde{E}^\sigma$ are homogeneous for any refinement of this grading. Set $a=\frac{-1}{\sqrt{2}}1$, $\lambda=\frac{1}{\sqrt{2}}$. It suffices to prove that the automorphism $\varphi=c_{\bi,1,\bi} ~ \phi_1(a,\lambda)$ of the Albert pair (see Proposition \ref{automfiofA} and \eqref{eq:iotas}) is an equivalence between the Cayley-Dickson grading and any fine refinement of the $\ZZ \times\ZZ_2^3$-grading. A straightforward computation shows that:
\begin{equation} \begin{aligned} \label{equivalenceCD}
\varphi^+: \quad
& E_1\mapsto E_1, \quad E_2\mapsto\frac{1}{2}S_+, \quad E_3\mapsto\frac{1}{2}S_-, \\
& \iota_1(1)\mapsto2\widetilde{E}, \quad \iota_1(a)\mapsto\bi\iota_1(a)=\nu(a), \\
& \iota_2(x)\mapsto\frac{1}{\sqrt{2}}\nu_-(x), \quad \iota_3(\bar x)\mapsto\frac{1}{\sqrt{2}}\nu_+(x), \\
\varphi^-: \quad
& E_1\mapsto E_1, \quad E_2\mapsto\frac{1}{2}S_-, \quad E_3\mapsto\frac{1}{2}S_+, \\
& \iota_1(1)\mapsto2\widetilde{E}, \quad \iota_1(a)\mapsto-\bi\iota_1(a)=-\nu(a), \\
& \iota_2(x)\mapsto\frac{1}{\sqrt{2}}\nu_+(x), \quad \iota_3(\bar x)\mapsto\frac{1}{\sqrt{2}}\nu_-(x)
\end{aligned} \end{equation}
so that $\varphi$ takes the homogeneous components of the Cayley-Dickson grading to homogeneous components in any refinement of the $\ZZ\times \ZZ_2^3$-grading, as required.
\end{proof}

\begin{proposition} \label{gradrank3}
The fine $\ZZ_3^3$-grading on $\alb$, considered as a grading on $\cV_\alb$, admits a unique fine refinement, up to relabeling, which has universal group $\ZZ \times\ZZ_3^3$. 
\end{proposition}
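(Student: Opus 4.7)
The plan is to combine Proposition~\ref{finegradingproperty} with Proposition~\ref{regulargradspairs}, after reducing the problem to the fact that every homogeneous component of the $\ZZ_3^3$-grading on $\alb$ is one-dimensional.

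First, I would recall from the classification of gradings on $\alb$ (cited in Section~\ref{sectionCayleyAlbert} via \cite{EK12a}, \cite{EKmon}) that the $\ZZ_3^3$-grading on $\alb$ is fine, with universal group $\ZZ_3^3$, and that all $27$ of its homogeneous components are one-dimensional. Viewed as a grading on $\cV_\alb=(\alb,\alb)$, this says that each space $(\cV_\alb)^{\pm}_h$, $h\in\ZZ_3^3$, is a line.

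Next, let $\Gamma$ be any fine group grading on $\cV_\alb$ of which the $\ZZ_3^3$-grading is a coarsening, and set $U=\Univ(\Gamma)$. Since every component of $\Gamma$ refines some $(\cV_\alb)^{\pm}_h$, and those are lines, the components of $\Gamma$ must coincide with them as subspaces. By Proposition~\ref{finegradingproperty}, the supports $\supp\Gamma^+$ and $\supp\Gamma^-$ are disjoint in $U$, so the refinement is forced to assign distinct degrees to $(\cV_\alb)^+_h$ and $(\cV_\alb)^-_h$ for each $h$. Hence, as a set-theoretic decomposition into homogeneous subspaces, $\Gamma$ is uniquely determined: up to relabeling, there is at most one fine refinement, and Example~\ref{Ex3Albertpair} provides one.

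For the universal group, I would use that $1=X_3^3\in\alb$ lies in the zero component of the $\ZZ_3^3$-grading, and hence $1^+$ is homogeneous in $\Gamma$. Setting $g_0=\deg_\Gamma(1^+)$, Proposition~\ref{regulargradspairs} applies to the fine grading $\Gamma$: the shift $\Gamma^{[-g_0]}$ restricts to a grading $\Gamma_\alb$ on the Jordan algebra $\alb$, and $\Univ(\Gamma)\cong\Univ(\Gamma_\alb)\times\ZZ$. The restricted grading $\Gamma_\alb$ has exactly the $27$ one-dimensional homogeneous components of the $\ZZ_3^3$-grading on $\alb$, so the two gradings share the same universal group $\ZZ_3^3$, whence $\Univ(\Gamma)\cong\ZZ\times\ZZ_3^3$.

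The main technical point is justifying that $\Gamma_\alb$ has \emph{exactly} the components of the original $\ZZ_3^3$-grading on $\alb$, rather than a proper refinement of it as a grading on the algebra; this is where the fineness of the $\ZZ_3^3$-grading on $\alb$ (and the fact that its components are already lines) is essential, since it leaves no room for further splitting on the algebra side and makes the universal groups coincide.
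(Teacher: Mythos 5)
Your argument is correct and follows essentially the same route as the paper, whose proof consists of citing Proposition~\ref{regulargradspairs} and exhibiting the degree map $\deg(x^\sigma)=(\sigma 1,\deg_\alb(x))$. You simply make explicit the steps the paper leaves implicit: the one-dimensionality of the components of the $\ZZ_3^3$-grading on $\alb$ forces the subspace decomposition of any fine refinement, Proposition~\ref{finegradingproperty} separates the $+$ and $-$ supports, and the homogeneity of $1^+$ lets Proposition~\ref{regulargradspairs} deliver $\Univ(\Gamma)\cong\Univ(\Gamma_\alb)\times\ZZ\cong\ZZ\times\ZZ_3^3$.
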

\begin{proof} 
This is a consequence of Proposition~\ref{regulargradspairs}. The degree map can be given by $\deg(x^\sigma)=(\sigma1,\deg_\alb(x))$, where $\deg_\alb(x)$ denotes the degree of the $\ZZ_3^3$-grading on $\alb$.
\end{proof}

\begin{proposition} \label{propCartanAlb}
The fine $\ZZ^4$-grading on $\alb$, considered as a grading on $\cV_\alb$, admits a unique fine refinement, up to relabeling, which has universal group $\ZZ^7$ and is the Cartan grading on $\cV_\alb$.
\end{proposition}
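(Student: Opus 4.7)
The strategy mirrors the proof of Proposition~\ref{propCDalb2}. Let $\Gamma'$ be a fine refinement of the induced $\ZZ^4$-grading $(\Gamma,\Gamma)$ on $\cV_\alb$, with universal group $G'$. By Theorem~\ref{dim1}, every homogeneous component of $\Gamma'$ is one-dimensional. Since all nonzero components of the Cartan $\ZZ^4$-grading $\Gamma$ on $\alb$ are already one-dimensional, the only component of $(\Gamma,\Gamma)$ that must be split is the zero component $\alb_0 = \FF E_1 \oplus \FF E_2 \oplus \FF E_3$, appearing on both sides of $\cV_\alb$.

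First I would analyze the restriction of $\Gamma'$ to the graded subpair $(\alb_0,\alb_0)$, which is isomorphic to the Jordan pair of the split semisimple algebra $\FF\times\FF\times\FF$. By Theorem~\ref{dim1}(2), this subpair is semisimple and inherits a fine grading with three one-dimensional components on each side. Up to equivalence there is a unique such fine grading, since all maximal tori in $\AAut(\FF^3)$ are conjugate, and since any homogeneous element can be completed to a homogeneous idempotent by Remark~\ref{remarkgrads}, the homogeneous components must correspond to a frame of $(\alb_0,\alb_0)$. After composing with an appropriate stabilizer automorphism of $\Gamma$ (a permutation of indices via some $\bar\tau_{ij}$ and a rescaling $c_{\lambda_1,\lambda_2,\lambda_3}$), we may assume that $E_i^\sigma$ is homogeneous of some degree $\gamma_i^\sigma \in G'$ for $i=1,2,3$, with $\gamma_i^+ + \gamma_i^- = 0$ deduced from $U_{E_i^+}(E_i^-) = E_i^+$.

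Second I would verify that the $\ZZ^7$-grading described in Example~\ref{ExCartanAlbertpair} is indeed a refinement of $(\Gamma,\Gamma)$ by exhibiting an epimorphism $\ZZ^7 \to \ZZ^4$ that composes it to the Cartan grading on $\alb$. Once the three extra generators $\gamma_i^+$ are fixed, the degrees of all remaining basis elements $\iota_i(e_j)^\sigma$, $\iota_i(u_j)^\sigma$, $\iota_i(v_j)^\sigma$ in $\Gamma'$ are forced by their degrees in $\Gamma$ together with correction terms in $\langle \gamma_1^+,\gamma_2^+,\gamma_3^+\rangle$, computed from relations of the form $\{E_i^+, \iota_j(x)^-, E_k^+\}$ and $\{E_i^+, E_j^-, \iota_k(x)^+\}$, together with the homogeneity of the generic trace (Proposition~\ref{lemmatracejordanpairs}). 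This shows that any such refinement is equivalent to the Cartan grading on $\cV_\alb$, and an argument parallel to Proposition~\ref{universalCartanbiCayleypair}, introducing the 4 generators from $\Gamma$ together with $\gamma_1^+, \gamma_2^+, \gamma_3^+$, then shows that $\ZZ^7$ is the universal group.

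The main obstacle is the uniqueness argument in the second paragraph: proving rigorously that the stabilizer of $\Gamma$ acts transitively enough on the possible refinements of the zero component so that, after relabeling, the three idempotents $E_i^\sigma$ become simultaneously homogeneous. This hinges on the classification of fine gradings of $(\FF^3,\FF^3)$ and on the existence of stabilizer elements of $\Gamma$ (the diagonal $c_{\lambda_1,\lambda_2,\lambda_3}$ and the triality-like $\bar\tau_{ij}$) rich enough to permute and rescale the frame $\{(E_1,E_1),(E_2,E_2),(E_3,E_3)\}$ onto an arbitrary homogeneous frame of the subpair $(\alb_0,\alb_0)$.
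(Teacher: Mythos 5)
There is a genuine gap at the key step of your argument, namely the claim that the homogeneous components of the refinement inside $\alb_0=\FF E_1\oplus\FF E_2\oplus\FF E_3$ ``must correspond to a frame'' of the subpair $(\alb_0,\alb_0)\cong(\FF^3,\FF^3)$. Remark~\ref{remarkgrads} only lets you complete a homogeneous element to a homogeneous \emph{idempotent}, not to a rank-one one, and the subpair $(\FF^3,\FF^3)$ does admit an inequivalent fine grading with one-dimensional components: the analogue of the $\ZZ_3^3$-grading, with homogeneous components spanned by $\sum_{i}\omega^{-ij}E_i$, $j=0,1,2$ (this is the $\ZZ_3$-grading of the group algebra $\FF\ZZ_3\cong\FF^3$). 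Its homogeneous elements are invertible in the subpair, so they complete to homogeneous idempotents of rank $3$, and no homogeneous frame of coordinate lines exists. For the same reason your appeal to conjugacy of maximal tori fails: fine gradings correspond to maximal \emph{quasitori}, and the quasitorus producing the $\ZZ_3$-type grading is not conjugate to the diagonal torus. Working inside the subpair $(\alb_0,\alb_0)$ alone therefore cannot decide how $\alb_0$ splits; you must use its interaction with the off-diagonal part of $\alb$.

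The paper closes exactly this gap with a one-line observation: $\{\iota_{i+1}(e_1)^\sigma,\iota_i(e_1)^{-\sigma},\iota_{i+2}(e_1)^\sigma\}=8E_i^\sigma$, and the elements $\iota_j(e_1)^{\pm}$ already span one-dimensional homogeneous components of the coarse $\ZZ^4$-grading, hence remain homogeneous in any refinement; therefore each $E_i^\sigma$ is forced to be homogeneous, with no automorphism needed. This also yields uniqueness \emph{up to relabeling}, as stated, whereas your route of composing with stabilizer automorphisms would at best give uniqueness up to equivalence. The remainder of your proposal (deriving the degrees of the $\iota_i(x)^\sigma$ from triple-product relations and the homogeneity of the generic trace, and identifying the universal group as in Proposition~\ref{universalCartanbiCayleypair} by exhibiting seven generators) agrees with the paper's sketch and is fine once the homogeneity of the $E_i^\sigma$ is established correctly.
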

\begin{proof}
The proof is arduous but straightforward, so we do not give all the details. Note that, since $\{ \iota_{i+1}(e_1)^\sigma, \iota_i(e_1)^{-\sigma}, \iota_{i+2}(e_1)^\sigma \} = 8E_i^\sigma$, the elements $E_i^\sigma$ must be homogeneous, so the fine refinement is unique. Consider a realization as $G$-grading, with $G$ an abelian group, of the Cartan grading on $\cV_\alb$. One can check directly that the degrees of the elements $\iota_1(e_1)^+$, $\iota_1(e_2)^+$, $\iota_1(u_1)^+$, $\iota_1(u_2)^+$, $\iota_1(u_3)^+$, $\iota_2(e_1)^+$, $\iota_3(e_1)^+$ generate $G$. We conclude that the $G$-grading is induced from the Cartan grading by some epimorphism $\ZZ^7 \to G$ that is the identity on the support (and sends the canonical basis of $\ZZ^7$ to the degrees of the mentioned elements), and so $\ZZ^7$ is the universal group of the Cartan grading on $\cV_\alb$. 
\end{proof}

\subsection{Construction of fine gradings on the bi-Cayley triple system}

\begin{example} \label{ExCDbiCayleytriple}
Consider a Cayley-Dickson basis $\{x_i\}_{i=0}^7$ of $\cC$ and denote by $\deg_\cC$ the degree map of the associated $\ZZ_2^3$-grading. Then $\{(x_i,0), (0,x_i) \}_{i=0}^7$ will be called a {\em nonisotropic Cayley-Dickson basis} of $\cT_\cB$. It can be checked that we have a fine $\ZZ_2^5$-grading on $\cT_\cB$ given by $\deg(x_i,0)=(\bar1,\bar0,\deg_\cC(x_i))$ and $\deg(0,x_i)=(\bar0,\bar1,\deg_\cC(x_i))$, and will be called the {\em nonisotropic Cayley-Dickson grading} on $\cT_\cB$. (The isotropy is relative to the quadratic form $q=n\perp n$ of $\cB$.)
\end{example}

\begin{example} \label{ExisotropicCDbiCayleytriple}
Let $\{x_i\}_{i=0}^7$ be as above. Fix $\bi\in\FF$ with $\bi^2=-1$. Then, $\{(x_i,\pm\bi\bar x_i) \}_{i=0}^7$ will be called an {\em isotropic Cayley-Dickson basis} of $\cT_\cB$. It can be checked that we have a fine $\ZZ \times\ZZ_2^3$-grading on $\cT_\cB$ given by $\deg(x_i,\pm\bi\bar x_i)=(\pm1,\deg_\cC(x_i))$, and will be called the {\em isotropic Cayley-Dickson grading} on $\cT_\cB$. (The isotropy is relative to the quadratic form $q=n\perp n$ of $\cB$.) 
\end{example}

\begin{example} \label{ExCartanbiCayleytriple}
Let $\{z_i\}_{i=1}^8$ be a Cartan basis of $\cC$. Then, we will say that $\{(z_i,0), (0,z_i) \}_{i=1}^8$ is a {\em Cartan basis} of $\cT_\cB$. It can be checked that we have a fine $\ZZ^4$-grading where the degree map is given by the following table:

\begin{center}
\begin{tabular}{|c|l|l|}
 \hline
 deg & $\cC_1$ & $\cC_2$ \\
 \hline
 $e_1$ & $(0,0,1,0)$ & $(0,0,0,-1)$ \\
 $e_2$ & $(0,0,-1,0)$ & $(0,0,0,1)$ \\
 \hline
 $u_1$ & $(1,0,-1,0)$ & $(1,0,0,-1)$ \\
 $u_2$ & $(0,1,-1,0)$ & $(0,1,0,-1)$ \\
 $u_3$ & $(-1,-1,1,2)$ & $(-1,-1,2,1)$ \\
 \hline
 $v_1$ & $(-1,0,1,0)$ & $(-1,0,0,1)$ \\
 $v_2$ & $(0,-1,1,0)$ & $(0,-1,0,1)$ \\
 $v_3$ & $(1,1,-1,-2)$ & $(1,1,-2,-1)$ \\
 \hline
\end{tabular}
\end{center}

\noindent and will be called the {\em Cartan grading} on $\cT_\cB$. (Notice that the projection $\ZZ^4 \to \ZZ^2$ of the degree on the first two coordinates induces the Cartan $\ZZ^2$-grading on $\cC$, so it suffices to show that the last two coordinates behave well with respect to the product, and this is easily checked).
\end{example}

Note that the homogeneous elements of the nonisotropic Cayley-Dickson grading on $\cT_\cB$ are in the orbits $\cO_2(\lambda)$ with $\lambda\in\FF^\times$, the ones of the isotropic Cayley-Dickson grading on $\cT_\cB$ are in the orbit $\cO_2(0)$, and the ones of the Cartan grading on $\cT_\cB$ are in the orbit $\cO_1$ (see Lemma~\ref{orbitsbiCayleytriple}); hence these three gradings cannot be equivalent. This can also be seen from their universal groups, as follows:

\begin{proposition} 
The nonisotropic Cayley-Dickson grading on the bi-Cayley triple system has universal group $\ZZ_2^5$. 
\end{proposition}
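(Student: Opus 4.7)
The approach is to take an arbitrary realization of the grading as a $G$-grading for some abelian group $G$, show that the subgroup $\langle\supp\Gamma\rangle$ of $G$ generated by the support is a quotient of $\ZZ_2^5$, and then observe that the concrete realization in Example~\ref{ExCDbiCayleytriple} already attains $\ZZ_2^5$, forcing the universal group to be exactly $\ZZ_2^5$.

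First I would fix notation. Identify the index set of the Cayley-Dickson basis with $\ZZ_2^3$ so that $x_0 = 1$ and $x_\alpha x_\beta = \pm x_{\alpha+\beta}$, and write $a_\alpha = \deg(x_\alpha, 0)$, $b_\alpha = \deg(0, x_\alpha)$ for $\alpha \in \ZZ_2^3$. Since each basis element $u$ satisfies $q(u) = n(x_\alpha) = 1$ and $Q_u u = q(u) u$ in $\cT_\cB$ (see Remark~\ref{remarkPhialambda}), the identity $\{u,u,u\} = 2 Q_u u = 2u$ forces $3\deg(u) = \deg(u)$, so $2 a_\alpha = 2 b_\alpha = 0$ and $\langle\supp\Gamma\rangle$ is elementary abelian of exponent $2$.

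Next, I would extract the key homogeneity relations from a direct computation of the triple product,
\begin{equation*}
\{(x_\alpha, 0), (x_\beta, 0), (0, x_\gamma)\} = \bigl(0,\; 2\delta_{\alpha\beta} x_\gamma - (x_\gamma x_\alpha)\bar{x}_\beta\bigr),
\end{equation*}
which, for $\alpha \ne \beta$, is a nonzero scalar multiple of $(0, x_{\alpha+\beta+\gamma})$. This gives $a_\alpha + a_\beta + b_\gamma = b_{\alpha+\beta+\gamma}$. Specializing $\beta = 0$ and $\alpha \ne 0$ yields $b_\alpha = a_\alpha + c$, where $c := a_0 + b_0$. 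The symmetric computation of $\{(0, x_\alpha), (0, x_\beta), (x_\gamma, 0)\}$ gives, after substituting $b_\lambda = a_\lambda + c$ and using $2c = 0$, the relation $a_{\alpha+\beta+\gamma} = a_\alpha + a_\beta + a_\gamma$ for $\alpha \ne \beta$. Taking $\beta = 0$ and $\alpha \ne 0$, the assignment $f \colon \ZZ_2^3 \to G$, $\alpha \mapsto a_\alpha + a_0$, becomes a group homomorphism, and every element of the support has the form $f(\alpha) + a_0$ or $f(\alpha) + a_0 + c$. Hence $\langle\supp\Gamma\rangle$ is generated by the five order-$2$ elements $a_0,\, c,\, f(e_1),\, f(e_2),\, f(e_3)$, so it is a quotient of $\ZZ_2^5$; in particular so is $\Univ(\Gamma)$.

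Finally, in the explicit realization of Example~\ref{ExCDbiCayleytriple}, the support generates the full $\ZZ_2^5$: indeed $a_0 = (\bar1,\bar0,\bar0,\bar0,\bar0)$, $b_0 = (\bar0,\bar1,\bar0,\bar0,\bar0)$, and $f(e_j) = a_{e_j} + a_0 = (\bar0,\bar0,e_j)$, so these five elements span. The universal property of $\Univ(\Gamma)$ provides a surjection $\Univ(\Gamma) \twoheadrightarrow \ZZ_2^5$, which combined with the previous paragraph gives $\Univ(\Gamma) \cong \ZZ_2^5$. The main bookkeeping task is confirming the non-vanishing and sign pattern of the products $(x_\gamma x_\alpha)\bar{x}_\beta$ in a Cayley-Dickson basis, so that the triple products used are genuinely nonzero and the homogeneity relations apply; once this is in place, everything reduces to a routine manipulation inside an elementary abelian $2$-group.
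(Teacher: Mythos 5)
Your proof is correct and follows essentially the same route as the paper: show every degree has order dividing $2$, use homogeneity of the triple products on the Cayley--Dickson basis to express all degrees in terms of $a_0$, $c=a_0+b_0$ and a coarsening of the $\ZZ_2^3$-degree on $\cC$, and then invoke the explicit $\ZZ_2^5$-realization to pin down the universal group. The only cosmetic difference is that you derive the order-$2$ property from $\{u,u,u\}=2u$ rather than from the homogeneity of the trace form, which is equally valid.
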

\begin{proof} Let $\{x_i\}_{i=0}^7$ be a Cayley-Dickson basis of $\cC$ with $x_0=1$. Consider a realization of the nonisotropic Cayley-Dickson grading on $\cT_\cB$ as $G$-grading for some abelian group $G$. Since the trace $t$ is homogeneous and $t((x_i,0),(x_i,0))\neq0\neq t((0,x_i),(0,x_i))$, it follows that all the elements of $G$ have order $\leq 2$. Call $a=\deg(1,0)$, $b=\deg(0,1)$, $g_i=\deg(x_i,0)$ and $a_i=a + g_i$ for $0\leq i\leq 7$. Note that we have $\deg(x_i,0)=g_i=a_i+a$. Since $\{(1,0),(x_i,0),(0,1)\}=(0,x_i)$ for each $i$, we have $\deg(0,x_i)=a+g_i+b = a_i+b$. If $i\neq j$ with $i,j\neq 0$, we have $\{(x_i,0),(0,1),(0,x_j)\}=(x_ix_j,0)$; hence $\deg(x_ix_j,0)=g_i+b+(a_j+b)=(a_i+a_j)+a$, and it follows that $\deg_\cC(x_i):=a_i$ defines a coarsening of the $\ZZ_2^3$-grading on $\cC$. It is clear that the $G$-grading is induced from the $\ZZ_2^5$-grading by an epimorphism $\ZZ_2^5 \to G$ that sends $(\bar1,\bar0,\bar0,\bar0,\bar0) \mapsto a$, $(\bar0,\bar1,\bar0,\bar0,\bar0) \mapsto b$ and restricts to some epimorphism $0 \times \ZZ_2^3 \to \langle a_i \rangle$. We can conclude that $\ZZ_2^5$ is (isomorphic to) the universal group of the nonisotropic Cayley-Dickson grading on $\cT_\cB$.
\end{proof}

\begin{proposition} 
The isotropic Cayley-Dickson grading on the bi-Cayley triple system has universal group $\ZZ \times\ZZ_2^3$. 
\end{proposition}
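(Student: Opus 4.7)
The plan is to adapt the argument used in the preceding proposition. Fix an abelian group $G$ and an arbitrary realization $\Gamma$ of the isotropic Cayley-Dickson grading as a $G$-grading. Write $y_i^\pm := (x_i, \pm\bi\bar x_i)$ for the $16$ homogeneous basis elements and set $a_i := \deg(y_i^+)$, $b_i := \deg(y_i^-)$ in $G$. The goal is to produce an epimorphism $\ZZ \times \ZZ_2^3 \twoheadrightarrow G$ that is the identity on $\supp\Gamma$; together with Example~\ref{ExisotropicCDbiCayleytriple} this will identify $\ZZ \times \ZZ_2^3$ as the universal group.

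First, homogeneity of the trace (Lemma~\ref{lemmatrace}) applied to $t(y_i^+, y_i^-) = 2n(x_i) + \bi(-\bi)\cdot 2n(\bar x_i) = 4 \neq 0$ forces $b_i = -a_i$ for every $i$. Next, using the explicit triple product of $\cT_\cB$ from Definition~\ref{parB}, for $i \neq 0$ all trace-type and $n$-type contributions to $\{y_i^+, y_0^-, y_i^+\}$ vanish (because $x_i \perp 1$) and the surviving term collapses to $-4 y_0^+$ via $\bar x_i x_i = n(x_i) = 1$. This gives $2a_i + b_0 = a_0$, i.e.\ $2(a_i - a_0) = 0$, so each element $c_i := a_i - a_0$ has order dividing $2$ in $G$.

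The decisive step is to link different $a_i$'s multiplicatively. For distinct nonzero indices $i, j$, I will compute $\{y_i^+, y_0^+, y_j^-\}$: since $n(x_i, x_j) = 0$ all quadratic-form and trace terms vanish, and the surviving contribution $x_2 z_1 + z_2 x_1 = \bi(\bar x_i x_j - \bar x_j x_i)$ reduces to $2\bi\bar x_i x_j$ via the Hurwitz identity $\bar x_i x_j + \bar x_j x_i = n(x_i, x_j)\cdot 1 = 0$. Writing $x_i x_j = \epsilon x_m$ with $\epsilon \in \{\pm 1\}$ and $\deg_\cC(x_m) = \deg_\cC(x_i) + \deg_\cC(x_j)$, and using $\bar x_i = -x_i$, the calculation yields
\begin{equation*}
\{y_i^+, y_0^+, y_j^-\} = (-2\epsilon\, x_m, \, 2\epsilon\bi\, x_m) = -2\epsilon\, y_m^+ \neq 0,
\end{equation*}
whence $a_i + a_0 - a_j = a_m$; equivalently $c_m = c_i - c_j = c_i + c_j$, using that $c_j$ has order two.

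The assignment $\deg_\cC(x_i) \mapsto c_i$ (with $c_0 = 0$) is therefore a well-defined homomorphism $\ZZ_2^3 \to G$ matching the multiplicative structure of the $\ZZ_2^3$-grading on $\cC$, and together with $1 \mapsto a_0$ it extends to an epimorphism $\ZZ \times \ZZ_2^3 \twoheadrightarrow G$ sending $(1, \deg_\cC x_i) \mapsto a_i$ and $(-1, \deg_\cC x_i) \mapsto -a_i = b_i$. This is the identity on $\supp\Gamma$, completing the argument. The main obstacle is the choice of triple in the decisive step: because the basis is $q$-isotropic, most symmetric brackets $\{y_i^\pm, y_j^\pm, y_k^\pm\}$ vanish identically or produce an element split across two homogeneous components of distinct degrees---which would force $a_0 = 0$ and collapse the $\ZZ$-factor---so one must exploit the asymmetric combination with exactly one $y_0$-factor and one $y_j^-$-factor to extract $x_i x_j$ as a single homogeneous vector.
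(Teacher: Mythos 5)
Your proposal is correct and follows essentially the same route as the paper's proof: trace homogeneity to get $\deg(x_i,-\bi\bar x_i)=-\deg(x_i,\bi\bar x_i)$, a quadratic/triple product involving $(1,\pm\bi 1)$ to show each $c_i=a_i-a_0$ has order at most $2$, a mixed bracket producing a nonzero multiple of $(x_ix_j,\bi\overline{x_ix_j})$ to get $c_m=c_i+c_j$, and the resulting epimorphism $\ZZ\times\ZZ_2^3\to G$ that is the identity on the support. The only differences are cosmetic (the paper computes $Q_{(1,\bi1)}(x_i,-\bi\bar x_i)$ and $\{(1,\bi1),(x_i,\bi\bar x_i),(x_j,-\bi\bar x_j)\}$ rather than your choices of arguments), and your computations check out.
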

\begin{proof} Let $\{x_i\}_{i=0}^7$ be a Cayley-Dickson basis of $\cC$ with $x_0=1$. Consider a realization of the isotropic Cayley-Dickson grading on $\cT_\cB$ as $G$-grading for some abelian group $G$. Call $g_i=\deg(x_i,\bi\bar x_i)$ and $a_i=g_i-g_0$. Since the trace $t$ is homogeneous and $t((x_i,\bi\bar x_i),(x_i,-\bi\bar x_i))\neq0$, it follows that $\deg(x_i,-\bi\bar x_i)=-g_i$. For each $i\neq0$ we have $Q_{(1,\bi1)}(x_i,-\bi\bar x_i)=-2(x_i,\bi\bar x_i)$, so $2g_0=2g_i$. Thus, $a_i$ has order $\leq2$. Moreover, $\deg(x_i,\bi\bar x_i)=a_i+g_0$, $\deg(x_i,-\bi\bar x_i)=a_i-g_0$. But also, for each $i\neq j$ with $i,j\neq0$, we have $\bar x_i=-x_i$, $\bar x_j=-x_j$, $x_ix_j=-x_jx_i$, from where we get $\{(1,\bi1),(x_i,\bi\bar x_i),(x_j,-\bi\bar x_j)\}=-2(x_ix_j,\bi\overline{x_ix_j})$, and taking degrees we obtain $\deg(x_ix_j,\bi\overline{x_ix_j})=(a_i+a_j)+g_0$. In consequence, $\deg_\cC(x_i):=a_i$ defines a coarsening of the $\ZZ_2^3$-grading on $\cC$. Therefore, the $G$-grading is induced from the $\ZZ \times\ZZ_2^3$-grading by an epimorphism $\ZZ \times\ZZ_2^3 \to G$ that sends $(1,\bar0,\bar0,\bar0) \mapsto g_0$ and restricts to some epimorphism $0 \times \ZZ_2^3 \to \langle a_i \rangle$. We can conclude that $\ZZ \times\ZZ_2^3$ is (isomorphic to) the universal group of the isotropic Cayley-Dickson grading on $\cT_\cB$.
\end{proof}

\begin{proposition}
The Cartan grading on the bi-Cayley triple system has universal group $\ZZ^4$. 
\end{proposition}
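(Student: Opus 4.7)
The plan is to mimic the strategy of Proposition~\ref{universalCartanbiCayleypair} (the analogous statement for $\cV_\cB$): take an arbitrary realization of the grading as a $G$-grading, pick four distinguished degrees as generators, derive every other homogeneous degree from triple products plus trace--homogeneity, and then match the resulting epimorphism $\ZZ^4\to G$ with the one given by the universal property of the universal group.

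More concretely, let $\{e_1,e_2,u_1,u_2,u_3,v_1,v_2,v_3\}$ be a Cartan basis of $\cC$, and let $\Gamma$ be a realization of the Cartan grading on $\cT_\cB$ as a $G$-grading for some abelian group $G$. Set
\[
g_1=\deg(u_1,0),\quad g_2=\deg(u_2,0),\quad a=\deg(e_1,0),\quad b=\deg(0,e_1).
\]
By Lemma~\ref{lemmatrace} the trace $t$ is homogeneous, and since $t((e_1,0),(e_2,0))=1=t((u_i,0),(v_i,0))$ (and similarly for the second copy of $\cC$), we immediately obtain $\deg(e_2,0)=-a$, $\deg(0,e_2)=-b$, $\deg(v_i,0)=-\deg(u_i,0)$ and $\deg(0,v_i)=-\deg(0,u_i)$ for $i=1,2,3$.

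Next I would compute a short list of triple products of the form $\{(x,0),(y,0),(0,z')\}$, $\{(0,x'),(y,0),(z,0)\}$, etc.\ to force the remaining degrees into $\langle g_1,g_2,a,b\rangle$. For instance, using the formula in Definition~\ref{parB}, one checks
\[
\{(u_1,0),(u_2,0),(0,e_1)\}=(0,v_3),\qquad \{(u_2,0),(v_1,0),(0,u_1)\}=(0,u_2),
\]
and similar identities that yield $\deg(0,u_i)=g_i+a+b$ for $i=1,2$ and $\deg(0,v_3)=g_1+g_2+b$ (so $\deg(0,u_3)=-g_1-g_2-b$). The only degree that requires more care is $\deg(u_3,0)$, since $u_3$ is not produced in $\cC$ as a product of $u_1$ and $u_2$; for this I would use a triple product that ``transports'' $u_3$ between the two copies of $\cC$, e.g.\ $\{(u_1,0),(u_3,0),(0,e_1)\}=(0,-v_2)$, to obtain $\deg(u_3,0)=-g_1-g_2-a-2b$ (and $\deg(v_3,0)=g_1+g_2+a+2b$). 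Finding this last triple is what I expect to be the main obstacle, because the obvious candidates, e.g.\ purely of the form $\{(\cdot,0),(\cdot,0),(\cdot,0)\}$, all return zero and give no information about $\deg(u_3,0)$.

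Once every homogeneous degree has been expressed in terms of $g_1,g_2,a,b$, these four elements generate $G$ and the $G$-grading is induced from the $\ZZ^4$-grading of Example~\ref{ExCartanbiCayleytriple} by the epimorphism $\ZZ^4\to G$ sending the canonical basis to $(g_1,g_2,a,b)$, which is the identity on $\supp\Gamma$. Conversely, since the $\ZZ^4$-grading of Example~\ref{ExCartanbiCayleytriple} is itself such a realization, the universal property of the universal group provides an epimorphism $G\to\ZZ^4$ which is the identity on $\supp\Gamma$. The two compositions are identities on the support and therefore on all of $G$ and $\ZZ^4$, so $G\cong\ZZ^4$ and $\ZZ^4$ is the universal group.
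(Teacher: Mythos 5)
Your proposal is correct and follows essentially the same strategy as the paper's proof: fix an arbitrary realization as a $G$-grading, choose the four degrees $\deg(u_1,0)$, $\deg(u_2,0)$, $\deg(e_1,0)$, $\deg(0,e_1)$ as generators, and use trace-homogeneity together with a handful of triple-product identities (all of which I checked are valid, including $\{(u_1,0),(u_3,0),(0,e_1)\}=(0,-v_2)$, which handles the $u_3$ degree that you rightly flag as the only delicate point) to show these generate $G$, then close with the universal-property argument. The paper uses slightly different identities (e.g.\ $\{(v_1,0),(v_2,0),(0,e_2)\}=(0,u_3)$ and $\{(0,e_2),(0,u_3),(e_2,0)\}=(u_3,0)$) and takes $b=\deg(0,e_2)$ instead of $\deg(0,e_1)$, but this is immaterial.
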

\begin{proof} Let $\{e_i,u_j,v_j|i=1,2; j=1,2,3\}$ be a Cartan basis of $\cC$. Consider a realization of the Cartan grading on $\cT_\cB$ as $G$-grading for some abelian group $G$. Call $a=\deg(e_1,0)$, $b=\deg(0,e_2)$, and $h_i=\deg(u_i,0)$ for $i=1,2$. We claim that $\{a,b,g_1,g_2\}$ generate $G$. Indeed, since the trace is homogeneous, we get $\deg(e_2,0)=-a$, $\deg(0,e_1)=-b$, and $\deg(v_i,0)=-h_i$ for $i=1,2$. Since $\{(v_1,0),(v_2,0),(0,e_2)\}=(0,u_3)$, we deduce that $\deg(0,u_3) = -h_1-h_2+b = -\deg(0,v_3)$. Also, from $\{(0,e_2),(0,u_3),(e_2,0)\}=(u_3,0)$, we obtain $\deg(u_3,0) = -h_1-h_2-a+2b = -\deg(v_3,0)$. We have proved the claim. It is clear that the $G$-grading is induced from the $\ZZ^4$-grading by an epimorphism $\ZZ^4 \to G$ that sends $(1,0,0,0) \mapsto g_1$, $(0,1,0,0) \mapsto g_2$, $(0,0,1,0) \mapsto a$ and $(0,0,0,1) \mapsto b$, and we can conclude that $\ZZ^4$ is (isomorphic to) the universal group of the Cartan grading on $\cT_\cB$.
\end{proof}

\section{Classification of fine gradings}

\subsection{Classification of fine gradings on the bi-Cayley pair}

Given a grading on a semisimple Jordan pair, by Remark~\ref{remarkgrads}, any homogeneous element can be completed to a maximal orthogonal system of homogeneous idempotents. In the case of the bi-Cayley pair, since the capacity is $2$, it will consist either of two idempotents of rank $1$, or one idempotent of rank $2$. We will cover these possibilities with the following Lemmas.

\begin{lemma} \label{lemmacartan}
Let $\Gamma$ be a fine grading on the bi-Cayley pair such that there is some homogeneous element of rank $1$. Then $\Gamma$ is equivalent to the Cartan grading (Example~\ref{CartanbasisbiCayleypair}).
\end{lemma}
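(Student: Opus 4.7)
The plan is to produce a homogeneous frame of rank-$1$ idempotents from the given rank-$1$ element, transport it to a standard frame via an inner automorphism, and then argue that the induced fine refinement of the joint Peirce decomposition is forced, up to equivalence, to coincide with the Cartan grading of $\cV_\cB$.

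First, starting from the homogeneous rank-$1$ element, I would apply Remark~\ref{remarkgrads} to complete it to a homogeneous rank-$1$ idempotent $e$. Since $\cV_\cB$ has rank $2$, the Peirce-$0$ subpair $\cV_0(e)$ is a graded semisimple Jordan pair of rank $1$, so a second application of Remark~\ref{remarkgrads} yields a homogeneous rank-$1$ idempotent $f$ orthogonal to $e$. The pair $(e,f)$ is thus a homogeneous frame, which by Theorem~\ref{orbitframes} can be transported by an inner automorphism to the standard frame $e_0=((e_1,0)^+,(e_2,0)^-)$, $f_0=((0,e_1)^+,(0,e_2)^-)$ built from the Cartan idempotents $e_1,e_2\in\cC$. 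After this transport we may assume that $(e_0,f_0)$ itself is homogeneous.

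With this reduction, the joint Peirce decomposition of $\cV_\cB$ with respect to $(e_0,f_0)$ is graded, and a direct calculation shows it has five nonzero pieces $\FF(e_1,0)^\sigma$, $\FF(0,e_1)^\sigma$, $\cV_1(e_0)^\sigma\cap\cV_0(f_0)^\sigma$, $\cV_0(e_0)^\sigma\cap\cV_1(f_0)^\sigma$, $\cV_1(e_0)^\sigma\cap\cV_1(f_0)^\sigma$ of dimensions $1,1,4,4,6$ on each side $\sigma=\pm$. By Theorem~\ref{dim1}(3) all homogeneous components of $\Gamma$ are $1$-dimensional, so only the $4$- and $6$-dimensional Peirce pieces still need to be decomposed further.

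The main obstacle, which is the technical heart of the proof, is to pin down the fine refinement on these remaining Peirce pieces. The approach is to use the stabilizer of $(e_0,f_0)$ inside $\Aut\cV_\cB$, which contains the $2$-dimensional torus $\{c_{\lambda,\mu}\med \lambda,\mu\in\FF^\times\}$ together with a large portion of the related-triple group $\RT$ (Remark~\ref{remarkGeneratorsbiCayley}). Because the fine grading corresponds to a maximal quasitorus of $\Aut\cV_\cB$ adapted to this frame, and because the classification of fine gradings on the Cayley algebra $\cC$ recalled in Section~\ref{sectionCayleyAlbert} gives only two options (the Cartan $\ZZ^2$-grading and the Cayley--Dickson $\ZZ_2^3$-grading), the refinement on the Peirce pieces must arise from one of these two gradings on $\cC$. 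The Cayley--Dickson case is excluded by the hypothesis: in any Cayley--Dickson basis of $\cC$ all elements are nonisotropic for the norm, so the lifted homogeneous elements of $\cV_\cB$ are all of rank $2$ by Lemma~\ref{orbitsbiCayleypair}, contradicting the existence of a homogeneous rank-$1$ element. Hence the refinement is the Cartan one, $\Gamma$ is equivalent to the Cartan grading of Example~\ref{CartanbasisbiCayleypair}, and the universal group is $\ZZ^6$ by Proposition~\ref{universalCartanbiCayleypair}.
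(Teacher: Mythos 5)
Your opening moves match the paper's: completing the rank-$1$ element to a homogeneous frame (Remark~\ref{remarkgrads}), transporting it to a standard frame by Theorem~\ref{orbitframes}, observing that the associated Peirce pieces are graded, and invoking Theorem~\ref{dim1} to get $1$-dimensional components. The problem is the step you yourself identify as ``the technical heart'': you assert that the induced fine decomposition of the $4$- and $6$-dimensional Peirce pieces ``must arise from one of the two fine gradings on $\cC$'' because the grading corresponds to a maximal quasitorus adapted to the frame. Nothing in the paper (or in your argument) establishes that the stabilizer of the frame acts on these Peirce pieces through $\AAut(\cC)$, or that its maximal quasitori are classified by fine gradings of the Cayley algebra; you would have to identify that stabilizer explicitly, which is a nontrivial computation you do not carry out. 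Moreover, your exclusion of the Cayley--Dickson alternative does not cohere with your own setup: the homogeneous components $\cV_{11}$ and $\cV_{22}$ are already spanned by the isotropic elements $(e_1,0)$ and $(0,e_1)$ (or $(e_2,0)$), so ``all homogeneous elements are nonisotropic, hence of rank $2$'' cannot be the dichotomy in play; and arguing ``the only fine gradings are Cartan and Cayley--Dickson, and the latter has no rank-$1$ homogeneous elements'' is circular, since this lemma is one of the two cases establishing that classification.

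What the paper does instead at this point is entirely explicit. Using the homogeneity of the generic trace and of the map $f(x,y,z)=t(x,y)z+t(z,y)x-\{x,y,z\}$, it first shows that the isotropic subspaces $(0\oplus\cC e_1)^\sigma$ and $(0\oplus\cC e_2)^\sigma$ are graded, hence so are $\cC_2^\sigma$ and $\cC_1^\sigma$, and that the homogeneous components of $\cV^+$ and $\cV^-$ coincide with disjoint supports. It then extracts a homogeneous basis of $\cC_2$ consisting of four orthogonal hyperbolic pairs (this is where isotropy of $\cC e_1$ and $\cC e_2$ together with homogeneity of $t$ is used), normalizes it to a Cartan basis of $\cC$ by a related triple (Lemma~\ref{lemmaAlberto}), and finally propagates homogeneity to all the Cartan basis elements of $\cC_1$ by repeated application of $f$. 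To repair your proof you would need to replace the quasitorus assertion by an argument of this concrete kind, or else actually compute the stabilizer of the frame and justify the claimed correspondence with gradings on $\cC$.
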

\begin{proof} 
Write $\cV=\cV_\cB$ for short. First, we complete the homogeneous element to a set consisting of two homogeneous orthogonal idempotents of rank $1$. By Theorem~\ref{orbitframes}, we can assume without loss of generality that the homogeneous orthogonal idempotents are $(c_1^+,c_2^-)$ and $(c_2^+,c_1^-)$, where $c_i=(e_i,0)\in\cB$ and $e_i$ are nontrivial orthogonal idempotents of $\cC$ with $e_1+e_2=1$. We will consider the Peirce decomposition $\cC=\FF e_1\oplus \FF e_2\oplus U\oplus V$ associated to the idempotents $e_1$ and $e_2$. Since the generic trace is homogeneous,  
\begin{equation} \label{gradedmap} \begin{aligned}
& f(x^\sigma, y^{-\sigma},z^\sigma) := t(x,y)z+t(z,y)x-\{x,y,z\} \\ & = (n(x_1,z_1)y_1+\bar y_2(x_2z_1+z_2x_1),n(x_2,z_2)y_2+(x_2z_1+z_2x_1)\bar y_1)
\end{aligned} \end{equation}
is a homogeneous map too. By Remark~\ref{homogeneouskernel}, $K^\sigma=\ker(t_{c_1}) \cap \ker(t_{c_2})$ is a graded subspace of $\cV^\sigma$. For each homogeneous $z^+\in K^+$, we have $n(e_1,z_1)=t(c_1,z)=0$ and $f(c_1^+,c_2^-,z^+)=(0,z_2e_1)^+$ is homogeneous. Note that $(0\oplus\cC)^\sigma \subseteq K^\sigma$, so there are homogeneous elements $\{(x_i, y_i)\}_{i=1}^8$ of $K^+$ such that $\{y_i\}_{i=1}^8$ is a basis of $\cC$. Thus, the subspace $(0\oplus\cC e_1)^+ = \sum_{i=1}^8 (0 \oplus \FF y_i e_1)^+$ is graded. Similarly, $(0\oplus \cC e_1)^\sigma$ and $(0\oplus \cC e_2)^\sigma$ are graded for $\sigma=\pm$. Hence, $\cC_2^\sigma=(0\oplus\cC)^\sigma$ is graded, and in consequence $\cC_1^\sigma = (\cC\oplus0)^\sigma  = \bigcap_{x\in\cC_2^{-\sigma}} \ker(t_x)$ is graded too. 

We claim that the homogeneous elements of $\cC_i^+$ and $\cC_i^-$ coincide. Indeed, take homogeneous elements $x^+=(x_1,0)$ and $z^+=(z_1,0)$ of $\cC_1^+$ such that $n(x_1,z_1)=1$. Then, for any homogeneous element $y^-=(y_1,0)$ of $\cC_1^-$, $f(x^+,y^-,z^+)=y^+$ is homogeneous too, and hence the homogeneous elements of $\cC_1^+$ and $\cC_1^-$ coincide; and similarly this is true for $\cC_2^+$ and $\cC_2^-$. Since $\Gamma$ is fine, the supports $\supp \cC_i^\sigma$ are disjoint (because otherwise we could obtain a refinement of $\Gamma$ combining it with the $\ZZ^2$-grading: $\cV_{(\sigma1,0)} = \cC_1^\sigma$, $\cV_{(0,\sigma1)} = \cC_2^\sigma$. 

From now on, we can omit the superscript $\sigma$, because the homogeneous components of $\cV^+$ coincide with those of $\cV^-$. The rest of this proof will be used in the proof of Lemma~\ref{lemmacartan2}.

Recall that $(0\oplus \cC e_i)$ are graded subspaces, where $\cC e_1$ and $\cC e_2$ are isotropic subspaces of $\cC$. Since the trace is homogeneous, there is a homogeneous basis $\{(0,x_i),(0,y_i)\}_{i=1}^4$ of $\cC_2$ such that $\{x_i,y_i\}_{i=1}^4$ is a basis of $\cC$ consisting of four orthogonal hyperbolic pairs, that is, such that $n(x_i,y_j)=\delta_{ij}$, $n(x_i,x_j)=0=n(y_i,y_j)$. It is not hard to see that there is an element of $\Ort^+(\cC,n)$ that sends the elements $\{x_i,y_i\}_{i=1}^4$ to a Cartan basis $\{e_i,u_j,v_j \med i=1,2; j=1,2,3\}$ of $\cC$, and by Lemma~\ref{lemmaAlberto}, that can be done in $\cC_2$ with an automorphism given by a related triple (as in Remark~\ref{remarkrelatedtriples}). Hence, we can assume that we have a homogeneous Cartan basis of $\cC_2$ (and the subspace $\cC_1$ is still graded). Then we have the following graded subspaces:
\begin{align*}
& f((0,e_1),(0,e_2),\cC\oplus0)=\bar e_2(e_1\cC)\oplus0=(\FF e_1+U)\oplus0, \\
& f((0,e_2),(0,e_1),\cC\oplus0)=\bar e_1(e_2\cC)\oplus0=(\FF e_2+V)\oplus0, \\
& f((0,u_1),(0,e_1),(\FF e_1+U)\oplus0)=(\bar e_1(u_1(\FF e_1+U)))\oplus0 \\
& \hspace{5.8cm} =(\FF v_2+\FF v_3)\oplus0, \\
& f((0,v_2),(0,e_2),(\FF v_2+\FF v_3)\oplus0)=(\bar e_2(v_2(\FF v_2+\FF v_3)))\oplus0=(\FF u_1)\oplus0,
\end{align*}
so $(u_1,0)$ is homogeneous, and similarly $(u_i,0)$, $(v_i,0)$ are homogeneous for $i=1,2,3$. Furthermore,
$f((0,u_1),(0,e_2),(v_1,0))=(\bar e_2(u_1v_1),0)=(-e_1,0)$,
so $(e_1,0)$ and $(e_2,0)$ are homogeneous. Since $\Gamma$ is fine, we conclude that $\Gamma$ is the Cartan grading.
\end{proof}


\begin{lemma} \label{lemmacd} 
Let $\Gamma$ be a fine grading on the bi-Cayley pair such that the nonzero homogeneous elements have rank $2$. Then $\Gamma$ is equivalent to the Cayley-Dickson grading (Example~\ref{CDbasisbiCayleypair}).
\end{lemma}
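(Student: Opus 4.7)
The plan is to adapt the strategy of Lemma~\ref{lemmacartan} to the rank-two setting, where a single rank-two homogeneous idempotent replaces the frame of two rank-one homogeneous idempotents used there. First, I would take any nonzero homogeneous element and complete it, by Remark~\ref{remarkgrads}, to a homogeneous idempotent; since $\cV_\cB$ has capacity $2$ and there are no rank-one homogeneous elements by hypothesis, this idempotent necessarily has rank $2$. By Remark~\ref{remarkorbitidempotents}, all rank-two idempotents lie in a single orbit under $\Aut\cV_\cB$, so after applying a suitable automorphism I may assume the idempotent is $e=((1,0),(1,0))$. The Peirce decomposition associated to $e$ then has $\cV_0^\sigma=0$ and identifies $\cV_2^\sigma=\cC_1^\sigma$ and $\cV_1^\sigma=\cC_2^\sigma$ as graded subspaces of $\cB^\sigma$.

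Second, I would verify that every nonzero homogeneous element of $\cC_i^\sigma$ is nonisotropic for $n$: indeed, a nonzero $(z_1,0)\in\cC_1^\sigma$ with $n(z_1)=0$ satisfies $z_2z_1=0$ trivially and $n(z_1)=n(z_2)=0$, so by the rank-one characterization in Lemma~\ref{orbitsbiCayleytriple} it would have rank $1$, contradicting the hypothesis; the analogous argument applies to $\cC_2^\sigma$. By Theorem~\ref{dim1} the homogeneous components are one-dimensional, producing homogeneous bases $\{(x_i,0)^+\}_{i=1}^{8}$ of $\cC_1^+$ and $\{(y_i,0)^-\}_{i=1}^{8}$ of $\cC_1^-$ of nonisotropic vectors, which I would normalize so that $n(x_i)=n(y_i)=1$. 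The homogeneous linear isomorphism $Q^+_{e^+}\colon \cC_1^-\to\cC_1^+$, $(y,0)^-\mapsto(\bar y,0)^+$, together with the one-dimensionality of the components, pairs up these bases via the Cayley involution, and combining this with homogeneity of the trace $t$ one extracts enough orthogonality relations among the $x_i$ to conclude that, after relabeling, $\{x_i\}$ is pairwise $n$-orthogonal in $\cC$.

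Third, since $\Ort^+(\cC,n)$ acts transitively on orthonormal bases of $\cC$, there is an $f_1\in\Ort^+(\cC,n)$ sending $\{x_i\}$ to a prescribed Cayley-Dickson basis; by Lemma~\ref{lemmaAlberto} this $f_1$ extends to a related triple $(f_1,f_2,f_3)$, and by Remark~\ref{remarkrelatedtriples} the pair $(f_1,f_2)$ is an automorphism of $\cV_\cB$ preserving both Peirce pieces $\cC_1^\sigma$ and $\cC_2^\sigma$. After applying it, I may assume $\{x_i\}_{i=0}^{7}$ is a Cayley-Dickson basis of $\cC$. The triple-product identity $\{(x_i,0)^+,(1,0)^-,(0,1)^+\}=(0,\bar x_i)^+$ (and its analogues) then forces the homogeneous basis of $\cC_2^\sigma$ to be $\{(0,\bar x_i)^\sigma\}$, again Cayley-Dickson; comparison with Example~\ref{CDbasisbiCayleypair} together with the universal-group identification of Proposition~\ref{universalCDbiCayleypair} concludes that $\Gamma$ is equivalent to the Cayley-Dickson grading. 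The main obstacle I expect is the orthogonality step of the second paragraph: squeezing out of the homogeneity of $t$, the involution-pairing via $Q^+_{e^+}$, and the rank-two hypothesis enough to ensure that the homogeneous basis of $\cC_1^+$ is genuinely pairwise orthogonal in $\cC$; once this is established, the Cayley-Dickson normalization and the propagation to $\cC_2$ are essentially routine.
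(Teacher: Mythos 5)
Your skeleton is the same as the paper's (complete to a rank-two homogeneous idempotent, normalize it to $((1,0)^+,(1,0)^-)$, get graded subspaces $\cC_1^\sigma$, $\cC_2^\sigma$, extract an orthonormal homogeneous basis, move it to a Cayley--Dickson basis by a related triple, propagate to $\cC_2$), but the step you yourself flag as the main obstacle is a genuine gap, and the route through $Q^+_{e^+}$ does not close it. That map identifies $\cC_1^-$ with $\cC_1^+$ via the Cayley involution $y\mapsto\bar y$, so combined with homogeneity of $t$ it only yields $n(x_i,\bar x_j)=0$ unless $\deg(x_i,0)^+ +\deg(x_j,0)^+=2\deg(1,0)^+$. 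This constrains the form $(x,y)\mapsto n(x,\bar y)=n(x,1)n(y,1)-n(x,y)$, not $n$ itself, and one can check that the resulting system of relations is compatible with $n(x_i,x_{j})\neq0$ for a pair $i\neq j$ whenever $2\bigl(\deg(x_i,0)^+-\deg(1,0)^+\bigr)\neq0$, which you have not excluded at this stage. The missing idea, which the paper imports from the proof of Lemma~\ref{lemmacartan}, is that the homogeneous components of $\cV^+$ and $\cV^-$ coincide as subspaces of $\cB$: the map $f(x,y,z)=t(x,y)z+t(z,y)x-\{x,y,z\}$ of \eqref{gradedmap} is homogeneous, and $f\bigl((1,0)^+,(y_1,0)^-,(1,0)^+\bigr)=(2y_1,0)^+$, so $(x_i,0)^-$ is homogeneous exactly when $(x_i,0)^+$ is, of degree $-g_i$ (using fineness to see the supports of $\Gamma^+$ and $\Gamma^-$ are disjoint, and $t((x_i,0)^+,(x_i,0)^-)=2n(x_i)\neq0$). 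Then $n(x_i,x_j)=t((x_i,0)^+,(x_j,0)^-)\neq0$ forces $g_i=g_j$, hence $i=j$ by one-dimensionality, and orthonormality is immediate.

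There is a second, smaller gap in the propagation to $\cC_2$: the identity $\{(x_i,0)^+,(1,0)^-,(0,1)^+\}=(0,\bar x_i)^+$ only helps if $(0,1)^+$ is homogeneous, and nothing in your argument arranges this. Choosing $f_1\in\Ort^+(\cC,n)$ to normalize the basis of $\cC_1$ and extending it to a related triple leaves $f_2$, hence the homogeneous basis of $\cC_2$, uncontrolled. The paper does this normalization first: it picks a homogeneous $(0,x)\in\cC_2$, notes $n(x)\neq0$ by the rank-two hypothesis (the same observation you make), and applies the first part of Lemma~\ref{lemmaAlberto} to get a related triple fixing $(1,0)$ and sending $(0,x)$ to $(0,1)$; only then does $f\bigl((x,0)^+,(1,0)^-,(0,1)^+\bigr)=(0,x)^+$ identify the homogeneous components of $\cC_1$ and $\cC_2$ and show the resulting basis is, up to scalars, a Cayley--Dickson basis. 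Both gaps are repairable with tools already in the paper, but as written your argument establishes neither the orthonormality of $\{x_i\}$ nor the Cayley--Dickson form of the homogeneous basis of $\cC_2$.
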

\begin{proof} 
Write for short $\cV=\cV_\cB$. Take a homogeneous element and complete it to a homogeneous idempotent of rank $2$. By Remark~\ref{remarkorbitidempotents}, we can assume without loss of generality that our homogeneous idempotent is $c_1=((1,0)^+,(1,0)^-)$. 
The subpaces $\cC_1^\sigma=\im Q_{(1,0)^\sigma}$ and $\cC_2^\sigma=\ker Q_{(1,0)^{-\sigma}}$ are graded. With the same arguments given in the proof of Lemma~\ref{lemmacartan}, we can deduce that the supports $\supp \cC_i^\sigma$ are disjoint and the homogeneous components of $\cV^+$ coincide with those of $\cV^-$. From now on, we can omit the superscript $\sigma$. The arguments of the rest of this proof will be used in the proof of Lemma~\ref{lemmacd2}.

We can take a homogeneous element $(0,x)$ with $n(x)=1$ (otherwise, $n(x)=0$ and $(0,x)$ would have rank $1$, a contradiction). By Lemma~\ref{lemmaAlberto} and Remark~\ref{remarkrelatedtriples}, there is an automorphism of $\cV$, given by a related triple, that maps $(1,0)\mapsto(1,0)$, $(0,x)\mapsto(0,1)$. In consequence, we can assume that $(1,0)$ and $(0,1)$ are homogeneous. Recall that the map $f$ in Equation~\eqref{gradedmap} is homogeneous. From $f((x,0),(1,0),(0,1))=(0,x)$, it follows that $(x,0)$ is homogeneous if and only if $(0,x)$ is homogeneous, i.e., the homogeneous components coincide in both $\cC_1$ and $\cC_2$. We can take a homogeneous basis $B=\{(x_i,0),(0,x_i)\}_{i=1}^8$ where $x_1=1$ and $n(x_i)=1$ for all $i$. Since the trace is homogeneous and the homogeneous components are $1$-dimensional (by Theorem~\ref{dim1}), we also have $n(x_i,x_j)=t((x_i,0),(x_j,0))=0$, i.e., $\{x_i\}$ is an orthonormal basis of $\cC$. Using the map $f$, it is easy to deduce that $(x_ix_j,0)$ and $(0,x_ix_j)$ are homogeneous for any $1\leq i,j\leq 8$, so actually we can assume, without loss of generality, that 
$B$ is a Cayley-Dickson basis of $\cV$. Since $\Gamma$ is fine, we conclude that $\Gamma$ is the Cayley-Dickson grading.
\end{proof}

\begin{theorem} \label{gradsbiCayleypair}
Let $\Gamma$ be a fine grading on the bi-Cayley pair. Then, $\Gamma$ is equivalent to either the Cartan grading (Example~\ref{CartanbasisbiCayleypair}), with universal group $\ZZ^6$, or the Cayley-Dickson grading (Example~\ref{CDbasisbiCayleypair}), with universal group $\ZZ^2\times\ZZ_2^3$.
\end{theorem}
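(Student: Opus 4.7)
The plan is very short because the heavy lifting has already been done in Lemmas~\ref{lemmacartan} and \ref{lemmacd}. The proof reduces to a dichotomy argument based on the ranks of nonzero homogeneous elements.

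First, I would invoke Proposition~\ref{orbits} together with the fact that the bi-Cayley pair has rank $2$ and (by the discussion preceding Lemma~\ref{lemmatrace}) defect zero, so the possible ranks of elements in $\cB^\sigma$ are only $0$, $1$, and $2$. Since $\cV_\cB$ is nonzero, any fine grading $\Gamma$ has at least one nonzero homogeneous element, which must therefore have rank $1$ or $2$.

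Now I would split into two cases. If there exists at least one nonzero homogeneous element of rank $1$, then the hypothesis of Lemma~\ref{lemmacartan} is satisfied, and $\Gamma$ is equivalent to the Cartan grading of Example~\ref{CartanbasisbiCayleypair}. Its universal group was shown to be $\ZZ^6$ in Proposition~\ref{universalCartanbiCayleypair}. Otherwise, every nonzero homogeneous element has rank $2$, so the hypothesis of Lemma~\ref{lemmacd} is satisfied and $\Gamma$ is equivalent to the Cayley-Dickson grading of Example~\ref{CDbasisbiCayleypair}, whose universal group is $\ZZ^2 \times \ZZ_2^3$ by Proposition~\ref{universalCDbiCayleypair}.

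There is no genuine obstacle at this stage: the classification of orbits in Proposition~\ref{orbits} guarantees that the two cases cover all possibilities, and the two lemmas are exhaustive. The only small point to check is that the two gradings are genuinely inequivalent, which is immediate from the fact that their universal groups $\ZZ^6$ and $\ZZ^2\times\ZZ_2^3$ are non-isomorphic (they differ in their torsion subgroups), so no relabeling can identify them.
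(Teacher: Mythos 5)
Your proposal is correct and follows exactly the paper's argument: the theorem is stated there as an immediate consequence of Lemmas~\ref{lemmacartan} and \ref{lemmacd} together with Propositions~\ref{universalCartanbiCayleypair} and \ref{universalCDbiCayleypair}, with the same rank dichotomy covering all cases. Your added observation that the two gradings are inequivalent because $\ZZ^6$ and $\ZZ^2\times\ZZ_2^3$ are non-isomorphic is a correct (if implicit in the paper) finishing touch.
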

\begin{proof} Consequence of Lemmas~\ref{lemmacartan} and \ref{lemmacd} (and Propositions~\ref{universalCDbiCayleypair} and \ref{universalCartanbiCayleypair}), since they cover all the possibilities.
\end{proof}

\subsection{Classification of fine gradings on the Albert pair}

Given a grading on a semisimple Jordan pair, by Remark~\ref{remarkgrads}, any homogeneous element can be completed to a maximal orthogonal system of homogeneous idempotents. In the case of the Albert pair, since the capacity is $3$, it will consist either of three idempotents of rank $1$, or one idempotent of rank $2$ and another of rank $1$, or one of rank $3$. We will cover these possibilities with the following Lemmas.

\begin{lemma} \label{gradrk1} 
Let $\Gamma$ be a fine grading on $\cV_\alb$ such that all nonzero homogeneous idempotents have rank 1. Then, $\Gamma$ is equivalent to the Cartan grading (Example~\ref{ExCartanAlbertpair}).
\end{lemma}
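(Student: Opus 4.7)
The plan is to use the frame hypothesis to reduce, after an equivalence, to the case where the joint Peirce decomposition of the standard frame $((E_1,E_1),(E_2,E_2),(E_3,E_3))$ refines $\Gamma$, and then to identify the restriction of $\Gamma$ to each off-diagonal block with a Cartan grading of $\cC$. By Remark~\ref{remarkgrads}, every nonzero homogeneous element of $\cV_\alb$ completes to a homogeneous idempotent of the same rank, so by the hypothesis every nonzero homogeneous element has rank $1$. Since the capacity of $\cV_\alb$ is $3$, any maximal homogeneous orthogonal system is a frame of three rank-$1$ idempotents; by Theorem~\ref{orbitframes} I would compose $\Gamma$ with an inner automorphism so that this frame is $e_i=(E_i,E_i)$ for $i=1,2,3$. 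The joint Peirce decomposition is then $\Gamma$-compatible, so the subspaces $\FF E_i^\sigma$ and $\iota_i(\cC)^\sigma$ (for $i=1,2,3$, $\sigma=\pm$) are graded.

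A direct computation with the Freudenthal adjoint gives $\iota_i(a)^\# = -4n(a)E_i$; since rank~$1$ forces $\iota_i(a)^\#=0$, every homogeneous vector of $\iota_i(\cC)^\sigma$ is isotropic for $n$, which already rules out any Cayley--Dickson-type basis on $\cC$. Next, mimicking the argument of Lemma~\ref{lemmacartan}, I would combine the homogeneity of the generic trace $T$ (Lemma~\ref{lemmatrace}) with the homogeneous trilinear map $f(x,y,z) = T(x,y)z + T(z,y)x - \{x,y,z\}$ and the fineness of $\Gamma$ (which forbids refining $\Gamma$ by the $\ZZ$-grading that separates $\cV^+$ from $\cV^-$) to conclude that the homogeneous components of $\iota_i(\cC)^+$ and $\iota_i(\cC)^-$ correspond under $\iota_i(a)^+\leftrightarrow\iota_i(a)^-$. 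The grading then descends to a grading on each $\iota_i(\cC)\cong\cC$ with eight $1$-dimensional isotropic components.

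Fixing $i=1$, I would apply an automorphism of $\cV_\alb$ coming from a related triple $(f,f,f)$, which preserves the frame (Remark~\ref{remarkrelatedtriples}) and, by Lemma~\ref{lemmaAlberto}, can be chosen so as to send the homogeneous basis of $\iota_1(\cC)$ to a Cartan basis $\{e_1,e_2,u_j,v_j\}_{j=1}^{3}$ of $\cC$. Using $\iota_i(a)\iota_{i+1}(b)=\iota_{i+2}(\bar a\bar b)$ and $\iota_i(a)\iota_i(b)=2n(a,b)(E_{i+1}+E_{i+2})$, one obtains homogeneous triple products that couple the three copies; for instance
\[
\{\iota_1(a),\iota_1(b),\iota_2(c)\} \;=\; 2\iota_2((cb)\bar a) \,+\, 2n(a,b)\iota_2(c) \,-\, 2\iota_2((ca)\bar b),
\]
and analogous formulas hold for the mixed triples among $\iota_1(\cC),\iota_2(\cC),\iota_3(\cC)$. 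The $1$-dimensionality of the homogeneous components forces the bases of $\iota_2(\cC)$ and $\iota_3(\cC)$ to come from the same Cartan basis of $\cC$ (suitably relabeled), so the resulting homogeneous basis of $\cV_\alb$ coincides with the one in Example~\ref{ExCartanAlbertpair}, and Proposition~\ref{propCartanAlb} identifies the universal group as $\ZZ^{7}$.

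The hardest step is this last synchronization: verifying that the Cartan structures on the three copies $\iota_i(\cC)$ are rigidly coupled, rather than independently chosen. Carrying this out rigorously requires testing the triple products above against specific Cartan basis elements (e.g., taking $a,b\in\{e_1,e_2\}$ or in $U\cup V$) and using $1$-dimensionality to read off, one by one, the degree of each $\iota_2(x)^\pm$ and $\iota_3(x)^\pm$ from the already-known degrees in $\iota_1(\cC)$.
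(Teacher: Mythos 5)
Your first half is sound and coincides with the paper's: a maximal homogeneous orthogonal system is a frame of three rank-$1$ idempotents, Theorem~\ref{orbitframes} moves it to $(E_i,E_i)$, and the joint Peirce decomposition makes $\FF E_i^\sigma$ and $\iota_i(\cC)^\sigma$ graded. But the second half has two problems. First, a concrete error: a triple $(f,f,f)$ is a related triple only when $f\in\Aut\cC$, and an automorphism of $\cC$ cannot in general carry an arbitrary homogeneous hyperbolic basis of $\iota_1(\cC)$ to a Cartan basis. What you need is the second statement of Lemma~\ref{lemmaAlberto}: choose $f_1\in\Ort^+(\cC,n)$ doing the job on the first slot and complete it to a related triple $(f_1,f_2,f_3)$ with $f_2,f_3$ \emph{a priori} unrelated to $f_1$; Remark~\ref{remarkrelatedtriples} then extends it to $\cV_\alb$. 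This matters precisely because $f_2$ and $f_3$ are not under your control, so normalizing $\iota_1(\cC)$ tells you nothing directly about which lines of $\iota_2(\cC)$ and $\iota_3(\cC)$ are homogeneous.

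Second, and more seriously, the step you yourself flag as the hardest --- showing that the homogeneous lines of $\iota_2(\cC)$ and $\iota_3(\cC)$ are the Cartan lines of the \emph{same} Cartan basis --- is only described, not proved, and it is exactly the content of the lemma. This synchronization is not a routine readoff: it is the analogue of the full argument of Lemma~\ref{lemmacartan} (where, after fixing a homogeneous Cartan basis in one copy of $\cC$, one has to produce a chain of graded subspaces $(\FF e_1+U)$, $(\FF v_2+\FF v_3)$, etc., before isolating each line of the other copy). The paper closes this gap by a different and shorter route: the restriction of $\Gamma$ to the Peirce subpair $\iota_1(\cC)\oplus\iota_2(\cC)\cong\cV_\cB$ has only rank-one homogeneous elements, hence is the Cartan $\ZZ^6$-grading by Lemma~\ref{lemmacartan}, and by Schwarz's theorem \cite{S87} the equivalence realizing this is an automorphism of $\cV_\cB$ that extends to $\cV_\alb$ fixing $E_3^\pm$; after that only $\iota_3(\cC)$ remains, and $\{\iota_1(a)^+,E_3^-,\iota_2(b)^+\}=\iota_3(\bar a\bar b)^+$ forces its components at once. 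If you want to avoid \cite{S87}, you must actually carry out the $\iota_1$--$\iota_2$ synchronization (e.g.\ by reproducing the graded-subspace chain of Lemma~\ref{lemmacartan} inside $\iota_1(\cC)\oplus\iota_2(\cC)$); as written, the proposal asserts the conclusion of that computation without performing it.
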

\begin{proof}
We can take a set of three orthogonal homogeneous idempotents $F=\{e_1,e_2,e_3\}$, so $F$ is a frame, and up to automorphism (by Theorem~\ref{orbitframes} or Remark~\ref{remarkorbitidempotents}), we can assume that $e_i=(E_i^+,E_i^-)$. Hence, for any permutation $\{i,j,k\}=\{1,2,3\}$, the associated Peirce subspaces, 
$$(\cV_\cB)^\sigma_{jk} = \{ x\in\alb \med D(e_j^\sigma, e_j^{-\sigma})x = x =  D(e_k^\sigma, e_k^{-\sigma})x \} = \iota_i(\cC)^\sigma, $$are graded. It is clear that $\Gamma$ restricts to a grading $\Gamma_\cB$ on the bi-Cayley pair $\cV_\cB:=(\cB,\cB)$, where $\cB:=\iota_1(\cC)\oplus\iota_2(\cC)$. By \cite{S87}, we know that each automorphism of the bi-Cayley pair has a unique extension to the Albert pair that fixes $E_3^+$ and $E_3^-$, and hence we can identify $\Aut\cV_\cB$ with the stabilizer of $e_3$ in $\Aut\cV_\alb$. The nonzero homogeneous elements of $\Gamma_\cB$ must have rank one, and therefore $\Gamma_\cB$ is equivalent to the Cartan $\ZZ^6$-grading. We can apply an automorphism of $\cV_\cB$ extended to $\Aut\cV_\alb$ and assume that we have the Cartan basis on $\cV_\cB$ as in Example~\ref{CartanbasisbiCayleypair}. Then, it is easy to check that we have the homogeneous basis of the Cartan grading on the Albert pair, and consequently, $\Gamma$ is the Cartan $\ZZ^7$-grading on the Albert pair. 
\end{proof}

\begin{lemma} \label{gradrk2}
Let $\Gamma$ be a fine grading on $\cV_\alb$ such that there are two orthogonal homogeneous idempotents, one of rank $1$ and the other of rank $2$. Then, $\Gamma$ is equivalent to the Cayley-Dickson $\ZZ^3\times\ZZ_2^3$-grading (Example~\ref{ExCDAlbertpair}).
\end{lemma}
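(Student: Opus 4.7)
First, I would invoke Remark~\ref{remarkorbitidempotents} to apply an automorphism of $\cV_\alb$ bringing the two orthogonal homogeneous idempotents to the standard form $e = (E_3, E_3)$ (rank $1$) and $f = (E_1+E_2, E_1+E_2)$ (rank $2$). The Peirce decomposition relative to $e$ then yields graded subpairs of $\cV_\alb$; in particular, $\cV_2(e)^\sigma = \FF E_3$, $\cV_1(e)^\sigma = \iota_1(\cC) \oplus \iota_2(\cC) \cong \cB$ (a copy of the bi-Cayley pair $\cV_\cB$), and $\cV_0(e)^\sigma = \FF E_1 \oplus \FF E_2 \oplus \iota_3(\cC)$, identified with the Jordan pair of $\cH_2(\cC)$ with $f^\sigma$ as unit.

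Next, I would analyze $\Gamma|_{\cV_1(e)}$ on $\cV_\cB$. The key step is to rule out rank-$1$ homogeneous elements there: any such element of $\cV_\cB$ would be of rank $1$ in $\cV_\alb$ as well, so by Remark~\ref{remarkgrads} it could be completed to a homogeneous rank-$1$ idempotent of $\cV_\alb$ orthogonal to $e$; pairing this with $e$ and with a further rank-$1$ idempotent obtained by splitting $f$ would yield three orthogonal rank-$1$ homogeneous idempotents. By Lemma~\ref{gradrk1} this would force $\Gamma$ to be equivalent to the Cartan grading of $\cV_\alb$ (Example~\ref{ExCartanAlbertpair}); but all homogeneous basis elements of the Cartan grading have rank $1$, which contradicts the homogeneity of the rank-$2$ idempotent $f$. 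Hence all nonzero homogeneous elements of $\cV_\cB$ have rank $2$, and the argument of Lemma~\ref{lemmacd}, together with the observation that a proper refinement of $\Gamma|_{\cV_\cB}$ would lift to a proper refinement of $\Gamma$ on $\cV_\alb$ and contradict fineness, shows that $\Gamma|_{\cV_\cB}$ is equivalent to the Cayley-Dickson grading of $\cV_\cB$. Applying a graded automorphism of $\cV_\cB$ extended to $\cV_\alb$ and fixing $e$ (such extensions exist as in Lemma~\ref{gradrk1}), I may assume the Cayley-Dickson basis $\{(x_i,0)^\sigma, (0,x_i)^\sigma\}_{i=0}^{7}$ is a homogeneous basis of $\iota_1(\cC) \oplus \iota_2(\cC)$, for some Cayley-Dickson basis $\{x_i\}_{i=0}^{7}$ of $\cC$ with $x_0 = 1$.

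Finally, I would propagate the grading to $\iota_3(\cC)$ and to the idempotents $E_1, E_2$ via triple products in $\cV_\alb$. Direct computation of $\iota_1(x_i)\cdot\iota_2(x_j) = \iota_3(\overline{x_i}\,\overline{x_j})$ and of $\{\iota_1(x_i), \iota_2(x_j), \iota_3(z)\} = 8\,n(\overline{z}\,\overline{x_i}, x_j)\,E_2$, together with its cyclic analogues producing $E_1$ and $E_3$, shows, using the homogeneity of the generic trace $T$ and the grading already established on $\iota_1(\cC) \oplus \iota_2(\cC)$, that each $\iota_3(x_i)$ is homogeneous with degree inherited from the $\iota_1$ and $\iota_2$ pieces, and that $E_1^\pm, E_2^\pm$ are individually homogeneous. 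Since $\Gamma$ is fine, all homogeneous components are $1$-dimensional by Theorem~\ref{dim1}, and matching the degrees with Example~\ref{ExCDAlbertpair} identifies $\Gamma$ with the Cayley-Dickson $\ZZ^3 \times \ZZ_2^3$-grading. The main obstacle will be the final paragraph: the precise bookkeeping that forces $E_1^\pm$ and $E_2^\pm$ to be \emph{individually} homogeneous (with distinct degrees) rather than just their sum $f^\sigma$, and that rules out any alternative fine refinement compatible with the established Cayley-Dickson structure on $\iota_1(\cC)\oplus\iota_2(\cC)$.
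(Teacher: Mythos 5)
Your overall strategy coincides with the paper's: reduce to a bi-Cayley Peirce subpair of a homogeneous idempotent, show the induced grading there is the Cayley--Dickson one, and extend. The paper works with the joint Peirce component $\iota_2(\cC)\oplus\iota_3(\cC)$ of $E_1$ and $E_2+E_3$ and then simply invokes Proposition~\ref{propCDalb2} (whose basis $E,\widetilde E,S_\pm,\nu(a),\nu_\pm(x)$ is adapted to exactly this configuration), whereas you use $\iota_1(\cC)\oplus\iota_2(\cC)$ and propagate by hand. Your hand propagation does work, and the step you flag as the ``main obstacle'' is not one: $\{\iota_1(a)^+,E_3^-,\iota_2(b)^+\}=2\iota_3(\bar b\bar a)^+$ gives a homogeneous Cayley--Dickson basis of $\iota_3(\cC)^\pm$, then $\{\iota_1(a)^+,\iota_2(b)^-,\iota_3(c)^+\}=8n(\bar c\bar a,b)E_2^+$ and its cyclic analogue make $E_1^\pm$ and $E_2^\pm$ individually homogeneous; distinctness of their degrees need not be argued separately, since the Cayley--Dickson grading then refines $\Gamma$ and fineness forces equality of homogeneous components.

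The genuine gap is in your elimination of the Cartan alternative for $\Gamma|_{\cV_\cB}$. A rank-one homogeneous element of $\cV_1(e)$ completes (Remark~\ref{remarkgrads}) to a homogeneous idempotent $c$ with $c^\sigma\in\cV_1(e)^\sigma$, and such an idempotent is \emph{not} orthogonal to $e$: orthogonality means $c\in\cV_0(e)$, while idempotents in the Peirce-$1$ space are collinear with $e$. Moreover, the homogeneous rank-$2$ idempotent $f$ need not split into \emph{homogeneous} rank-$1$ idempotents, so you cannot manufacture a homogeneous frame this way, and Lemma~\ref{gradrk1} (whose hypothesis is that \emph{all} homogeneous idempotents have rank $1$) is not directly applicable. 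The correct way to close this, which is what the paper's parenthetical remark amounts to, is: if $\Gamma|_{\cV_\cB}$ had a rank-one homogeneous element it would be the Cartan grading on $\cV_\cB$ by Lemma~\ref{lemmacartan}; propagating its homogeneous Cartan basis through $\iota_3(\cC)$ and to the $E_i$ via triple products (exactly as in the proof of Lemma~\ref{gradrk1}) forces $\Gamma$ to be the Cartan grading on $\cV_\alb$, all of whose nonzero homogeneous elements have rank $1$ (the components are spanned by the $E_i$ and by $\iota_i(z)$ with $n(z)=0$, so $\iota_i(z)^\#=0$), contradicting the homogeneity of the rank-$2$ idempotent. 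With that repair, and noting that fineness of $\Gamma|_{\cV_\cB}$ follows at once from Theorem~\ref{dim1} (one-dimensional components) rather than from the lifting argument you sketch, your proof is complete.
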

\begin{proof}
Denote by $e_1$ and $e_2$ the orthogonal homogeneous idempotents, with $\rk(e_1)=1$ and $\rk(e_2)=2$.
By Remark~\ref{remarkorbitidempotents} we can assume that $e_1^\sigma = E:=E_1$ and $e_2^\sigma = \widetilde{E}:=E_2+E_3$. The Peirce subspace $\cB^\sigma := \{x\in\alb \med D(e_1^\sigma, e_1^{-\sigma})x = x = D(e_2^\sigma, e_2^{-\sigma})x \} = \iota_2(\cC)\oplus\iota_3(\cC)$ is graded, and we can identify it with the bi-Cayley pair $\cV_\cB$. The grading $\Gamma_\cB$ induced on $\cV_\cB$ must be equivalent to the Cayley-Dickson $\ZZ^2\times\ZZ_2^3$-grading (because the Cartan grading on $\cV_\cB$ can only be extended to the Cartan grading on $\cV_\alb$, which does not have homogeneous elements of rank $2$). By the same arguments used in the proof of Lemma~\ref{gradrk1}, we can apply an automorphism of the bi-Cayley pair extended to $\cV_\alb$ to assume that we have a homogeneous basis of $\cV_\alb$ as in Proposition~\ref{propCDalb2} (the elements of $\cV_\cB$ are of the form $\nu_{\pm}(x)$). We conclude that $\Gamma$ is equivalent to the Cayley-Dickson $\ZZ^3\times\ZZ_2^3$-grading.
\end{proof}

\begin{lemma} \label{gradrk3}
Let $\Gamma$ be a fine grading on $\cV_\alb$ with some homogeneous idempotent of rank $3$. Then, $\chr\FF\neq3$ and $\Gamma$ is equivalent to the $\ZZ\times\ZZ_3^3$-grading (Example~\ref{Ex3Albertpair}).
\end{lemma}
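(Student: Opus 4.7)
The plan is to use the shift construction of Proposition~\ref{regulargradspairs} to descend $\Gamma$ to a fine grading on $\alb$ itself, and then to invoke the classification of fine gradings on $\alb$ recalled in Section~\ref{sectionCayleyAlbert}. Observe first that a homogeneous idempotent $e=(e^+,e^-)$ of rank $3$ has $e^+$ and $e^-$ invertible in $\alb$: since $\cV_\alb$ has capacity $3$, a rank-$3$ idempotent $e$ has $\cV_2(e)=\cV_\alb$, which forces $U_{e^+}$ to be invertible on $\alb$. By Proposition~\ref{invertibleorbits} the group $\Aut\cV_\alb$ acts transitively on the invertibles of $\alb$, so after applying an equivalence we may assume $e^+=1$; the idempotent condition $Q_{e^+}(e^-)=e^+$, which under $U_1=\id$ reads $e^-=1$, then forces $e^-=1$ as well. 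Hence $1^\pm$ are both homogeneous in $\Gamma$.

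Now setting $g:=-\deg(1^+)$, Proposition~\ref{regulargradspairs} ensures that the shift $\Gamma^{[g]}$ restricts to a group grading $\Gamma_J$ on $\alb$ with $\Univ(\Gamma)\cong \Univ(\Gamma_J)\times\ZZ$ (because $\Gamma$ is fine). By Theorem~\ref{dim1}, every homogeneous component of $\Gamma$ has dimension $1$; since shifting only relabels components, $\Gamma^{[g]}$ also has $1$-dimensional components, and hence so does its restriction $\Gamma_J$ to $\alb=\cV^+$. A group grading whose every homogeneous component is $1$-dimensional admits no proper refinement as a group grading, so $\Gamma_J$ is fine on $\alb$.

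Finally I would compare $\Gamma_J$ with the four fine gradings on $\alb$ listed in Section~\ref{sectionCayleyAlbert}: the $0$-components of these gradings have dimensions $3$, $3$, $2$ and $1$ for the universal groups $\ZZ^4$, $\ZZ_2^5$, $\ZZ\times\ZZ_2^3$ and $\ZZ_3^3$ respectively, so only the $\ZZ_3^3$-grading has $1$-dimensional components throughout. Since the $\ZZ_3^3$-grading on $\alb$ is defined only when $\chr\FF\neq 3$, this forces $\chr\FF\neq 3$ and identifies $\Gamma_J$ up to equivalence with the $\ZZ_3^3$-grading on $\alb$; by Proposition~\ref{gradrank3} (equivalently, by the explicit lift of Example~\ref{Ex3Albertpair}) the grading $\Gamma$ is then equivalent to the $\ZZ\times\ZZ_3^3$-grading, as required. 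The main obstacle in the plan is the middle step, passing from fineness of the pair grading $\Gamma$ to fineness of the descent $\Gamma_J$ on the algebra; this is precisely what combining Proposition~\ref{regulargradspairs} with Theorem~\ref{dim1} is designed to deliver, and once done, the classification step reduces to a quick comparison of the dimensions of the zero components.
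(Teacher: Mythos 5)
Your proof is correct and follows essentially the same route as the paper's: normalize the rank-$3$ homogeneous idempotent to $(1^+,1^-)$, use the shift of Proposition~\ref{regulargradspairs} together with Theorem~\ref{dim1} to descend to a fine grading on $\alb$ with one-dimensional components, and identify that descent with the $\ZZ_3^3$-grading before lifting back via Proposition~\ref{gradrank3}. The two small variations are both sound: for the normalization the paper invokes Remark~\ref{remarkorbitidempotents} (transitivity on rank-$3$ idempotents) where you pass through invertibility and Proposition~\ref{invertibleorbits}; and for $\chr\FF\neq3$ the paper argues directly that homogeneity and nondegeneracy of the generic trace force $T(1^+,1^-)=3\neq0$ on the one-dimensional subpair $(\FF1^+,\FF1^-)$, whereas you deduce it from the absence, in characteristic $3$, of any fine grading on $\alb$ with all components one-dimensional — the paper's trace argument is more self-contained, while yours leans a bit harder on the quoted classification, but both are valid.
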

\begin{proof}
Let $e$ be a homogeneous idempotent of rank $3$. By Remark~\ref{remarkorbitidempotents}, we can assume, up to automorphism, that $e=(1^+,1^-)$, where $1$ is the identity of $\alb$. By Theorem~\ref{dim1}, the homogeneous components are $1$-dimensional, and on the other hand the trace is homogeneous and nondegenerate, so the restriction of the trace to the subpair $(\FF1^+,\FF1^-)$ must be nondegenerate, which forces $\chr\FF\neq3$.

By Proposition~\ref{regulargradspairs}, if $g=-\deg(1^+)$ and $\deg_g$ is the degree map of the shift $\Gamma^{[g]}$ of $\Gamma$, then $\deg_g(x^+)=\deg_g(x^-)$ for any homogeneous element $x\in\alb$, and $\deg_g$ restricts to a grading $\Gamma_\alb$ on $\alb$. Since $\Gamma$ is fine, its homogeneous components are $1$-dimensional by Proposition~\ref{dim1}, and this is also true for $\Gamma_\alb$. Therefore, $\Gamma_\alb$ must be, up to equivalence, the $\ZZ_3^3$-grading, because this is the only grading on $\alb$ with $1$-dimensional homogeneous components. Finally, since $\Gamma$ is a shift of the $\ZZ_3^3$-grading $(\Gamma_\alb,\Gamma_\alb)$ on $\cV_\alb$, this forces $\Gamma$ to be the $\ZZ\times\ZZ_3^3$-grading (see Proposition~\ref{regulargradspairs}).
\end{proof}

\begin{theorem} 
The fine gradings on the Albert pair are, up to equivalence, the Cartan $\ZZ^7$-grading (Example~\ref{ExCartanAlbertpair}), the Cayley-Dickson $\ZZ^3\times\ZZ_2^3$-grading (Example~\ref{ExCDAlbertpair}), and the $\ZZ\times\ZZ_3^3$-grading (Example~\ref{Ex3Albertpair}). The $\ZZ\times\ZZ_3^3$-grading only occurs if $\chr\FF\neq3$.
\end{theorem}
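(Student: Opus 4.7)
The plan is to observe that the three preceding Lemmas \ref{gradrk1}, \ref{gradrk2}, and \ref{gradrk3} exhaust every possibility for the structure of a maximal orthogonal system of homogeneous idempotents of $\Gamma$, so the theorem follows by combining them.

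First, I would use Remark \ref{remarkgrads} to produce a maximal orthogonal system of homogeneous idempotents. Since $\Gamma$ is a grading on a finite-dimensional semisimple Jordan pair, any nonzero homogeneous element completes to a homogeneous idempotent, and iterating in the Peirce-$0$ subpair (which remains graded and, by Theorem~\ref{dim1}(2), semisimple) produces a maximal orthogonal system $(e_1,\dots,e_s)$ of homogeneous idempotents. The key point is that this system must saturate the capacity: if the Peirce-$0$ subpair of $e_1+\cdots+e_s$ were nonzero, it would contain a nonzero homogeneous element, hence a further homogeneous idempotent orthogonal to all $e_i$, contradicting maximality. Since $\kappa(\cV_\alb)=3$, we conclude $\sum_i \rk(e_i)=3$.

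Second, I would partition into the three cases corresponding to partitions of $3$. If some $e_i$ has rank $3$, Lemma~\ref{gradrk3} applies and yields that $\chr\FF\neq3$ and $\Gamma$ is equivalent to the $\ZZ\times\ZZ_3^3$-grading of Example~\ref{Ex3Albertpair}. Otherwise, if some homogeneous idempotent has rank $2$, then (by the above maximality/capacity argument) the system must be of the form $(e_1,e_2)$ with $\rk(e_1)=2$ and $\rk(e_2)=1$, and Lemma~\ref{gradrk2} gives that $\Gamma$ is equivalent to the Cayley-Dickson $\ZZ^3\times\ZZ_2^3$-grading of Example~\ref{ExCDAlbertpair}. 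In the remaining case every nonzero homogeneous idempotent of $\cV_\alb$ has rank $1$, so Lemma~\ref{gradrk1} yields equivalence with the Cartan $\ZZ^7$-grading of Example~\ref{ExCartanAlbertpair}. The universal groups have already been computed in the preceding subsection, so no further work is needed.

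The main (and essentially only) subtlety is the case analysis: one must justify that a maximal orthogonal system of \emph{homogeneous} idempotents always reaches the full capacity, rather than merely being maximal among homogeneous systems. This is not formal — it uses in an essential way that fine gradings on finite-dimensional semisimple Jordan pairs propagate to all Peirce components (Theorem~\ref{dim1}(2)) and that semisimple graded pairs always admit homogeneous idempotents (Remark~\ref{remarkgrads}). Once this observation is in place, the theorem reduces immediately to the three preceding lemmas and presents no additional difficulty.
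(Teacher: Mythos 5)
Your proposal is correct and follows essentially the same route as the paper: the paper's proof is exactly the observation that Lemmas~\ref{gradrk1}, \ref{gradrk2} and \ref{gradrk3} cover all cases, with the exhaustiveness of the case split resting on Remark~\ref{remarkgrads} (a maximal orthogonal system of homogeneous idempotents exists and its Peirce-$0$ space must vanish) and on $\kappa(\cV_\alb)=3$, which is precisely the argument you spell out. Your only minor imprecision is citing Theorem~\ref{dim1}(2) for the semisimplicity of the graded Peirce-$0$ subpair, where the relevant fact is the von Neumann regularity argument of Remarks~\ref{remarkorbitidempotents} and \ref{remarkgrads}; this does not affect the validity of the proof.
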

\begin{proof} 
This result follows since Lemmas~\ref{gradrk1}, \ref{gradrk2} and \ref{gradrk3} cover all possible cases.
\end{proof}

\subsection{Classification of fine gradings on the bi-Cayley triple system}

Recall that we defined the norm of $\cB$ as the quadratic form $q\colon\cB\to\FF$, $q(x,y):=n(x)+n(y)$. Also, we already know that $\Aut\cT_\cB\leq\Ort(\cB,q)$, and the nonzero isotropic elements of $\cB$ are exactly the ones contained in the orbits $\cO_1$ and $\cO_2(0)$ of $\cT_\cB$.

\begin{lemma} \label{lemmacartan2} 
Let $\Gamma$ be a fine grading on $\cT_\cB$ with some homogeneous element in the orbit $\cO_1$. Then $\Gamma$ is, up to equivalence, the Cartan grading on $\cT_\cB$ (Example~\ref{ExCartanbiCayleytriple}).
\end{lemma}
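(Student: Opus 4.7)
The plan is to adapt the proof of Lemma~\ref{lemmacartan} to the triple system setting, exploiting the additional symmetry available when a pair grading arises from a triple grading. First, I would apply Lemma~\ref{orbitsbiCayleytriple} and Theorem~\ref{generatorsAutTB} to find $\psi\in\Aut\cT_\cB$ mapping the given rank~$1$ homogeneous element to $(e_1,0)$, where $e_1$ is a fixed nontrivial idempotent of $\cC$ with complement $e_2=1-e_1$. Replacing $\Gamma$ by $\psi\Gamma$, we may assume that $(e_1,0)$ is itself homogeneous of some degree $g$ in $\Gamma$.

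Next, I would view $\Gamma$ as the pair grading $(\Gamma,\Gamma)$ on $\cV_\cB$. Since $(e_1,0)^+$ is homogeneous of rank~$1$, Remark~\ref{remarkgrads}, applied to the semisimple pair $\cV_\cB$, provides a homogeneous $w\in\cT_\cB$ such that $((e_1,0),w)$ is a homogeneous pair idempotent. Its Peirce-$0$ subpair is graded, simple, and of capacity $1$, hence a graded division pair by Theorem~\ref{semisimple}, in which a homogeneous rank-$1$ idempotent orthogonal to $((e_1,0),w)$ can be selected. Using the stabilizer of $(e_1,0)$ inside $\Aut\cT_\cB$ (which, by Theorem~\ref{generatorsAutTB}, is a rich subgroup generated by suitable $\varphi_{a,\lambda}$), I would normalize these two orthogonal homogeneous idempotents into the canonical form $((e_1,0),(e_2,0))$ and $((e_2,0),(e_1,0))$ used in Lemma~\ref{lemmacartan}.

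At this point the argument of the second half of the proof of Lemma~\ref{lemmacartan} applies essentially verbatim. The Peirce subspaces $\cC^\sigma_1=(\cC\oplus0)^\sigma$ and $\cC^\sigma_2=(0\oplus\cC)^\sigma$ become graded, and since the pair grading $(\Gamma,\Gamma)$ has identical $+$ and $-$ homogeneous components, the refinement into a homogeneous Cartan basis of each $\cC^\sigma_i$ occurs simultaneously in $\cV^+$ and $\cV^-$. All the intermediate automorphisms used—namely related triples (which by Remark~\ref{remarkrelatedtriples} are automorphisms of $\cT_\cB$) together with applications of the homogeneous trilinear product $f$ from \eqref{gradedmap}—preserve the triple structure. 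This produces a homogeneous Cartan basis of $\cT_\cB$ of the form appearing in Example~\ref{ExCartanbiCayleytriple}, and fineness of $\Gamma$ then forces $\Gamma$ to be equivalent to the Cartan grading on $\cT_\cB$.

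The main obstacle is the normalization step: bringing the pair of orthogonal homogeneous idempotents into canonical form using only triple automorphisms, rather than the full pair inner automorphism group supplied by Theorem~\ref{orbitframes}. Verifying that $\Stab_{\Aut\cT_\cB}((e_1,0))$ acts transitively on the relevant set of homogeneous rank-$1$ pair idempotents orthogonal to $((e_1,0),w)$ will require an explicit analysis via the generators $\varphi_{a,\lambda}$ of Theorem~\ref{generatorsAutTB}.
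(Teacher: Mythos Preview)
Your approach is essentially correct and follows the paper's strategy, but you add an unnecessary detour through the Peirce-$0$ subpair. The paper, like you, completes the homogeneous rank-$1$ element $x$ to a homogeneous pair idempotent $(x,y)$ in $(\cT_\cB,\cT_\cB)$; then $y\in\cT_\cB$ is already a homogeneous element of $\Gamma$ with $\rk(y)=1$ and $t(x,y)=1$, and normalizing this \emph{single} idempotent suffices. After moving $x$ to $(e_1,0)$ via Lemma~\ref{orbitsbiCayleytriple}, the constraints $n(y_1)=n(y_2)=0$, $y_2y_1=0$, $n(e_1,y_1)=1$ let a related triple send $y\mapsto(e_2,y_2)$ with $y_2\in\FF e_1\oplus V$; since then $n(y_2)=0$, the triple automorphism $\varphi_{-y_2,1}$ is defined, fixes $(e_1,0)$, and sends $(e_2,y_2)\mapsto(e_2,0)$. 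This makes both $(e_1,0)$ and $(e_2,0)$ homogeneous---precisely the outcome of your frame normalization---without ever selecting a second orthogonal idempotent. Your flagged ``main obstacle'' is thus dispatched by two explicit triple automorphisms rather than an abstract transitivity argument for $\Stab_{\Aut\cT_\cB}((e_1,0))$. The remainder of your outline (showing $\cC_1$ and $\cC_2$ are graded and invoking the end of the proof of Lemma~\ref{lemmacartan}) matches the paper.
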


\begin{proof} 
Let $x$ be homogeneous in the orbit $\cO_1$. We claim that we can take a homogeneous element $y$ in the orbit $\cO_1$ and such that $t(x,y)=1$. Indeed,
it suffices to consider the grading $(\Gamma,\Gamma)$ on the bi-Cayley pair and complete the element $x$ to a homogeneous idempotent $(x,y)$ of the pair (recall that we have $\rk(e^+)=\rk(e^-)$ for any idempotent). Since the trace form is invariant for automorphisms of the pair and all idempotents of rank $1$ of the pair are in the same orbit, it follows that $t(x,y)=1$ (it suffices to check this for an idempotent of rank $1$ of the pair).

Up to automorphism, by Lemma~\ref{orbitsbiCayleytriple}, we can assume that $x=(e_1,0)$ with $e_1$ a nontrivial idempotent of $\cC$. Consider, as usual, the Peirce decomposition of $\cC$ relative to the idempotents $e_1$ and $e_2:=\bar e_1$. By Lemma~\ref{orbitsbiCayleytriple}, we know that $n(y_1)=n(y_2)=0$ and $y_2y_1=0$. Since $n(y_1)=0$ and $n(e_1,y_1)=t(x,y)=1$, there is an automorphism given by a related triple (see Lemma~\ref{lemmaAlberto}) that sends $(e_1,0)\mapsto(e_1,0)$, $y\mapsto(e_2,y_2)$. Thus, we can also assume that $y=(e_2,y_2)$. Since $y_2y_1=0$, it follows that $y_2=\lambda e_1+v$ with $\lambda\in \FF$, $v\in V$. Take $a=-y_2$ and $\mu=1$ (so $n(a)+\mu^2=1$). We have $\varphi_{a,\mu}(e_1,0)=(e_1,0)$ and $\varphi_{a,\mu}(e_2,y_2)=(e_2,0)$. Therefore, we can assume that $(e_i,0)$ are homogeneous for $i=1,2$.

Since the trace is homogeneous, $f(x,y,z):=t(x,y)z+t(z,y)x-\{x,y,z\}$ is a homogeneous map and $\ker(t_x)$ is graded. For any homogeneous $z\in \ker(t_x)$, we have $n(e_1,z_1)=t(x,z)=0$, and so $f((e_1,0),(e_2,0),z)=(n(e_1,z_1)e_2,z_2e_1)=(0,z_2e_1)$ is homogeneous.
In consequence $(0\oplus\cC e_1)$ is graded. Similarly, $(0\oplus\cC e_2)$ is graded, and hence $\cC_2$ is graded. Since the trace is homogeneous, the subspace orthogonal (for the trace) to $\cC_2$, which is $\cC_1$, is graded too. 
We can conclude the proof with the same arguments given in the end of the proof of Lemma~\ref{lemmacartan}.
\end{proof}

\begin{lemma} \label{lemmacd2}
Let $\Gamma$ be a fine grading on $\cT_\cB$ with some homogeneous element in some orbit $\cO_2(\lambda)$ with $\lambda\neq0$. Then $\Gamma$ is, up to equivalence, the nonisotropic Cayley-Dickson grading (Example~\ref{ExCDbiCayleytriple}).
\end{lemma}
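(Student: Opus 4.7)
The approach mirrors the proof of Lemma~\ref{lemmacd}, with the essential new ingredient being that our fine triple-system grading cannot have any homogeneous element in the orbit $\cO_1$: otherwise Lemma~\ref{lemmacartan2} would force $\Gamma$ to be equivalent to the Cartan grading on $\cT_\cB$, whose support lies entirely in $\cO_1$ and hence misses $\cO_2(\lambda_0)$ with $\lambda_0\neq 0$.

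First, fix a homogeneous element in $\cO_2(\lambda_0)$; by transitivity of $\Aut\cT_\cB$ on that orbit (Lemma~\ref{orbitsbiCayleytriple}) we may move it to $(\sqrt{\lambda_0},0)$, and rescaling inside its $1$-dimensional span shows that $(1,0)$ itself is homogeneous. The subspaces $\cC_1=\im Q_{(1,0)}$ and $\cC_2=\ker Q_{(1,0)}$ are then graded, and $Q_{(1,0)}((1,0))=(1,0)$ forces $2\deg(1,0)=0$.

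By the Cartan-exclusion, every nonzero homogeneous $(z,0)\in\cC_1$ satisfies $n(z)\neq 0$, and likewise in $\cC_2$. The Peirce decomposition $\cT_\cB=\cC_1\oplus\cC_2$ is a $\ZZ_2$-grading compatible with the triple product (from $\{\cV_i,\cV_j,\cV_k\}\subseteq\cV_{i-j+k}$), so by fineness each component $\cT_g$ lies entirely in $\cC_1$ or entirely in $\cC_2$; moreover, if $\dim(\cT_g\cap\cC_1)\geq 2$ then two independent representatives $(z,0),(z',0)$ would admit a nonzero linear combination of vanishing norm (the polynomial $n(z)+2tn(z,z')+t^2n(z')$ has a root in $\FF=\bar\FF$), yielding an element in $\cO_1$ and contradicting the Cartan-exclusion. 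Hence all components are $1$-dimensional.

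Now pick a homogeneous $(0,y)\in\cC_2$, rescale inside its component to make $n(y)=1$, and apply Lemma~\ref{lemmaAlberto} to obtain a related triple whose induced automorphism fixes $(1,0)$ and sends $(0,y)\mapsto(0,1)$; so $(1,0)$ and $(0,1)$ are both homogeneous. The auxiliary map
\[ f(u,v,w):=t(u,v)w+t(w,v)u-\{u,v,w\} \]
is homogeneous (Lemma~\ref{lemmatrace}) and a direct calculation gives $f((z,0),(1,0),(0,1))=(0,z)$, so $(z,0)$ is homogeneous iff $(0,z)$ is. Choose a homogeneous basis $\{(x_i,0)\}_{i=1}^8$ of $\cC_1$ with $x_1=1$ and $n(x_i)=1$; trace homogeneity together with $1$-dimensional components gives $n(x_i,x_j)=0$ for $i\neq j$, so $\{x_i\}$ is an orthonormal basis of $\cC$. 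Since $\{(0,1),(0,x_j),(x_i,0)\}=(x_jx_i,0)$ and all components are $1$-dimensional, the products $x_jx_i$ are scalar multiples of basis elements; after a further automorphism of $\cC$ extended to $\cT_\cB$ via a related triple, $\{x_i\}$ becomes a Cayley-Dickson basis and $\Gamma$ is identified with the nonisotropic Cayley-Dickson grading. The main difficulty is the $1$-dimensionality of the components, which cannot be deduced from Theorem~\ref{dim1} (applicable only to fine pair gradings); it is resolved by combining the Peirce $\ZZ_2$-grading with the $\cO_1$-exclusion from Lemma~\ref{lemmacartan2}.
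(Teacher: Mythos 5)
Your proposal is correct and follows the same route as the paper: exclude the Cartan possibility via Lemma~\ref{lemmacartan2} so that every nonzero homogeneous element has rank $2$, normalize the given element to $(1,0)$ using Lemma~\ref{orbitsbiCayleytriple}, observe that $\cC_1=\im Q_{(1,0)}$ and $\cC_2=\ker Q_{(1,0)}$ are graded, and then run the endgame of Lemma~\ref{lemmacd} (related triple to make $(0,1)$ homogeneous, the homogeneous map $f$, an orthonormal homogeneous basis, reduction to a Cayley--Dickson basis). The one place you genuinely deviate is worthwhile: the paper obtains $1$-dimensionality of the components by citing Theorem~\ref{dim1}, which is stated for fine gradings on Jordan \emph{pairs}, and the induced pair grading $(\Gamma,\Gamma)$ on $\cV_\cB$ need not be fine; your substitute --- combining the Peirce $\ZZ_2$-refinement with the fact that a $\geq 2$-dimensional subspace of $(\cC,n)$ over $\FF=\bar\FF$ contains nonzero isotropic vectors, which would be rank-$1$ homogeneous elements contradicting the $\cO_1$-exclusion --- is correct and closes this small gap cleanly.
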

\begin{proof}
It is clear that $\Gamma$ cannot be equivalent to the Cartan grading, because there is a homogeneous element $x$ in the orbit $\cO_2(\lambda)$ with $\lambda\neq0$ and in the Cartan grading all the homogeneous elements have rank $1$. In particular, by Lemma~\ref{lemmacartan2}, all nonzero homogeneous elements of $\Gamma$ must have rank $2$. Up to automorphism and up to scalars, we can assume by Lemma~\ref{orbitsbiCayleytriple} that $x=(1,0)$. Then, $\cC_1 = \im Q_x$ and $\cC_2 = \ker Q_x$ are graded subspaces, and we can conclude with the same arguments given in the end of the proof of Lemma~\ref{lemmacd}.
\end{proof}

\begin{lemma} \label{lemmacdisotropic} 
Let $\Gamma$ be a fine grading on $\cT_\cB$ where all the nonzero homogeneous elements are in the orbit $\cO_2(0)$. Then $\Gamma$ is, up to equivalence, the isotropic Cayley-Dickson grading (Example~\ref{ExisotropicCDbiCayleytriple}).
\end{lemma}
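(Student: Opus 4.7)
The plan is to use a canonical homogeneous element to exhibit a splitting $\cB = W_+ \oplus W_-$ into two $8$-dimensional graded isotropic subspaces, and to transfer the grading on $W_+$ to a grading of $\cC$ as a Hurwitz algebra, which is then pinned down to the Cayley--Dickson grading by a dimension count at the trivial degree.

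By Lemma~\ref{orbitsbiCayleytriple}, after applying a suitable automorphism of $\cT_\cB$ I may assume that $x_0 = (1,\bi 1) \in \cT_{g_0}$ is homogeneous for some $g_0 \in \Univ(\Gamma)$. By Theorem~\ref{dim1} the components of $\Gamma$ are $1$-dimensional, and since the trace is nondegenerate and homogeneous (Lemma~\ref{lemmatrace}) I pick a homogeneous $y_0 \in \cT_{-g_0}$ with $t(x_0, y_0) \neq 0$; by hypothesis $y_0 \in \cO_2(0)$. A direct computation yields
\[
Q_{x_0}(y) = (\bar y_1 + \bi y_2,\; \bi y_1 - \bar y_2),
\]
from which $\im Q_{x_0} = \ker Q_{x_0} = W_+ := \{(z, \bi \bar z) \med z \in \cC\}$. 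Since $Q_{x_0}$ is a homogeneous operator, $W_+$ is a graded $8$-dimensional subspace of $\cB$; as each homogeneous component of $\Gamma$ is $1$-dimensional, the support $\supp\Gamma$ splits as $S^+ \sqcup S^-$ according to whether the corresponding component lies in $W_+$, and this gives a graded direct sum $\cB = W_+ \oplus W_-$ with $W_- := \bigoplus_{h\in S^-}\cT_h$ of dimension $8$. After normalizing $y_0$ within its homogeneous line $\cT_{-g_0}$ using the stabilizer of $x_0$ in $\Aut\cT_\cB$, I obtain $y_0 = (1, -\bi 1)$ and, by the symmetric computation, $W_- = \im Q_{y_0} = \{(z,-\bi\bar z) \med z\in\cC\}$.

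The crux is to transfer the grading on $W_+$ to $\cC$ via the linear isomorphism $\cC \xrightarrow{\sim} W_+$, $z \mapsto (z, \bi\bar z)$, and to verify that this is a group grading of $\cC$ as a Hurwitz algebra. Compatibility with the norm $n$ and the trace form is read off from triple product identities such as
\[
\{w_1, y_0, w_2\} = 2\,\text{tr}(a_1)\, w_2 + 2\,\text{tr}(a_2)\, w_1 - 2\, n(a_1, a_2)\, x_0 \;\in W_+
\]
for $w_i = (a_i, \bi\bar a_i) \in W_+$, and compatibility with the Cayley product is recovered from a suitable combination of operators involving $Q_{x_0}$, $Q_{y_0}$, and brackets between $W_+$ and $W_-$ that expresses $a_1 a_2$ within $W_+$. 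By fineness the induced grading on $\cC$ has $1$-dimensional components, and its trivial component is exactly $\FF 1$; among the fine gradings on $\cC$ recalled in Section~\ref{sectionCayleyAlbert}, this rules out the Cartan $\ZZ^2$-grading---whose trivial component $\FF e_1 \oplus \FF e_2$ has dimension $2$---and forces the Cayley--Dickson $\ZZ_2^3$-grading on $\cC$. Finally, $g_0$ must have infinite order, since $2 g_0 = 0$ would force $t$ to vanish on the $1$-dimensional $\cT_{g_0}$ (as $t(x_0, x_0) = 2q(x_0) = 0$), contradicting its nondegeneracy. Hence $\Univ(\Gamma) \cong \ZZ \times \ZZ_2^3$ and $\Gamma$ is equivalent to the isotropic Cayley--Dickson grading of Example~\ref{ExisotropicCDbiCayleytriple}.

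The main obstacle is the explicit extraction of the Cayley product of $\cC$ from triple products on $W_+$: one needs to find the right combination of operators involving the $Q$-maps at both $x_0$ and $y_0$, and simultaneously to exploit the $\Spin(7)$-stabilizer of $x_0$ in $\Aut\cT_\cB \cong \Spin(9)$ to normalize $y_0$ within the line $\cT_{-g_0}$ to the form $(1,-\bi 1)$ so that the formulas take the desired shape.
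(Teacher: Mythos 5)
Your skeleton is reasonable and your computations of $Q_{x_0}$ and of $\{w_1,y_0,w_2\}$ are correct, but the two steps you defer to your closing paragraph as ``the main obstacle'' are precisely the substantive content of the proof, and you do not carry them out. The first is the normalization of $y_0$ to $(1,-\bi 1)$: you take $y_0$ to be an arbitrary homogeneous element of $\cT_{-g_0}$ and assert it can be moved into $\FF(1,-\bi1)$ by the stabilizer of $x_0$, but this transitivity is exactly what must be proved. The paper first pins down the shape of the candidate element: it shows that $V:=Q_{x_0}(\ker t_{x_0})=\{(z_0,\bi\bar z_0)\med z_0\in\cC_0\}$ and hence $V^\perp=\FF x'\oplus W_+$ (with $x'=(1,-\bi1)$) are graded, so a homogeneous element outside $W_+$ has the form $\widetilde x=x'+\lambda x_0+v$ with $v\in V$, and $q(\widetilde x)=0$ kills $\lambda$; it then constructs the normalizing automorphism explicitly, in two separate cases ($n(w)=0$ and $n(w)\neq0$ for $v=(w,\bi\bar w)$), by composing suitable $\varphi_{a,\lambda}$ with related triples. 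In your setup you have not shown $V$ is graded, so your $y_0$ could a priori also carry a component in $V'=\{(z_0,-\bi\bar z_0)\med z_0\in\cC_0\}$ (this can be excluded by observing that $Q_{x_0}(y_0)\in\cT_{g_0}=\FF x_0$ while $Q_{x_0}$ sends $V'$ into $V$, but you do not say this), and even after that reduction the explicit two-case construction is still needed; nothing in your text replaces it.

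The second gap is the recovery of the Cayley product: the ``suitable combination'' is the identity $\{(y_1,\bi\bar y_1),(1,\bi1),(y_2,-\bi\bar y_2)\}=2(y_1y_2,\bi\overline{y_1y_2})$, which mixes $W_+$ with $W_-=\{(z,-\bi\bar z)\med z\in\cC\}$; to use it you must first know that $(z,-\bi\bar z)$ is homogeneous whenever $(z,\bi\bar z)$ is (which follows from applying $Q_{x_0}$ and $Q_{x'}$ once $x'$ is known to be homogeneous), so this step is itself contingent on the normalization above. Two smaller points: Theorem~\ref{dim1} is stated for Jordan pairs, and the pair grading $(\Gamma,\Gamma)$ induced by a fine grading $\Gamma$ on $\cT_\cB$ need not be fine, so quoting it for $1$-dimensionality of the components needs justification; and your endgame via the classification of fine gradings on $\cC$ is a legitimate (and slightly cleaner) alternative to the paper's element-by-element construction of an isotropic Cayley--Dickson basis, but only once the transferred decomposition of $\cC$ has actually been shown to be a group grading of the algebra.
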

\begin{proof} 
Take a nonzero homogeneous element $x\in\cB$. Since $x \in \cO_2(0)$, up to automorphism we can assume that $x=(1,\bi1)$ for some $\bi\in\FF$ with $\bi^2=-1$. Then, $W := \im Q_x = \ker Q_x = \{(z,\bi\bar z) \med z\in\cC\}$ is a graded subspace. Let $\cC_0$ denote the traceless octonions and set $V:=\{(z_0,\bi \bar z_0) \med z_0\in\cC_0\}$, $W':=\{(z, -\bi\bar z) \med z\in\cC\}$, $V':=\{(z_0, -\bi \bar z_0) \med z_0\in\cC_0\}$, $x':=(1,-\bi1)$. Consider the map $t_x : \cB \to \cB$, $z \mapsto t(x,z)$. Since the trace is homogeneous, $\ker t_x = W \oplus V' = \FF x \oplus V \oplus V'$ is a graded subspace. Hence $Q_x(\ker t_x) = V$ is graded too. (Note that $V$ and $V'$ are isotropic subspaces which are paired relative to the trace form, and $x$ is paired with $x'$ too. But in general, $\FF x'$, $W'$ and $V'$ are not graded subspaces.) The subspace $V^\perp = \FF x' \oplus W$ is graded because the trace is homogeneous, so we can take a homogeneous element $\widetilde{x} = x' + \lambda x + v$ with $\lambda \in\FF$, $v\in V$. Since $\widetilde{x}\in \cO_2(0)$, we have $q(\widetilde{x})=0$, so $\lambda = 0$ and $\widetilde{x} = x' + v$. Put $v = (w,\bi \bar w)$ with $w\in\cC_0$, so $\widetilde{x} = (1+w, -\bi1 + \bi \bar w)$.

We claim that there is an automorphism such that $\varphi(x)\in \FF x$ and $\varphi(\widetilde{x}) \in\FF x'$. If $v=0$ there is nothing to prove, so we can assume $w\neq0$. We consider two cases.

First, consider the case $n(w)=0$. Set $\mu = \frac{1}{2}(1+\bi)$, $a=\mu w$, $\lambda = 1$. Then $\lambda^2 + n(a) = 1$, and hence $\varphi_{a,\lambda}$ is an automorphism. It is not hard to check that $\varphi_{a,\lambda}(x) = (b, \bi b)$ and $\varphi_{a,\lambda}(\widetilde{x}) = (b, -\bi b)$, where $b=1+\frac{1}{2}(1-\bi)w$. Since $n(b)=1$, by Lemma~\ref{lemmaAlberto} we can apply an automorphism given by a related triple that sends $(b,\bi b) \mapsto x=(1,\bi1)$ and $(b,-\bi b) \mapsto x'=(1,-\bi1)$, so we are done with this case.

Second, consider the case $n(w)\neq0$. Take $\lambda,\mu\in\FF$ such that $\lambda^2+\mu^2 n(w)=1$ and $\mu=\frac{1-2\lambda^2}{2\bi\lambda}$. (Replace the expression of $\mu$ of the second equation in the first one, multiply by $\lambda^2$ to remove denominators, take a solution $\lambda$ of this new equation, which exists because $\FF$ is algebraically closed and is nonzero because $n(w)\neq0$. Then take $\mu$ as in the second equation, which is well defined because $\lambda\neq0$.) Moreover, it is clear that $2\lambda^2-1\neq0$, because otherwise we would have $\mu=0$ and the first equation would not be satisfied. Set $a=\mu w$, so we have $\lambda^2 + n(a) = 1$ and therefore $\varphi_{a,\lambda}$ is an automorphism, that sends $x \mapsto (b,\bi b)$, $\widetilde{x} \mapsto (\gamma b,-\bi \gamma b)$, where $b=\lambda1-\bi\mu w$ and $\gamma = (\lambda+\bi\mu n(w))\lambda^{-1}$ (this is easy to check using the two equations satisfied by $\lambda$ and $\mu$). Note that $n(b)=2\lambda^2-1\neq0$, so again we can compose with an automorphism given by a related triple to obtain $\varphi(x)\in \FF x$ and $\varphi(\widetilde{x}) \in\FF x'$.

By the last paragraphs, we can assume that $x=(1,\bi1)$ and $x'=(1,-\bi1)$ are homogeneous elements. Therefore, $\im Q_x = W$, $Q_x(\ker t_x) = V$, $\im Q_{x'} = W'$ and $Q_{x'}(\ker t_{x'}) = V'$ are graded subspaces (where $V$, $V'$, $W$ and $W'$ are defined as above). Note that for each $z\in\cC_0$, $(z,\bi \bar z)\in V$ is homogeneous if and only if $(z, -\bi \bar z)\in V'$ is homogeneous because $Q_x(z, -\bi \bar z) = -2 (z,\bi \bar z)$ and $Q_{x'}(z, \bi \bar z) = -2 (z, -\bi \bar z)$ for any $z\in\cC_0$. On the other hand, if $Z=(z,\bi \bar z)$ is homogeneous for some $z\in \cC$, then $n(z)\neq0$, because otherwise we would have $Z\in\cO_1$ by Lemma~\ref{orbitsbiCayleytriple}, which is not possible.

Take a homogeneous element $x_1=(z_1, \bi\bar z_1) \in V$. Since $n(z_1)\neq0$, scaling $x_1$ we can assume that $n(z_1)=1$. Also, $x'_1 := Q_{x'}(x_1) = (z_1, -\bi\bar z_1) \in V'$ is homogeneous. Since the trace is homogeneous, we can take a homogeneous element $x_2=(z_2, \bi\bar z_2) \in V \cap \ker t_x \cap \ker t_{x'} \cap \ker t_{x_1} \cap \ker t_{x'_1}$. Note that $n(z_1,z_2)=0=n(1,z_2)$, and scaling $x_2$ if necessary, we will assume that $n(z_2)=1$. Then $x'_2 = (z_2, -\bi\bar z_2) \in V'$ is homogeneous. Furthermore, for any homogeneous elements $(y_i, \pm\bi\bar y_i)$, $i=1,2$, we have that $\{(y_1,\bi\bar y_1) , (1,\bi1) , (y_2,-\bi\bar y_2)\}= 2(y_1y_2,\bi\overline{y_1y_2})$ is homogeneous too. Thus, in our case, $(x_1x_2, \pm \bi \overline{x_1x_2})$ are homogeneous. Again, since the trace is homogeneous, we can take homogeneous elements $x_3=(z_3, \bi \bar z_3)$ and $x'_3=(z_3, -\bi \bar z_3)$, with $n(z_3)=1$ and $z_3$ orthogonal to $\lspan\{1, z_1, z_2,z_1z_2 \}$. Notice that $\{z_1, z_2, z_3 \}$ are homogeneous elements generating a $\ZZ_2^3$-grading on $\cC$, and the elements $\{x, x', x_i, x'_i \med i=1,2,3 \}$ generate an isotropic Cayley-Dickson grading on the bi-Cayley triple system. Note that there is only one orbit of isotropic Cayley-Dickson bases (up to constants) on $\cT_\cB$, because the same is true for Cayley-Dickson bases (up to constants) on $\cC$. We can conclude the proof since $\Gamma$ is fine.
\end{proof}

\begin{theorem}
Any fine grading on the bi-Cayley triple system is equivalent to one of the three following nonequivalent gradings: the nonisotropic Cayley-Dickson $\ZZ_2^5$-grading (Example~\ref{ExCDbiCayleytriple}), the isotropic Cayley-Dickson $\ZZ\times\ZZ_2^3$-grading (Example~\ref{ExisotropicCDbiCayleytriple}), or the Cartan $\ZZ^4$-grading (Example~\ref{ExCartanbiCayleytriple}).
\end{theorem}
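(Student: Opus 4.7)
The plan is to observe that the three preceding lemmas already partition the universe of fine gradings on $\cT_\cB$ according to the orbit (under $\Aut\cT_\cB$) of a carefully chosen nonzero homogeneous element. By Lemma~\ref{orbitsbiCayleytriple}, the orbits on $\cB\setminus\{0\}$ are exactly $\cO_1$ and $\cO_2(\lambda)$ for $\lambda\in\FF$. Given any fine grading $\Gamma$, since $\cB\neq0$ there is at least one nonzero homogeneous element, and I want to split into cases according to which orbit it lives in.

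The first step is to check that the case split is exhaustive and the lemmas apply. If some homogeneous element lies in $\cO_1$, Lemma~\ref{lemmacartan2} forces $\Gamma$ to be equivalent to the Cartan $\ZZ^4$-grading of Example~\ref{ExCartanbiCayleytriple}. Otherwise, all nonzero homogeneous elements have rank $2$; if one of them lies in $\cO_2(\lambda)$ with $\lambda\neq0$, then Lemma~\ref{lemmacd2} gives equivalence with the nonisotropic Cayley-Dickson $\ZZ_2^5$-grading of Example~\ref{ExCDbiCayleytriple}. The remaining possibility is that every nonzero homogeneous element lies in $\cO_2(0)$, and Lemma~\ref{lemmacdisotropic} then identifies $\Gamma$ with the isotropic Cayley-Dickson $\ZZ\times\ZZ_2^3$-grading of Example~\ref{ExisotropicCDbiCayleytriple}. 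These three cases are mutually exclusive since the orbit structure of the homogeneous support is an invariant of the equivalence class.

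The final step is pairwise non-equivalence of the three gradings. The easiest route is to note that their universal groups, $\ZZ^4$, $\ZZ_2^5$, and $\ZZ\times\ZZ_2^3$, are pairwise non-isomorphic abelian groups, so no equivalence between any two is possible. Alternatively, one can read this off directly from the orbit of the homogeneous elements: the Cartan grading has homogeneous elements in $\cO_1$ only, the nonisotropic Cayley-Dickson grading only in $\bigcup_{\lambda\neq0}\cO_2(\lambda)$, and the isotropic Cayley-Dickson grading only in $\cO_2(0)$.

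Since all the substantive work (producing representative frames or homogeneous bases in each orbit, and spreading the grading from a single homogeneous element to a complete Cayley--Dickson or Cartan basis) is already contained in the three lemmas, the theorem itself is essentially a bookkeeping step and presents no real obstacle; the hard part was already Lemma~\ref{lemmacdisotropic}, where a graded partner $x'$ for the chosen homogeneous element $x\in\cO_2(0)$ had to be found inside the graded subspace $\FF x'\oplus W$ and then normalized by an explicit $\varphi_{a,\lambda}$ and a related triple.
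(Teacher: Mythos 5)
Your proposal is correct and follows exactly the paper's route: the theorem is stated there as an immediate consequence of Lemmas~\ref{lemmacartan2}, \ref{lemmacd2} and \ref{lemmacdisotropic}, whose hypotheses exhaust the possible orbits of nonzero homogeneous elements, and the pairwise non-equivalence is justified in the paper by the same two observations you give (distinct orbits of homogeneous elements, or equivalently non-isomorphic universal groups).
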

\begin{proof} This is a consequence of Lemmas~\ref{lemmacartan2}, \ref{lemmacd2} and \ref{lemmacdisotropic}.
\end{proof}

\begin{remark} 
We already know that the isotropic and nonisotropic Cayley-Dickson gradings on the bi-Cayley triple system are not equivalent. However,  the isotropic Cayley-Dickson grading on the bi-Cayley pair (defined in the obvious way) and the (nonisotropic) Cayley-Dickson grading on the bi-Cayley pair are equivalent. This equivalence is given by the restriction of the automorphism in Equation~\eqref{equivalenceCD} to the bi-Cayley pair defined on $\cB=\iota_2(\cC)\oplus\iota_3(\cC)$.
\end{remark}

\subsection{Classification of fine gradings on the Albert triple system}

\begin{theorem} 
There are four gradings, up to equivalence, on the Albert triple system. Their universal groups are: $\ZZ^4\times\ZZ_2$, $\ZZ_2^6$, $\ZZ \times\ZZ_2^4$ and $\ZZ_3^3\times\ZZ_2$.
\end{theorem}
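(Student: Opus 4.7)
The strategy is to reduce the classification on $\cT_\alb$ to the known classification on the Albert algebra $\alb$, via the general bridge established earlier in the paper.

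First, since $\FF$ is algebraically closed of characteristic $\neq 2$, the Albert algebra $\alb$ is finite-dimensional central simple. Hence Corollary~\ref{triplegrads} gives a bijective correspondence between equivalence classes of gradings on $\alb$ and equivalence classes of gradings on $\cT_\alb$; moreover, the Remark following that Corollary shows that fine gradings correspond to fine gradings (since a maximal quasitorus of $\AAut(\cT_\alb)$ is precisely a maximal quasitorus of $\AAut(\alb)$ times $\mmu_2$). Recalling from Section~\ref{sectionCayleyAlbert} that the fine gradings on $\alb$ have, up to equivalence, universal groups $\ZZ^4$, $\ZZ_2^5$, $\ZZ\times\ZZ_2^3$ and $\ZZ_3^3$ (the last only if $\chr\FF\neq3$), we obtain exactly four equivalence classes of fine gradings on $\cT_\alb$.

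Second, I would compute the universal group of each fine grading $\Gamma_\cT$ on $\cT_\alb$. Since $\alb$ is central simple and unital, Proposition~\ref{unitytriples} guarantees that $1$ is homogeneous in $\Gamma_\cT$, so the hypotheses of Proposition~\ref{regulargradstriples} are met. That proposition then yields $\Univ(\Gamma_\cT) \cong \Univ(\Gamma_\alb) \times \ZZ_2$, where $\Gamma_\alb$ is the corresponding fine grading on $\alb$ obtained by the shift $\Gamma_\cT^{[\deg 1]}$. Applying this to the four possibilities for $\Univ(\Gamma_\alb)$ gives the four universal groups
\[
\ZZ^4\times\ZZ_2,\qquad \ZZ_2^5\times\ZZ_2=\ZZ_2^6,\qquad (\ZZ\times\ZZ_2^3)\times\ZZ_2=\ZZ\times\ZZ_2^4,\qquad \ZZ_3^3\times\ZZ_2,
\]
matching the statement.

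The only point requiring care is to ensure that the $\ZZ_2$ factor really splits off as claimed, i.e., that the extra generator $\deg(1)$ of order $2$ is independent of the subgroup generated by $\supp\Gamma_\cT^{[\deg 1]}$. This is exactly the content of the last part of Proposition~\ref{regulargradstriples} under the assumption that $\Gamma_\cT$ is fine, so there is no additional obstacle; the verification reduces to the general argument already given there. No new case analysis intrinsic to the Albert setting is needed, which is what makes this classification a clean corollary of the earlier machinery together with the classification of fine gradings on $\alb$.
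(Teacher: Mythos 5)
Your proposal is correct and follows essentially the same route as the paper, which derives the theorem from Corollary~\ref{triplegrads}, Proposition~\ref{regulargradstriples} (whose hypotheses are met via Proposition~\ref{unitytriples}), and the known classification of fine gradings on $\alb$; you have merely spelled out the steps the paper leaves implicit.
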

\begin{proof} 
This is a consequence of Corollary~\ref{triplegrads}, Proposition~\ref{regulargradstriples} and the classification of fine gradings on the Albert algebra (\cite{EK12a}). 
\end{proof}

\section{Induced gradings on Lie algebras $\mathfrak{e}_6$ and $\mathfrak{e}_7$}

It is well-known that $\TKK(\cV_\cB)=\mathfrak{e}_6$ and $\TKK(\cV_\alb)=\mathfrak{e}_7$. Recall that $\dim \mathfrak{e}_6 = 78$ and $\dim \mathfrak{e}_7 = 133$. We will study now the gradings induced by the TKK construction from the fine gradings on $\cV_\cB$ and $\cV_\alb$. Note that the classification of fine gradings, up to equivalence, on all finite-dimensional simple Lie algebras over an algebraically closed field of characteristic $0$ is complete (\cite[Chapters 3-6]{EKmon}, \cite{Eld14}, \cite{Yu14}).
A classification of the fine gradings on $\mathfrak{e}_6$, for the case $\FF = \CC$, can be found in \cite{DV12}.

Recall that we always assume, unless otherwise stated, that the base field $\FF$ is algebraically closed with $\chr\FF\neq2$.

Recall that, if $\Gamma$ is a grading on a finite-dimensional algebra $A$, a sequence of natural numbers $(n_1, n_2, \dots)$ is called the \emph{type} of the grading $\Gamma$ if there are exactly $n_i$ homogeneous components of dimension $i$, for $i\in\NN$. Note that $\dim A = \sum_i i \cdot n_i$.

\begin{proposition}
The Cartan $\ZZ^6$-grading on the bi-Cayley pair extends to a fine grading with universal group $\ZZ^6$ and type $(72,0,0,0,0,1)$ on $\mathfrak{e}_6$, that is, a Cartan grading on $\mathfrak{e}_6$. Similarly, the Cartan $\ZZ^7$-grading on the Albert pair extends to a fine grading with universal group $\ZZ^7$ and type $(126,0,0,0,0,0,1)$ on $\mathfrak{e}_7$, that is, a Cartan grading on $\mathfrak{e}_7$.
\end{proposition}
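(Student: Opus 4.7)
The plan is to invoke Theorem~\ref{correspondence} to transport fineness and the universal group across the TKK construction, and then identify the extended grading with the Cartan (weight-space) decomposition by means of the associated algebraic torus in $\AAut(\mathfrak{e}_r)$.

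First I would apply Theorem~\ref{correspondence} directly: since the Cartan $\ZZ^6$-grading $\Gamma$ on $\cV_\cB$ is fine with $\Univ(\Gamma)=\ZZ^6$, its TKK-extension $E_{\ZZ^6}(\Gamma)$ is a fine grading on $L=\TKK(\cV_\cB)=\mathfrak{e}_6$ with universal group $\ZZ^6$, and likewise the Cartan $\ZZ^7$-grading on $\cV_\alb$ yields a fine grading on $\mathfrak{e}_7$ with universal group $\ZZ^7$. This handles both fineness and the identification of the grading group, with no further computation.

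Next I would pass to the Cartier dual. Since $\ZZ^r$ (with $r=6,7$) is torsion-free, $(\ZZ^r)^D$ is a connected algebraic torus of dimension $r$. The morphism $\eta_{E_G(\Gamma)}\colon (\ZZ^r)^D\to \AAut(\mathfrak{e}_r)$ corresponding to the grading has image a connected quasi-torus, hence an algebraic subtorus of $\AAut(\mathfrak{e}_r)$ of dimension at most $r$. Because the grading is fine, its image is maximal among diagonalizable subgroups; since any torus properly containing this image would itself be a quasi-torus and would contradict maximality, and since the rank of $\mathfrak{e}_r$ equals $r$, the image must be a maximal torus of $\AAut(\mathfrak{e}_r)$.

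Now the homogeneous decomposition defined by $E_G(\Gamma)$ coincides with the weight-space decomposition of $\mathfrak{e}_r$ under this maximal torus. By standard structure theory, the zero-weight space is a Cartan subalgebra of dimension $r$ and the nonzero-weight spaces are the one-dimensional root spaces, one for each root. This verifies that $E_G(\Gamma)$ is indeed a Cartan grading. Reading off the root systems of type $E_6$ and $E_7$, whose cardinalities are $72$ and $126$ respectively, we obtain the types $(72,0,0,0,0,1)$ on $\mathfrak{e}_6$ and $(126,0,0,0,0,0,1)$ on $\mathfrak{e}_7$.

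The only subtle point, and what I expect to be the main obstacle to phrase cleanly, is the passage from ``maximal quasi-torus in $\AAut(\mathfrak{e}_r)$'' to ``maximal torus''; this requires the remark that the image of $\GG_m^r$ is connected and that any strictly larger torus would still be a diagonalizable subgroup, contradicting fineness via the correspondence between fine gradings and maximal quasi-tori recalled in Subsection~2.1. Everything else is a direct application of Theorem~\ref{correspondence} together with the dimension count $\dim\mathfrak{e}_6=78=72+6$ and $\dim\mathfrak{e}_7=133=126+7$.
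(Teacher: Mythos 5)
Your proposal is correct and follows essentially the same route as the paper: apply Theorem~\ref{correspondence} to transfer fineness and the universal group to the TKK Lie algebra, and then observe that a fine grading with free universal group of rank equal to the rank of $\mathfrak{e}_r$ corresponds to a maximal torus, hence is the root-space (Cartan) decomposition, from which the type follows. The paper phrases this as the conjugacy of maximal tori (citing Humphreys) rather than via the Cartier dual and maximal quasitori, but the content is the same.
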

\begin{proof}
This is a consequence of Theorem~\ref{correspondence} and the fact that the only gradings up to equivalence with these universal groups on the Lie algebras are the Cartan gradings. (Recall that Cartan gradings on simple Lie algebras are induced by maximal tori. By \cite[Section 21.3]{H75}, the maximal tori of $\Aut(\mathfrak{e}_6)$ are conjugate, so their associated $\ZZ^6$-gradings on $\mathfrak{e}_6$ must be equivalent. The same holds for the $\ZZ^7$-gradings on $\mathfrak{e}_7$.)
\end{proof}

\begin{proposition} \label{inducedgradbiCayley}
The Cayley-Dickson $\ZZ^2\times\ZZ_2^3$-grading on the bi-Cayley pair extends to a fine grading with universal group $\ZZ^2\times\ZZ_2^3$ and type $(48,1,0,7)$ on $\mathfrak{e}_6$.
\end{proposition}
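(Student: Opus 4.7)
The plan is to apply Theorem~\ref{correspondence} to extend the Cayley-Dickson grading $\Gamma$ from $\cV_\cB$ to $\mathfrak{e}_6 = \TKK(\cV_\cB)$, and then compute the type by stratifying $L^0$. Theorem~\ref{correspondence} gives a TKK-compatible $G$-grading $\widetilde\Gamma = E_G(\Gamma)$ on $L = \mathfrak{e}_6$, with $G = \ZZ^2 \times \ZZ_2^3$ as universal group, that is fine as a TKK-compatible grading. Proposition~\ref{finegradingproperty} supplies a homomorphism $\pi \colon G \to \ZZ$ with $\pi(g) = \pm 1$ whenever $g \in \supp \Gamma^\pm$; since every element of $\supp L^0$ has the form $g^+ + g^-$ with $g^\pm \in \supp \Gamma^\pm$, we get $\pi(\supp L^0) = \{0\}$, so the supports $\supp L^{-1}, \supp L^0, \supp L^1$ are pairwise disjoint. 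Any ordinary refinement of $\widetilde\Gamma$ must therefore refine within each of $L^{-1}, L^0, L^1$ separately, contradicting either the fineness of $\Gamma$ on $\cV_\cB^\pm$ (for $L^{\pm 1}$, whose homogeneous components are already one-dimensional by Theorem~\ref{dim1}) or the fineness of $\widetilde\Gamma$ as a TKK-compatible grading (for $L^0$). Hence $\widetilde\Gamma$ is fine on $\mathfrak{e}_6$ with universal group $G$.

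For the type, the pieces $L^{\pm 1} = \cV_\cB^\pm$ each contribute $16$ one-dimensional components, giving $32$ one-dim components in total. The support of $L^0$ (of dimension $46 = 78 - 32$) lies in the $24$-element set $\{(0, 0, c), (1, -1, c), (-1, 1, c) \med c \in \ZZ_2^3\}$. A direct computation with Definition~\ref{parB} yields $D((x_i, 0)^+, (0, x_j)^-)(z_1, z_2) = (\bar z_2 (x_j x_i), 0)$; as $(i, j)$ ranges over the $8$ pairs with $d_i + d_j = c$, the product $x_j x_i$ runs through the one-dimensional component $\cC_c$ of the $\ZZ_2^3$-grading on $\cC$, so $\dim L^0_{(\pm 1, \mp 1, c)} = 1$ for every $c$, contributing $16$ more one-dim components. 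For the stratum $(0, 0, 0)$: since $i \mapsto d_i$ is a bijection $\{0, \dots, 7\} \to \ZZ_2^3$, only diagonal pairs $i = j$ contribute; by alternativity, $x_i(\bar{x_i} z) = n(x_i) z$, so each $\nu((x_i, 0)^+, (x_i, 0)^-)$ is a scalar multiple of the same operator (acting as $2n(x_i) \cdot \id$ on $\cC_1^+$ and its counterpart on $\cC_1^-$). Combining this with the analogous $(0, x_i)$-family yields $\dim L^0_{(0, 0, 0)} = 2$.

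For the remaining strata $(0, 0, c)$ with $c \neq 0$: the action of $\GL_3(\ZZ_2) \leq \Aut(\ZZ_2^3)$ induced by Cayley-Dickson basis permutations in $\Aut\cC$ produces self-equivalences of $\widetilde\Gamma$ permuting these seven strata transitively, so they share a common dimension $d$; the identity $16 + 2 + 7d = 46$ then forces $d = 4$. Thus the type of $L^0$ is $(16, 1, 0, 7)$, and the overall type is $(48, 1, 0, 7)$. The main obstacle is the dimension computation for $L^0_{(0, 0, c)}$ with $c \neq 0$: the transitive symmetry argument just described neatly avoids a full case analysis, but a direct (finite, explicit, yet longer) computation with the triple product formula for a single representative $c$ would serve as an alternative verification.
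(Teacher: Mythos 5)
Your proposal is correct, and its skeleton coincides with the paper's: invoke Theorem~\ref{correspondence} for fineness and the universal group, then determine the type by stratifying $L^0$ over the $24$ degrees $(0,0,c)$, $(\pm1,\mp1,c)$ and closing with $\dim\mathfrak{e}_6=78$. The difference lies in how the strata are handled. The paper proves only \emph{lower} bounds everywhere ($\dim L^0_{(0,0,0)}\ge 2$ via $m_{2,1},m_{1,2}$; $\dim L^0_{(0,0,c)}\ge 4$ by exhibiting four explicit linearly independent operators coming from the four unordered pairs $\{i,j\}$ with $d_i+d_j=c$; $\dim L^0_{(\pm1,\mp1,c)}\ge 1$ by a nonvanishing computation) and then forces equality from $2+28+16+32=78$. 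You instead compute the $(0,0,0)$ and $(\pm1,\mp1,c)$ strata \emph{exactly} — your observation that all $D((x_i,0)^+,(0,x_j)^-)$ with $d_i+d_j=c$ are proportional because $x_jx_i$ spans the one-dimensional $\cC_c$ is correct and gives $\dim L^0_{(\pm1,\mp1,c)}=1$ outright — and you replace the four explicit operators by a transitivity argument: the Weyl group $\GL_3(\ZZ_2)$ of the $\ZZ_2^3$-grading on $\cC$, realized by automorphisms $f\times f$ of $\cV_\cB$ and extended to $\mathfrak{e}_6$, permutes the seven strata $(0,0,c)$, $c\neq0$, transitively, so $46=2+16+7d$ forces $d=4$. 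Both arguments are valid; yours trades the explicit linear-independence check for a symmetry argument (which the paper itself justifies elsewhere, in the proof of Theorem~\ref{weylnonisotropicCD}), while the paper's version has the minor advantage of exhibiting the $4$-dimensional components concretely. Your preliminary re-derivation of the disjointness of $\supp L^{-1},\supp L^0,\supp L^1$ is redundant — it is already contained in the statement and proof of Theorem~\ref{correspondence} — but harmless.
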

\begin{proof}
This is a consequence of Theorem~\ref{correspondence}, except for the type, which we will now compute. Set $e=(0,0,\bar0,\bar0,\bar0)$ and write $L = \mathfrak{e}_6$, $\cV = \cV_\cB$. If $\nu(x,y)\in L_e^0$, it must be $\deg(x^+) + \deg(y^-) = e$ and hence $\FF x = \FF y$. For elements in the Cayley-Dickson basis of $\cV_\cB$ we have
$\{(x_i,0),(x_i,0),\cdot\} = m_{2,1}$ and $\{(0,x_i),(0,x_i),\cdot\} = m_{1,2}$, where $m_{\lambda,\mu} : \cB \to \cB$, $(a,b) \mapsto (\lambda a,\mu b)$. It follows that $L_e^0$ is spanned by $(m_{2,1},-m_{2,1})$ and $(m_{1,2},-m_{1,2})$. In particular, dim$L^0_e=2$.

Take $g=(0,0,t)\in G=\ZZ^2\times\ZZ_2^3$ with $0\neq t\in\ZZ_2^3$. Given a homogeneous element $x\in L^1 = \cV^+$ in the Cayley-Dickson basis of $\cV$, there is a unique $y\in L^{-1} = \cV^-$ in the Cayley-Dickson basis such that $\nu(x,y) = [x,y]\in L_g^0$, i.e., $\deg(x^+) + \deg(y^-) = g$, and in that case we always have $x\neq y$. Take different elements $x_i$, $x_j$ in the Cayley-Dickson basis of $\cC$ such that $\deg((x_i,0)^+) + \deg((x_j,0)^-) = g$. There are four such pairs $\{i,j\}$. Then, we have four linearly independent elements of $L_g^0$ such that their first components are given by:
\begin{align*}
\{(x_i,0),(x_j,0),\cdot\}=-\{(x_j,0),(x_i,0),\cdot\}=\left\{\begin{array}{l}
(x_j,0)\mapsto2(x_i,0) \\ (x_i,0)\mapsto-2(x_j,0) \\ (0,x_k)\mapsto(0,-(x_kx_i)\bar x_j) \ \text{for any $k$} \\ 0 \text{ otherwise}.
\end{array}\right.
\end{align*} 
It follows that dim$L_g^0 \geq 4$, and there are seven homogeneous components of this type, one for each choice of $t$.

Take now $g=(1,-1,t)$ with $t\in\ZZ_2^3$ (the case $g=(-1,1,t)$ is similar). Take elements $(x_i,0)$ and $(0,x_j)$ in the Cayley-Dickson basis such that $\deg((x_i,0)^+) + \deg((x_j,0)^-) = g$. Note that, for elements in the Cayley-Dickson basis we have
\begin{align*} 
\{(x_i,0),(0,x_j),\cdot\}=\left\{\begin{array}{l}
(0,x_k)\mapsto(\bar x_k(x_jx_i),0) \\
(x_k,0)\mapsto(0,0)
\end{array}\right.,
\end{align*}
which is a nonzero map. Hence $L_g^0 \neq 0$, and therefore $\dim L_g^0 \geq 1$. Note that there are 8 homogeneous components with degrees $g=(1,-1,t)$ for $t\in\ZZ_2^3$, and 8 more with degrees $g=(-1,1,t)$ for $t\in\ZZ_2^3$.

Finally, the subspace $L^1\oplus L^{-1}=V^+\oplus V^-$ consists of other $32$ homogeneous components of dimension $1$. The sum of the subspaces already considered has dimension at least $2 + 4 \cdot 7 + 16 + 32 = 78 = \dim L$. Therefore, the inequalities above are actually equalities and the type of the grading is $(48,1,0,7)$.
\end{proof}

\begin{remark}
For the TKK construction $L=\TKK(\cV_\cB)$ of $\mathfrak{e}_6$, we have that $L^0 = \text{Der}(\cV_\cB) \cong \mathfrak{d}_5 \oplus Z$ where $Z$ is a $1$-dimensional center (see the proof of Theorem~\ref{ThSchemesBicayley}). The $\ZZ^6$-grading on $L$ restricts to a $\ZZ^5$-grading of type $(40,0,0,0,0,1)$ on $L^0$, which restricts to the Cartan $\ZZ^5$-grading on $\mathfrak{d}_5$. On the other hand, the $\ZZ^2\times\ZZ_2^3$-grading on $L$ restricts to a $\ZZ\times\ZZ_2^3$-grading of type $(16,1,0,7)$ on $L^0$.
\end{remark}

\begin{proposition} 
The Cayley-Dickson $\ZZ^3\times\ZZ_2^3$-grading on the Albert pair extends to a fine grading with universal group $\ZZ^3\times\ZZ_2^3$ and type $(102,0,1,7)$ on $\mathfrak{e}_7$.
\end{proposition}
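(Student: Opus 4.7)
The plan is to follow the blueprint of Proposition~\ref{inducedgradbiCayley}. By Theorem~\ref{correspondence}, the Cayley-Dickson grading $\Gamma$ on $\cV_\alb$ extends via the TKK construction to a fine $G$-grading on $L = \TKK(\cV_\alb) = \mathfrak{e}_7$ with universal group $G = \ZZ^3\times\ZZ_2^3$, which settles the first two assertions. Since $\dim\alb = 27$ and all components of $\Gamma$ are one-dimensional with $\supp\Gamma^+ \cap \supp\Gamma^- = \emptyset$ (Proposition~\ref{finegradingproperty}), the subspaces $L^{\pm 1} = \cV_\alb^{\pm}$ contribute $54$ one-dimensional homogeneous components of $L$; it remains to decompose $L^0 = \lspan\{\nu(x,y) : (x,y)\in\cV_\alb\}$, of dimension $133 - 54 = 79$, into its $G$-homogeneous components.

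For $g = 0$, one-dimensionality of the components of $\Gamma$ forces $\FF x = \FF y$ in any contribution $\nu(x,y)$, so $L^0_0$ is spanned by the twenty-seven diagonal elements $\nu(E_i,E_i)$ and $\nu(\iota_i(x_k),\iota_i(x_k))$. Using $\{x,x,z\} = 2x^2z$ in the Jordan algebra $\alb$ together with $\iota_i(x_k)^2 = 4n(x_k)(E_{i+1}+E_{i+2})$, one sees that $D(E_j,E_j) = 2L_{E_j}$ (these three left-multiplications being linearly independent on $\alb$) and that $D(\iota_i(x_k),\iota_i(x_k)) = 4n(x_k)(D(E_{i+1},E_{i+1}) + D(E_{i+2},E_{i+2}))$, so $L^0_0 = \lspan\{\nu(E_j,E_j)\}_{j=1}^{3}$ is three-dimensional.

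For $g = (0,0,0,t)$ with $0\neq t\in\ZZ_2^3$, the vanishing $\ZZ^3$-coordinate forces the realizing pair to share its index, namely $(\iota_i(x_k)^+,\iota_i(x_l)^-)$ with $\deg_\cC x_k + \deg_\cC x_l = t$ for some $i\in\{1,2,3\}$. For each fixed $i$, the four unordered pairs $\{k,l\}$ yield four linearly independent operators by the argument used in the proof of Proposition~\ref{inducedgradbiCayley} applied within $\iota_i(\cC)$, giving $\dim L^0_g \geq 4$. Every other $g\in\supp L^0$ has a nonzero $\ZZ^3$-component of the form $a_i - a_j$ with $i\neq j$ (after using $\deg(E_i) = -a_i + a_{i+1} + a_{i+2}$ and checking that $D(E_i,E_j) = 0$ for $i\neq j$), and for each such degree one exhibits a nonzero operator $\nu(\iota_i(x_k)^+,\iota_j(x_l)^-)$, producing $\dim L^0_g \geq 1$; there are exactly $6\cdot 8 = 48$ such degrees. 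The lower bounds sum to $3 + 7\cdot 4 + 48 = 79 = \dim L^0$, forcing equality throughout, so the total type on $L$ is $(54+48, 0, 1, 7) = (102, 0, 1, 7)$.

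The main obstacle is the careful identification of $\supp L^0$ itself: because many different kinds of pairs---among them $\nu(E_i,\iota_j(x_k))$ and $\nu(\iota_i(x_k),\iota_j(x_l))$---can produce the same degree $a_i - a_j + (0,0,0,s)$, one must verify that the ``accidental'' combinations either vanish ($\nu(E_i,E_j) = 0$ for $i\neq j$, $\nu(E_i,\iota_i(x)) = 0$) or land in the same one-dimensional subspace. The dimension-counting argument above sidesteps the explicit case analysis once enough independent nonzero generators have been exhibited at the asserted degrees; verifying those vanishings and the required lower bounds via the Albert-algebra formulas $\iota_i(a)\iota_{i+1}(b) = \iota_{i+2}(\bar a\bar b)$ and $\iota_i(a)\iota_i(b) = 2n(a,b)(E_{i+1}+E_{i+2})$ is where the bookkeeping becomes most delicate.
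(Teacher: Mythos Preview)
Your proof is correct and follows essentially the same dimension-counting strategy as the paper: establish lower bounds of $3$, $4\cdot 7$, and $48$ on the dimensions of the pieces of $L^0$ at the degrees $0$, $(0,0,0,t)$ with $t\neq 0$, and $a_i-a_j+(\ldots)$ respectively, then observe that these already sum to $79=\dim L^0$. The only difference is presentational: the paper obtains the $48$ one-dimensional components and the seven four-dimensional components by invoking the already-computed type for the $\ZZ^2\times\ZZ_2^3$-grading on $\mathfrak{e}_6$ and arguing ``by symmetry'' among the three copies of $\cV_\cB$ inside $\cV_\alb$, whereas you redo these lower bounds directly (including the verifications $D(E_i,E_j)=0$ and $D(E_i,\iota_i(x))=0$ to control $\supp L^0$).
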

\begin{proof}
This is a consequence of Theorem~\ref{correspondence}, except for the type, which we will now compute. Notice that $L^1 \oplus L^{-1} = \alb^+ \oplus \alb^-$ consists of $54$ homogeneous components of dimension 1.

Set $e = (0,0,0,\bar0,\bar0,\bar0) \in \ZZ^3\times\ZZ_2^3$. Note that $\{E_i,E_i,\cdot\}$ acts multiplying by $2$ on $E_i$, and multiplying by $0$ on $E_{i+1}$ and $E_{i+2}$. Therefore, $\dim L^0_e \geq 3$. 

Recall from the proof of Proposition~\ref{inducedgradbiCayley} that the $\ZZ^2 \times \ZZ_2^3$-grading on $\mathfrak{e}_6$ (induced from $\cV_\cB$) has $16$ components of dimension $1$ with associated degrees $(\pm1,\mp1,g)$ with $g\in\ZZ_2^3$. Therefore, by symmetry for our grading on $\mathfrak{e}_7$, there must be $16 \times 3 = 48$ homogeneous components of at least dimension 1 (the dimension may increase on $\mathfrak{e}_7$), with associated degrees $(\pm1,\mp1,0,g)$, $(\pm1,0,\mp1,g)$, $(0,\pm1,\mp1,g)$, where $g\in\ZZ_2^3$. These components span a subspace of dimension at least $48$. 

Recall also that the $\ZZ^2 \times \ZZ_2^3$-grading on $\mathfrak{e}_6$ has $7$ components of dimension $4$ and degrees $(0,0,g)$ with $e\neq g\in\ZZ_2^3$, so the $\ZZ^3 \times \ZZ_2^3$-grading on $\mathfrak{e}_7$ has at least $7$ components, with degrees $(0,0,0,g)$ with $e\ne g\in \ZZ_2^3$, of dimension at least $4$, whose sum spans a subspace of dimension at least $28$.

Finally, note that de sum of the previous subspaces has dimension at least $54 + 3  + 48 + 28 = 133 = \text{dim} \mathfrak{e}_7$. Hence, the inequalities in the dimensions above are equalities, and the result follows.
\end{proof}

\begin{proposition}
Assume that $\chr\FF\neq3$. The $\ZZ \times \ZZ_3^3$-grading on the Albert pair extends to a fine grading with universal group $\ZZ \times \ZZ_3^3$ and type $(55,0,26)$ on $\mathfrak{e}_7$.
\end{proposition}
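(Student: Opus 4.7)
The plan is to apply Theorem~\ref{correspondence}: the $\ZZ\times\ZZ_3^3$-grading $\Gamma$ on $\cV_\alb$ from Example~\ref{Ex3Albertpair} extends via $E_{\ZZ\times\ZZ_3^3}$ to a TKK-compatible grading $\widetilde\Gamma$ on $L=\TKK(\cV_\alb)=\mathfrak{e}_7$ that is fine and has the same universal group $\ZZ\times\ZZ_3^3$. The remaining task is to compute the type of $\widetilde\Gamma$ by determining dimensions in each of the three TKK-layers.

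The easy layers are $L^{\pm1}=\cV_\alb^\pm=\alb$, whose gradings are the fine $\ZZ_3^3$-grading on $\alb$ shifted by $\pm(1,\bar0,\bar0,\bar0)$. Since the $\ZZ_3^3$-grading on $\alb$ has 27 one-dimensional components (one at each element of $\ZZ_3^3$), the layers $L^1\oplus L^{-1}$ contribute 54 one-dimensional components to $\widetilde\Gamma$, all supported at degrees $(\pm1,g)$ with $g\in\ZZ_3^3$.

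For the middle layer, $L^0=\Innder(\cV_\alb)$ is supported on $\{0\}\times\ZZ_3^3$ and, via the standard TKK dictionary for unital Jordan algebras, is isomorphic as a graded Lie algebra to the full structure algebra $\mathrm{str}(\alb)=L_\alb\oplus\Der(\alb)$ (with $\dim L_\alb=27$, $\dim\Der(\alb)=52$, so $\dim L^0=79=133-54$). The $\ZZ_3^3$-grading on $L_\alb$ mirrors the one on $\alb$ and yields 27 one-dimensional components. For the summand $\Der(\alb)\cong F_4$: since each $\alb_g$ is $1$-dimensional, every $D\in\Der(\alb)_0$ must act as a scalar $\lambda_g\in\FF$ on $\alb_g$, and the derivation property together with the fact that the product on $\alb$ connects enough components forces the assignment $g\mapsto\lambda_g$ to be an additive character $\ZZ_3^3\to\FF$; since $\chr\FF\neq3$, this character is trivial, so $\Der(\alb)_0=0$. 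The grading inherited by $\Der(\alb)$ is thus the classical fine $\ZZ_3^3$-grading on $F_4$ (see \cite{EKmon}), which by the standard description has 26 components of dimension 2, one at each nonzero $g\in\ZZ_3^3$.

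Combining the two summands, $L^0_{(0,\bar 0,\bar 0,\bar 0)}$ has dimension $1+0=1$ while $L^0_{(0,g)}$ has dimension $1+2=3$ for each $g\neq0$ in $\ZZ_3^3$. Adding the 54 one-dimensional components coming from $L^{\pm1}$, we conclude that $\widetilde\Gamma$ has type $(54+1,0,26)=(55,0,26)$. The main obstacle is the identification of the grading on $\Der(\alb)$ with the known fine $\ZZ_3^3$-grading on $F_4$: while the vanishing of $\Der(\alb)_0$ follows from the character argument above, the uniform dimension $2$ in each nonzero degree ultimately relies on the classical structure of the $\ZZ_3^3$-grading on $F_4$; alternatively, one can bypass this by computing $\dim L^0_{(0,g)}$ directly from the triple products $D(x,y)$ of homogeneous elements of $\alb$ and verifying linear relations.
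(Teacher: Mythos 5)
Your proposal is correct and reaches the right type, but it computes the middle layer $L^0$ by a genuinely different route than the paper. The paper's proof handles $L^0$ with two quick observations: the identity component $L^0_e$ is spanned by the elements $\nu(x,x^{-1})$ and hence equals $\FF(\id,-\id)$ (using $\{x,x^{-1},\cdot\}=2\id$ for invertible $x$), and the remaining $133-55=78$ dimensions are distributed over the $26$ degrees $(0,g)$, $g\neq0$, which all lie in a single orbit of $\Aut\Gamma$ because the Weyl group contains $\SL_3(\ZZ_3)$ acting transitively on $\ZZ_3^3\setminus\{0\}$ (Theorem~\ref{weylZxZ33}); equidimensionality then forces each component to have dimension $78/26=3$. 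You instead decompose $L^0\cong L_\alb\oplus\Der(\alb)$ as a graded space, kill $\Der(\alb)_0$ by the additive-character argument (valid since every $g\in\ZZ_3^3$ has order $3$ and $\chr\FF\neq3$ — though note that additivity of $g\mapsto\lambda_g$ is only forced where products of homogeneous elements are nonzero, which does hold for this grading), and import the known type $(0,26)$ of the fine $\ZZ_3^3$-grading on $F_4$ from \cite{EKmon}. Your route is more structural and explains \emph{why} the nonzero components have dimension $3$ (namely $1+2$), at the cost of relying on the classification of the $\ZZ_3^3$-grading on $\Der(\alb)$ and on the identification of $\Innder(\cV_\alb)$ with the graded structure algebra; the paper's orbit argument avoids both and needs only the Weyl group computation it carries out anyway. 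Both arguments are sound.
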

\begin{proof} 
This is a consequence of Theorem~\ref{correspondence}, except for the type, which we will now compute. We know that our grading satisfies that, if $\alb^\sigma_g = \FF x$ for some $0\neq x \in \alb$, then $\alb^{-\sigma}_{-g} = \FF x^{-1}$. Hence $L_e^0$ is spanned by elements of the form $\nu(x,x^{-1})$. But it is well-known that, if an element $x$ is invertible in a Jordan algebra, then $\{x,x^{-1},\cdot\} = 2 \id$. Therefore, $L_e^0 = \FF(\id, -\id)$ has dimension $1$. (Actually, $L_e^0$ is the center of $L^0$.) Moreover, the subspace $L^1 \oplus L^{-1} = \alb^+ \oplus \alb^-$ consists of $54$ homogeneous components of dimension 1. 

The rest of homogeneous components span a subspace of dimension $133 - 55 = 78$ (actually, a subalgebra isomorphic to $\mathfrak{e}_6$) and support $\{(0,g) \med 0\neq g \in \ZZ_3^3 \}$, and since its homogeneous components are clearly in the same orbit under the action of $\Aut\Gamma$ (see Theorem~\ref{weylZxZ33} and its proof for more details), each of them must have dimension $78/26=3$.
\end{proof}

\section{Weyl groups}

Now we will compute the Weyl groups of the fine gradings on the Jordan pairs and triple systems of types bi-Cayley and Albert.

As a consequence of Corollary~\ref{tripleWeyl} and the classification of the Weyl groups of fine gradings on $\alb$ (see \cite{EKmon}), we already know the Weyl groups of the fine gradings on the Albert triple system.

\begin{theorem} \label{weylnonisotropicCD}
Let $\Gamma$ be either the Cayley-Dickson $\ZZ^2 \times \ZZ_2^3$-grading on the bi-Cayley pair, or the nonisotropic Cayley-Dickson $\ZZ_2^5$-grading on the bi-Cayley triple system. Then, 
\begin{equation*} \begin{aligned}
& \cW(\Gamma) \cong \left\{ \left(\begin{array}{c|c} A & 0 \\ \hline B & C \end{array}\right) \in \GL_5(\ZZ_2) \med 
  A\in\langle\tau\rangle, B\in\mathcal{M}_{3\times2}(\ZZ_2), C\in\GL_3(\ZZ_2) \right\},
\end{aligned}
\end{equation*}
where $\tau=\left(\begin{array}{cc}0&1 \\ 1&0 \end{array}\right) \in \GL_2(\ZZ_2)$.
\end{theorem}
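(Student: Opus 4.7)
The plan is to obtain $\cW(\Gamma)$ by the usual two-step argument: first bound it above by a support-preservation analysis inside $\Aut(\Univ(\Gamma))$, and then realize every element of the bound by an explicit automorphism of the graded system.

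\emph{Upper bound.} Any self-equivalence of the graded system induces an automorphism of $\Univ(\Gamma)$ that permutes $\supp\Gamma^+$ and, by the pair/triple duality together with Proposition~\ref{finegradingproperty}, also $\supp\Gamma^-$ (in the triple case one simply preserves $\supp\Gamma$). The torsion subgroup $\ZZ_2^3$ is characteristic in $\Univ(\Gamma)\in\{\ZZ^2\times\ZZ_2^3,\,\ZZ_2^5\}$, so every element of $\Aut(\Univ(\Gamma))$ has block form $\bigl(\begin{smallmatrix}A & 0\\ B & C\end{smallmatrix}\bigr)$ with $A$ acting on the first two coordinates, $C$ on the $\ZZ_2^3$-part, and $B$ as the cross term. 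Preservation of $\{(1,0,t),(0,1,t):t\in\ZZ_2^3\}$ forces $A$ to be a permutation matrix, so $A\in\langle\tau\rangle$, while $B\in M_{3\times2}(\ZZ_2)$ is unconstrained (adding any element of $\ZZ_2^3$ keeps a support element in the support) and $C\in\GL_3(\ZZ_2)$. This yields $\cW(\Gamma)\subseteq H$, the matrix group in the statement.

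\emph{Realization.} To produce equality, I would exhibit automorphisms of $\cV_\cB$ (respectively $\cT_\cB$) covering a generating set of $H$. The automorphism $\bar\tau_{12}$ realizes $(A,B,C)=(\tau,0,I)$. Any automorphism of $\cC$ preserving the $\ZZ_2^3$-grading extends, via the related triple $(\phi,\phi,\phi)$, to a graded automorphism of the pair and of the triple system, and since the Weyl group of the $\ZZ_2^3$-grading on $\cC$ is $\GL_3(\ZZ_2)$ (see \cite{EKmon}), all elements $(I,0,C)$ with $C\in\GL_3(\ZZ_2)$ are realized. For the diagonal part of $B$, the automorphisms $\varphi_{x_i,0}$ of Proposition~\ref{phialambda}, where $x_i$ is a Cayley-Dickson basis element and $s_i:=\deg_\cC x_i$, act on the grading group as $(\tau,(s_i\mid s_i),I)$; composing with $\bar\tau_{12}$ gives $(I,(s_i\mid s_i),I)$, and as $i$ varies we realize the subgroup $\{(s\mid s):s\in\ZZ_2^3\}\subset M_{3\times2}(\ZZ_2)$. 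The remaining matrices $(s_2\mid s_3)$ with $s_2\ne s_3$ are produced by automorphisms arising from $\Gamma^+(W,n)$ (for the pair) and $\Spin(V,n)$ (for the triple) via the description in Theorem~\ref{ThAutomGroups}.

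\emph{Main obstacle.} The delicate point is producing the off-diagonal part of $B$: the family $\varphi_{x_i,0}$ only yields matrices with equal columns, so a further source of generators is needed. The natural resolution is to exploit the Clifford algebra realization of $\Aut\cV_\cB$ and $\Aut\cT_\cB$: a product of two suitably chosen vectors of $W$ (respectively $V$) yields an element of $\Gamma^+(W,n)$ whose action on $\cB$ shifts the Cayley-Dickson degrees of $\cC_1$ and $\cC_2$ by independent amounts. Verifying this directly, and checking that the resulting automorphism covers exactly the missing columns-distinct matrices in $M_{3\times2}(\ZZ_2)$, completes the lower bound. The identification $\cW(\Gamma)\cong H$ then holds uniformly for the pair and the triple system, since every realizing automorphism can be chosen inside $\Aut\cT_\cB\le\Aut\cV_\cB$.
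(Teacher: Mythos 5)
Your proof is incomplete at exactly the point you flag as the ``main obstacle,'' and the mechanism you propose to close it does not work. The off-diagonal part of the $B$-block cannot be produced by products of two vectors in the Clifford model: for homogeneous $a,b\in\cC$ of norm $1$, $\Psi(a\cdot b)$ acts on $\cB$ as $\mathrm{diag}(r_{\bar a}l_{\bar b},\,l_{\bar a}r_{\bar b})$, and both blocks send $x\mapsto (x\bar b)a$, resp.\ $x\mapsto a(\bar b x)$, so both shift the $\ZZ_2^3$-degree by the \emph{same} amount $\deg_\cC a+\deg_\cC b$; the remaining basis vectors of $W$ (resp.\ $V$), namely $e,f,x,y$, map to block-diagonal operators that shift no $\ZZ_2^3$-degree at all, so no product of two vectors yields unequal shifts on $\cC_1$ and $\cC_2$. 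Together with $\bar\tau_{12}$ and the extended automorphisms of $\cC$, this only generates the subgroup with $B$ of the form $(s\mid s)$, which is strictly smaller than $H$. The source of the missing generators is different: one needs a related triple $(f_1,f_2,f_3)$ all of whose components are monomial with respect to a Cayley--Dickson basis but which shift the $\ZZ_2^3$-degree by amounts $c_1,c_2,c_3$ that are \emph{not} all equal --- the identity $f_1(\bar x\bar y)=\overline{f_2(x)}\,\overline{f_3(y)}$ only forces $c_1+c_2+c_3=0$, so $(c_1,c_2)$ can be an arbitrary pair. This is what the paper extracts (via Lemma~\ref{lemmaAlberto}) and it is the genuinely nontrivial input; asserting that some Clifford element ``shifts the degrees by independent amounts'' without exhibiting it leaves the lower bound unproved.

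There is a second, smaller gap in your upper bound for the triple system: $\ZZ_2^3$ is \emph{not} characteristic in $\ZZ_2^5$ (all subgroups of $\ZZ_2^5$ isomorphic to $\ZZ_2^3$ are conjugate under $\GL_5(\ZZ_2)$), and mere preservation of $\supp\Gamma$ does not force an automorphism of $\ZZ_2^5$ to preserve $0\times\ZZ_2^3$, since other index-$4$ subgroups also have cosets contained in the support. You must identify $0\times\ZZ_2^3$ intrinsically from the graded structure --- for instance as $\{g-h \med g,h\in\supp\Gamma,\ Q_{\cT_g}\cT_h\neq0\}$, or, as the paper does, by transporting the problem to the pair (where the torsion subgroup of $\ZZ^2\times\ZZ_2^3$ \emph{is} characteristic) and checking that the two Weyl groups act identically on the common support. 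Your argument for the pair case is fine, and your realizations of $\tau$, of $\GL_3(\ZZ_2)$, and of the diagonal shifts via $\varphi_{x_i,0}$ are correct.
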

\begin{proof} 
We will prove now the first case. Identify $\ZZ^2$ and $\ZZ_2^3$ with the subgroups $\ZZ^2 \times 0$ and $0 \times \ZZ_2^3$ of $G=\ZZ^2 \times \ZZ_2^3$. Let $\{ a, b \}$ and $\{ a_i \}_{i=1}^3$ denote the canonical bases of the subgroups $\ZZ^2$ and $\ZZ_2^3$. Let $\Gamma_\cC$ be the $\ZZ_2^3$-grading on $\cC$. It is well-known (see \cite{EKmon}) that $\cW(\Gamma_\cC) \cong \Aut(\ZZ_2^3) \cong \GL_3(\ZZ_2)$. If $f\in\Aut\Gamma_\cC$, then $f\times f\in\Aut\Gamma$ (notation as in the proof of Theorem \ref{generatorsAutVB}), and with an abuse of notation we have $\cW(\Gamma_\cC)\leq\cW(\Gamma)\leq\Aut(G)$. 

Since $\bar\tau_{12}$ induces the element $\tau$ of $\cW(\Gamma)$ of order 2, given by $a\leftrightarrow b$, that commutes with $\cW(\Gamma_\cC)$, we have $\langle\tau\rangle \times \GL_3(\ZZ_2)\leq\cW(\Gamma)$. Furthermore, from Lemma~\ref{lemmaAlberto} we can deduce that there is a related triple $\varphi$ that induces an element $\bar\varphi$ of $\cW(\Gamma)$ of the form $a\mapsto a+c_1$, $b\mapsto b+c_2$ with $c_i\in\ZZ_2^3$, $c_1\neq0$. Without loss of generality, composing $\varphi$ with some element of $\cW(\Gamma_\cC)$ if necessary, we can also assume that $\bar\varphi$ fixes $a_i$ for $i=1,2,3$. It is clear that $\bar\varphi$ and $\langle\tau\rangle \times \GL_3(\ZZ_2)$ generate a subgroup $\cW$ of $\cW(\Gamma)$ isomorphic to the one stated in the result. It remains to show that $\cW(\Gamma) \leq \cW$.

Take $\phi\in\cW(\Gamma)$; we claim that $\phi\in\cW$. By $\phi$-invariance of $\supp\Gamma$, either $\phi(a)=a+c$, or $\phi(a)=b+c$, for some $c\in\ZZ_2^3$, so if we compose $\phi$ with elements of $\cW$ we can assume that $\phi(a)=a$. Since the torsion subgroup $\ZZ_2^3$ is $\phi$-invariant, if we compose with elements of $\cW(\Gamma_\cC)$ we can also assume that $\phi(a_i)=a_i$ ($i=1,2,3$). Finally, by $\phi$-invariance of $\supp\Gamma$ and $\ZZ_2^3$, it must be $\phi(b)=b+c$ for some $c\in\ZZ_2^3$, and composing again with elements of $\cW$ we can assume in addition that $\phi(b)=b$. Hence $\phi=1$ and $\cW(\Gamma) = \cW$.

Finally, let $\Gamma'$ denote the $\ZZ_2^5$ grading on the bi-Cayley triple system. Note that $\Gamma'$ induces a coarsening $(\Gamma', \Gamma')$ of $\Gamma$ on $\cV_\cB$, where we can identify the set $\supp\Gamma'$ with $\supp\Gamma^+$. Since $\cW(\Gamma)$ is determined by the action of $\Aut(\Gamma)$ on $\supp\Gamma^+ \equiv \supp\Gamma'$, it follows that we can identify $\Aut(\Gamma')$ with a subgroup of $\Aut(\Gamma)$ and hence $\cW(\Gamma') \leq \cW(\Gamma)$. Recall that $\bar\tau_{12}$, $\Aut(\Gamma_\cC)$ and $\varphi$ induce the generators of $\cW(\Gamma)$, and on the other hand these are given by elements of $\Aut(\Gamma')$, so we also have $\cW(\Gamma) \leq \cW(\Gamma')$ with the previous identification, and therefore $\cW(\Gamma) = \cW(\Gamma')$.
\end{proof}

\begin{theorem}
Let $\Gamma$ be the isotropic Cayley-Dickson $\ZZ \times \ZZ_2^3$-grading on the bi-Cayley triple system. Then $\cW(\Gamma)$ is the whole $\Aut(\ZZ\times\ZZ_2^3)$.
\end{theorem}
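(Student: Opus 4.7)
The Weyl group $\cW(\Gamma)$ embeds automatically in $\Aut(G)$, where $G=\ZZ\times\ZZ_2^3$ is the universal group of $\Gamma$, so the plan is to realize every element of $\Aut(G)$ by an automorphism of $\cT_\cB$. Any $\phi\in\Aut(G)$ is determined by its restriction to the torsion subgroup $\ZZ_2^3$ (an arbitrary element of $\GL_3(\ZZ_2)$) together with the image of a generator of the free factor, which must be of infinite order and hence of the form $(\epsilon,c)$ with $\epsilon\in\{\pm1\}$ and $c\in\ZZ_2^3$. Thus $\Aut(G)$ is generated by three families: (i) the copy of $\GL_3(\ZZ_2)$ acting on $\ZZ_2^3$ and fixing the free factor, (ii) the sign flip $\sigma\colon(n,g)\mapsto(-n,g)$, and (iii) the twisted translations $\tau_c\colon(n,g)\mapsto(n,g+nc)$ for $c\in\ZZ_2^3$.

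For family (i), every $f\in\Aut\cC$ lifts to $(f,f)\in\Aut\cT_\cB$ via the related triple $(f,f,f)$ (Remark~\ref{remarkrelatedtriples}), and this map acts on the homogeneous basis by $(x_i,\bi\bar x_i)\mapsto(f(x_i),\bi\,\overline{f(x_i)})$. Since the Weyl group of the $\ZZ_2^3$-grading on $\cC$ is the whole $\GL_3(\ZZ_2)$ (see \cite{EKmon}), this realizes every generator of type (i). For family (ii), the involution $\bar\tau_{12}$ of $\cT_\cB$ satisfies
\[
\bar\tau_{12}(x,\bi\bar x)=(\overline{\bi\bar x},\bar x)=(\bi x,\bar x)=\bi\,(x,-\bi\bar x),
\]
so it sends the component of degree $(1,g)$ to the one of degree $(-1,g)$, inducing $\sigma$.

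For family (iii), fix $0\ne c\in\ZZ_2^3$ and let $x_c$ be the corresponding Cayley-Dickson basis element of $\cC$, so that $n(x_c)=1$ and $\varphi_{x_c,0}$ is a valid automorphism of $\cT_\cB$ by Proposition~\ref{phialambda}. A direct calculation yields
\[
\varphi_{x_c,0}(x_i,\bi\bar x_i)=(-\bi\,x_ix_c,\;x_c\bar x_i).
\]
Combining the Cayley-Dickson relations $\bar x_j=-x_j$ for $j\ne 0$, $x_ix_c\in\FF x_{i+c}$, and the anticommutativity $x_cx_i=-x_ix_c$ for distinct nonzero $i,c$, together with separate verifications for $i=0$ and $i=c$, one checks that the image is a nonzero scalar multiple of $(x_{i+c},-\bi\,\overline{x_{i+c}})$ in every case. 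Hence $\varphi_{x_c,0}$ sends degree $(1,i)$ to degree $(-1,i+c)$, and the composition $\bar\tau_{12}\circ\varphi_{x_c,0}$ realizes $\tau_c$.

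The main obstacle is the sign-tracking in step (iii): for each $i\in\ZZ_2^3$ one must verify that the scalar read off from the first component of the image agrees with the one read off from the second component, and the key algebraic input that makes this work uniformly is the anticommutativity of distinct nonzero Cayley-Dickson generators (necessary to identify the second component $x_c\bar x_i=-x_cx_i=x_ix_c$ with the first, up to a factor of $-\bi$). Once this verification is carried out in the three cases $i=0$, $i=c$, and $i\notin\{0,c\}$, families (i)--(iii) together generate $\Aut(G)$, and every automorphism of $G$ is realized in $\cW(\Gamma)$, completing the proof.
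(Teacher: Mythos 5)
Your proposal is correct and follows essentially the same route as the paper: realize $\GL_3(\ZZ_2)$ by automorphisms of $\cC$ extended via related triples, the sign flip by $\bar\tau_{12}$, and the translation block $\mathcal{M}_{3\times1}(\ZZ_2)$ by the automorphisms $\varphi_{a,0}$ with $a$ a norm-one homogeneous octonion (the paper uses $\Phi(a)=\varphi_{a,0}c_{1,-1}$, which induces $\tau_c$ directly, while you compose $\varphi_{x_c,0}$ with $\bar\tau_{12}$ — a cosmetic difference). The sign verifications in your step (iii) check out in all three cases $i=0$, $i=c$, $i\notin\{0,c\}$.
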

With the natural identification we can express this result as follows:
\begin{equation*}
\cW(\Gamma) \cong \left\{\left(\begin{array}{c|c} A & 0 \\ \hline B & C \end{array}\right) \med A\in\{\pm1\}, B\in\mathcal{M}_{3\times1}(\ZZ_2), C\in\GL_3(\ZZ_2) \right\}.
\end{equation*}
\begin{proof}
The proof is similar to the proof of Theorem~\ref{weylnonisotropicCD}, so we do not give all the details. The block with $\GL_3(\ZZ_2)$ is induced by automorphisms of $\cC$ extended to $\cT_\cB$. The blocks with $0$ and $\{\pm1\}$ are obtained by $\cW(\Gamma)$-invariance of the torsion subgroup and the support of $\Gamma$. (Both automorphisms $\bar\tau_{12}$ and $c_{1,-1}$ induce the element that generates the block $\{\pm1\}$ in $\cW(\Gamma)$.)
Take a homogeneous $a\in\cC$ with nonzero degree in the associated Cayley-Dickson grading on $\cC$. Hence, $\text{tr}(a)=0$, and scaling we can assume that $n(a)=1$. Consider the automorphism $\varphi=\Phi(a)$ of $\cT_\cB$, with $\Phi$ as in Proposition~\ref{isomphi}. It is checked that $\varphi(x_i,\pm\bi\bar x_i) = \pm\bi(x_ia,\pm\bi\overline{x_ia})$, and therefore $\varphi$ belongs to $\Aut(\Gamma)$ and induces a nonzero element of the block $\mathcal{M}_{3\times1}(\ZZ_2)$. We conclude that all the block $\mathcal{M}_{3\times1}(\ZZ_2)$ appears, which concludes the proof.
\end{proof}

\begin{remark}
The Weyl group in the result above is isomorphic to the Weyl group of the $\ZZ\times\ZZ_2^3$-grading on the Albert algebra.
\end{remark}

\begin{theorem} \label{weylZxZ33}
Let $\Gamma$ be the fine grading on $\cV_\alb$ with universal group $\ZZ \times \ZZ_3^3 $. Then, with the natural identification of $\Aut(\ZZ\times\ZZ_3^3)$ with a group of $4\times 4$-matrices,
\begin{equation*}
\cW(\Gamma) \cong \left\{\left(\begin{array}{c|c}  1 & 0  \\ \hline A & B \end{array}\right) \med A\in\mathcal{M}_{3\times1}(\ZZ_3), B\in\SL_3(\ZZ_3) \right\}.
\end{equation*}
\end{theorem}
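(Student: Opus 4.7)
The plan is to decompose $\cW(\Gamma)$ into a shift part (the entry $A \in \mathcal{M}_{3\times 1}(\ZZ_3)$ of the matrix) realized by autotopies of invertible homogeneous elements of $\alb$, and an internal part ($B \in \SL_3(\ZZ_3)$) identified with the known Weyl group of the $\ZZ_3^3$-grading on the Albert algebra.

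First I would pin down the ambient group. Any element of $\Aut(G)$ with $G = \ZZ \times \ZZ_3^3$ preserves the torsion subgroup $\{0\}\times\ZZ_3^3$, so has a matrix form $\bigl(\begin{smallmatrix}\varepsilon & 0 \\ A & B\end{smallmatrix}\bigr)$ with $\varepsilon \in \{\pm 1\}$, $A \in \ZZ_3^3$, $B \in \GL_3(\ZZ_3)$. By Proposition~\ref{finegradingproperty} applied to $\Gamma$, the sets $\supp\Gamma^+$ and $\supp\Gamma^-$ lie respectively in $\{1\} \times \ZZ_3^3$ and $\{-1\}\times\ZZ_3^3$, and since Jordan pair automorphisms preserve each $\cV^\sigma$, every element of $\cW(\Gamma)$ fixes the $\ZZ$-coordinate, forcing $\varepsilon = 1$. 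Hence $\cW(\Gamma)$ embeds into the subgroup of those matrices with $\varepsilon=1$, a priori with $B \in \GL_3(\ZZ_3)$.

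Next I would realize all shifts. Given $s \in \ZZ_3^3$, choose a nonzero homogeneous element $e_{2s}$ of degree $2s$ in the $\ZZ_3^3$-grading on $\alb$ (take $e_0 = 1$). Because that grading is a graded-division structure on $\alb$---all nonzero homogeneous elements are invertible, as follows from the explicit description recalled around \eqref{Z33grading} and is standard in \cite[Ch.~5]{EKmon}---the autotopy $U_{e_{2s}}$ yields an automorphism $\varphi = (U_{e_{2s}}, U_{e_{2s}^{-1}}) \in \Aut(\cV_\alb)$, with $U_{e_{2s}}$ sending $\alb^+_g$ to $\alb^+_{g+4s}=\alb^+_{g+s}$ and $U_{e_{2s}^{-1}}$ sending $\alb^-_g$ to $\alb^-_{g-s}$. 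Thus $\varphi \in \Aut(\Gamma)$ and it induces on $G$ the map $(n,t)\mapsto(n,t+ns)$, which is precisely the shift matrix $\bigl(\begin{smallmatrix}1 & 0 \\ s & I\end{smallmatrix}\bigr)$. This exhibits the whole column of shifts.

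Third I would analyze the ``vertical'' slice $A = 0$, i.e.\ the stabilizer of $\deg(1^+) = (1,0)$ in $\cW(\Gamma)$. Any representative $\varphi$ preserves the line $\FF 1^+$, say $\varphi^+(1^+) = \lambda 1^+$; composing with $c_{\lambda^{-1}} \in \Stab(\Gamma)$ reduces to $\varphi^+(1^+) = 1^+$. An autotopy of $\alb$ fixing the unit is a Jordan algebra automorphism, so $\varphi^+ \in \Aut(\alb)$ and $\varphi = (\varphi^+,\varphi^+)$; this $\varphi^+$ preserves the $\ZZ_3^3$-grading on $\alb$ and determines an element of the Weyl group of that grading. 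By the classification of fine gradings on $\alb$ in \cite[Ch.~5]{EKmon}, that Weyl group is exactly $\SL_3(\ZZ_3)$; conversely each such Weyl element lifts to $\cW(\Gamma)$. Therefore the $B$-entries occurring in $\cW(\Gamma)$ are precisely $\SL_3(\ZZ_3)$, and combining this with the shifts realised above, every matrix of the stated shape lies in $\cW(\Gamma)$ and no other does. The main obstacle is the identification of the Weyl group of the $\ZZ_3^3$-grading on $\alb$ with $\SL_3(\ZZ_3)$: once this input from \cite{EKmon} is taken, the remainder is routine tracking of degrees under $U$-operators. A minor ancillary point is verifying the graded-division property so that the autotopies $U_{e_{2s}}$ are defined.
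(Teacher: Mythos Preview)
Your argument is correct and follows the same overall architecture as the paper's proof (block form forced by torsion and the support, then realize shifts and identify the $B$-block with $\cW(\Gamma_\alb)\cong\SL_3(\ZZ_3)$), but the two tactical steps are carried out differently. For the shifts, the paper exhibits the single explicit automorphism $c_{1,\omega^2,\omega}$ of $\cV_\alb$ and checks by hand that it induces $a\mapsto a+a_3$ while fixing the $a_i$, then lets $\SL_3(\ZZ_3)$ act to obtain the remaining shifts; your use of the $U$-operators $U_{e_{2s}}$ of invertible homogeneous elements produces all shifts at once and is more conceptual, relying only on the graded-division property of the $\ZZ_3^3$-grading on $\alb$. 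For the upper bound on the $B$-block, the paper argues indirectly: it invokes the existence (from \cite{EK12b}) of two equivalent but nonisomorphic $\ZZ_3^3$-gradings on $\alb$, shows they remain nonisomorphic as gradings on $\cV_\alb$ (because the algebra product is recovered from the triple product and $1^\pm$), and concludes that the full $\GL_3(\ZZ_3)$ cannot occur; your reduction---normalize a representative by $c_{\lambda^{-1}}$ so that $\varphi^+(1)=1$, hence $\varphi^+\in\Aut(\alb)$ and $\varphi=(\varphi^+,\varphi^+)\in\Aut(\Gamma_\alb)$---is more direct and identifies the $A=0$ slice \emph{exactly} with $\cW(\Gamma_\alb)$, so the $\SL_3$ restriction is inherited immediately. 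Both approaches ultimately rest on the same external input $\cW(\Gamma_\alb)\cong\SL_3(\ZZ_3)$, and your version makes that dependency more transparent.
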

\begin{proof} 
Set $G=\ZZ \times \ZZ_3^3 $ and identify the subgroups $\ZZ$ and $\ZZ_3^3$ with $\ZZ \times 0$ and $0 \times \ZZ_3^3$. Let $\Gamma_\alb$ be the $\ZZ_3^3$-grading on $\alb$ as given in Equation~\eqref{Z33grading}, and $\deg_\alb$ its degree map. Let $a_1,a_2,a_3$ denote the canonical generators of $\ZZ_3^3$ (hence $\deg_\alb(X_i) = a_i$), and write $a$ for the generator $1$ of $\ZZ$. Therefore, for each homogeneous element $x\in\alb$, we have $\deg(x^\pm)=(\pm1, \deg_\alb(x))$ in $\Gamma$. By \cite{EK12b}, $\cW(\Gamma_\alb)\cong \SL_3(\ZZ_3)$. With the identification $\Aut\alb\leq\Aut\cV_\alb$, we have $\Aut\Gamma_\alb\leq\Aut\Gamma$, and we can also identify $\cW(\Gamma_\alb)$ with a subgroup of $\cW(\Gamma)$ which acts on $\ZZ_3^3$ and fixes the generator $a$ of $\ZZ $. Since $\supp\Gamma^+$ and the torsion subgroup $\ZZ_3^3$ are $\cW(\Gamma)$-invariant, we deduce that $\SL_3(\ZZ_3)$, $0$ and $1$ appear in the block structure of $\cW(\Gamma)$, as it is described above. We claim now that $\mathcal{M}_{3\times1}(\ZZ_3)$ appears in the block structure, and for this purpose, it suffices to find an element of $\cW(\Gamma)$ given by $a\mapsto a+a_3$ and that fixes each $a_i$. Take $\varphi=c_{1,\omega^2,\omega}$ with $\omega$ a primitive cubic root of 1, and the induced automorphism $\tau$ in $\cW(\Gamma)$ (notation $c_{\lambda_1,\lambda_2,\lambda_3}$ as in Subsection~\ref{automorphismssubsection}). It is clear that $\tau(a_1)=a_1$ and $\tau(a_2)=a_2$. Since $\varphi(1)=\sum_{i=1}^3\omega^{2i}E_i$, we have $\tau(a)=a+a_3$. Also, $\varphi(\sum_{i=1}^3\omega^{2i}E_i)=\sum_{i=1}^3\omega^iE_i$, from where we get $\tau(a+a_3)=a+2a_3$, and so $\tau(a_3)=a_3$. We conclude that $\tau$ is the element of $\cW(\Gamma)$ that we were searching, and hence a subgroup $\cW$ of $\cW(\Gamma)$ as in the statement appears. It remains to prove that $\cW(\Gamma)\leq\cW$.

Take $\phi\in\cW(\Gamma)$; we claim that $\phi\in\cW$. Without loss of generality for our purpose, if we compose with elements of $\cW$ we can assume that $\phi(a)=a$. It suffices to show that $\phi$ acts on $\ZZ_3^3$ as an element of $\SL_3(\ZZ_3)$. We know by \cite{EK12b} that there are two equivalent but nonisomorphic $G$-gradings on $\alb$, that we denote by $\Gamma^+ = \Gamma_\alb$ and $\Gamma^-$. (The nonisomorphy is due to the existence of homogeneous elements $X_i$ in $\Gamma^+$ and $X'_i$ in $\Gamma^-$, with $X_i$ and $X'_i$ of the same degree, and such that $(X_1X_2)X_3=\omega X_1(X_2X_3)$ and $(X'_1X'_2)X'_3=\omega^2 X'_1(X'_2X'_3)$.) Notice that the product in the algebra is determined by the triple product and the elements $1^\pm$ (because $\{x,1,y\}=xy$), so it follows that $\Gamma^+$ and $\Gamma^-$ are neither isomorphic when they are considered as $\ZZ_3^3$-gradings on $\cV_\alb$. Thus, the whole $\GL_3(\ZZ_3)$ cannot appear in the block structure. Since $\SL_3(\ZZ_3)$ has index $2$ in $\GL_3(\ZZ_3)$, we deduce that $\SL_3(\ZZ_3)$ is exactly what appears in the block structure of $\cW(\Gamma)$. We conclude that $\phi$ acts on $\ZZ_3^3$ as an element of $\SL_3(\ZZ_3)$, and so $\phi\in\cW$.
\end{proof}

\begin{theorem} 
Let $\Gamma$ be the Cayley-Dickson $\ZZ^3 \times \ZZ_2^3$-grading on $\cV_\alb$. Then, 
\begin{equation*}
\cW(\Gamma) \cong \left\{\left(\begin{array}{c|c} A & 0 \\ \hline B & C \end{array}\right) \med A\in\Sym(3)=\langle\tau,\sigma\rangle, B\in\mathcal{M}_3(\ZZ_2), C\in\GL_3(\ZZ_2) \right\},
\end{equation*}
with
\begin{equation*}
\tau=\left(\begin{array}{ccc} 0&1&0\\1&0&0\\0&0&1\end{array}\right), \quad
\sigma=\left(\begin{array}{ccc} 0&0&1\\1&0&0\\0&1&0\end{array}\right) \in \GL_3(\ZZ_2).
\end{equation*}
\end{theorem}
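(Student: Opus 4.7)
The plan is to identify $\cW(\Gamma)$ with a subgroup of $\Aut(G)$ for $G=\ZZ^3\times\ZZ_2^3$, first constraining its shape and then exhibiting enough automorphisms of $\cV_\alb$ to realize every block. Since $\ZZ_2^3$ is the torsion subgroup of $G$, any $\phi\in\Aut(G)$ preserves it, giving the block form $\phi=\left(\begin{smallmatrix}A&0\\B&C\end{smallmatrix}\right)$ with $A\in\GL_3(\ZZ)$, $B\in\mathcal{M}_3(\ZZ_2)$, $C\in\GL_3(\ZZ_2)$. The projection of $\supp\Gamma^+$ to the free part $\ZZ^3$ equals $\{e_1,e_2,e_3,-e_1+e_2+e_3,e_1-e_2+e_3,e_1+e_2-e_3\}$; the three coordinate vectors $e_i$ carry $|B_2|=8$ homogeneous components each (one for every Cayley--Dickson basis element of $\cC$), while the three $E_j$-degrees carry only one. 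Since $\cW(\Gamma)$ preserves this multiplicity and acts separately on $\supp\Gamma^+$ and $\supp\Gamma^-$ (automorphisms of the Jordan pair respect the $\cV^\pm$ decomposition, hence forbid sign changes), the matrix $A$ must permute $\{e_1,e_2,e_3\}$, and therefore $A\in\Sym(3)=\langle\tau,\sigma\rangle$.

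Next I would verify that each block is fully realized by explicit automorphisms. For the $\Sym(3)$ part in $A$, the three automorphisms $\bar\tau_{ij}\in\Aut(\alb)\subseteq\Aut(\cV_\alb)$ produce the transpositions with trivial action on $B$ and $C$, since they act on $\cC$ by conjugation, which preserves the $\ZZ_2^3$-grading on $\cC$. For the $\GL_3(\ZZ_2)$ part in $C$, the diagonal embedding $\Aut(\cC)\hookrightarrow\Aut(\alb)$ given by $\iota_i(x)\mapsto\iota_i(f(x))$, $E_i\mapsto E_i$, transports the Weyl group of the Cayley--Dickson grading on $\cC$ into the $C$-block without affecting $A$ or $B$.

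The shift block $\mathcal{M}_3(\ZZ_2)$ is the delicate part. For each CD basis element $y\in B_2$ of degree $b\in\ZZ_2^3$ (and norm one), composing the reflection-type automorphisms of Proposition~\ref{automfiofA} one finds, by direct calculation with the formulas in that Proposition, that $\phi_1(y,0)\circ\phi_1(1,0)$ fixes each $E_i$, preserves $\iota_1$-degrees (acting at worst by a sign), and shifts both $\iota_2$- and $\iota_3$-degrees by $b$, producing the shift matrix with columns $(0,b,b)$. Cyclically analogous compositions with $\phi_2$ and $\phi_3$ produce shifts $(b,0,b)$ and $(b,b,0)$, and together these generate the index-$8$ subgroup of $\mathcal{M}_3(\ZZ_2)$ consisting of matrices with vanishing row-sum. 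To realize the remaining shifts $(b,0,0), (0,b,0), (0,0,b)$, one passes to inner Bergmann-type automorphisms $\beta(x,y)\in\Inn(\cV_\alb)$ for suitably chosen quasi-invertible pairs $(x,y)\in\cV_\alb$ supported on a single $\iota_i$-component, modelled on the $\varphi_a=\beta((a,0),(0,1))$ construction used for $\cV_\cB$ in Subsection~3.2; these automorphisms lie in $\Aut(\cV_\alb)\setminus\Aut(\alb)$ because they are not norm isometries but only norm similarities. Together with the above generators, these yield all of $\mathcal{M}_3(\ZZ_2)$.

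Finally, combining all three families of generators produces a subgroup of $\cW(\Gamma)$ of the claimed shape, and the upper bound established in the first paragraph shows equality. The main obstacle will be the explicit construction of the one-column shifts: it requires a careful computation of $B(x,y)=\id-D(x,y)+Q(x)Q(y)$ on the Albert pair for quasi-invertible pairs localized in a single $\iota_i(\cC)$, together with verification that the resulting $\beta(x,y)$ genuinely permutes the homogeneous components of $\Gamma$ and induces the desired shift on $G$ while acting trivially on the $A$ and $C$ blocks. This step is substantially more involved than its bi-Cayley analog because the Peirce structure in $\cV_\alb$ is richer and the Bergmann operator mixes $\iota_i(\cC)$ with the $E_j$'s in a nontrivial way that one must then carefully cancel.
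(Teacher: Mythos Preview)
Your upper bound for the $A$-block is correct, but your own multiplicity observation yields a further constraint on $B$ that you overlook. The degrees $\deg(E_j^+)=(-e_j+e_{j+1}+e_{j+2},\,0)$ are the \emph{unique} elements of $\supp\Gamma^+$ lying over their $\ZZ^3$-projection (this is precisely your ``only one component'' remark), so any Weyl element must carry each of them to another such degree; in particular the $\ZZ_2^3$-part of the image must vanish. With $A\in\Sym(3)$ and working over $\ZZ_2$, this forces $B(-e_1+e_2+e_3)=0$, i.e.\ $B_1+B_2+B_3=0$ where the $B_i$ are the columns of $B$. Hence $B$ is confined to the index-$8$ sum-zero subspace of $\mathcal{M}_3(\ZZ_2)$, which is exactly what your compositions $\phi_i(y,0)\circ\phi_i(1,0)$ already realize. (Those compositions fix all $E_i$ and are therefore related triples; the sum-zero condition on their three $\ZZ_2^3$-shifts is forced by the defining identity $f_1(\bar x\bar y)=\overline{f_2(x)}\,\overline{f_3(y)}$.)

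Consequently your Bergmann-operator construction of one-column shifts $(b,0,0)$ is doomed: such an element would send $\deg(E_1^+)$ to $(-e_1+e_2+e_3,\,b)$, which lies outside $\supp\Gamma^+$ for $b\neq 0$. The paper's own proof is equally loose at this point---it asserts that the related-triple shifts coming from the bi-Cayley case fill all of $\mathcal{M}_3(\ZZ_2)$, without checking the effect on the $E_i$-degrees---and the support calculation above shows the theorem cannot hold as stated: the correct $B$-block is $\{B\in\mathcal{M}_3(\ZZ_2)\mid B_1+B_2+B_3=0\}$. Your explicit work with the $\phi_i$'s, together with the $\bar\tau_{ij}$ and $\Aut(\cC)$ pieces, already proves this corrected statement once you sharpen the upper bound; the Bergmann step should simply be dropped.
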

\begin{proof}
Set $G=\ZZ^3 \times \ZZ_2^3$ and identify $\ZZ^3$ and $\ZZ_2^3$ with the subgroups $\ZZ^3\times0$ and $0\times\ZZ_2^3$. Let $a_1,a_2,a_3$ be the canonical generators of $\ZZ_2^3$, and $b_i=\deg^+(E_i)$, for $i=1,2,3$, the generators of $\ZZ^3$. Let $\Gamma_\cC$ be the $\ZZ_2^3$-grading on $\cC$. It is well-known that $\cW(\Gamma_\cC)\cong\Aut(\ZZ_2^3)\cong \GL_3(\ZZ_2)$, and the automorphisms of $\cC$ are extended to related triples, which are also extended to $\Aut(\cV_\alb)$, and hence $\GL_3(\ZZ_2)$ appears in the block structure. Since the torsion subgroup $\ZZ_2^3$ is $\cW(\Gamma)$-invariant, the zero block must appear. The homogeneous components consisting of elements of rank 1 are exactly the ones of the idempotents $E_i$, and therefore the set $\{b_1,b_2,b_3\}$ is $\cW(\Gamma)$-invariant. This implies that the $(1,1)$-block is, up to isomorphism, a subgroup of $\Sym(3)$; since there are elements of $\Aut\Gamma$ that permute the idempotents $E_i$, the group $\Sym(3)$ must be what appears in the block. On the other hand, for the Cayley-Dickson grading $\Gamma'$ of $\cV_\cB$, we know that there are related triples in $\Aut\Gamma'$ that do not fix the subgroup $\ZZ^2$ of the universal group, and these are obtained as restriction of elements of $\Aut\Gamma$ that do not fix $\ZZ^3$, so it follows that $\mathcal{M}_3(\ZZ_2)$ must appear in the block structure. This concludes the proof.
\end{proof}

\bigskip

The Weyl groups of the Cartan gradings on $\cT_\cB$, $\cV_\cB$ and $\cV_\alb$ are given as follows:

\begin{theorem} \label{theoremWeylCartanGradings}
The Weyl groups of the Cartan gradings on the bi-Cayley pair and the Albert pair are isomorphic to the Weyl groups of the root systems of type $D_5$ and $E_6$, respectively. 

The Weyl group of the Cartan grading on the bi-Cayley triple system is isomorphic to $\ZZ_2^4 \rtimes \Sym(4)$, i.e., the automorphism group of the root system of type $B_4$ (or $C_4$).
\end{theorem}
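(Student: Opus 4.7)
The strategy is twofold: for the triple system I would exploit the explicit identification of $\Aut(\cT_\cB)$ given in Theorem~\ref{ThAutomGroups}, and for the two pairs I would pass through the TKK correspondence of Theorem~\ref{correspondence}. In all three cases, the Cartan grading is the weight-space decomposition associated with a maximal torus of the relevant algebraic group, so its Weyl group is the Weyl group of the ambient root system.

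For $\cT_\cB$: Theorem~\ref{ThAutomGroups} yields $\Aut(\cT_\cB) \cong \Spin(V,n)$ with $V = \cC \perp \FF x$ a $9$-dimensional nondegenerate quadratic space, so this is the simply-connected group of type $B_4$, and under the isomorphism $\Psi$ the module $\cT_\cB = \cB$ is identified with its $16$-dimensional spin representation. The $16$ elements listed as the support of Example~\ref{ExCartanbiCayleytriple} form, up to a linear change of basis of $\ZZ^4$, the $16$ weights $\bigl(\pm\tfrac{1}{2},\pm\tfrac{1}{2},\pm\tfrac{1}{2},\pm\tfrac{1}{2}\bigr)$ of the spin representation of $B_4$, so $\Gamma$ is the weight-space decomposition of a maximal torus $T$ of $\Spin(V,n)$ and hence $\cW(\Gamma) \cong N(T)/T = W(B_4) \cong \ZZ_2^4 \rtimes \Sym(4)$. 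To make this concrete I would realize the sign-change generators through the involutions $c_{1,-1}$, $\bar\tau_{12}$, and related triples of the form $(\varepsilon_1\id,\varepsilon_2\id,\varepsilon_3\id)$ with $\varepsilon_i\in\{\pm1\}$ satisfying $\varepsilon_1\varepsilon_2\varepsilon_3 = 1$, and the $\Sym(4)$ action through restrictions of automorphisms of $\cC$ that permute an orthogonal hyperbolic basis, lifted to $\cT_\cB$ by related triples.

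For $\cV_\cB$ and $\cV_\alb$: Theorem~\ref{correspondence} and the results of Section~5 show that the Cartan gradings extend to Cartan gradings on $\TKK(\cV_\cB) = \mathfrak{e}_6$ and $\TKK(\cV_\alb) = \mathfrak{e}_7$, whose Weyl groups are $W(E_6)$ and $W(E_7)$ respectively. By the discussion following Theorem~\ref{correspondence}, the embedding $\cW(\Gamma) \leq \cW(\widetilde{\Gamma})$ identifies $\cW(\Gamma)$ with the subgroup of $W(E_n)$ that stabilizes the TKK $\ZZ$-grading, i.e., that preserves each of $L^{\pm 1}$ rather than swapping them. In $\mathfrak{e}_6$, the dimensions $\dim L^0 = 46 = \dim\mathfrak{d}_5 + 1$ show that this $3$-grading is the parabolic grading associated with the (cominuscule) simple root whose removal leaves the Levi of type $D_5 \oplus T_1$; in $\mathfrak{e}_7$, $\dim L^0 = 79 = \dim\mathfrak{e}_6 + 1$ identifies it as the cominuscule $3$-grading of Levi type $E_6 \oplus T_1$. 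Therefore $\cW(\Gamma)$ coincides with the Levi Weyl subgroup, which is $W(D_5)$ and $W(E_6)$ respectively.

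The main obstacle is justifying the identification of the stabilizer of the cominuscule $3$-grading in $W(E_n)$ with the Levi Weyl subgroup. One inclusion is immediate, since elements of the Levi Weyl group fix the defining fundamental coweight of the parabolic and hence preserve the $\ZZ$-grading. The reverse inclusion is a standard fact from parabolic Weyl theory: any $w \in W(E_n)$ that preserves the grading must permute the simple roots of $L^0$ among themselves, so it lies in $W(\text{Levi})$. As a double-check, the indices $[W(E_6) : W(D_5)] = 27$ and $[W(E_7) : W(E_6)] = 56$ agree with the cardinalities of the $W(E_n)$-orbits of the relevant cominuscule cocharacters, confirming via orbit-stabilizer that the stabilizers have the claimed orders. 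A more hands-on alternative would be to realize generators of $W(D_5)$ and $W(E_6)$ as explicit permutations of $\supp\Gamma^+$ induced by automorphisms in $\Aut(\cV)$ using Theorem~\ref{generatorsAutVB} together with the description of $\Aut(\cV_\alb) \cong \SStr(\alb)$, and to verify that any further element of $\cW(\widetilde\Gamma)$ would have to interchange $\cV^+$ and $\cV^-$ and so not lie in $\cW(\Gamma)$.
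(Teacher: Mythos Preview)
Your approach for the two pairs is exactly the reduction to the TKK Lie algebra that the paper invokes (and then omits); you have simply supplied the details the paper leaves out. Two small points are worth tightening. First, the discussion after Theorem~\ref{correspondence} only gives the inclusion $\cW(\Gamma)\leq\cW(\widetilde\Gamma)$; to get the \emph{equality} with the stabilizer of the $3$-grading you also need that every $\psi\in\Aut L$ preserving each $L^i$ restricts to an element of $\Aut\cV$, which follows at once from $\{x,y,z\}=[[x,y],z]$ and $L^0=[L^1,L^{-1}]$. Second, for $\mathfrak{e}_6$ one has $\cW(\widetilde\Gamma)=\Aut(\Phi_{E_6})=W(E_6)\rtimes\ZZ_2$, not $W(E_6)$; the extra diagram involution sends the cominuscule coweight $\varpi_1^\vee$ to $\varpi_6^\vee$, and since these lie in distinct $W(E_6)$-orbits the stabilizer still lands inside $W(E_6)$ and equals $W(D_5)$, so your conclusion is unaffected. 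Your orbit--stabilizer check with indices $27$ and $56$ is a clean confirmation.

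For the triple system you take a genuinely different route from the paper's TKK sketch: rather than passing through an associated Lie algebra, you use the identification $\Aut(\cT_\cB)\cong\Spin(V,n)$ with $\dim V=9$ from Theorem~\ref{ThAutomGroups} and read off $\cW(\Gamma)=N(T)/T=W(B_4)$ directly. This is more economical here, since it avoids having to set up a $5$-graded Lie algebra or to analyze the Cartan grading on $\cV_\cB$ induced by $(\Gamma,\Gamma)$; it also makes transparent why the answer is the full $W(B_4)=\ZZ_2^4\rtimes\Sym(4)$ rather than a proper subgroup. The only thing to make explicit is that the $16$ weights of the spin module are pairwise distinct and span the character lattice of $T$, so that $\Stab(\Gamma)=T$ and $\Aut(\Gamma)=N_{\Spin_9}(T)$; both facts are immediate from the description of the support in Example~\ref{ExCartanbiCayleytriple}.
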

\begin{proof}
The result follows by reducing the computation of the Weyl group to the corresponding problem of the Lie algebras obtained by the TKK process, and will be omitted.
\end{proof}

\section*{Acknowledgments}

I would like to give thanks to professor Alberto Elduque for his supervision, numerous corrections, and valuable suggestions, which have considerably improved this paper, written as a part of my PhD thesis. I am also very thankful to the referee for his many corrections and improvements of this paper.


\end{document}